\newtheorem{theo}{Theorem}[section]
\newtheorem{pro}[theo]{Proposition}
\newtheorem{lem}[theo]{Lemma}
\newtheorem{cor}[theo]{Corollary}
\newcommand{\ra}{\rightarrow}
\theoremstyle{definition}
\newtheorem{defin}[theo]{Definition}
\newtheorem{exa}[theo]{Example}
\newtheorem*{nota}{Notation}
\theoremstyle{remark}
\newtheorem{rem}[theo]{Remark}
\title{Analytic-geometric methods for finite Markov chains with applications to quasi-stationarity}
\author[1]{Persi Diaconis}
\author[2]{Kelsey Houston-Edwards\thanks{kedwards@olin.edu}}
\author[3]{Laurent Saloff-Coste\thanks{lsc@math.cornell.edu}}
\affil[1]{\textit{\small Departments of Mathematics and Statistics, Stanford University}}
\affil[2]{\textit{\small The Olin College of Engineering}}
\affil[3]{\textit{\small Department of Mathematics, Cornell University}}
\begin{document}

\maketitle

\begin{abstract}
For a relatively large class of well-behaved absorbing  (or killed) finite Markov chains, we give detailed quantitative estimates regarding the behavior of the chain before it is absorbed (or killed). Typical examples are random walks on box-like finite subsets of the square lattice $\mathbb Z^d$ absorbed (or killed) at the boundary. The analysis is based on Poincar\'e, Nash, and Harnack inequalities, moderate growth, and on the notions of John and inner-uniform domains. 
\end{abstract}

\tableofcontents

\section{Introduction} \setcounter{equation}{0}

\subsection{Basic ideas and scope}
Markov chains that are either absorbed or killed at boundary points are important in many applications. We refer to \cite{CMSM,DM} for entries to the vast literature regarding such chains and their applications. Absorption and killing are distinguished by what happens to the chain when it exits its domain $U$. In the killing case, it simply ceases to exist. In the absorbing case, the chain exits $U$ and gets absorbed at a specific boundary point which, from a classical viewpoint, is still part of the state space of the chain. 
In this paper we study the behavior of chains until they are either absorbed or killed, which means that there is no significant difference between the two cases. 
For simplicity, we will phrase the present work in the language of  Markov chains killed at the boundary.

The goal of this article is to explain how to apply to finite Markov chains a well-established circle of ideas developed for and used in the study of the heat equation with Dirichlet boundary condition in Euclidean domains and manifolds with boundary, or, equivalently, for Brownian motion killed at the boundary. By applying these techniques to some finite Markov chains, we can provide good estimates for the behavior of these chains until they are killed. These estimates are 
also very useful for computing probabilities concerning the exit position of the process, that is, the position when the chain is killed. Such probabilities are related to harmonic measure and time-constrained variants. This is discussed by the authors in a follow-up article~\cite{DHSZ2}.

In \cite{DM}, a very basic example of this sort is discussed, lazy simple random walk on $\{0,1\dots,N\}$ with absorption at $0$ and reflection at $N$. This served as a starting point for the present work. Even for such a simple example, the techniques developed below provide improved estimates.

The present approach utilizes powerful tools: Harnack, Poincar\'e and Nash inequalities. It leads to good results even for  domains whose boundaries are quite rugged, namely,  inner-uniform domains and John domains. The notions of  ``Harnack inequality'' and ``John domain'' are quite unfamiliar in the context of finite Markov chains and their installment in this context is non-trivial and interesting mostly when a quantitative viewpoint is implemented carefully. The main contribution of this work is to provide such an implementation. 

\begin{figure}[h]
\begin{center} 

\begin{picture}(200,140)(-50,-20)

{\color{blue} \multiput(7.6,4)(10,10){11}{\circle*{3}}}
{\color{blue} \multiput(4,4)(10,0){11}{\circle*{3} }}
\multiput(4,4)(10,0){11}{\line(0,1){100}}
\multiput(4,4)(0,10){11}{\line(1,0){100}}
\put(-4,-4){\makebox(4,4){$\scriptstyle (0,0)$}}  \put(110,-4){\makebox(4,4){$\scriptstyle (N,0)$}}

\multiput(24,14)(10,0){9}{\circle*{3}} \multiput(34,24)(10,0){8}{\circle*{3}} \multiput(44,34)(10,0){7}{\circle*{3}}
\multiput(54,44)(10,0){6}{\circle*{3}} \multiput(64,54)(10,0){5}{\circle*{3}} 
\multiput(74,64)(10,0){4}{\circle*{3}}
\multiput(84,74)(10,0){3}{\circle*{3}} \multiput(94,84)(10,0){2}{\circle*{3}} \put(104,94){\circle*{3}}
\multiput(104,14)(0,10){9}{\circle*{4}}
\end{picture}
\caption{The forty-five degree finite cone in $\mathbb Z^2$}\label{D0}
\end{center}\end{figure}
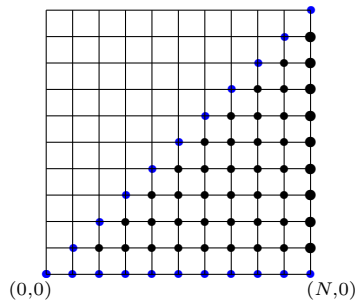

The type of finite Markov chains---more precisely, the type of families of finite Markov chains---to which these methods apply is, depending of one's perspective, both quite general and rather restrictive. First, we will mostly deal with reversible Markov chains. Second, the most technical part of this work applies only to families of finite Markov chains whose state spaces have a common  ``finite-dimensional'' nature. Our basic geometric assumptions require that all Markov chains in the general family under consideration have, roughly speaking, the same dimension. The model examples are families of finite Markov chains whose state spaces are subsets of $\mathbb{Z}^d$ for some fixed $d$, such as the family of forty-five degree cones parametrized by $N$ shown in Figure~\ref{D0}. Many interesting families of finite Markov chains evolve on state spaces that have an ``infinite-dimensional nature,'' e.g., the hypercube $\{0,1\}^d$ or the symmetric group $\mathbb S_d$ where $d$ grows to infinity. Our main results do not apply well to these ``infinite-dimensional'' families of Markov chains, although some intermediary considerations explained in this paper  do apply to such examples. See Section~\ref{sec-Doob}.

The simple example depicted in Figure \ref{D0} illustrates the aim of this work.  Start with simple random walk on the square grid in the plane. For each integer $N>2$, consider the subgraph of the square grid consisting of those vertices $(p,q)$  such that
$$  q<p\le N  \mbox{ and } 0<q<N,$$ which are depicted by black dots on Figure~\ref{D0}.  Call this set of vertices~${U=U_N}$.  The boundary where the chain is killed (depicted in blue) consists of the bottom and diagonal sides of the cone, i.e., the vertices with either $q=0$ or $p=q$ for $0\le p,q\le N$.  Call this set $\partial U =\partial U_N$ and set $\mathfrak X=\mathfrak X_N=U_N\cup \partial U_N$. The vertices along the right side of the cone, $\{(N,q), 1\le q<N\}$, have one less neighboring vertex, so we add a loop at each of these vertices. (In Figure~\ref{D0}, these vertices are depicted with larger black dots and the loops are omitted for simplicity.) 

We are interested in understanding the behavior of the simple random walk on $\mathfrak X$ killed at the boundary $\partial U$, before its random killing time $\tau_U$. In particular, we would like to have good approximations of quantities
such as 
\begin{equation}
\label{eq:est1}
\mathbf P_x(\tau_U >\ell), \;\; \mathbf P_x( X_t=y| \tau_U >\ell ), \;\; \mathbf P_x( X_t=y \mbox{ and } \tau_U>n),
\end{equation}
for $x,y\in U, \ 0\le t\le \ell,$ and
\begin{equation}
\label{eq:est2}
\lim_{\ell\ra \infty}  \mathbf P_x( X_t=y| \tau_U >\ell ),
\end{equation}
for $ x,y\in U, \ 0\le t <+\infty$ where the time parameter $t$ is integer valued.
This limit, if it exists, can be interpreted as the iterated transition probability at time $t$ for the chain conditioned to never be absorbed. We chose the example in Figure \ref{D0} because it is a rather simple domain, but already demonstrates some of the complexities in approximating the above quantities.

\subsection{The Doob-transform technique}
Before looking at this example in detail, consider a general irreducible aperiodic Markov kernel $K$ on a finite or countable state space $\mathfrak X$. Let $U$ be a finite subset of $\mathfrak X$ such that the kernel $K_U(x,y)=K(x,y)\mathbf 1_U(x)\mathbf 1_U(y)$ is still irreducible and aperiodic. Let $(X_t)$ be the (discrete time) random walk on $\mathfrak X$ driven by $K$, and let $\tau_U$ be the stopping time equal to the time of the first exit from $U$ as above. 

A rather general result explained in Section~\ref{sec-Dir} implies that the limit 
$$\lim_{\ell\ra \infty}  \mathbf P_x( X_t=y| \tau_U >\ell ), \;\;x,y\in U,\;\;t \in \mathbb{N}_{\geq 0}$$ exists and so we can define $K_{\mbox{\tiny Doob}}^t(x,y)$ for any $x, y \in U$ and $t \in \mathbb{N}_{\geq 0}$ as
$$K_{\mbox{\tiny Doob}}^t(x,y) = \lim_{\ell\ra \infty}  \mathbf P_x( X_t=y| \tau_U >\ell).$$
It is not immediately clear that this collection of $t$-dependent kernels, $$K^1_{\mbox{\tiny Doob}},K^2_{\mbox{\tiny Doob}},K^3_{\mbox{\tiny Doob}},\dots,$$  has special properties but, it turns out that it is nothing other than the collection of the iterated  kernels of the kernel $K_{\mbox{\tiny Doob}}=K^1_{\mbox{\tiny Doob}}$ itself, i.e.,
$$K_{\mbox{\tiny Doob}}^t(x,y)=\sum_z K^{t-1}_{\mbox{\tiny Doob}}(x,z)K_{\mbox{\tiny Doob}}(z,y).$$
Moreover, $K_{\mbox{\tiny Doob}}$ is an irreducible aperiodic Markov kernel.

To see why this is true, let us explicitly find the kernel $K_{\mbox{\tiny Doob}}$. Recall that, by the Perron-Frobenius theorem, the irreducible, aperiodic, non-negative kernel $K_U$ has a real eigenvalue $\beta_0 \in [0,1]$ which is simple and such that $|\beta| < \beta_0$ for every other eigenvalue $\beta$.
This top eigenvalue $\beta_0$ has a right eigenfunction $\phi_0$ and a left eigenfunction $\phi_0^*$ which are both positive functions on $U$.  Set
$$   K_{\phi_0}(x,y)= \beta_0^{-1}  \phi_0(x)^{-1} K_U(x,y)\phi_0(y)$$
and observe that this is an irreducible aperiodic  Markov kernel with invariant probability measure proportional to $\phi^*_0\phi_0$. These facts all follow 
from the definition and elementary algebra.

In Section~\ref{sec-IU}, we show that
$$\lim_{\ell\ra \infty}  \mathbf P_x( X_t=y| \tau_U >\ell ) = K^t_{\phi_0}(x,y),$$
and hence
$$K_{\mbox{\tiny Doob}}^t(x,y) = K^t_{\phi_0}(x,y).$$
 This immediately implies that
$$\mathbf P_x(X_t=y \mbox{ and } \tau_U >t)=K^t_U(x,y)= \beta_0^t K^t_{\mbox{\tiny Doob}}(x,y) \phi_0(x)\phi_0(y)^{-1}.$$

If we assume---this is a big and often unrealistic assumption---that we know the eigenfunction $\phi_0$, either via an explicit formula or via ``good two-sided estimates,'' then any question about
$$\mathbf P_x(X_t=y \mbox{ and } \tau_U >t) \ \text{or, equivalently,} \  K^t_U(x,y)$$
can be answered by studying $$K^t_{\mbox{\tiny Doob}}(x,y)$$ and vice-versa.  The key point of this technique is that $K_{\mbox{\tiny Doob}}$ is an irreducible aperiodic Markov kernel with invariant measure  proportional to $\phi_0^*\phi_0$ and its ergodic properties can be investigated using a wide variety of classical tools.  

The notation $K_{\mbox{\tiny Doob}} $ refers to the fact that this well-established circle of ideas is known as the Doob-transform technique. From now on, we will use the name $K_{\phi_0}$ instead, to remind the reader about the key role of the eigenfunction $\phi_0$.

\subsection{The 45 degree finite discrete cone} 
In our specific example depicted in Figure~\ref{D0}, $K_U$ is symmetric 
in $x,y$ so that~${\phi_0^*=\phi_0}$. We let $\pi_U\equiv 2/N(N-1) $ denote the uniform measure on $U$ and normalize $\phi_0$ by the natural condition 
$\pi_U(\phi_0^2)=1$. Then, $\pi_{\phi_0}=\phi_0^2 \pi_U$ is the invariant probability measure of $K_{\phi_0}$ and this pair 
$(K_{\phi_0} , \pi_{\phi_0})$ is irreducible, aperiodic, and reversible. By applying known quantitative methods to this particular aperiodic, irreducible, ergodic Markov chain, we can approximate the quantities~\eqref{eq:est1} and~\eqref{eq:est2} as follows. 

For any  $x=(p,q)\in U$
 and any  $t$,   set  $x_{\sqrt{t}}=(p_{\sqrt{t}},q_{\sqrt{t}})$ where 
$$p_{\sqrt{t}}= (p+ 2\lfloor \sqrt{t/4}\rfloor)\wedge N \mbox{ and }q_{\sqrt{t}}=(q+ \lfloor  \sqrt{t/4}\rfloor)\wedge (N/2). $$
The transformation $x=(p,q)\mapsto x_{\sqrt{t}}=(p_{\sqrt{t}},q_{\sqrt{t}})$  takes any vertex $x=(p,q)$ and pushes it inside $U$ and away from the boundary at scale $\sqrt{t}$
(at least as long as $t\le N$). The two key properties of  $x_{\sqrt{t}}$ are that it is  at distance at most $\sqrt{t}$ from $x$ and at a distance from the boundary $\partial U$ of order at least 
$\sqrt{t}\wedge N$.  

The following six statements can be proven using the techniques in this paper. The first five of these statements generalize to a large class of examples that will be described in detail. The last statement takes advantage of the particular structure of the example in Figure~\ref{D0}. Note that the constants $c,C$ may change from line to line but are independent of $N,t$ and $x,y\in U=U_N.$
\begin{enumerate}
\item For all $N$, $ cN^{-2} \le 1-\beta_0 \le   CN^{-2}.$ This eigenvalue estimate gives a basic rate at which mass disappears from $U$. For a more precise statement, see item 5 below.
\item All eigenvalues of $K_U$ are real, the smallest one, $\beta_{\mbox{\tiny min}}$, satisfies 
$$ \beta_0+\beta_{\mbox{\tiny min}}\ge c\beta_0 N^{-2}$$  and, for any eigenvalue $\beta$ other than $\beta_0$, 
$$\beta_0-\beta\ge c\beta_0 N^{-2}.$$
This inequality shows that $\frac{\beta_{\mbox{\tiny min}}}{\beta_0}$, the smallest eigenvalue of $K_{\phi_0}$, is strictly larger than~$-1$, which implies the aperiodicity of $K_{\phi_0}$.
\item   For all $x,y,t,N$ with  $t\ge N^2$   
$$ \max_{x,y} \left\{ \left| \frac{N(N-1)\mathbf P_x(X_t=y \mbox{ and } \tau_U >t)}{2\beta_0^t \phi_0(x)\phi_0(y) } -1\right| \right\} \le Ce^{- c t/N^2}.$$
A simple interpretation of this (and the following) statement is that $$\mathbf{P}_x(X_t = y \ \text{and} \ \tau_U > t)\;\; (\mbox{resp. } \mathbf{P}_x(\tau_U > t))$$ is asymptotic to a known function expressed in terms of $\beta_0$ and $\phi_0$.
\item  For all $x,t,N$ with  $t\ge N^2$,  
$$ \max_{x} \left\{ \left| \frac{N(N-1)\mathbf P_x(\tau_U >t)}{2\beta_0^t \phi_0(x) \pi_U(\phi_0) } -1\right| \right\} \le Ce^{- c t/N^2}.$$
\item    For all $x, t,N$,  $$ c\beta_0^t\frac{\phi_0(x)}{\phi_0(x_{\sqrt{t}})}\le \mathbf P_x(\tau_U>t)  \le   C \beta_0^t \frac{\phi_0(x)}{\phi_0(x_{\sqrt{t}})} .$$
Unlike the third and fourth statements on this list, which give asymptotic expressions for $$ \mathbf P_x(X_t=y\mbox{ and } \tau_U>t) \mbox{ and }\mathbf{P}_x(\tau_U > t)$$ for times greater than $N^2$, the fifth statement provides a two-sided bound of the survival probability $\mathbf P_x(\tau_U>t)$ that holds true uniformly for every starting point $x$ and  time $t>0$.
\item   For all $N$ and  $x=(p,q)\in U$, where $U$ is described in Figure~\ref{D0},     
$$ cpq(p+q)(p-q) N^{-4} \le \phi_0(x) \le Cpq(p+q)(p-q) N^{-4}  .$$
Observe that this detailed description of the somewhat subtle behavior of $\phi_0$ in all of $U$, together with the previous estimate of $\mathbf P_x(\tau_U>t)$, provides precise information for the survival probability of the process $(X_t)_{t>0}$ started at any given point in $U$. 
 \end{enumerate}
In general, it is hard to get detailed estimates on $\phi_0$, although some non-trivial and useful  properties of $\phi_0$ can be derived for large classes of examples. Even in the example given in Figure~\ref{D0}, the behavior of $\phi_0$ is not easily explained. In this case, it is actually possible to explicitly compute $\phi_0$:
\begin{eqnarray*}\phi_0(x) =4\kappa_N \sin \frac{ \pi p}{2N+1}\sin\frac{\pi q}{2N+1}
\left(\sin^2 \frac{\pi p}{2N+1} - \sin^2 \frac{\pi q}{2N+1}  \right).
\end{eqnarray*}
The constant $\kappa_N $ which makes this eigenfunction have $L^2(\pi_U)$-norm equal to $1$ can be computed to be $\kappa_N= \frac{\sqrt{8N(N-1)}}{2N+1}$. The eigenvalue $\beta_0$ is 
$$\beta_0= \frac{1}{2}\left(\cos\frac{\pi}{2N+1}+\cos\frac{3\pi}{2N+1}\right).$$

\subsection{A short guide}
Because some of the key techniques in this paper have a geometric flavor, we have chosen to emphasize the fact that all our examples are subordinate to some preexisting geometric structure. This underlying geometric structure introduces some of the key parameters that must remain fixed (or appropriately bounded) in order to obtain families of examples to which the results we seek to obtain apply uniformly. 

Generally, we use the language of graphs, and the most basic example of such a structure is a $d$-dimensional square grid. Throughout, the underlying 
space  is denoted by $\mathfrak X$. It is finite or countable and its elements are called vertices. It is equipped with an edge set $\mathfrak E$ which is a set of pairs undirected $\{x,y\}$ of distinct vertices (note that this excludes loops). Vertices in such pairs are called neighbors. For each $x\in \mathfrak X$, the number of pairs in $\mathfrak E$ that contain $x$ is supposed to be finite, i.e., the graph is locally finite. The structure $(\mathfrak X,\mathfrak E)$ yields a natural notion of a discrete path joining two vertices  and we assume that any two points in $\mathfrak X$  can indeed be joined by such a path.    

Two rather subtle types of finite subsets of $\mathfrak X$ play a key role in this work: \mbox{$\alpha$-John domains} 
and $\alpha$-inner-uniform domains. Inner-uniform domains are always John domains, but John domains are not always inner-uniform.  The number $\alpha\in (0,1]$ is a geometric parameter, and we will mostly consider families of subsets which are all either $\alpha$-John or $\alpha$-inner-uniform for one fixed $\alpha>0$.  John domains, named after Fritz John, are discussed in Section \ref{sec-JD} whereas the discussion and use of inner-uniform domains is postponed until Section \ref{sec-IU}. Our most complete results are for inner-uniform domains. These notions are well known in the context of (continuous) Euclidean domains, in particular in the field of conformal and quasi-conformal geometry. We provide a discrete version. See Figures \ref{D3}, \ref{IU-J}, and~\ref{IU-U} for simple examples.

Whitney coverings are a key tool used in proofs about John and inner-uniform domains. These are collections of inner balls within some domain that are nearly disjoint and have a radius that is proportional to the distance of the center to the boundary. These collections of balls are not themselves a covering of the domain, but their triples are, i.e., they generate a covering. See Section \ref{sec-WC} for the formal definition and Figure \ref{fig:whitney-covering} for an example. Whitney coverings are absolutely essential to the analysis presented in this paper. For instance, a Whitney covering of a given finite John domain $U$ is used to obtain good estimates for the second largest eigenvalue of a Markov chain (e.g., simple random walk on our graph) forced to remained in the finite domain $U$. See, e.g., Theorem \ref{th-KNU}.   

\begin{figure}[h]
\begin{center}
\begin{tikzpicture}[scale=.2]

\foreach \x in {0,30}
\draw [thick]   (15+\x,10)--(20+\x,5) (5+\x,5) -- (10+\x,5) -- (15+\x,5) -- (20+\x,5)-- (20+\x,10) --(15+\x,10) -- (10+\x,10) -- (10+\x,5) -- (15+\x,10)-- (15+\x,15)--((20+\x,10)--(20+\x,15)--(15+\x,15)--(10+\x,15)  (15+\x,15)-- (15+\x,20)  (20+\x,15)--(25+\x,15);

\foreach \x in {0,30}
\draw [fill]    (5+\x,5) circle [radius=.2] (10+\x,5) circle [radius=.2]  (15+\x,5) circle [radius=.2]  (20+\x,5) circle [radius=.2] (20+\x,10)  circle [radius=.2] (15+\x,10) circle [radius=.2]  (10+\x,10) circle [radius=.2] (10+\x,5) circle [radius=.2] (15+\x,10)  circle [radius=.2]  ((20+\x,10) circle [radius=.2] (20+\x,15)
circle [radius=.2]  (15+\x,15) circle [radius=.2]  (10+\x,15) circle [radius=.2]   (15+\x,15) circle [radius=.2]  (15+\x,20) circle [radius=.2]   (20+\x,15)circle [radius=.2] (25+\x,15)circle [radius=.2] ;

\node [red] at (4.5,4) {$2$};  \node [red]  at (10,4) {$5$};   \node [red]  at (15,4) {$2$}; \node [red] at (20.5,4) {$4$}; 
 \node [red]  at (9,10) {$3$};   \node [red]  at (15,9) {$5$}; \node [red] at (21,10) {$5$}; 
  \node [red]  at (9,15) {$3$};   \node [red]  at (14,14) {$7$};  \node [red] at (20,16) {$5$};  \node [red]  at (25.5,16) {$1$};  
   \node [red]  at (15,21) {$2$};
  
\node at (7,4) {$1$};  \node  at (13,4) {$1$};   \node at (18,4) {$1$};
\node at (9,7.5) {$2$};  \node  at (13,7) {$1$};   \node at (17,7) {$1$};  \node at (20.7,7.5) {$1$};  
\node at (12.15,9.2) {$1$};  \node  at (17.8,9.2) {$1$};
\node at (14.3,12.2) {$1$};  \node  at (17.1,11.8) {$1$}; \node at (20.7,12.5) {$1$};  
\node at (12.4,15.8) {$1$};  \node  at (17.5,15.8) {$3$}; \node at (22.5,15.8) {$1$};  
\node at (14.3,17.6) {$1$};

\draw [fill] (35,5) circle [radius=.4]; \node at (34,6) {$\scriptscriptstyle \frac{1}{2}$}; 
\node at (36,4) {$\scriptscriptstyle \frac{1}{2}$};  \node at (39,4) {$\scriptscriptstyle \frac{1}{5}$}; 

\node  at (41,4) {$\scriptscriptstyle \frac{1}{5}$};  \node at (44,4) {$\scriptscriptstyle \frac{1}{2}$};  \node at (46,4) {$\scriptscriptstyle \frac{1}{2}$}; 
 \node at (48.7,4) {$\scriptscriptstyle \frac{1}{4}$};  \draw [fill] (50,5) circle [radius=.4]; 
  \node at (51,4) {$\scriptscriptstyle \frac{1}{4}$};  \node at (50.7,6.3) {$\scriptscriptstyle \frac{1}{4}$};  \node at (48.9,7.1) {$\scriptscriptstyle \frac{1}{4}$};  
     
\node at (39.3,6.2) {$\scriptscriptstyle \frac{2}{5}$};  \node at (42.3,6.2) {$\scriptscriptstyle \frac{2}{5}$};
\node at (39.3,8.6) {$\scriptscriptstyle \frac{2}{3}$};  \node at (41.2,10.9) {$\scriptscriptstyle \frac{1}{3}$};  

\node at (43.9,8) {$\scriptscriptstyle \frac{1}{5}$};   \node at (43.7,10.9) {$\scriptscriptstyle \frac{1}{5}$}; \node at (45.9,8) {$\scriptscriptstyle \frac{1}{5}$}; 
 \node at (44.5,11.7) {$\scriptscriptstyle \frac{1}{5}$};  \node at (46.1,10.85) {$\scriptscriptstyle \frac{1}{5}$};

 \draw [fill] (50,10) circle [radius=.4];  \node at (51,10) {$\scriptscriptstyle \frac{1}{5}$};   \node at (50.7,8.5) {$\scriptscriptstyle \frac{1}{5}$}; 
  \node at (47.8,11.15) {$\scriptscriptstyle \frac{1}{5}$};   \node at (48.9,9.1) {$\scriptscriptstyle \frac{1}{5}$};    \node at (50.7,11.5) {$\scriptscriptstyle \frac{1}{5}$}; 
  
  \node at (44.5,13.4) {$\scriptscriptstyle \frac{1}{7}$};   \node at (43.7,15.9) {$\scriptscriptstyle \frac{1}{7}$}; \node at (47.2,13.8) {$\scriptscriptstyle \frac{1}{7}$}; 
 \node at (44.5,16.7) {$\scriptscriptstyle \frac{1}{7}$};  \node at (46.1,15.85) {$\scriptscriptstyle \frac{3}{7}$};

  \node at (50.7,13.8) {$\scriptscriptstyle \frac{1}{5}$}; 
 \node at (51.2,15.85) {$\scriptscriptstyle \frac{1}{5}$};  \node at (48.7,15.85) {$\scriptscriptstyle \frac{3}{5}$};  \node at (54,15.5) {$\scriptscriptstyle 1$};

 \draw [fill] (40,15) circle [radius=.4];   \node at (41.7,15.85) {$\scriptscriptstyle \frac{1}{3}$};  \node at (39,15) {$\scriptscriptstyle \frac{2}{3}$};   
 \draw [fill] (45,20) circle [radius=.4];    \node at (45,21.3) {$\scriptscriptstyle \frac{1}{2}$};  \node at (45.5,18.4) {$\scriptscriptstyle \frac{1}{2}$};    
    
    \end{tikzpicture}
\caption{A graph with weights ${\color{red} \pi},\mu$ ($\mu$ subordinated to $\pi$) and the resulting Markov kernel (with invariant measure $\pi$).  On the right, each edge $\{x,y\}$ carries two numbers, $K(x,y)$ and $K(y,x)$, with $K(x,y)$ written next to $x$. Large dots indicate non-zero holding and the holding value is indicated nearby.} \label{WM}
\end{center}
\end{figure}

With the geometric graph structure of Section~\ref{sec-JDW} fixed, we add vertex weights, $\pi(x)$ for each $x \in \mathfrak{X}$, and (positive) edge weights, $\mu_{xy}$ for each $\{x,y\} \in \mathfrak{E}$, with the requirement that $\mu$ is subordinated to $\pi$, i.e., $\sum_{y \in \mathfrak{X}} \mu_{xy} \leq \pi(x)$ (often, $\mu_{xy}$ is extended to all pairs by setting $\mu_{xy}=0$ when $\{x,y\}\not\in \mathfrak E$). Section \ref{sec-pimuK} explains how each choice of such weights defines a Markov chain and Dirichlet form adapted to the geometric structure $(\mathfrak X,\mathfrak E)$.  This is illustrated in Figure~\ref{WM} where the Markov kernel $K=K_\mu$ 
is obtained by seting $K(x,y) = \mu_{xy}/\pi(x)$ for $x \neq y$ and $K(x,x) = 1 - \sum_{y} \mu_{xy}/\pi(x)$. We will generally refer to the geometric structure of $(\mathfrak{X}, \mathfrak{E})$ with weights $(\pi,\mu)$ instead of the Markov chain.

Section \ref{sec-DMPN} introduces the important known concepts of volume doubling, moderate growth, various Poincar\'e inequalities, and Nash inequalities. These notions depend on the underlying structure $(\mathfrak X,\mathfrak E)$ and the weights $(\pi,\mu)$. There is a very large literature on volume doubling, Poincar\'e inequalities
and Nash inequalities in the context of harmonic analysis, global analysis and partial differential equations (see, e.g., \cite{GrigBook,LSCAspects} and the references therein for pointers to the literature) and analysis on countable graphs (see, \cite{BalrlowLMS,Coulhon,GrigoryanAMS,LSCStF}). The notion of moderate growth is from \cite{DSCmod,DSCNash} which also cover volume doubling and Poincar\'e and Nash inequalities in the context of finite Markov chains. 

Section \ref{sec-PQPJD} is one of the key technical sections of the article. Given an underlying structure $(\mathfrak{X},\mathfrak{E},\pi,\mu)$ which satisfies two basic assumptions---volume doubling and the ball Poincar\'e inequality---we prove a uniform Poincar\'e inequality for finite $\alpha$-John domains with a fixed $\alpha$. This relies heavily on the definition of a John domain and the use of Whitney coverings. Theorems \ref{th-P1} and~\ref{th-PP1} are the main statements in this section. Section \ref{sec-AW} provides an extension of the results of Section \ref{sec-PQPJD}, namely, Theorems \ref{th-PP1doub} and \ref{th-PP1controlled}. The line of reasoning for these results is adapted from \cite{Jerison,MLSC,LSCAspects} where earlier relevant references can be found (all these references treat PDE type situations). 

Section \ref{sec-Metro} illustrates the results of Section~\ref{sec-AW} in the classical context of the Metropolis-Hastings algorithm.  Specifically, given a finite John domain $U$ in a graph $(\mathfrak X,\mathfrak E)$, we can modify a simple random walk via edges weights in order to target a given probability distribution. Under certain hypotheses on the target distribution, Section \ref{sec-AW} provides useful tools to study the convergence of such chains.  We describe several examples in detail.

Section \ref{sec-Dir} deals with applications to absorbing Markov chains (or, equivalently for our purpose, chains killed at the boundary). We call such a chain Dirichlet-type by reference to the classical concept of Dirichlet boundary condition.  The section has two subsections. The first provides a very general discussion of the Doob transform technique for finite Markov chains. The second applies the results of Section \ref{sec-AW} to Dirichlet-type chains in John domains. The main results are Theorems \ref{th-Doob1}, \ref{th-Doob2}, and \ref{th-JZd}.  

Section \ref{sec-IU} introduces the notion of inner-uniform domain in the context of our underlying discrete space
$(\mathfrak X,\mathfrak E)$. Theorem \ref{theo-Carleson} captures a key property of the Perron-Frobenius eigenfunction $\phi_0$ in a finite inner-uniform domain. This key property is known as a {\em Carleson estimate}  
after Lennart Carleson. There is a vast literature regarding this estimate and its relation to the boundary Harnack principle in the context of potential theory in Euclidean domains (see, e.g., \cite{Ancona,BBB,Aik1,Aik2,Aik3} and the references and pointers given therein).

Corollary \ref{cor-Carleson2} is based on the Carleson estimate of Theorem \ref{theo-Carleson} and on Theorem \ref{th-Doob1}. It provides a sharp ergodicity result for Doob-transform chains in finite inner-uniform domains.  Section \ref{sec-cable} provides a proof of the Carleson estimate via transfer to 
the associated cable-process and Dirichlet form. Because the Carleson estimate is a deep and difficult result, it is nice to be able to obtain it from already known results. We use here a similar (and much more general)
version of the Carleson estimate in the context of local Dirichlet spaces developed in \cite{LierlLSCOsaka,LierlLSCJFA} following \cite{Aik1,Aik4} and \cite{Gyrya}. 
We apply to the eigenfunction $\phi_0$ the technique of passage from the discrete graph to the continuous cable space. This requires an interesting argument.  (See Proposition \ref{pro-*}.) Section~\ref{sec-HPWb} provides more refined results regarding the iterated kernels $K^t_U$ (chain killed at the boundary)
and $K^t_{\phi_0}$ (associated Doob-transform chain) in the form of two-sided bounds valid at all times and all space location in $U$. A key result is Corollary \ref{cor:exit-time-estimate}
which gives, for inner-uniform domains, a sharp two-sided bound on $\mathbf{P}_x(\tau_U>t)$, the probability that the process $(X_t)_{t>0}$ started at $x$ has not yet exited $U$ at time $t$. 

The final section, Section \ref{sec:examples}, describes several explicit examples in detail.

\section{John domains and Whitney coverings} \setcounter{equation}{0} \label{sec-JDW}
This section is concerned with notions of a purely geometric nature.
Our basic underlying structure can be described as a finite or countable set $\mathfrak X$ (vertex set) equipped with an edge set $\mathfrak E$ which, by definition,  is a set of pairs of  distinct vertices $\{x,y\}\subset \mathfrak X$. We write $x\sim y$ whenever $\{x,y\}\in \mathfrak E$ and say that these two points are neighbors.  By definition, a path is a finite sequence of points $\gamma=(x_0,\dots,x_m)$ such that $\{x_i,x_{i+1}\}\in \mathfrak E$, $0\le i <m$. We will always assume that $\mathfrak X$ is connected in the sense that, for any two points in $\mathfrak X$, there exists a finite path between them. 
The graph-distance  function $d$  assigns to any two points $x,y$ in $\mathfrak X$ the minimal length of  a path connecting $x$ to $y$, namely,
$$ d(x,y)=\inf \{m \ : \ \exists \ \gamma=(x_i)_0^m, \ x_0=x, \ x_m=y, \ \{x_i,x_{i+1}\}\in \mathfrak E\}.$$  
We set
$$B(x,r)=\{y:d(x,y)\le r).$$
This is the (closed) metric ball associated with the distance $d$. Note that the radius is a nonnegative real number and $B(x,r)=\{x\}$ for $r\in [0,1)$.
\begin{nota} Given a ball  $E=B(x,r)$ with specified center and radius and $\kappa>0$, let $\kappa E$  denote the ball $\kappa E=B(x,\kappa r)$.
\end{nota}
\begin{rem} We think of $\mathfrak E$ as producing a ``geometric structure'' on $\mathfrak X$. Note that loops are not allowed since the elements of $\mathfrak E$ are pairs, i.e., subsets of $\mathfrak X$ containing two distinct elements. This does not mean that the Markov chains we will consider are forbidden to have positive holding probability at some vertices. The example in the introduction, Figure~\ref{D0}, does have positive holding at some vertices (specifically, at $(N,q)$ for $1\le q\le N$) so the associated Markov chain is allowed to have loops even though the geometric structure does not.
\end{rem}

Let $U$ be a subset of $\mathfrak X$. By definition, the boundary of $U$ is 
$$\partial U=\{y\in \mathfrak X\setminus U: \exists\, x\in U \mbox{ such that } \{x,y\}\in \mathfrak E\}.$$ Note that this is the {\em exterior boundary} of $U$ in the sense that it sits outside of $U$. 
We say that $U$ is connected if, for any two points  $x,y$ in $U$, the exists a finite path $\gamma_{xy}=(x_0,x_1,\dots ,x_m)$ with $x_0 = x$ and $x_m = y$ such that $x_i\in U$ for~$0\le i\le m$. A domain $U$ is a connected subset of $\mathfrak X$. We will be interested here in finite domains. 

\begin{defin} Given a domain $U \subseteq \mathfrak X$, define the intrinsic distance $d_U$  by setting, for any $x,y\in U$,
$$d_U(x,y)=\inf\{ m \ : \ \exists (x_i)_0^m, \ x_0=x, \ x_m=y, \ \{x_i,x_{i+1}\}\in \mathfrak E, \ x_i\in U, \ 0\le i<m\}.$$
In words, $d_U(x,y)$ is the graph distance between $x$ and $y$ in the subgraph $(U, \mathfrak E_U)$ where $\mathfrak E_U=\mathfrak E\cap (U\times U)$. It is also sometimes called the inner distance (in $U$). Let $$B_U(x,r)=\{y\in U: d_U(x,y)\le r\}$$ be the (closed)  ball of radius $r$ around $x$ for the intrinsic distance $d_U$.
\end{defin}

In the example of Figure~\ref{D0}, we set 
$$\mathfrak X=\mathfrak X_N=\{(p,q): 0\le q\le p\le N\}.$$
The edge set $\mathfrak E=\mathfrak E_N$ is inherited from the square grid and $$U=U_N=\{(p,q): 0<q<p\le N\}.$$  It follows that the boundary $\partial U$ of $U$ (in $(\mathfrak X,\mathfrak E)$) is $$\partial U=\partial U_N=
\{(p,p), (p,0): 0\le p\le N\}.$$

\subsection{John domains} \label{sec-JD}

The following definition introduces a key geometric notion which is well known in the areas of harmonic analysis, geometry, and  partial differential equations.
\begin{defin}[John domain]   \label{def-John}
Given $\alpha, R>0$, we say that a  finite domain $U \subseteq \mathfrak X$,  equipped with a point $o\in U$, is in $J(\mathfrak X, \mathfrak E, o,\alpha, R)$
 if the domain $U$ has the property that for any point $x\in U$ there exists a path $\gamma_x=(x_0,\dots, x_m)$ of length $m_x$ contained in  $U$  such that $x_0=x$ and $x_m=o$, with $$ \max_{x\in U}\{m_x\}\le R \;\mbox{ and }  \;d(x_i, \mathfrak X\setminus U) \ge  \alpha (1+i),$$ for $0\le i\le m_x.$ When the context makes it clear what underlying structure $(\mathfrak X,\mathfrak E)$ is considered, we write
 $J(o,\alpha, R)$  for $J(\mathfrak X, \mathfrak E, o,\alpha, R)$.\end{defin}

We can think of a John domain $U$ as being one where any point $x$ is connected to the central point $o$ by a carrot-shaped region, which is entirely contained within $U$. The $x$ is the pointy end of the carrot and the point $o$ is the center of the round, fat end of the carrot. See Figure~\ref{Banana} for an illustration.

Within the lattice $\mathbb Z^d$, there are many examples of John domains: the lattice balls, the lattice cubes, and the intersection of Euclidean balls and Euclidean equilateral triangles with the lattice. See also Examples \ref{exa-T}, \ref{exa-mb}, and \ref{ex:convex} and Figure \ref{D3} below. Domains having large parts connected through narrow parts are not John. These examples, however, are much too simple to convey the subtlety and flexibility afforded by this definition.

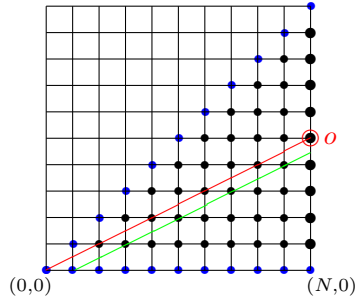
\begin{figure}[h]
\begin{center} 

\begin{picture}(200,140)(-50,-20)

{\color{blue} \multiput(7.6,4)(10,10){11}{\circle*{3}}}

{\color{blue} \multiput(4,4)(10,0){11}{\circle*{3} }}

\multiput(4,4)(10,0){11}{\line(0,1){100}}
\multiput(4,4)(0,10){11}{\line(1,0){100}}
\put(-4,-4){\makebox(4,4){$\scriptstyle (0,0)$}}  \put(110,-4){\makebox(4,4){$\scriptstyle (N,0)$}}

\multiput(24,14)(10,0){9}{\circle*{3}} \multiput(34,24)(10,0){8}{\circle*{3}} \multiput(44,34)(10,0){7}{\circle*{3}}
\multiput(54,44)(10,0){6}{\circle*{3}} \multiput(64,54)(10,0){5}{\circle*{3}} 
\multiput(74,64)(10,0){4}{\circle*{3}}
\multiput(84,74)(10,0){3}{\circle*{3}} \multiput(94,84)(10,0){2}{\circle*{3}} \put(104,94){\circle*{3}}
\multiput(104,14)(0,10){9}{\circle*{4}}

{\color{red}\put(104,54){\circle{6}} 
\put(4,4){\line(2,1){100}}
\put(109,52){\makebox{$o$}}
}

{\color{green}
	\put(11.6,4){\line(2,1){89}}
}

\end{picture}
\caption{The forty-five degree finite cone in $\mathbb Z^2$ with the ``center'' marked as a red $o$.} \label{D1}
\end{center}\end{figure}

\begin{defin}[$\alpha$-John domains] Given $(\mathfrak X,\mathfrak E)$, let
$J(\alpha)=J(\mathfrak X,\mathfrak E,\alpha)$ be the set of all domains $U\subset \mathfrak X$ which belong to $J(\mathfrak X,\mathfrak E, o,\alpha,R)$ for some fixed $o\in U$ and $R>0$.  A finite domain in $J(\alpha)$ is called an $\alpha$-John domain. \end{defin}

\begin{defin}[John center and John radius]For any domain $U\in J(\alpha)$, there is at least one pair $(o,R)$, with $o\in U$ and $R>0$, such that $U\in J(o,\alpha,R)$. Given such a John center $o$, let $R(U,o,\alpha)$ be the smallest $R$ such that $U\in J(o,\alpha, R)$.   
Assuming $\alpha$ is fixed, we call $R(U,o,\alpha)$ the John-radius of $U$ with respect to $o$. 
\end{defin}

\begin{rem} If we apply the second condition of Definition \ref{def-John} 
to any point in $U$ at distance $1$ from the boundary, we see that $\alpha\in (0,1]$.
\end{rem} 
\begin{rem} \label{rem:johnradius} Given $U\in J(\mathfrak X,\mathfrak E, \alpha)$, define the internal radius of $U$, viewed from~$o$, as
$$\rho_o(U)=\max\{d_U(o,x) :  x\in U\}.$$
Then, the John-radius $R(U,o,\alpha)$ is always greater than or equal to $\rho_o(U$), i.e., $R(U,o,\alpha) \geq \rho_o(U)$. Furthermore, we always have
$$  \min\{ d_U(o,z):z\in \mathfrak X\setminus U\}= d(o, \mathfrak X \setminus U)\ge \alpha (1+  R ( U,o, \alpha)),$$
which implies that $$\alpha (1+ R(U,o,\alpha))\le  1+\rho_o(U)\le 1+ R(U,o,\alpha).$$
In words, when $U\in J(\alpha)$ is not a singleton, the John-radius  of $U$ and  $\rho_o(U)$
are comparable, namely, $$ \frac{\alpha}{2} R(U,o,\alpha)\le \rho_o(U)\le R(U,o,\alpha).$$
We can also compare $\rho_o(U)$ to the diameter of the finite metric space $(U,d_U)$. Namely, we have
$$ \rho_o(U)\le \mbox{diam}(U,d_U)\le 2\rho_o(U).$$
\end{rem}
\begin{rem} \label{rem-MS}
 Let us compare this definition of a discrete John domain to the continuous version introduced in the classical reference \cite{MS}.  In \cite{MS}, a Euclidean domain $D$ is an $(\alpha,\beta)$-John domain (denoted $D\in \mathbf J(\alpha,\beta)$) if there exists a point $o\in D$ such that every $x\in D$ can be joined to $o$ by a rectifiable path $\gamma_x: [0,T_x]$ (paramatrized by arc-length) with $\gamma_x(0)=x$, $\gamma_x(T_x)=o$, $T_x\le \beta$ and $d_2(\gamma_x(t),\partial D)\ge \alpha (t/T_x)$ for $t\in [0,T_x]$.  (Here $d_2$ is the Euclidean distance.)  If one ignores the small modifications made in our  definition to account for the discrete graph structure,  the class $J(o, \alpha,R)$ is the analogue of the class $\mathbf J(\alpha R,  R)$ with an explicit center $o$.  The smallest $R$ such that $D$ belong to $\mathbf J(\alpha R,R)$ with a given center $o$ would be the analogue of our John-radius with respect to $o$.  
\end{rem} 

\begin{exa} \label{exa-T}Consider the example depicted in Figure~\ref{D1}.  
From the definition of John domain, one can see that it is best to choose $o$  far from the boundary. We pick $o=(N, \lfloor N/2\rfloor)$, depicted in red in Figure~\ref{D1}. For each point $x=(p,q)\in U$ we will define a (graph) geodesic path $\gamma_x$ joining $x$ to $o$ in $U$ that satisfies the conditions of a John domain. First, draw two straight lines $L$ and $L'$. The first line $L$, shown in red in Figure~\ref{D1}, joins $(0,0) $ to $(N,N/2)$. This is the line with equation $p-2q=0$ and the integer points on this line are at equal graph-distance  from the ``boundary lines'' $\{(p,q):  q=0, p = 0,1,\dots, N\}$  and $\{(p,p): p = 0,1,\dots, N\}$ as shown in blue in Figure~\ref{D1}. The line $L'$, shown in green, has the equation $p-2q = 1$. For any integer point $x=(p,q)$ on the line $L$, there is graph-geodesic path $\gamma_x$ joining $x$ to $o$ obtained by alternatively moving two steps right and one step up.  Similarly, for any integer point $x=(p,q)$ on the line  $L'$, there is a graph-geodesic  path $\gamma_x$ joining $x$ to $o$ by  moving right, then up, to reach a point $x'$ on $L$. From there, following $\gamma_{x'}$ to $o$. For any integer point $x$ in $U$ above $L$, define $\gamma_x$ by moving straight right until reaching an integer point $x'$ on $L$, then follow 
$\gamma_{x'}$ to $o$.  For those $x\in U$ below $L$, move straight up until reaching an integer point $x'$ on $L'$. From there, follow the path $\gamma_{x'}$ to $o$. 

Along any of the paths $\gamma_x=(x_0,\dots, x_m)$, with $x_0=x\in U$ and $x_m=o$,   $d(x_{i},\mathfrak X\setminus U)$ is non-increasing and $d(x_{3i},\mathfrak X\setminus U) \ge 1+ i$. It follows that  $d(x_{j},\mathfrak X\setminus U) \ge \frac{1}{3}(1+ i)$.  This proves that $U$ is a John domain with respect to $o$ with 
parameter $\alpha=1/3$ and John-radius $R(U,o, \frac{1}{3})= \rho_o(U)=N+[N/2] -3$. 
 \end{exa}

\begin{exa}[Metric balls]
\label{exa-mb}
  Any metric ball $U=B(o,R)$ is a 1-John domain, i.e.,  
$$B(o,R)\in J(\mathfrak X,\mathfrak E, o,1, R).$$
This is a straightforward but important example.  For each $x\in B(o,r)$, fix a path of minimal length $\gamma_x=(x_0=x,x_1,\dots, x_{m_x}=o)$, $m_x\le R$, joining $x$ to $o$ in $(\mathfrak X,\mathfrak E)$.  Then, $d(x_i, \mathfrak X\setminus B(o,R)) \ge  1+i$ because, otherwise, there would be 
a point $z \notin B(o,R)$ and at distance at most  $R$ from $o$, contradicting the definition of a ball.
\end{exa} 

\begin{exa}[Convex sets]\label{ex:convex}  In the classical theory of John domains in Euclidean space, convex sets provide basic examples. Round, convex sets have a good John constant ($\alpha$ close to $1$) whereas long, narrow ones have a bad John constant ($\alpha$ close to $0$). We will describe how this theory applies in the case of discrete convex sets, but first, let us review the continuous case. Here is how  the definition of Euclidean John domain given in \cite{MS} applies to Euclidean  convex sets. A Euclidean convex set $C$ belongs to $\mathbf J(\alpha,\beta)$ (see \cite[Definition 2.1]{MS} and Remark \ref{rem-MS} above) if and only if there exists $o\in C$ such that  
	$$B_2(o,\alpha) \subset C\subset B_2(o,\beta).$$
	Here the balls are Euclidean  balls and this is indicated by the subscript $2$, referencing the $d_2$ metric. 
	This condition is obviously necessary for $C\in \mathbf J(\alpha,\beta)$. To see that it is sufficient, observe that along the line-segment $\gamma_{xy}$ between any two points $x,y\in C$, parametrized by arc-length and of length $T$, the function $f(t)=d_2(\gamma_{xy}(t),C^c)$, defined on $[0,T]$, is concave (it is the minimum of the distances to the supporting 
	hyperplanes defining $C$).  Hence,  if we assume that $B(y,\alpha)\subset C$,
	either $d_2(y,C^c) < d_2(x,C^c)$ and then $d_2(\gamma_{xy}(t),C^c)\ge \alpha \ge \alpha \frac{t}{T}$, 
	or $d_2(y,C^c) \ge d_2(x,C^c)$  and 
	$$d_2(\gamma_{xy}(t),C^c) - d_2(x,C^c) \ge \frac{t}{T} (d_2(y,C^c) -d_2(x,C^c)) $$
	which gives
	$$d_2(\gamma_{xy}(t),C^c)  \ge  \alpha \frac{t}{T} + \left(1-\frac{t}{T}\right) d_2(x,C^c)
	\ge \alpha \frac{t}{T}.$$

To transition to discrete John domains,  we first consider the case of finite domains in $\mathbb Z^2$ because it is quite a bit simpler than the general case (compare \cite[Section 6]{DSCNash} and\cite{Virag}). In $\mathbb Z^2$, we can show that
any finite sub-domain $U$ of $\mathbb Z^2$ (this means we assume that $U$ is graph connected) obtained as the trace of a convex set $C$ such that $B_2(o,\alpha R)\subseteq C\subseteq B(o,R)$
for some $\alpha\in (0,1)$ and $R>0$
is a $\alpha'$-John domain with $\alpha'$ depending only on $\alpha$.

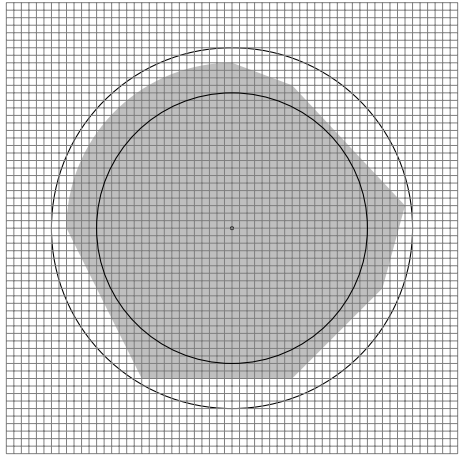
\begin{figure}[h] 
\begin{center}
\begin{tikzpicture}[scale=.1]

\path [fill=lightgray]  (8,30) arc [radius=22, start angle=180, end angle=90] (30,52)  --
 (38,49) -- (53,33) -- (50,22) --   (38,10) -- (18,10) -- (8,30);
 \draw [fill] (30,30)  circle  [radius=.2];
 \draw (30,30)  circle  [radius=24];    
  \draw [help lines] (0,0) grid (60,60);   
  \draw (30,30)  circle  [radius=18];   
    
   \end{tikzpicture}
\caption{A  finite discrete ``convex subset'' of $\mathbb Z^2$}
\label{fig-ConvZ2}
\end{center}
\end{figure}  

To deal with higher dimensional grids ($d>2$), let us adopt here the definition put forward by B\'alint Vir\'ag in \cite{Virag}: a subset $U$ of the square lattice $\mathbb Z^d$ is {\em convex} if and only if there exists a convex set $C\subset \mathbb R^d$ such that $U=\{x\in \mathbb Z^d:   d_\infty(x,C)\le 1/2\}$ where $d_\infty(x,y)=\max\{|x_i-y_i|: 1\le i\le d\}$.  The set $C$ is called a base for $U$.   We will use three distances on $\mathbb R^d$ and $\mathbb Z^d$: the max-distance $d_\infty$, the Euclidean $L^2$-distance $d_2(x,y)=\sqrt{\sum_1^d |x_i-y_i|^2}$ and the $L^1$-distance  $d_1(x,y)=\sum_1^d |x_i -y_i|$ which coincides with the graph distance on $\mathbb Z^d$.

In \cite{Virag}, B. Vir\'ag shows that, given a subset $U$ of $\mathbb Z^d$ that is convex in the sense explained above, for any two points $x,y\in U$, there is a discrete path $\gamma_{xy} =(z_0,\dots,z_m)$ in $U$ such that: (a) $z_0=x,z_m=y$; (b) $\gamma_{xy}$ is a discrete geodesic path in $\mathbb Z^d$; and (c) if $L_{xy}$  is the straight-line passing through $x$ and $y$ then each vertex $z_i$ on $\gamma_{xy}$ satisfies $d_\infty(z_i,L_{xy})<1$.  We will use this fact to prove the following proposition.
\begin{pro} \label{pro-Conv}Let $U\subset \mathbb Z^d$ be convex in the sense explained above, with base $C$.   
Suppose there is a point $o$ in $U$ and positive reals 
$\alpha, R$ such that  
\begin{equation} \label{eq-alphaConv}
B_2(o,\alpha R)\subset C \mbox{ and }C+B_\infty(0,1) \subset B_2(o,R),\end{equation} where $C+B_\infty(0,1) = \{y\in \mathbb R^d: d_\infty(y,C)\le 1\}$.
Then the set $U$ is in $J(o, \alpha', R')$ with 
$\alpha' = \alpha/(6d\sqrt{d}) $  and $\alpha R\le R' \le \sqrt{d}R $, where $d$ is the dimension of the underlying graph $\mathbb{Z}^d$.
\end{pro}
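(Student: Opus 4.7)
The plan is to use Virág's theorem (quoted just before the statement) as the main engine. For each $x \in U$, let $\gamma_x = (z_0, \ldots, z_{m_x})$ be the path in $U$ from $x$ to $o$ that Virág produces; this is a $\mathbb{Z}^d$-geodesic with every vertex satisfying $d_\infty(z_i, L_{xo}) < 1$, where $L_{xo}$ is the straight line from $x$ to $o$. The length bound $m_x = d_1(x,o) \le \sqrt{d}\,d_2(x,o) \le \sqrt{d}\,R$ follows from the assumption $U \subset C + B_\infty(0,1) \subset B_2(o,R)$, so I take $R' = \sqrt{d}\,R$. The inequality $R' \ge \alpha R$ is then essentially free: $B_2(o,\alpha R) \subset C \subset U$ contains points at Euclidean distance $\alpha R$ from $o$, and $d_1 \ge d_2$ on $\mathbb{R}^d$ implies the graph distance from such a point to $o$ is at least $\alpha R$.

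The key reduction is a geometric observation that replaces the discrete quantity ``distance to $\mathbb{Z}^d \setminus U$'' by a continuous one estimable via convexity. Any $y \in \mathbb{Z}^d \setminus U$ satisfies $d_\infty(y,C) > 1/2$, so $y \notin C$; hence $\mathbb{Z}^d \setminus U \subset \mathbb{R}^d \setminus C$, and consequently
$$d(z_i, \mathbb{Z}^d \setminus U) = d_1(z_i, \mathbb{Z}^d \setminus U) \ge d_2(z_i, \mathbb{R}^d \setminus C).$$
It therefore suffices to establish $d_2(z_i, \mathbb{R}^d \setminus C) \ge \alpha'(1+i)$.

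To estimate the right-hand side, I would exploit the convexity of $C$. Pick $\tilde x \in C$ with $d_\infty(\tilde x, x) \le 1/2$ and consider the auxiliary Euclidean segment $\tilde L(t) = (1-t)\tilde x + t\,o$, which lies entirely in $C$. The function $t \mapsto d_2(\tilde L(t), \mathbb{R}^d \setminus C)$ is an infimum of affine distance-to-hyperplane functions (one for each supporting hyperplane of $C$), hence concave; its value at $t=1$ is at least $\alpha R$, so $d_2(\tilde L(t), \mathbb{R}^d \setminus C) \ge t\alpha R$. If $p_i = L_{xo}(t_i)$ is a closest point on $L_{xo}$ to $z_i$, then $d_\infty(L_{xo}(t_i), \tilde L(t_i)) = (1-t_i)|x - \tilde x|_\infty \le 1/2$, so $z_i$ is within $d_\infty$-distance $3/2$, hence $d_2$-distance $\tfrac{3}{2}\sqrt{d}$, of $\tilde L(t_i)$, yielding
$$d_2(z_i, \mathbb{R}^d \setminus C) \ge t_i\,\alpha R - \tfrac{3}{2}\sqrt{d}.$$
To translate $t_i$ into $i$, use $i = |z_i - x|_1 \le \sqrt{d}\,|z_i - x|_2 \le \sqrt{d}\bigl(t_i|o - x|_2 + \sqrt{d}\bigr) \le \sqrt{d}(t_i R + \sqrt{d})$, which gives $t_i \ge (i - d)/(R\sqrt{d})$ for $i \ge d$, and hence $d_2(z_i, \mathbb{R}^d \setminus C) \ge \alpha(i - d)/\sqrt{d} - \tfrac{3}{2}\sqrt{d}$.

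The proof concludes with a two-regime argument. For small $i$, concretely those below a threshold of order $d\sqrt{d}/\alpha$, the required bound $d(z_i, \mathbb{Z}^d \setminus U) \ge \alpha'(1+i)$ follows from the trivial estimate $d(z_i, \mathbb{Z}^d \setminus U) \ge 1$ (since $z_i \in U$ and loops are disallowed); for larger $i$, the geometric bound above dominates. Choosing $\alpha' = \alpha/(6d\sqrt{d})$ leaves enough slack for the two regimes to overlap. I expect the main obstacle to be exactly the bookkeeping of this two-regime argument, together with the fact that $x$ itself need not lie in $C$, which forces the shift from $L_{xo}$ to $\tilde L$ and is ultimately responsible for the $d\sqrt{d}$ in the denominator of $\alpha'$.
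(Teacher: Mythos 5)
Your proposal is correct and follows essentially the same route as the paper's proof: Virág's geodesic hugging the segment $L_{xo}$, a concavity argument giving a linear lower bound on the distance to the complement of the convex base along the segment, the comparison $d_1\le \sqrt{d}\,d_2$ to convert the parameter into the index $i$, and the trivial bound $d_1(z_i,\mathbb Z^d\setminus U)\ge 1$ to handle small $i$. The only (immaterial) difference is how you deal with the fact that $x$ need not lie in $C$: the paper passes to the enlarged convex set $C+B_\infty(0,1)$ and later subtracts $d$, whereas you shift the segment endpoint to a nearby $\tilde x\in C$ and pay $\tfrac{3}{2}\sqrt{d}$; both yield the same two-regime conclusion with $\alpha'=\alpha/(6d\sqrt{d})$.
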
  The dimensional constants in this statement are related to the use of three metrics, namely, $d_1,d_2$ and $d_\infty$.

\begin{rem} In practice, this definition is more flexible than it first appears because one can choose the base $C$. Moreover, once a certain finite domain $U$ is proved to be an $\alpha_0$-John-domain in $\mathbb Z^d$, it is easy to see that we are permitted to add and subtract in an arbitrary fashion lattice points that are at a fixed distance $r_0$ from the boundary $\partial U$ of $U$ in $\mathbb Z^d$,  as long as
we preserved connectivity. The cost is to change the John-parameter $\alpha_0$ to $\bar{\alpha}_0$
where $\bar{\alpha}_0$ depends only on $r_0$ and $\alpha_0$.
   \end{rem}

\begin{proof}[Proof of Proposition {\ref{pro-Conv}}]  The convexity of $C$ (together with that of the unit cube $B_\infty(0,1)$) implies the convexity of $C'=C+B_\infty(0,1)$.
Thus, by hypothesis, we know that the straight-line segments $l_x$ joining any point $x\in C'$ to $o$ that witness that $C'\in \mathbf J(\alpha R,R)$.
For $x\in U$,  the construction in \cite{Virag} provides a discrete geodesic path $\gamma_x=(x_0,\dots, x_{m_x})$ (of length $m_x$) in $\mathbb Z^d$ joining $x$ to $o$ within the set $U$ and which stays at most $d_\infty$-distance $1$ from $l_x$.  As usual, we parametrize $l_x$ by arc-length so that $l_x(0)=x$, $l_x(T)=o$, $T=T_x$.  For each point $x_i\in \gamma_x$, we pick a point $z_i$ on $l_x$ such that $d_\infty(x_i,z_i)=d_\infty(x_i,l_x)<1$ and define $t_i\in [0,T]$ by $z_i=l_x(t_i)$.  For each $x \in U$,
$$d_1(x,o) \le \sqrt{d} d_2(x,o) \le \sqrt{d} R .$$
To obtain a lower bound on  $d_1(x_i,\mathbb Z^d \setminus U)$, observe that
$$ d_1(x_i, \mathbb Z^d \setminus U) \ge  d_1(x_i, \mathbb{R}^d\setminus C)  $$
because   $C$ is contained in  $U+ B_{\infty}(0,\frac{1}{2})$. 
By definition of $C'$,  
$d_1(x_i,\mathbb R^d\setminus C)\ge d_1(x_i, \mathbb R^d\setminus C')- d.$ Hence, we have 
$$d_1(x_i, \mathbb Z^d\setminus U)\ge  d_2(x_i,\mathbb R^d\setminus C')-d.$$
Recall that  $z_i=l_x(t_i)$ is on the line-segment from $x$ to $o$ and at $d_\infty$-distance less than $1$ from $x_i$. Further, we know that
$$d_2(z_i,\mathbb R^d\setminus C')\ge \alpha t_i $$
because $C'$ is convex and $B_2(o,\alpha R)\subset C'\subset B_2(o,R)$.   Also, we have
$$t_i=d_2(x,z_i) \ge d_2(x,x_i) -d_2(x_1,z-i) \ge  \frac{d_1(x,x_i)}{\sqrt{d}} - \sqrt{d} = \frac{i-d}{\sqrt{d}}.$$  
Putting these estimates together gives
$$d_1(x_i, \mathbb Z^d\setminus U)\ge  \frac{\alpha}{\sqrt{d}}\left(i - d-\frac{d\sqrt{d}}{\alpha}\right).$$
Since, by construction, $d_1(x_i, \mathbb Z^d\setminus U)\ge 1$ for all $i$, it follows from the previous estimate that,
$$d_1(x_i, \mathbb Z^d\setminus U)\ge  \frac{\alpha}{6d\sqrt{d}}\left(1+ i \right)$$
 for all $0\le i\le m_x$.
\end{proof}
\end{exa}

Convexity is certainly not necessary for a family of  connected subsets of $\mathbb Z^d$ to be  $\alpha$-John domains with a  uniform $\alpha\in (0,1)$. 
Figure~\ref{D3} gives  an example of such a family that is far from convex in any sense. If we denote by $U_N$ the set depicted for a given $N$ and 
let $o_N=(\lfloor 2N/3 \rfloor,\lfloor N/2\rfloor)$ the chosen central point, then there are positive reals $\alpha,c,C$, independent of $N$, such that $U_N$ is a $J(o_N,\alpha, R)$  with $cN\le R\le CN$.   Figure~\ref{D4} gives an example of a family of  sets that is NOT  uniformly in $J(\alpha)$, for any $\alpha>0$.  

\begin{figure}[h]
\begin{center}
\begin{picture}(200,180)(-20,-20)

{\color{red} \put(93,70){\circle{3}}}

{\color{blue}   \multiput(0,0)(10,0){15}{\circle*{4}}
\multiput(0,0)(0,10){15}{\circle*{4}}
\multiput(0,140)(10,0){15}{\circle*{4}}
\multiput(140,0)(0,10){14}{\circle*{4}}
\multiput(10,10)(10,10){6}{\circle*{4}}
\multiput (40,130)(0,-10){4}{\circle*{4}}  
\multiput (90,130)(0,-10){4}{\circle*{4}}  
\put(90,40){\circle*{4}}
}

\multiput(0,0)(10,0){15}{\line(0,1){140}}
\multiput(0,0)(0,10){15}{\line(1,0){140}}

\put(-6,-8){\makebox(4,4){$\scriptstyle (0,0)$}}
\put(-6,145){\makebox(4,4){$ {\scriptstyle (0,N)}$}}
\put(150,-6){\makebox(4,4){$\scriptstyle (N,0)$}}
\put(-27,57){\makebox(4,4){$\scriptstyle (0, \lfloor 3N/7\rfloor)$}}
\put(35,155){\makebox(4,4){$\scriptstyle (\lfloor N/3\rfloor,N)$}}
\put(-25,97){\makebox(4,4){$\scriptstyle (0,\lfloor N/3\rfloor)$}}
\put(85,155){\makebox(4,4){$\scriptstyle (\lfloor 2N/3\rfloor,N)$}}
\put(170,37){\makebox(4,4){$\scriptstyle (N, \lfloor N/3\rfloor)$}}
\put(170,67){\makebox(4,4){$\scriptstyle (N, \lfloor N/2\rfloor)$}}
\put(150,40){\vector(-1,0){6}}
\put(150,70){\vector(-1,0){6}}
\put(40,150){\vector(0,-1){6}}
\put(90,150){\vector(0,-1){6}}

\end{picture}
\caption{A non-convex example of John domain, with the boundary points indicated in blue, and center $o$ indicated in red.}\label{D3} 
\end{center}\end{figure}
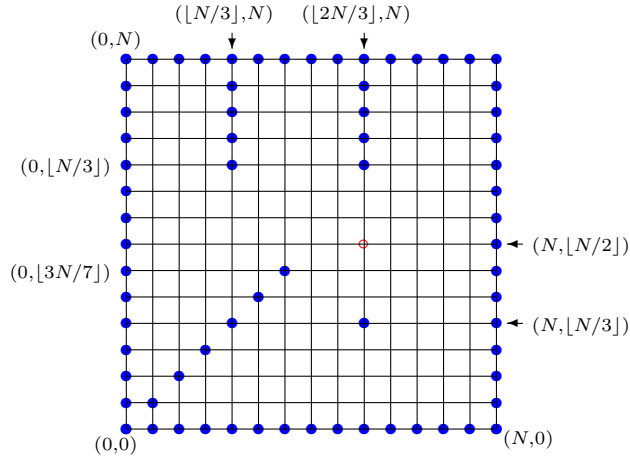

\begin{figure}[h]
\begin{center}
 \begin{picture}(200,200)(-20,-20)

{\color{blue}   \multiput(0,0)(10,0){15}{\circle*{4}}
\multiput(0,0)(0,10){15}{\circle*{4}}
\multiput(0,140)(10,0){15}{\circle*{4}}
\multiput(140,0)(0,10){14}{\circle*{4}}
\multiput(10,10)(10,10){11}{\circle*{4}}

}

\multiput(0,0)(10,0){15}{\line(0,1){140}}
\multiput(0,0)(0,10){15}{\line(1,0){140}}

\put(-6,-8){\makebox(4,4){$\scriptstyle (0,0)$}}
\put(-6,145){\makebox(4,4){$ {\scriptstyle (0,N)}$}}
\put(150,-6){\makebox(4,4){$\scriptstyle (N,0)$}}
\put(-30,110){\makebox(4,4){$\scriptstyle (0,N-\lfloor \sqrt{N}\rfloor )$}}

\end{picture}
\caption{A family of subsets that are \emph{not} uniformly John domains, with the boundary points indicated in blue. The passage between the top and bottom triangles is too narrow. } \label{D4}
\end{center}\end{figure}
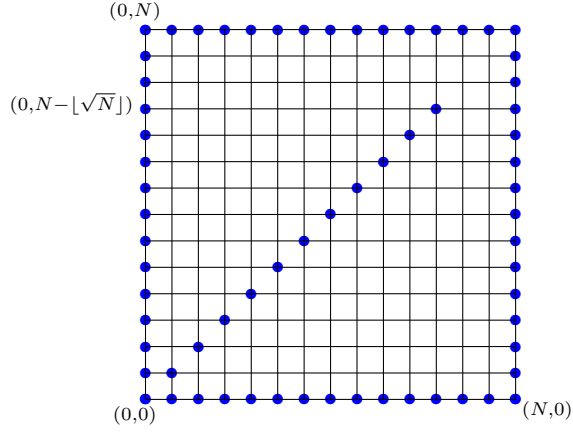

The following lemma shows that any inner-ball  $B_U(x,r)$ in a John domain contains  a ball from the original graph with roughly the same radius.
When the graph is equipped with a doubling measure (see Section \ref{sec-DMPN}),
this shows that the inner balls for the domain $U$ have volume comparable to that of the original balls. 
\begin{lem} Given $U \in J(\mathfrak{X},\mathfrak{E},\alpha)$, recall that $\rho_o(U)=\max\{d_U(o,x): x\in U\}$.
For any $x\in U$ and $r\in [0,2\rho_o(U)]$, there exists $x_r \in U$ such that $B(x_r, \alpha r/8) \subset B_U(x,r)$. For $r\ge 2\rho_o(U)$, we have $B_U(x,r)=U$. 
\end{lem}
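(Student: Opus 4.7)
The plan is to select $x_r$ on the John path from $x$ to $o$ at a position roughly $r/2$ steps along (truncating at $o$ if needed), and then derive both containments from the defining inequality $d(x_i,\mathfrak X\setminus U)\ge\alpha(1+i)$ combined with the triangle inequality for $d_U$.

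First I would dispose of the second statement: if $r\ge 2\rho_o(U)$, then for any $y\in U$,
$$d_U(x,y)\le d_U(x,o)+d_U(o,y)\le 2\rho_o(U)\le r,$$
so $B_U(x,r)=U$. For the main statement, fix $x\in U$ and a John path $\gamma_x=(x_0,\dots,x_{m_x})$ with $x_0=x$, $x_{m_x}=o$, $d(x_i,\mathfrak X\setminus U)\ge\alpha(1+i)$. Set $i^\star=\min(\lfloor r/2\rfloor,m_x)$ and define $x_r:=x_{i^\star}$. The central claim is the inner-radius bound
$$d(x_r,\mathfrak X\setminus U)\ge \alpha r/2. \qquad (\star)$$
If $i^\star=\lfloor r/2\rfloor$, $(\star)$ is immediate from the John property. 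If instead $i^\star=m_x<\lfloor r/2\rfloor$, then $x_r=o$; in this case I would invoke the inequality $d(o,\mathfrak X\setminus U)\ge \alpha(1+R(U,o,\alpha))\ge \alpha(1+\rho_o(U))$ from Remark \ref{rem:johnradius}, and use the hypothesis $r\le 2\rho_o(U)$ to conclude $d(o,\mathfrak X\setminus U)\ge \alpha(1+r/2)\ge \alpha r/2$.

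Granting $(\star)$, take any $y\in B(x_r,\alpha r/8)$ and any minimal path from $x_r$ to $y$ in $(\mathfrak X,\mathfrak E)$. Every vertex $z$ on this path satisfies
$$d(z,\mathfrak X\setminus U)\ge d(x_r,\mathfrak X\setminus U)-d(x_r,z)\ge \alpha r/2-\alpha r/8>0,$$
so the whole path lies in $U$. Hence $d_U(x_r,y)\le d(x_r,y)\le \alpha r/8$. Combined with $d_U(x,x_r)\le i^\star\le r/2$ (since $\gamma_x\subset U$) and $\alpha\le 1$, the triangle inequality gives
$$d_U(x,y)\le d_U(x,x_r)+d_U(x_r,y)\le r/2+\alpha r/8\le r/2+r/8<r,$$
so $y\in B_U(x,r)$, as required. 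The tiny-$r$ regime (where $\lfloor r/2\rfloor=0$ and $\alpha r/8<1$) is harmless: then $x_r=x$ and both balls collapse to the singleton $\{x\}$.

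The only genuine subtlety in this argument is the bifurcation into the two cases for $i^\star$: one has to handle the possibility that the John path from $x$ is shorter than $r/2$, for which the endpoint property of $o$ (buried in the John-radius remark) is exactly what is needed. Once that dichotomy is set up, the rest is a clean application of the John defining inequality and the inner-distance triangle inequality.
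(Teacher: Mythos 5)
Your proof is correct and follows essentially the same route as the paper's: pick a point along the John path at depth proportional to $r$, use the John inequality (plus the bound $d(o,\mathfrak X\setminus U)\ge \alpha(1+\rho_o(U))$ from Remark \ref{rem:johnradius} when the path ends at $o$ too soon) to get $\delta(x_r)\gtrsim \alpha r$, and finish with the triangle inequality for $d_U$. The only difference is bookkeeping — you index by $\lfloor r/2\rfloor$ along $\gamma_x$ instead of the paper's case split on whether $o\in B_U(x,r/4)$ and its ``first exit from $B_U(x,\lfloor r/4\rfloor)$'' choice — which is harmless and even yields the slightly stronger bound $\delta(x_r)\ge \alpha r/2$.
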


\begin{proof} The statement concerning the case $r\ge 2\rho_o(U)$ is obvious. We consider three cases. First, consider the case when $o\in B_U(x,r/4)$ and $\rho_o(U)\le  r <2\rho_o(U)$. Then $B(o,\alpha\rho_o(U)/4) \subseteq B_U(x, r)$ and we can set $x_r=o$. Second, assume that  $o\in B_U(x,r/4)$ and $ r< \rho_o(U)$. Recall from Remark~\ref{rem:johnradius} that $d(o,\mathfrak X\setminus U)\ge \alpha(1+\rho_o(U))$.
It follows that  $B(o,\alpha r/8) \subset U$ and $B(o,\alpha r/8)\subset B_U(x,r)$. We can again set $x_r=o$.  Finally, assume that $o\not\in B_U(x,r/4)$. If $r<8$, we can take $x_r=x$. When $r\ge 8$,
let $\gamma_x=(z_0=x,z_1,\dots,z_m=o)$ be the John-path from $x$ to $o$ and let $x_r=z_i$, where $z_i$ is the first point on $\gamma_x$ such that
$z_{i+1}\not\in B_U(x, \lfloor r/4\rfloor )$. 
 By construction, we  have $d_U(x,x_r)\le \lfloor r/4\rfloor +1 \le r/2$, $i(x,r)\ge \lfloor r/4\rfloor $ and 
$$\delta(x_r)\ge \alpha(1+\lfloor r/4\rfloor)\ge \alpha r/4.$$  Therefore $B(x_r, \alpha r/8) \subset U$ and 
$B_U(x_r, \alpha r/8)=B(x_r,\alpha r/8)\subset B_U(x,r).$~\end{proof}

\subsection{Whitney coverings} \label{sec-WC}
Let $U$ be a finite domain in the underlying  graph $(\mathfrak X,\mathfrak E)$ (this graph may be finite or countable).
Fix a small parameter $\eta\in (0,1)$. For each point $x\in U$, let 
$$B^\eta_x =\{y \in U: d(x,y)\le \eta \delta(x)/4\}$$ be the ball centered at $x$ of radius $ r(x)=\eta \delta (x)/4 $ where 
$$\delta(x)= d(x,\mathfrak X\setminus U)$$
is the distance from $x$ to $\mathfrak X \setminus U$, the boundary of $U$ in $(\mathfrak X,\mathfrak E)$. The finite family $\mathcal F=\{B^\eta_x: x\in U\}$ forms a covering of $U$. Consider the set of all sub-families $\mathcal V$ of $\mathcal F$ with the property that the balls $B^\eta_x$ in $\mathcal V$ are pairwise disjoint. This is a partially ordered finite set and we pick a maximal element 
$$\mathcal W =\{B^\eta_{x_i}: 1\le i\le M\},$$
which, by definition, is a \emph{Whitney covering} of $U$. Note that the Whitney covering of $U$ is not a covering itself, but it generates a covering, because the triples of the balls in $\mathcal W$ are a covering of $U$. Because the balls in $U$ are disjoint, this is a relatively efficient covering.

The size $M$ of this covering will never appear in our computation and is introduced strictly for convenience. This integer $M$ depends on $U,s,\eta$ and on the particular choice made among all maximal elements in $\mathcal V$.

Whitney coverings are useful because they allow us to do manipulations on balls that form a covering---such as doubling their size---without leaving the domain $U$. Moreover, for any $k<4/\eta$, the closed ball
$\{y: d(x,y)\le k r(x)\}$ is entirely contained in $U$. 

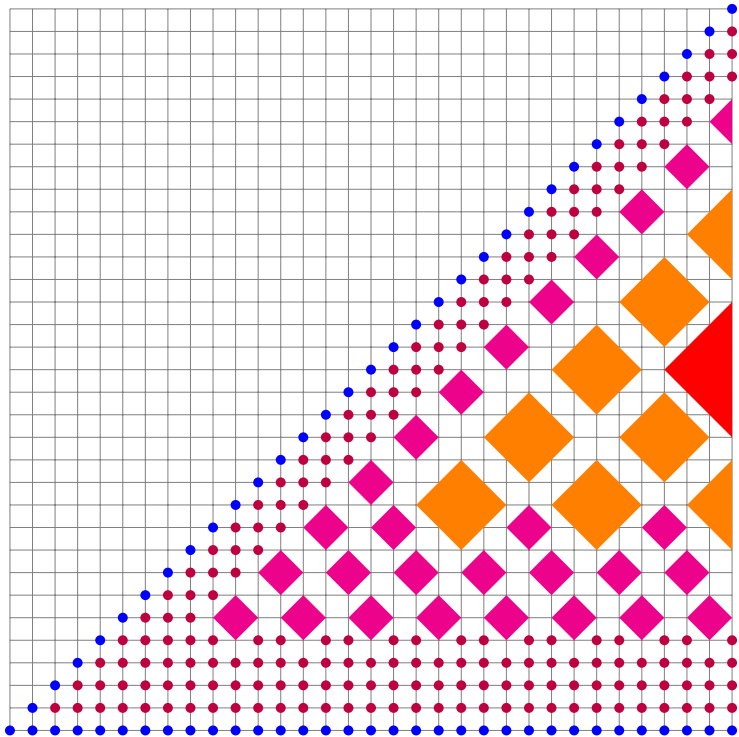
\begin{figure}[h]
	\begin{center}
		\begin{tikzpicture}[scale=0.3]
		
		\draw [help lines] (0,0) grid (32,32);
		
		\foreach \x in {0,1,...,32}
		\node at (\x,0) [circle,fill=blue] [scale=0.4] {};
		\foreach \x in {1,2,...,32}
		\node at (\x,\x) [circle,fill=blue] [scale=0.4] {};
		
		\fill[fill=red]
        (32,19)
     -- (32,13)
     -- (29,16);
		
		\foreach \x/\y in {32/10,32/22}
		{
		\fill[fill=orange]
        (\x,\y+2)
     -- (\x-2,\y)
     -- (\x,\y-2);
 		} 
		
		\foreach \x/\y in {32/27}
		{
 		\fill[fill=magenta]
        (\x,\y+1)
     -- (\x,\y-1)
     -- (\x-1,\y);
        }
        
		\foreach\x/\y in {23/13, 26/16, 26/10, 29/19, 29/13, 20/10}
		{
     	\fill[fill=orange]
        (\x-2,\y)
     -- (\x,\y-2)
     -- (\x+2,\y)
     -- (\x,\y+2);
 		}
		
		\foreach \x/\y in {29/9, 30/7, 27/7, 25/5, 24/7, 23/9, 28/5, 21/7, 22/5, 18/7, 15/7, 12/7, 19/5, 16/5, 13/5, 10/5, 14/9, 17/9, 16/11, 18/13, 20/15, 22/17, 24/19, 26/21, 28/23, 30/25, 31/5}
		{
	   	\fill[fill=magenta]
        (\x-1,\y)
     -- (\x,\y+1)
     -- (\x+1,\y)
     -- (\x,\y-1);
 		}
 		
 		\foreach \x in {2,3,...,32}
		\node at (\x,1) [circle,fill=purple] [scale=0.4] {};
		\foreach \x in {3,4,...,32}
		\node at (\x,2) [circle,fill=purple] [scale=0.4] {};
		\foreach \x in {4,5,...,32}
		\node at (\x,3) [circle,fill=purple] [scale=0.4] {};
		\foreach \x in {5,6,7,8,9,11,12,14,15,17,18,20,21,23,24,26,27,29,30,32}
		\node at (\x,4) [circle,fill=purple] [scale=0.4] {};
		\foreach \x in {6,7,...,32}
		\node at (\x,\x-1) [circle,fill=purple] [scale=0.4] {};
		\foreach \x in {7,8,...,32}
		\node at (\x,\x-2) [circle,fill=purple] [scale=0.4] {};	
		\foreach \x in {8,9,...,32}
		\node at (\x,\x-3) [circle,fill=purple] [scale=0.4] {};	
		
        \end{tikzpicture}
    \caption{A Whitney covering of the forty-five degree cone with $\eta = \frac{4}{5}$, where the boundary of the cone is indicated in blue. The color of each ball in the Whitney covering indicates its radius.}\label{fig:whitney-covering}
    \end{center}
\end{figure}

In the above (standard, discrete) version of the notion of Whitney covering, the largest balls are of size comparable to  
$\eta \max\{d(x,\mathfrak X\setminus U: x\in U\}$.
In the following $s$-version, $s\ge 1$, where $s$ is a (scale)  parameter, the size of the largest balls are at most $s$. 
Fix  $s\ge 1$ and a small parameter $\eta\in (0,1)$ as before. For each point $x\in U$, let 
$$B^{s,\eta}_x =\{y: d(x,y)\le \min\{s,\eta \delta(x)/4\}\}$$ be the ball centered at $x$ of radius $ r(x)=\min\{s,\eta \delta (x)/4\} $. Note as before that, for any $k<4/\eta$, the closed ball
$\{y: d(x,y)\le k \delta(x)/4\}$ is entirely contained in $U$.   
The finite family  $\mathcal F_s=\{B^{s,\eta}_x: x\in U\}$ form a covering of $U$. Consider the set of all sub-families $\mathcal V_s$ of $\mathcal F_s$ with the property that the balls $B^{s,\eta}_x$ in $\mathcal V_s$ are pairwise disjoint. These subfamilies form a partially ordered finite set and, just as we did with $\mathcal{F}$, we pick a maximal element 
$$\mathcal W_s =\{B^{s,\eta}_{x_i}: 1\le i\le M\},$$
which is the \emph{s-Whitney covering}. See Figure~\ref{fig:whitney-corner} for an example.

\begin{figure}
    \begin{center}
    \begin{subfigure}{0.45\textwidth}
    \includegraphics[scale=0.8]{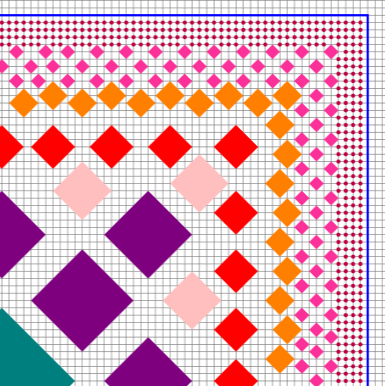}
    \end{subfigure}
	\hspace{0.6cm}
    \begin{subfigure}{0.45\textwidth}
    \includegraphics[scale=0.8]{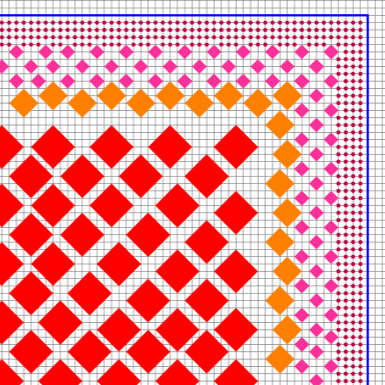}
    \end{subfigure}
    \end{center}
    \caption{The left figure shows a standard Whitney covering of the upper right corner of a square (boundary indicated in blue) with $\eta = \frac{4}{5}$. The right figure shows the same thing, but with the additional assumption that $s=3$, i.e., the maximum radius of a Whitney ball is 3.}\label{fig:whitney-corner} 
\end{figure}

As before, the size $M$ of this covering will never appear in our computations. It will be useful to split the family $\mathcal W_s$ into its two natural components, $\mathcal W_s=\mathcal W_{=s}\cup \mathcal W_{<s}$ where $\mathcal W_{=s}$ is the subset of $\mathcal W_s$ of those balls $B(x_i,r(x_i))$ such that  $r(x_i)=s$. 
\begin{rem}\label{rem-Whit} When the domain $U$ is finite (in a more general context, bounded) any Whitney covering ${\mathcal W}_s$ with parameter $s$ large enough, namely $$s\ge \eta (\rho_o(U)+1)/4,$$ is simply a Whitney covering $\mathcal W$ because $\min\{s,\eta\delta(x)/4\}=\eta \delta(x)/4$ for all $x\in U$. It follows that properties that hold for all $\mathcal W_s$, $s>0$, also hold for any standard Whitney coverings $\mathcal W$. \end{rem}

\begin{lem}[Properties of $\mathcal W_s$, $s\ge 1$] \label{lem-W} For any $s>0$,
the family $\mathcal W_s$ has the following properties.
\begin{enumerate} 
\item The balls $B^{s,\eta}_{x_i}=B(x_i,r(x_i))$, $1\le i\le M$,  are pairwise disjoint and  $$U=\bigcup_1^M B(x_i, 3r(x_i)).$$ In other words, the tripled balls cover $U$.
\item  For any $\rho \le 4/\eta$ and any $z\in B(x_i,  \rho r(x_i))$,  $$\delta(x_i) (1- \rho\eta /4)  \le  \delta (z) \le  (1+\rho\eta/4) \delta (x_i) $$
and  $$(1- \rho\eta /4) r(x_i) \le  r (z) \le  (1+\rho\eta/4) r(x_i).$$
 \item For any $\rho \le 2/\eta$, if the balls $B(x_i, \rho r(x_i))$ and $B(x_j, \rho r(x_j))$ intersect then  
$$ \frac{1}{3}\le \frac{1-\rho\eta/4}{1+\rho\eta/4}\le \frac{\delta(x_i)}{\delta(x_j)}\le \frac{ 1+\rho \eta/4}{1-\rho\eta/4}\le 3.$$
  \end{enumerate}
\end{lem}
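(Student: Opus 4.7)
The plan is to establish the three items in the order (2), (3), then (1), since (3) is an immediate corollary of (2) and the argument for the covering assertion in (1) reuses the same estimate that powers (2). The entire proof rests on a single analytic input: the boundary-distance function $\delta(\cdot)=d(\cdot,\mathfrak X\setminus U)$ is $1$-Lipschitz with respect to the graph distance $d$.

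For (2), if $z\in B(x_i,\rho r(x_i))$ then $d(z,x_i)\le \rho r(x_i)\le \rho\eta\delta(x_i)/4$, and the Lipschitz property of $\delta$ immediately yields $(1-\rho\eta/4)\delta(x_i)\le \delta(z)\le (1+\rho\eta/4)\delta(x_i)$. To transfer this into the analogous two-sided bound for $r(\cdot)=\min\{s,\eta\delta(\cdot)/4\}$, I would invoke the elementary fact that if $c_1\alpha\le \beta\le c_2\alpha$ with $0\le c_1\le 1\le c_2$, then $c_1\min\{s,\alpha\}\le \min\{s,\beta\}\le c_2\min\{s,\alpha\}$, applied with $\alpha=\eta\delta(x_i)/4$ and $\beta=\eta\delta(z)/4$. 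This auxiliary inequality is a short case check depending on whether $\min\{s,\alpha\}$ equals $s$ or $\alpha$, and interacting carefully with the truncation by $s$ is the only real bit of bookkeeping in the argument.

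Item (3) then follows by picking any $z$ in the intersection of $B(x_i,\rho r(x_i))$ and $B(x_j,\rho r(x_j))$ and applying (2) at both centers: dividing the resulting bounds for $\delta(z)$ gives
$$\frac{1-\rho\eta/4}{1+\rho\eta/4}\le \frac{\delta(x_i)}{\delta(x_j)}\le \frac{1+\rho\eta/4}{1-\rho\eta/4},$$
and substituting $\rho\le 2/\eta$ produces the stated constants $1/3$ and $3$.

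Finally, for (1), disjointness is built into the definition of $\mathcal W_s$ as a maximal pairwise-disjoint family. For the covering statement, I would fix $z\in U$ and argue by maximality: either $B^{s,\eta}_z\in\mathcal W_s$ (in which case $z=x_i$ for some $i$ and $z\in B(x_i,3r(x_i))$ trivially), or $B^{s,\eta}_z$ intersects some $B^{s,\eta}_{x_i}\in\mathcal W_s$, giving $d(z,x_i)\le r(z)+r(x_i)\le \eta(\delta(z)+\delta(x_i))/4$. Combining this with $|\delta(z)-\delta(x_i)|\le d(z,x_i)$ yields $\delta(z)\le \tfrac{1+\eta/4}{1-\eta/4}\delta(x_i)\le \tfrac{5}{3}\delta(x_i)$, hence $r(z)\le \tfrac{5}{3}r(x_i)$ (again by splitting the two cases $r(x_i)=s$ and $r(x_i)=\eta\delta(x_i)/4$), so that $d(z,x_i)\le \tfrac{8}{3}r(x_i)<3r(x_i)$. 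I expect no serious obstacle; the only subtlety throughout is the careful handling of the truncation by $s$ in the definition of $r(\cdot)$, which is why having the little $\min$-lemma on hand from the outset streamlines the entire argument.
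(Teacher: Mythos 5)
Your proof is correct and follows essentially the same route as the paper: maximality of $\mathcal W_s$ plus the triangle inequality (i.e., the $1$-Lipschitz property of $\delta$) applied to the centers and radii, with the case $\eta\le 1$ giving the factors $5/3$ and $8/3<3$. The only cosmetic difference is that you prove (2)–(3) first and make the truncation-by-$s$ step explicit via the little $\min$-lemma, whereas the paper proves (1) directly and leaves that step (and parts (2)–(3)) implicit.
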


\begin{proof}  We prove the first assertion. Consider a point $z\in U$. 
Since $\mathcal W_s$ is maximal, the ball $B^{s,\eta}_z$
intersects $\cup_1^M B(x_i, r(x_i))$. So there is an $i \in \{1,2,\dots,M\}$ and a $y\in B^{s,\eta}_{x_i}$ such that $y\in B^{s,\eta}_z$.  By the triangle inequality,  
$$\delta (x_i)\ge \delta(z) -  r (x_i)- r(z) \ge \delta(z)- \eta\delta(x_i)/4-\eta\delta(z)/4,$$
which yields,
$$ (1+\eta/4) \delta (x_i)\ge (1-\eta/4)\delta(z),$$
and hence,
$$(1+\eta/4) r (x_i)\ge (1-\eta/4) r (z).$$
It follows that  
$$d(x_i, z) \le  r(x_i)+r(z)\le  r (x_i) \left(1+\frac{ 1+\eta/4}{1-\eta/4}\right) \le \left(1+\frac{5}{3}\right)r(x_i) .$$
This contradicts the assumption that $z\not\in \cup_1^M B(x_i, 3r(x_i))$.  

The proofs of (2)-(3) follow the same line of reasoning.
\end{proof}

 \section{Doubling and moderate growth; Poincar\'e and Nash inequalities}    \setcounter{equation}{0} 
 \label{sec-DMPN}

 In this section, we fix a background graph structure $(\mathfrak X,\mathfrak E)$ and use $x \sim y$ to indicate that $\{x,y\} \in \mathfrak{E}$. As before, let $d(x,y)$ denote the graph distance between $x$ and $y$, and let
 $$B(x,r)=\{y: d(x,y)\le r\}$$ be the ball of radius $r$ around $x$. (Note that balls are not uniquely defined by their radius and center, i.e., it's possible that $B(x,r) = B(\tilde{x},\tilde{r})$ for $x \neq \tilde{x}$ and $r \neq \tilde{r}$.) In addition we will assume that $\mathfrak X$  is equipped with a measure $\pi$
 and, later,  that $\mathfrak E$ is equipped with an edge weight $\mu=(\mu_{xy})$ defining a Dirichlet form.
  
 \subsection{Doubling and moderate growth}
 Assume that  $\mathfrak X$ is equipped with a positive measure $\pi$, where $\pi(A)=\sum _{x\in A}\pi(x)$ for any finite subset $A$ of 
 $\mathfrak X$. (The total mass $\pi(\mathfrak X)$ may be finite or infinite.) Denote the volume of a ball with respect to $\pi$ as
 $$V(x,r)=\pi(B(x,r)).$$
 For any function $f$ and any ball $B$ we set
 $$f_B=\frac{1}{\pi(B)}\sum_Bf\pi.$$
 
 If $U$ is a finite subset of $\mathfrak X$, then let $\pi|_U$ be the restriction of $\pi$ to $U$, i.e., $\pi|_U(x) = \pi(x) \mathbf{1}_U(x)$. We often still call this measure $\pi$. Let $\pi_U$ be the probability measure on $U$ that is proportional to $\pi|_U$, i.e., $\pi_U(x) = \frac{\pi|_U(x)}{Z}$ where $Z = \sum_{y \in U} \pi|_U(y)$ is the normalizing constant.
 
 \begin{defin}[Doubling]\label{defn-doubling} We say that $\pi$ is doubling (with respect to $(\mathfrak X,\mathfrak E)$) if there exists a constant $D$ (the doubling constant) such that, for all $x\in \mathfrak X$ and $r>0$,
 $$V(x,2r)\le D V(x,r).$$
 \end{defin}
 This property has many implications. The proofs are left to the reader. 
 \begin{enumerate}
\item For any $x\sim y$,    $\pi(x)\le D\pi(y)$.
\item For any $x\in \mathfrak X$, $\#\{y: \{x,y\}\in \mathfrak E\}\le D^2$.
\item For any $x\in \mathfrak X, r\ge s>0$ and $y\in B(x,r)$,
$$\frac{V(x,r)}{V(y,s)}\le  D^2 \left(\frac{\max\{1,r\}}{\max\{1,s\}}\right)^{\log_2D}.$$
\end{enumerate} 
 
 We will need the following classic result for the case $p=2$. (For example, for the proofs of Theorems~\ref{th-P1} and~\ref{th-PP1}.) The complete proof is given here for the convenience of the reader. 
 \begin{pro}  \label{prop-sumballs}
 Let $(\mathfrak X,\mathfrak E, \pi)$ be doubling.  For any $p\in [1,\infty)$, any real number $t\ge 1$, any sequence of balls $B_i$, and any sequence of non-negative reals $a_i$, we have
 $$\left\| \sum_i a_i \mathbf 1_{tB_i}\right\|_p \le C \left\|\sum_i  a_i \mathbf 1_{B_i}\right\|_p,$$ where $C= 2(D^2p)^{1-1/p}D^{1+\log_2t}$ and $\|f\|_p=\left(\sum_{\mathfrak X} |f|^p \pi\right)^{1/p}$.
 \end{pro}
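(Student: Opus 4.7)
The plan is to prove the inequality by duality, reducing it to a quantitative Hardy--Littlewood maximal inequality on the doubling space $(\mathfrak X,\pi)$. Set $f=\sum_i a_i\mathbf 1_{B_i}$ and $F=\sum_i a_i\mathbf 1_{tB_i}$, and let $Mg(x)=\sup\{(g)_B:B\text{ a ball with }x\in B\}$ denote the uncentered Hardy--Littlewood maximal function, where $(g)_B=\pi(B)^{-1}\int_B g\,d\pi$.

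The case $p=1$ is handled directly: iterated doubling gives $\pi(tB_i)\le D^{\lceil\log_2 t\rceil}\pi(B_i)\le D^{1+\log_2 t}\pi(B_i)$ for each $i$, and summing over $i$ yields $\|F\|_1\le D^{1+\log_2 t}\|f\|_1$ (the factor $2$ in the stated constant is slack here).

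For $p>1$ I would argue by duality. Fix a non-negative $g$ with $\|g\|_{p'}=1$. Since $t\ge 1$ we have $B_i\subseteq tB_i$, so every $y\in B_i$ lies in the ball $tB_i$ and hence $Mg(y)\ge(g)_{tB_i}$. Averaging in $y\in B_i$ against $\pi$ gives $(g)_{tB_i}\le(Mg)_{B_i}$, and therefore
\[
\int_{tB_i}g\,d\pi=\pi(tB_i)(g)_{tB_i}\le\frac{\pi(tB_i)}{\pi(B_i)}\int_{B_i}Mg\,d\pi\le D^{1+\log_2 t}\int_{B_i}Mg\,d\pi.
\]
Summing in $i$ and applying H\"older yields
\[
\int Fg\,d\pi\le D^{1+\log_2 t}\int fMg\,d\pi\le D^{1+\log_2 t}\|f\|_p\|Mg\|_{p'},
\]
so by taking the supremum over $g$ it suffices to establish the maximal inequality $\|Mg\|_{p'}\le 2(D^2p)^{1-1/p}\|g\|_{p'}$.

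For this last step, combine the trivial bound $\|Mg\|_\infty\le\|g\|_\infty$ with the weak-$(1,1)$ estimate $\pi(\{Mg>\lambda\})\le D^2\|g\|_1/\lambda$, which in a space with doubling constant $D$ follows from the standard Vitali-type selection argument (doubling is used once to control the measure of the threefold enlargement of a disjoint subfamily of the chosen balls). Marcinkiewicz interpolation applied at exponent $p'$, using the identities $p'/(p'-1)=p$ and $1/p'=1-1/p$, then produces precisely $\|Mg\|_{p'}\le 2(D^2p)^{1/p'}\|g\|_{p'}=2(D^2p)^{1-1/p}\|g\|_{p'}$. Multiplying the two constants yields the claimed $C=2(D^2p)^{1-1/p}D^{1+\log_2 t}$. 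The main obstacle is tracking the Marcinkiewicz constant $2(p/(p-1))^{1/p}$ sharply through the $L^1$--$L^\infty$ interpolation; the duality manipulation and the volume doubling estimate $\pi(tB_i)/\pi(B_i)\le D^{1+\log_2 t}$ are essentially bookkeeping.
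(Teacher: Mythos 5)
Your proof is correct and follows essentially the same route as the paper: reduce to the dual pairing, use $B_i\subseteq tB_i$ together with the doubling bound $\pi(tB_i)\le D^{1+\log_2 t}\pi(B_i)$ to pass from $\int_{tB_i} g\,d\pi$ to $\int_{B_i} Mg\,d\pi$, and then invoke the uncentered maximal inequality with constant $2(D^2p)^{1-1/p}$ obtained from the Vitali weak-$(1,1)$ bound and $L^1$--$L^\infty$ interpolation (the paper carries out this interpolation by hand, and tests duality with the specific choice $h=f^{p/q}$ rather than a supremum over $g$, but these are cosmetic differences).
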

 \begin{rem} For $p=1$, the result is trivial since $\pi(tB)\le D^{\log_2(t)}\pi(B)$ for any ball $B$. 
 \end{rem}
 \begin{proof}  For any function $f$, consider the maximal function
$$Mf(x)=\sup_{B\ni x}\left\{ \frac{1}{\pi(B)}\sum_{ y\in B}|f(y)| \pi(y) \right\}.$$
By Lemma \ref{lem-Max} below, $\|Mf\|_q\le  C_q \|f\|_q$ for all $1<q\le +\infty$.
Also, for any ball $B$, $x\in B$ and function $h\ge 0$,  we have 
$$ \frac{1}{\pi(tB)}\sum_{y \in tB} h(y)\pi(y) \le(Mh)(x)$$
and thus
$$ \frac{1}{\pi(tB)}\sum_{y \in tB} h(y)\pi(y) \le \frac{1}{\pi(B)}\sum_B (Mh)(y)\pi(y).$$ 
Set $$f(y)= \sum_i a_i \mathbf 1_{tB_i}(y) \;\; \text{ and } \;\; g(y)= \sum_i  a_i \mathbf 1_{B_i}(y).$$
It suffices to prove that, for all functions $h\ge 0$, 
$|\sum f h\pi |\le C\|g\|_p\|h\|_q$,  where $1/p+1/q=1$.
Note that \begin{eqnarray*}
\sum_{y\in \mathfrak X} f(y) h(y)\pi(y) &=&\sum_i  a_i \sum_{y \in tB_i} h(y)\pi(y) \\
&\le & \sum_i  a_i \frac{\pi(tB_i) }{\pi(B_i)} \sum_{y \in B_i} (Mh)(y)   \pi \\
&\le &  D^{1+\log_2 t} \sum_i  a_i  \sum_{y \in B_i} (Mh)(y)   \pi(y) \\
&=&D^{1+log_2 t} \sum _{y \in \mathfrak X}\sum_i  a_i \mathbf 1_{B_i} (Mh)(y) \pi(y)  \\
&\le & D^{1+\log_2 t} \|g\|_p\|Mh\|_q \\
&\le &C_qD^{1+\log_2(t)} \|g\|_p\|h\|_q.
\end{eqnarray*}
Applying this fact with $h=f^{p/q}$ proves the desired result.
\end{proof}

\begin{lem}\label{lem-Max} 
For any $q\in (1,+\infty]$ and any $f$, the maximal function $M$ satisfies $\|Mf\|_q\le C_q\|f\|_q$ with $C_q=2 (D^{2}p)^{1-1/p}$ where $1/p+1/q=1$.
\end{lem}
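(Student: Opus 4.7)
The plan is to follow the classical Hardy--Littlewood route in the discrete doubling setting: establish a weak-type $(1,1)$ bound for $M$ via a Vitali-type covering lemma, combine it with the trivial $L^\infty$ bound, and conclude by Marcinkiewicz interpolation.

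First I would prove a discrete Vitali selection lemma: given any finite collection of balls $\{B_j\}$ in $(\mathfrak X,\mathfrak E,\pi)$, there is a pairwise disjoint subfamily $\{B_{j_k}\}$ with $\bigcup_j B_j \subseteq \bigcup_k 3 B_{j_k}$. This follows by the standard greedy procedure: order the $B_j$ by decreasing radius and retain each ball that is disjoint from those already kept; any rejected ball meets some retained $B_{j_k}$ of radius at least as large and is therefore contained in $3B_{j_k}$. (Since balls are not uniquely determined by a center/radius pair, one simply fixes one such pair for each $B_j$; only the radii play a role.)

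Next, for $\lambda>0$ put $E_\lambda=\{Mf>\lambda\}$. For each $x\in E_\lambda$ pick a ball $B_x\ni x$ with $\sum_{B_x}|f|\,\pi>\lambda\,\pi(B_x)$. Applying the Vitali lemma to any finite exhaustion of $E_\lambda$ and using the doubling estimate $\pi(3B)\le\pi(4B)\le D^2\pi(B)$ gives
\[
\pi(E_\lambda)\le\sum_k\pi(3B_{j_k})\le D^2\sum_k\pi(B_{j_k})\le\frac{D^2}{\lambda}\sum_k\sum_{B_{j_k}}|f|\,\pi\le\frac{D^2}{\lambda}\|f\|_1,
\]
where pairwise disjointness prevents double counting. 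This is the weak $(1,1)$ bound with constant $A_1=D^2$; together with the trivial $\|Mf\|_\infty\le\|f\|_\infty$ (and hence $A_\infty=1$), we are ready to interpolate.

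For $1<q<\infty$, split $f=f_\lambda+f^\lambda$ with $f_\lambda=f\mathbf 1_{|f|>\lambda/2}$. Since $|f^\lambda|\le\lambda/2$ pointwise, $Mf^\lambda\le\lambda/2$ and $\{Mf>\lambda\}\subseteq\{Mf_\lambda>\lambda/2\}$, so
\[
\|Mf\|_q^q=q\int_0^\infty\lambda^{q-1}\pi(Mf>\lambda)\,d\lambda\le 2D^2 q\sum_{y\in\mathfrak X}|f(y)|\,\pi(y)\int_0^{2|f(y)|}\lambda^{q-2}\,d\lambda=2^q pD^2\,\|f\|_q^q,
\]
with $p=q/(q-1)$. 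Taking $q$-th roots and using $1/q=1-1/p$ yields $\|Mf\|_q\le 2(D^2 p)^{1/q}\|f\|_q=2(D^2 p)^{1-1/p}\|f\|_q$, which is $C_q\|f\|_q$ as stated; for $q=\infty$ the trivial bound $1\le 2$ suffices. The only real obstacle is bookkeeping of the doubling constant through each step; no new ideas beyond the classical argument are needed, and the discreteness of the graph setting plays no essential role once the Vitali lemma is in hand.
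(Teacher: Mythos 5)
Your proof is correct and follows essentially the same route as the paper's: a greedy Vitali-type selection of disjoint balls whose triples cover $\{Mf>\lambda\}$, the weak-type $(1,1)$ bound with constant $D^2$ from doubling, truncation of $f$ at level $\lambda/2$, and the layer-cake identity, arriving at the same constant $C_q=2(D^2p)^{1-1/p}$. The only differences are cosmetic (you phrase the last step as interpolation and mention a finite exhaustion, which the paper leaves implicit).
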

\begin{proof}  Consider the set $V^f_\lambda=\{x: Mf(x)>\lambda\}$. By definition, for each $x\in V^f_{\lambda}$ there is a ball $B_x$ such that
$\frac{1}{\pi(B_x)}\sum_{B_x} |f|\pi> \lambda$.  Form 
$$\mathcal B=\{B_x: x\in V^f_\lambda\}$$ and extract from it a set of disjoint balls $B_1,\dots, B_q$ so that 
$B_1$ has the largest possible radius  among all balls in $\mathcal B$, $B_2$ has the largest possible radius among all balls in $\mathcal B$ which are disjoint from $B_1$.
At stage $i$, the ball $B_i$ is chosen to have  the largest possible radius among the balls $B_x$ which are disjoint from $B_1,\dots, B_{i-1}$. We stop when no such balls exist. 

We claim that the balls $3B_i$ cover $V^f_\lambda$, where $1\le i\le q$ and $q$ is the size of $\mathcal{B}$. For any $x\in V^f_\lambda$,  
we have $B_x=B(z,r)$, for some $z$ and $r$, and $ B(z,r) \cap \left(\cup_1^q B_i\right) \neq \emptyset$. 
By construction if $j$ is the first subscript such that there exists $y\in B(z,r)\cap  B_j$, $r$ must be no larger than the radius of $B_j$. This implies $z\in 2B_j$
and $x\in 3B_j$.   

It follows from the fact that $3B_i$ cover $V^f_{\lambda}$ that 
$$\pi(V^f_\lambda)\le D^{2} \sum_1^q\pi(B_i) \le D^2\lambda^{-1} \sum_1^q \sum_{B_i}|f|\pi \le D^2 \lambda ^{-1}\sum_{\mathfrak{X}} |f|\pi.$$
Next observe that  $ Mf \le M(f\mathbf 1_{\{|f|>\lambda/2\}})+ \lambda/2$ and thus
$$\{x: Mf(x)> \lambda\}\subset \{x: M(f\mathbf 1_{\{|f|>\lambda/2\}})(x)>\lambda/2\}.$$ 
Therefore
$\pi (Mf >\lambda)\le  2D^2\lambda^{-1} \sum_{\{f>\lambda/2\}} |f|\pi $.
Finally, recall that
$$\|h\|^q_q=  q\int_0^\infty \pi(h>\lambda) \lambda^{q-1}d\lambda.$$
This gives 
$$\|Mf\|_q^q \le 2qD^2 \sum \int_0^{2|f|}\lambda^{q-2}d\lambda  |f| \pi = \frac{qD^2 2^{q}}{q-1} \sum |f|^q\pi.$$ 
This gives $C_q= 2D^{2/q} \left(\frac{1}{1-1/q}\right)^{1/q}$. If $1/p+1/q=1$ then $C_q= 2(D^2p)^{1-1/p}$.
\end{proof}

The following notion of moderate growth is key to our approach. It was introduced in~\cite{DSCmod} for groups and in~\cite{DSCNash} for more general finite Markov chains. The reader will find many examples there.
It is used below repeatedly, in particular, in Lemma \ref{lem-Q} and Theorems \ref{th-KNU}-\ref{th-Ktilde1}-\ref{th-Ktilde2}, and in Theorems \ref{th-Doob1}-\ref{th-Doob2}-\ref{th-JZd}.

\begin{defin} Assume that $\mathfrak X$ is finite.  We say that $(\mathfrak X,\mathfrak E, \pi)$ has $a,\nu$-moderate volume growth if the volume of balls
satisfies 
$$\forall\, r\in (0,\mbox{diam}],\;\;\frac{V(x,r)}{\pi(\mathfrak X)}\ge a \left(\frac{1+r}{\mbox{diam}}\right)^\nu,$$
where $\mbox{diam} = \sup\{|\gamma_{xy}| : x,y \in \mathfrak{X}\}$ is the maximum of path lengths $|\gamma_{xy}|$ with $\gamma_{xy}$ the shortest path between $x,y \in \mathfrak{X}.$
\end{defin}

\begin{rem} When $\mathfrak X$ is finite and $\pi$ is $D$-doubling then $(\mathfrak X,\mathfrak E, \pi)$ has $((D)^{-2},\log_2D)$-moderate growth because
$$\frac{V(x,s)}{\pi(\mathfrak X)}=\frac{V(x,s)}{V(x,\mbox{diam})}\ge  D^{-1}\left( \frac{\max\{1,s\}}{\mbox{diam}}\right) ^{\log_2D} \ge
 D^{-2}\left( \frac{1+s}{\mbox{diam}}\right) ^{\log_2D}.$$
\end{rem}
Because of this remark, moderate growth can be seen as a generalization of the doubling condition. It implies
that the size of $\mathfrak X$ (as measured by $\pi(\mathfrak X)$) is bounded by a power of the diameter (this can be viewed as a ``finite dimension'' condition and a rough upper bound on volume growth). It also implies that the measure of small balls grows fast enough: $V(x,s)\ge a\pi(X)(\mbox{diam})^{-\log_2D}(1+s)^\nu.$

 \subsection{Edge-weight, associated Markov chains and Dirichlet forms} \label{sec-pimuK}
 This section introduces symmetric edge-weights $\mu_{xy}=\mu_{yx}\ge 0$ and the associated quadratic form
$$\mathcal E_\mu(f,g)= \frac{1}{2}\sum_{x,y} (f(x)-f(y))(g(x)-g(y))\mu_{xy}.$$
\begin{defin}\label{defn-adapted-elliptic-subor}
\begin{enumerate}
    \item 
We say the edge-weight $\mu =(\mu_{xy})_{ x\neq y\in \mathfrak X}$,  is adapted to~$\mathfrak E$ if 
$$\mu_{xy}>0  \mbox{ if and only } \{x,y\}\in \mathfrak E.$$ 
\item We say that the edge-weight $\mu=(\mu_{xy})_{ x\neq y\in \mathfrak X}$ is elliptic with constant $P_e\in (0,\infty)$ with respect to $(\mathfrak X,\mathfrak E,\pi)$ 
if 
$$\forall \ \{x,y\}\in \mathfrak E,\;\; P_e\mu_{xy}\ge \pi(x).$$ 
\item We say  that the edge-weight $\mu =(\mu_{xy})_{ x,y\in \mathfrak X}$ is subordinated to $\pi$ on $\mathfrak X$ if 
$$\forall\,x\in \mathfrak X,\;\; \sum_{y\in \mathfrak X}\mu_{xy}\le \pi(x).$$ 
\end{enumerate}
\end{defin}
\begin{rem} An adapted edge-weight $\mu$ is always such that $\mu_{xy}=0$ if $\{x,y\}\not\in \mathfrak E$, so the definition of adapted edge-weight means that $\mu$ is carried by the edge set $\mathfrak{E}$ in a qualitative sense. Ellipticity makes this quantitative in the sense that
$\mu_{xy}\ge P_e^{-1}\pi(x)$. Note that, with this definition, the smaller the ellipticity constant, the better. \end{rem}
\begin{rem}\label{rem:ellip-equiv} Since $\mu_{xy}=\mu_{yx}$, the ellipticity condition is equivalent to  $$P_e\mu_{xy}\ge  \pi(y)$$
 and also to $P_e \mu_{xy}\ge \max\{\pi(x),\pi(y)\}$.
\end{rem}
The condition   $\sum_y\mu_{xy} <+\infty$   implies immediately that the quadratic form  $\mathcal E_\mu$  defined on finitely supported functions is closable with dense domain in $L^2(\pi)$.  In that case, the data $(\mathfrak X,\pi,\mu)$ defines a continuous time Markov process on the state space $\mathfrak X$, reversible with respect to the measure $\pi$. This Markov process is the process associated to the Dirichlet form 
obtained by closing $\mathcal E_\mu$ in $L^2(\pi)$ and to the associated self-adjoint semigroup $H_t$.  See, e.g., \cite[Example 1.2.4]{FOT}.

\begin{defin} Assume the the edge-weight $\mu$ is subordinated to $\pi$, i.e.,
$$\forall\,x\in \mathfrak X,\;\; \sum_y\mu_{xy}\le \pi(x).$$
Set
\begin{equation} \label{def-Ksub}
K_\mu(x,y)=\left\{\begin{array}{ccl}  \mu_{xy}/\pi(x)  & \mbox{ for  } x\neq y,\\
1-(\sum_{y}\mu_{xy}/\pi(x)) & \mbox{ for } x=y. 
\end{array}\right. \end{equation}
\end{defin}
Note that the condition that $\mu$ is subordinated to $\pi$ is  necessary and sufficient for the semigroup $H_t$ to be of the form $H_t=e^{-t(I-K)} $ where $K$ is a Markov kernel on $\mathfrak X$. Indeed, we then have $K=K_\mu$. 
This Markov kernel is always reversible with respect to $\pi$.  
Of course, if we replace the condition $\sum_y\mu_{xy}\le \pi(x)$ by the weaker condition  $\sum_y\mu_{xy}\le A\pi(x)$ for some finite $A$,
then $H_t=e^{-At(I-K_{A^{-1}\mu})}$ where $A^{-1}\mu$ is the weight $(A^{-1}\mu_{xy})_{x,y\in \mathfrak X}$.
    
\subsection{Poincar\'e inequalities}

\begin{defin}[Ball Poincar\'e Inequality]\label{defn-ball-poincare} We say that $(\mathfrak X,\mathfrak E,\pi,\mu)$ satisfies 
the ball Poincar\'e inequality with parameter $\theta$  
 if there exists a constant $P$ (the  Poincar\'e constant) such that, for all $x\in \mathfrak X$ and $r>0$,
 $$\sum_{z\in B(x,r)} |f(z)-f_B|^2\pi(z) \le P r^\theta \sum_{z,y\in B(x,r), z\sim y}|f(z)-f(y)|^2 \mu_{zy}.$$\end{defin}
 
 \begin{rem} Under the doubling property, ellipticity is somewhat related to the Poincar\'e inequality on balls of small radius.  Whenever the ball of radius $1$ around a point $x$ is a star (i.e., there are no neighboring relations between the neighbors of $x$ as, for instance, in a square grid) the ball Poincar\'e inequality  with constant $P$ implies easily  that, at such point $x$ and for any $y\sim x$,
$$\pi(y) \le  P D^2 \mu_{xy}.$$ 
To see this, fix $y\in B(x,1)$ and apply the Poincar\'e inequality on $B(x,1)$ to the test function
defined on $B(x,1)$ by 
$$f(x) = \begin{cases} -c & \text{ if } x \neq y \\ 1 & \text{ if } x=y, \end{cases}$$
where $c=\pi(y)/(\pi(B(x,1))-\pi(y))$ so that the mean of $f$ over $B(x,1)$ is $0$. Recall that $B(x,1)$ is assumed to be a star and note that $0\le c\le \pi(y)/\pi(x)\le D$ where $D\ge 1$ is the doubling constant. 
This yields 
$$ \pi(y)\le
P(1-c)^2\mu_{xy}
\le P D^2 \mu_{xy}. $$
Hence, when all balls of radius $1$ are stars then the ball Poincar\'e inequality with constant $P$ implies ellipticity with constant $P_e=D^2P$. (See Remark~\ref{rem:ellip-equiv}.) However, when it is not the case that all balls of radius $1$ are stars then the ball Poincar\'e inequality does not necessarily imply ellipticity.
\end{rem}

\begin{figure}[t] 
\begin{center} 
\setlength{\unitlength}{0.036cm}
\begin{picture}(250,300)

\put(180,0){
\put(60,0){
\multiput(0,0)(10,10){3}{
\multiput(0,0)(5,5){3}{\circle*{2}}
\put(0,0){\line(1,1){10}}
\put(0,10){\circle*{2}}\put(0,10){\line(1,-1){10}}
\put(10,0){\circle*{2}}}

\put(0,20){
\multiput(0,0)(5,5){3}{\circle*{2}}
\put(0,0){\line(1,1){10}}
\put(0,10){\circle*{2}}\put(0,10){\line(1,-1){10}}
\put(10,0){\circle*{2}}}
\put(20,0){
\multiput(0,0)(5,5){3}{\circle*{2}}
\put(0,0){\line(1,1){10}}
\put(0,10){\circle*{2}}\put(0,10){\line(1,-1){10}}
\put(10,0){\circle*{2}}}}

\put(0,60){\multiput(0,0)(10,10){3}{
\multiput(0,0)(5,5){3}{\circle*{2}}
\put(0,0){\line(1,1){10}}
\put(0,10){\circle*{2}}\put(0,10){\line(1,-1){10}}
\put(10,0){\circle*{2}}}

\put(0,20){
\multiput(0,0)(5,5){3}{\circle*{2}}
\put(0,0){\line(1,1){10}}
\put(0,10){\circle*{2}}\put(0,10){\line(1,-1){10}}
\put(10,0){\circle*{2}}}
\put(20,0){
\multiput(0,0)(5,5){3}{\circle*{2}}
\put(0,0){\line(1,1){10}}
\put(0,10){\circle*{2}}\put(0,10){\line(1,-1){10}}
\put(10,0){\circle*{2}}}}

\multiput(0,0)(30,30){3}{
\multiput(0,0)(10,10){3}{
\multiput(0,0)(5,5){3}{\circle*{2}}
\put(0,0){\line(1,1){10}}
\put(0,10){\circle*{2}}\put(0,10){\line(1,-1){10}}
\put(10,0){\circle*{2}}}

\put(0,20){
\multiput(0,0)(5,5){3}{\circle*{2}}
\put(0,0){\line(1,1){10}}
\put(0,10){\circle*{2}}\put(0,10){\line(1,-1){10}}
\put(10,0){\circle*{2}}}
\put(20,0){
\multiput(0,0)(5,5){3}{\circle*{2}}
\put(0,0){\line(1,1){10}}
\put(0,10){\circle*{2}}\put(0,10){\line(1,-1){10}}
\put(10,0){\circle*{2}}}}}
\put(0,180){
\put(60,0){
\multiput(0,0)(10,10){3}{
\multiput(0,0)(5,5){3}{\circle*{2}}
\put(0,0){\line(1,1){10}}
\put(0,10){\circle*{2}}\put(0,10){\line(1,-1){10}}
\put(10,0){\circle*{2}}}

\put(0,20){
\multiput(0,0)(5,5){3}{\circle*{2}}
\put(0,0){\line(1,1){10}}
\put(0,10){\circle*{2}}\put(0,10){\line(1,-1){10}}
\put(10,0){\circle*{2}}}
\put(20,0){
\multiput(0,0)(5,5){3}{\circle*{2}}
\put(0,0){\line(1,1){10}}
\put(0,10){\circle*{2}}\put(0,10){\line(1,-1){10}}
\put(10,0){\circle*{2}}}}

\put(0,60){\multiput(0,0)(10,10){3}{
\multiput(0,0)(5,5){3}{\circle*{2}}
\put(0,0){\line(1,1){10}}
\put(0,10){\circle*{2}}\put(0,10){\line(1,-1){10}}
\put(10,0){\circle*{2}}}

\put(0,20){
\multiput(0,0)(5,5){3}{\circle*{2}}
\put(0,0){\line(1,1){10}}
\put(0,10){\circle*{2}}\put(0,10){\line(1,-1){10}}
\put(10,0){\circle*{2}}}
\put(20,0){
\multiput(0,0)(5,5){3}{\circle*{2}}
\put(0,0){\line(1,1){10}}
\put(0,10){\circle*{2}}\put(0,10){\line(1,-1){10}}
\put(10,0){\circle*{2}}}}

\multiput(0,0)(30,30){3}{
\multiput(0,0)(10,10){3}{
\multiput(0,0)(5,5){3}{\circle*{2}}
\put(0,0){\line(1,1){10}}
\put(0,10){\circle*{2}}\put(0,10){\line(1,-1){10}}
\put(10,0){\circle*{2}}}

\put(0,20){
\multiput(0,0)(5,5){3}{\circle*{2}}
\put(0,0){\line(1,1){10}}
\put(0,10){\circle*{2}}\put(0,10){\line(1,-1){10}}
\put(10,0){\circle*{2}}}
\put(20,0){
\multiput(0,0)(5,5){3}{\circle*{2}}
\put(0,0){\line(1,1){10}}
\put(0,10){\circle*{2}}\put(0,10){\line(1,-1){10}}
\put(10,0){\circle*{2}}}}}
\multiput(0,0)(90,90){3}{
\put(60,0){
\multiput(0,0)(10,10){3}{
\multiput(0,0)(5,5){3}{\circle*{2}}
\put(0,0){\line(1,1){10}}
\put(0,10){\circle*{2}}\put(0,10){\line(1,-1){10}}
\put(10,0){\circle*{2}}}

\put(0,20){
\multiput(0,0)(5,5){3}{\circle*{2}}
\put(0,0){\line(1,1){10}}
\put(0,10){\circle*{2}}\put(0,10){\line(1,-1){10}}
\put(10,0){\circle*{2}}}
\put(20,0){
\multiput(0,0)(5,5){3}{\circle*{2}}
\put(0,0){\line(1,1){10}}
\put(0,10){\circle*{2}}\put(0,10){\line(1,-1){10}}
\put(10,0){\circle*{2}}}}

\put(0,60){\multiput(0,0)(10,10){3}{
\multiput(0,0)(5,5){3}{\circle*{2}}
\put(0,0){\line(1,1){10}}
\put(0,10){\circle*{2}}\put(0,10){\line(1,-1){10}}
\put(10,0){\circle*{2}}}

\put(0,20){
\multiput(0,0)(5,5){3}{\circle*{2}}
\put(0,0){\line(1,1){10}}
\put(0,10){\circle*{2}}\put(0,10){\line(1,-1){10}}
\put(10,0){\circle*{2}}}
\put(20,0){
\multiput(0,0)(5,5){3}{\circle*{2}}
\put(0,0){\line(1,1){10}}
\put(0,10){\circle*{2}}\put(0,10){\line(1,-1){10}}
\put(10,0){\circle*{2}}}}

\multiput(0,0)(30,30){3}{
\multiput(0,0)(10,10){3}{
\multiput(0,0)(5,5){3}{\circle*{2}}
\put(0,0){\line(1,1){10}}
\put(0,10){\circle*{2}}\put(0,10){\line(1,-1){10}}
\put(10,0){\circle*{2}}}

\put(0,20){
\multiput(0,0)(5,5){3}{\circle*{2}}
\put(0,0){\line(1,1){10}}
\put(0,10){\circle*{2}}\put(0,10){\line(1,-1){10}}
\put(10,0){\circle*{2}}}
\put(20,0){
\multiput(0,0)(5,5){3}{\circle*{2}}
\put(0,0){\line(1,1){10}}
\put(0,10){\circle*{2}}\put(0,10){\line(1,-1){10}}
\put(10,0){\circle*{2}}}}}
\end{picture}
\caption{A finite piece of the Vicsek graph, an  infinite graph which is both a tree and a fractal graph, has volume growth of type $r^d$ with $d=\log 5/\log 3$ and satisfies the Poincar\'e inequality on balls with parameter $\theta=1+d=1+\log 5/\log 3$.}   \label{fig-V2}
\end{center}
\end{figure}
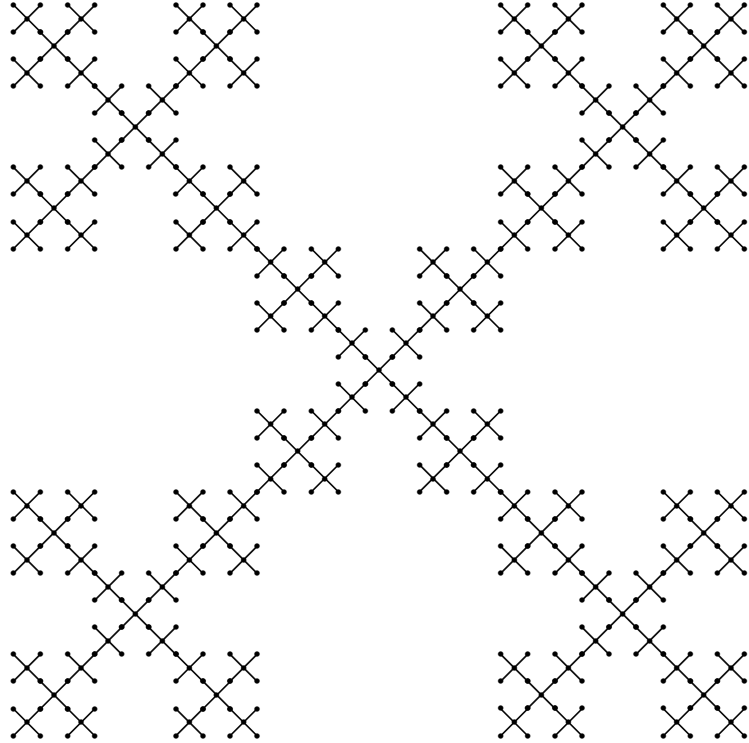

 \begin{defin}[Classical Poincar\'e inequality] A finite subset $U$ of $\mathfrak X$, equipped with the restrictions of $\pi$ and $\mu$ to $U$ and $\mathfrak E\cap (U\times U)$ satisfies the (Neumann-type) Poincar\'e inequality  with constant $P(U)$  if and only if, for any function $f$ defined on $U$,
 $$\sum_{U}|f(x)-f_U|^2\pi(x) \le P(U) \mathcal E_{\mu,U}(f,f)$$
where 
$$\mathcal E_{\mu,U}(f,g)= \frac{1}{2} \sum_{x,y\in U} (f(x)-f(y))(g(x)-g(y)) \mu_{xy}$$ 
and $f_U=\pi(U)^{-1}\sum_U f\pi$.  
 \end{defin} 
 
 \begin{exa}  Assume that $\mathfrak X$ is finite and that $(\mathfrak X,\mathfrak E,\pi,\mu)$ satisfies the ball Poincar\'e inequality
  with parameter $\theta$. Then, taking $r=\mbox{diam}(\mathfrak X)$ implies that $\mathfrak X$ satisfies the Poincar\'e inequality with constant
 $P(\mathfrak X)= 2P \mbox{diam}(\mathfrak X)^\theta$.
 \end{exa}
 
  \begin{defin}[$\mathcal Q$-Poincar\'e Inequality]  
Let $\mathcal Q=\{Q(x,r): x \in \mathfrak X, r>0\}$ be a given collection of finite subsets of $\mathfrak X$.
 We say that $(\mathfrak X,\mathfrak E,\pi,\mu)$ satisfies 
the $\mathcal Q$-Poincar\'e inequality with parameter $\theta$ if there exists a constant $P$ such that for any function $f$ with finite support and $r>0$, 
 $$\sum_x |f(x)-Q_rf(x)|^2\pi(x)\le P r^\theta \mathcal E_\mu(f,f)$$
 where $$\mathcal{E}_{\mu}(f,g) = \frac{1}{2}\sum_{x,y \in \mathfrak{X}} (f(x) - f(y))(g(x) - g(y))\mu_{xy}$$
 and $Q_rf(x) =\pi(Q(x,r))^{-1}\sum_{y\in Q(x,r)}f(y)\pi(y)$.
 \end{defin}
 
 The notion of $\mathcal Q$-Poincar\'e inequality is tailored to make it a useful tool to prove the Nash inequalities discussed in the next subsection. We can think of $Q_rf$ as a regularized version of $f$ at scale $r$. The $\mathcal Q$-Poincar\'e inequality provides control (in $L^2$-norm) of the difference $f-Q_rf$. If $\mathfrak X$ is finite and there is an $R>0$ such that $Q(x,R)=\mathfrak X$ for all $x$ then
 $Q_Rf(x)$ is the $\pi$ average of $f$ over $\mathfrak X$ and the $\mathcal Q$-Poincar\'e inequality at level $R$ becomes a classical Poincar\'e inequality as defined above.
 
 \begin{exa} The typical example of a collection $\mathcal Q$ is the collection of all balls $B(x,r)$. In that case, $Q_rf(x)=f_r(x)$ is simply the average of $f$ over 
 $B(x,r)$. In this case, the $\mathcal Q$-Poincar\'e inequality is often called a pseudo-Poincar\'e inequality.  Furthermore, if $(\mathfrak X,\mathfrak E,\pi,\mu)$ 
 satisfies the doubling property and the ball Poincar\'e inequality then it automatically satisfies the pseudo-Poincar\'e inequality. 
 \end{exa}
 
 \subsection{Nash inequality}
 
 Nash inequalities (in $\mathbb R^n$) were introduced in a famous 1958 paper of John Nash as a tool to capture the basic decay of the heat kernel over time. Later, they where used by many authors for a similar purpose in the contexts of Markov semigroups and Markov chains on countable graphs.
 Nash inequalities where first used in the context of finite Markov chains
in \cite{DSCNash}, a paper to which we refer for a more detailed introduction. 

 Assume that $(\mathfrak X,\mathfrak E)$ is equipped with a measure $\pi$ and an edge-weight $\mu$. 
The following is a variant of \cite[Theorem 5.2]{DSCNash}. The proof is the same. 
\begin{pro} \label{pro-Nash1}
Assume that there is a family of operators defined on finitely supported functions on $\mathfrak X$, $Q_s$ (with $0\le s \le T$) such that
$$\|Q_sf\|_\infty \le  M (1+s)^{- \nu} \|f\|_1$$
for some $\nu \geq 0$ and   that the edge weight $\mu=(\mu_{x,y})$ is such that  
$$ \|f-Q_sf\|_2^2\le P s^\theta \mathcal E_{\mu}(f,f) .$$
then the Nash inequality
$$\|f\|_2^{2(1+\theta/\nu)}\le C\left[\mathcal E_{\mu}(f,f)+\frac{1}{PT^\theta }\|f\|_2^2\right]\|f\|_1^{2\theta/\nu}$$
holds with $C= (1+\frac{\theta}{2\nu})^2(1+\frac{2\nu}{\theta})^{\theta/\nu}M^{\theta/\nu} P$.
\end{pro}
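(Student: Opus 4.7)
The plan is to derive the Nash inequality from the two hypotheses via the standard duality-plus-interpolation argument. First, I would decompose $f = (f - Q_sf) + Q_sf$ and take the inner product with $f$ in $L^2(\pi)$, obtaining
$$\|f\|_2^2 \;=\; \langle f - Q_sf, f\rangle + \langle Q_sf, f\rangle \;\le\; \|f - Q_sf\|_2\,\|f\|_2 + \|Q_sf\|_\infty\,\|f\|_1$$
by Cauchy--Schwarz on the first piece and H\"older on the second. Substituting the two hypotheses then yields, for every $s \in [0,T]$,
$$\|f\|_2^2 \;\le\; \sqrt{P s^\theta\,\mathcal E_\mu(f,f)}\;\|f\|_2 \;+\; M(1+s)^{-\nu}\,\|f\|_1^2.$$
Treating this as a quadratic in $\|f\|_2$ and applying Young's inequality $ab \le \delta a^2 + b^2/(4\delta)$ with parameter $\delta \in (0,1)$ gives
$$\|f\|_2^2 \;\le\; \frac{P s^\theta}{4\delta(1-\delta)}\,\mathcal E_\mu(f,f) \;+\; \frac{M(1+s)^{-\nu}}{1-\delta}\,\|f\|_1^2, \qquad s\in[0,T].$$

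Next, I would minimize the right-hand side over $s\in[0,T]$. The unconstrained minimizer is
$$s^* \;\sim\; \left(\frac{M\|f\|_1^2}{P\,\mathcal E_\mu(f,f)}\right)^{1/(\theta+\nu)}.$$
If $s^* \le T$, setting $s = s^*$ and raising the resulting bound to the power $(\theta+\nu)/\nu$ produces a clean Nash inequality of the form $\|f\|_2^{2(1+\theta/\nu)} \le C\,P M^{\theta/\nu}\,\mathcal E_\mu(f,f)\,\|f\|_1^{2\theta/\nu}$ with no correction term. If instead $s^* > T$, I would set $s = T$; the assumption $s^* > T$ forces $PT^\theta\,\mathcal E_\mu(f,f) \lesssim M T^{-\nu}\|f\|_1^2$, so that $\|f\|_2^2 \lesssim M T^{-\nu}\|f\|_1^2$. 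Multiplying this inequality by $\|f\|_2^{2\theta/\nu}$ rewrites as
$$\|f\|_2^{2(1+\theta/\nu)} \;\lesssim\; P M^{\theta/\nu}\,\frac{\|f\|_2^2}{PT^\theta}\,\|f\|_1^{2\theta/\nu},$$
which is exactly the contribution of the correction term. Combining the two cases, or equivalently working from the outset with $\mathcal E_\mu(f,f) + \|f\|_2^2/(PT^\theta)$ in place of $\mathcal E_\mu(f,f)$ (so that the minimizer always falls in $[0,T]$), yields the Nash inequality as stated.

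The main obstacle is simply bookkeeping: to recover the precise prefactor $C = (1+\theta/(2\nu))^2(1+2\nu/\theta)^{\theta/\nu}M^{\theta/\nu}P$ one must use the optimal Young parameter $\delta = \nu/(2\nu+\theta)$ (which produces the $(1+\theta/(2\nu))^2$ factor) and carry out the exact one-variable minimization of $As^\theta + Bs^{-\nu}$ (which produces the standard balancing constant $(1+2\nu/\theta)^{\theta/\nu}$). A minor technicality is that the bound involves $(1+s)^{-\nu}$ rather than $s^{-\nu}$; this is absorbed by the change of variables $r = 1+s$ or by noting that the minimization is insensitive to a unit shift, and does not affect the final constant.
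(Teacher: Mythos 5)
Your argument is correct and is essentially the proof the paper relies on: the paper does not reproduce a proof but states that it is the same as \cite[Theorem 5.2]{DSCNash}, and that argument is exactly your decomposition $\|f\|_2^2=\langle f-Q_sf,f\rangle+\langle Q_sf,f\rangle$, followed by Cauchy--Schwarz/H\"older, absorption of the cross term, and optimization over $s\in[0,T]$ with the truncation at $T$ producing the $\frac{1}{PT^\theta}\|f\|_2^2$ correction; carrying out the optimizations exactly (optimal $\delta=\nu/(2\nu+\theta)$ and exact balancing of $As^\theta+Bs^{-\nu}$) does reproduce the stated constant. The only blemishes are cosmetic sketch-level ones (in the $s^*>T$ case one should first raise $\|f\|_2^2\lesssim MT^{-\nu}\|f\|_1^2$ to the power $\theta/\nu$ and then multiply by $\|f\|_2^2$, and the remark that replacing $\mathcal E_\mu$ by $\mathcal E_\mu+\|f\|_2^2/(PT^\theta)$ forces the minimizer into $[0,T]$ is not quite automatic), neither of which affects the validity of the case analysis you actually give.
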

\begin{rem} When $$Q_rf(x)=\pi(Q(x,r))^{-1}\sum_{y\in Q(x,r)}f(y)\pi(y)$$ as in the definition of the $\mathcal Q$-Poincar\'e inequality,
the first assumption, $$\|Q_sf\|_\infty \le  M (1+s)^{- \nu} \|f\|_1,$$ amounts to a lower bound on the volume of the set $Q(x,r)$. In that case, the second assumption
is just the requirement that the $\mathcal Q$-Poincar\'e inequality is satisfied.
\end{rem}
For the next statement, we assume that $\mu$ is subordinated to $\pi$, i.e., for all $x$,  $\sum_y\mu_{xy}\le \pi(x)$. We consider the Markov kernel $K$ defined 
at (\ref{def-Ksub}) for which $\pi$ is a reversible measure and whose associated Dirichlet form on $L^2(\pi)$ is $\mathcal E_\mu(f,f)=\langle (I-K)f,f\rangle_\pi$. 

\begin{pro}[{\cite[Corollary 3.1]{DSCNash}}]  \label{pro-Nash2}
Assume that $\mu$ is subordinated to $\pi$ and that
$$\forall \,f\in L^2(\pi),\;\;\|f\|_2^{2(1+\theta/\nu)}\le C\left[\mathcal E_{\mu}(f,f)+\frac{1}{N}\|f\|_2^2\right]\|f\|_1^{2\theta/\nu}.$$
Then, for all $0\le n\le 2N$,
$$\sup_{x,y}\left\{K^{2n}(x,y)/\pi(y)\right\} =\sup_{x}\left\{K^{2n}(x,x)/\pi(x)\right\}\le 2\left(\frac{8C(1+\nu/\theta)}{n+1}\right)^{\nu/\theta}.$$
\end{pro}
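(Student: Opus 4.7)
Since $K$ is reversible with respect to $\pi$, the normalized kernel $k_m(x,y) := K^m(x,y)/\pi(y)$ is symmetric in $x,y$. The Chapman--Kolmogorov identity then rewrites as
\[
k_{2n}(x,y) = \sum_z k_n(x,z)\, k_n(y,z)\, \pi(z) = \langle k_n(x,\cdot),\, k_n(y,\cdot)\rangle_{L^2(\pi)},
\]
and Cauchy--Schwarz yields $k_{2n}(x,y) \le \sqrt{k_{2n}(x,x)\, k_{2n}(y,y)} \le \sup_z k_{2n}(z,z)$. The reverse inequality is trivial, which establishes the first claimed equality.

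\textbf{The quantitative bound via Nash iteration.} Setting $\tilde\delta_x = \delta_x/\pi(x)$, which satisfies $\|\tilde\delta_x\|_1 = 1$, reversibility identifies $k_{2n}(x,x) = \sum_y K^n(x,y)^2/\pi(y) = \|K^n\tilde\delta_x\|_2^2$. More generally, using that $\|K^n g\|_1 \le \|g\|_1$ (an easy consequence of reversibility), it suffices to bound $u(n) := \|K^n g\|_2^2$ uniformly over $g$ with $\|g\|_1 \le 1$. Applying the hypothesized Nash inequality to $K^n g$ yields
\[
u(n)^{1+\theta/\nu} \le C\bigl[\mathcal E_\mu(K^n g, K^n g) + u(n)/N\bigr].
\]
The goal is to turn this into a recursion whose output is $u(n) \le 2(8C(1+\nu/\theta)/(n+1))^{\nu/\theta}$ for $n\le 2N$.

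\textbf{The main obstacle and its resolution.} An iterable recursion requires comparing $\mathcal E_\mu(K^n g, K^n g)$ to the decrement $u(n)-u(n+1)$. Spectrally, these equal $\int(1-\lambda)\lambda^{2n}\,d\mu_g$ and $\int(1-\lambda)(1+\lambda)\lambda^{2n}\,d\mu_g$ respectively, so the factor $(1+\lambda)$ (which vanishes at $\lambda = -1$) prevents any universal inequality $\mathcal E_\mu \le C'(u(n)-u(n+1))$. I would circumvent this by passing to the lazy chain $\tilde K = (I+K)/2$, whose spectrum lies in $[0,1]$. Since $\mathcal E_{\tilde K} = \mathcal E_\mu/2$, Nash for $K$ gives Nash for $\tilde K$ with constants $(2C, 2N)$. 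Because $I + \tilde K \ge I$, the non-negativity of the spectrum delivers
\[
\tilde u(n) - \tilde u(n+1) = \langle(I - \tilde K^2)\tilde K^n g,\, \tilde K^n g\rangle \ge \mathcal E_{\tilde K}(\tilde K^n g, \tilde K^n g),
\]
so Nash becomes the iterable inequality
\[
\tilde u(n)^{1+\theta/\nu} \le 2C\bigl[\tilde u(n) - \tilde u(n+1) + \tilde u(n)/(2N)\bigr].
\]
Setting $w(n) = \tilde u(n)^{-\theta/\nu}$ linearizes this into $w(n+1) - w(n) \ge (\theta/\nu)/(2C) - (\theta/\nu) w(n)/(2N)$, whose discrete Gr\"onwall-type solution yields a bound of the desired order on $\tilde u(n)$ for $n\le 2N$.

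\textbf{Transfer back to $K$.} The last step, converting the bound on $\tilde u(n) = \|\tilde K^n\tilde\delta_x\|_2^2$ into a bound on $u(n) = \|K^n\tilde\delta_x\|_2^2$, is the main technical point. The cleanest resolution is to carry out the entire Nash iteration in continuous time instead, using the heat semigroup $H_t = e^{-t(I-K)}$: since $u_c(t) := \|H_t\tilde\delta_x\|_2^2$ satisfies $u_c'(t) = -2\mathcal E_\mu(H_t\tilde\delta_x, H_t\tilde\delta_x)$, Nash becomes the genuine differential inequality $u_c^{1+\theta/\nu} \le C[-u_c'/2 + u_c/N]$, whose solution is immediate, and then the spectral inequality $\lambda^{2n} \le e^{-n(1-\lambda^2)}$ (valid for $\lambda\in[-1,1]$) combined with a companion estimate for the semigroup of $I-K^2$ yields the discrete-time bound without loss in the exponent. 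Either route produces the constant $2(8C(1+\nu/\theta))^{\nu/\theta}$ with only book-keeping of universal factors.
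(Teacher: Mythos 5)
The paper itself gives no proof of this proposition (it is imported verbatim from \cite[Corollary 3.1]{DSCNash}), so your argument has to stand on its own, and as written it does not: the gap sits exactly at the step you yourself label ``the main technical point.'' Your first two steps are fine (the Cauchy--Schwarz reduction to the diagonal, and the Nash iteration for the lazy kernel $\tilde K=(I+K)/2$, which indeed inherits the Nash inequality with constants $(2C,2N)$ and has the decrement inequality because its spectrum is nonnegative; note in passing that $\|K^ng\|_1\le\|g\|_1$ only needs $\pi$-stationarity, not reversibility). But neither of your proposed ways back to $K$ works. A bound on $\|\tilde K^n\tilde\delta_x\|_2^2=\int\bigl(\tfrac{1+\lambda}{2}\bigr)^{2n}d\mu_x$ gives no control of $\|K^n\tilde\delta_x\|_2^2=\int\lambda^{2n}d\mu_x$, since near $\lambda=-1$ the first integrand vanishes while the second is of order $1$; and the continuous-time detour via $\lambda^{2n}\le e^{-n(1-\lambda^2)}$ requires an ultracontractive bound for $e^{-t(I-K^2)}$, i.e.\ a Nash inequality for the form $\langle(I-K^2)f,f\rangle_\pi$, which does not follow from the hypothesis: one only has $\langle(I-K^2)f,f\rangle_\pi\le 2\mathcal E_{\mu}(f,f)$, the wrong direction, so the difficulty at the bottom of the spectrum simply reappears. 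The concluding claim that ``either route produces the constant with only book-keeping'' is therefore unsupported; in particular the factor $2$ and the role of the restriction $n\le 2N$ are never accounted for, and they are not cosmetic — they are where the negative spectrum is paid for.

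The missing ingredient is a device that controls the spectral mass near $-1$, and reversibility supplies a simple one: the diagonal entries at odd times are nonnegative, $K^{2j+1}(x,x)\ge 0$. Writing $K^{2n}(x,x)/\pi(x)=\int_{[-1,1]}\lambda^{2n}\,d\mu_x(\lambda)$ and decomposing the polynomial $2\bigl(\tfrac{1+\lambda}{2}\bigr)^{2n}-\lambda^{2n}$ into its even part, which is pointwise nonnegative on $[-1,1]$ because $(1+\lambda)^{2n}+(1-\lambda)^{2n}\ge(2|\lambda|)^{2n}$, and its odd part, all of whose coefficients are nonnegative, one integrates against $\mu_x$ (whose odd moments are the nonnegative quantities $K^{2j+1}(x,x)/\pi(x)$) to obtain the diagonal comparison $K^{2n}(x,x)\le 2\,\tilde K^{2n}(x,x)$ for every $x$. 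This inequality is valid only on the diagonal, which is why your Cauchy--Schwarz reduction must come first (your structure is compatible with this), it is the source of the factor $2$ in the statement, and together with your lazy-chain iteration it yields the proposition with the stated constants. Without this lemma, or an equivalent argument handling the spectrum near $-1$, your proof does not establish the result.
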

 This proposition demonstrates how the Nash inequality provides some control on the decay of the iterated kernel of the Markov chain driven by $K$ over time.

\section{Poincar\'e and $\mathcal Q$-Poincar\'e inequalities for John domains}  \setcounter{equation}{0} 
\label{sec-PQPJD}
 
This is a key section of this article as well as one of the most technical. Assuming that $(\mathfrak X,\mathfrak E,\pi,\mu)$  is adapted, elliptic, and 
satisfies the doubling property and the ball Poincar\'e inequality with parameter $\theta$, we derive both a Poincar\'e inequality (Theorem \ref{th-P1}) and a $\mathcal Q$-Poincar\'e inequality (Theorem \ref{th-PP1}) on finite John domains. 
The statement of the Poincar\'e inequality can be described informally as follows: for a finite domain $U$ in $J(\alpha)$  we have,
for all functions $f$ defined on $U$,
$$\sum_{U}|f(x)-f_U|^2\pi(x) \le CR^\theta \mathcal E_{\mu,U}(f,f)$$
where $R$ is the John radius for $U$ and
$C$ depends only on $\alpha$ and the constants, coming from doubling, the Poincar\'e inequality on balls, and ellipticity, which describe the basic properties of $(\mathfrak X,\mathfrak E,\pi,\mu)$. (Instead of $R$, one can use the intrinsic diameter of $U$ because they are comparable up to a multiplicative constant depending only on $\alpha$, see Remark \ref{rem:johnradius}.) We give an explicit description of the constant $C$ without trying to optimize what can be obtained through the general argument. For many explicit examples running a similar argument while taking advantage of the feature of the example will lead to (much) improved estimates for $C$ in terms of the basic parameters. 

These results will be amplified in Section \ref{sec-AW} by showing that the same technique  works as well for a large class of weights which can be viewed as modifications of the pair $(\pi,\mu)$.
 
Throughout this section, we fix a finite domain $U$ in  $\mathfrak X$ with (exterior) boundary $\partial U$ such that $U\in J( o,\alpha,R)$ for some $o\in U$. 
We also fix  a witness family of John-paths $\gamma_x$ for each $x\in U$, joining $x$ to $o$ and fulfilling the $\alpha$-John domain condition.

\subsection{Poincar\'e inequality for John domains}

Fix a Whitney covering of $U\in J(o,\alpha,R)$,
$$\mathcal W=\{B_i= B^\eta_{x_i}=B(x_i, r_i): 1\le i\le Q\},$$
with $r_i=\eta \delta(x_i)/4$ and parameter $\eta<1/4$.
By construction, the collection of balls  $B'_i=3B_i=B(x_i,3r_i)$ covers $U$, and it is useful to set
$$\mathcal W'=\{3B_i:\; 1\le i\le Q\}.$$
Please note that we always think of the elements of $\mathcal W,\mathcal W'$ as balls, each with a specified center and radius, not just subsets. 
\begin{lem} \label{lem-Rad}
Any ball $E$ in $\mathcal W$ (i.e., $E = B_i$ for some $i$) has radius $r$ bounded  above by   $\eta (2R+1)/4$. 
\end{lem}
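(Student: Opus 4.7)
The goal is to show that every Whitney ball $E = B_i = B(x_i, r_i) \in \mathcal{W}$ has radius $r_i \leq \eta(2R+1)/4$. Since by construction $r_i = \eta\,\delta(x_i)/4$ with $\delta(x_i) = d(x_i, \mathfrak{X} \setminus U)$, the task reduces to establishing the uniform bound $\delta(x) \leq 2R+1$ for every $x \in U$.

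The plan is to exploit the John-center $o$ together with the definition of the boundary $\partial U$. For any $x \in U$ and any $y \in \partial U$, the definition of $\partial U$ provides a neighbor $z \in U$ of $y$, so $d(x,y) \leq d(x,z) + 1$. Since $d \leq d_U$ (any path in $\mathfrak{X}$ is at least as short as any path forced to stay in $U$), it suffices to bound $d_U(x,z)$. By the triangle inequality in $(U,d_U)$ and the definition of the internal radius,
$$d_U(x,z) \leq d_U(x,o) + d_U(o,z) \leq 2\rho_o(U).$$
Using Remark~\ref{rem:johnradius}, which gives $\rho_o(U) \leq R(U,o,\alpha) = R$, this yields $d(x,z) \leq 2R$ and hence $d(x,y) \leq 2R+1$.

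Since $\mathfrak{X} \setminus U \supseteq \partial U$, infimizing over $y \in \partial U$ gives $\delta(x) \leq d(x, \partial U) \leq 2R+1$ (note that for $x \in U$ any geodesic to $\mathfrak{X}\setminus U$ must cross $\partial U$, so in fact $\delta(x) = d(x, \partial U)$). Substituting into $r_i = \eta\,\delta(x_i)/4$ gives the claim.

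There is no real obstacle: the argument is essentially a diameter estimate for $(U, d_U)$ via the John center, combined with the one-step passage from $U$ to $\partial U$. The only point requiring a moment of attention is the use of $d \leq d_U$ and the remark that translates the John radius into a bound on $\rho_o(U)$; neither uses the parameter $\alpha$ explicitly, which is why the final bound depends only on $\eta$ and $R$.
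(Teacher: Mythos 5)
Your proof is correct and takes essentially the same route as the paper: a triangle-inequality estimate through the John center $o$, using that every point of $U$ lies within distance $R$ of $o$, to get $\delta(x)\le 2R+1$ and hence $r_i=\eta\,\delta(x_i)/4\le \eta(2R+1)/4$. (One small imprecision: in general $R(U,o,\alpha)\le R$ rather than $=R$, but you only use $\rho_o(U)\le R$, which still holds.)
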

\begin{proof} By hypothesis, $U\in J(o,\alpha,R)$. Let $R_o=\delta(o)$. Any other point $x\in U$ is at distance at most $R$ from $o$.
It follows that $\delta (x)\le  R+ R_o\le 2R+1$. 
\end{proof}

Fix a ball $E_o$ in $\mathcal W$ such that $3E_o$ contains the point $o$.  For any $E=B(z,r) \in \mathcal W$, let $\gamma^E=\gamma_z$ be the John-path from $z$ to $o$ and
select a finite sequence 
\begin{equation}
\label{eq:whitney-chain}
    \mathcal W'(E)=(F^E_0,\dots,F^E_{q(E)})=(F_0,\dots,F_{q(E)})
\end{equation} of distinct balls 
 $F^E_i=F_i\in \mathcal W'$, for $0\le i\le q(E)$
such that   $F^E_0=3E$, $F^E_{q(E)}=3E_o$, $F^E_i$ intersects $\gamma^E$ and  $d(F^E_{i+1}, F^E_i)\le 1$, $0\le i\le q(E)-1$.  
This is possible since the balls in $\mathcal W'$ cover $U$. When the ball $E$ is fixed, we drop the superscript $E$ from the notation $F_i^E$. For each $E\in \mathcal W$, the sequence of balls
$3F_i$ (for $1\le i\le q(E)$) provides a chain of adjacent balls joining
$z$ to $o$ along the John-path $\gamma^E$. The union of the balls $6F^E_i$ form a carrot-shaped region joining $z$ to $o$ (thin at $z$ and wide at $o$). These families of balls are a key ingredient in the following arguments. See Figure~\ref{fig:chain} for an example.

\begin{figure}
\begin{center}
    \includegraphics[scale=0.8]{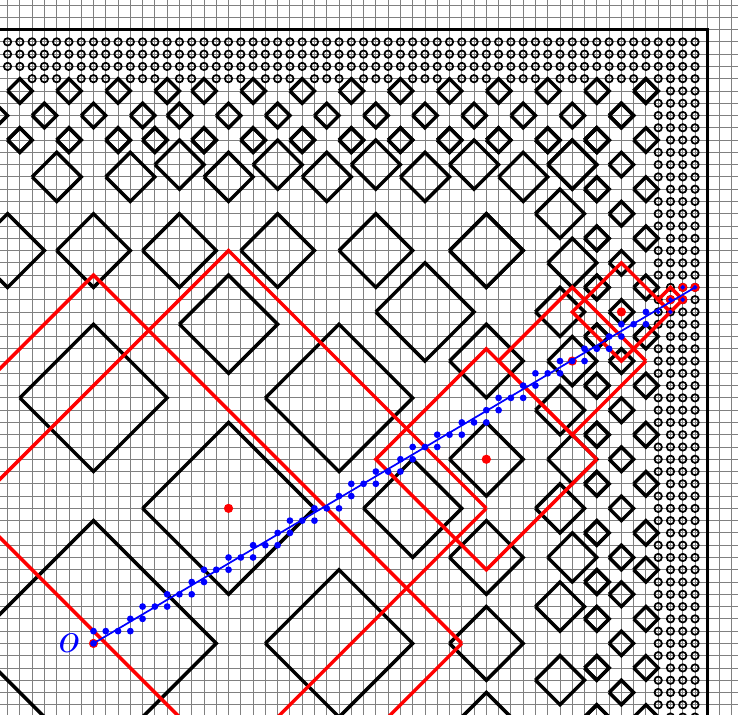}
\caption{A  chain of $9$ balls  $\mathcal W'(E) =\{3F_0, 3F_1, \dots,3F_{q(E)}\}$, $q(E)=8$, covering the path $\gamma_z$ from $o$ to $z$ (blue points staying close to the straight line from $o$ to $z$)
with $E=B(z,r)\in \mathcal W$, where $W$ is a Whitney covering of the corner of a square. The ball centers are in red. The Whitney parameter $\eta=4/5$. The initial Whitney ball $E$ has radius $1/5$ so $3F_{8}=E=\{z\}$. The ball $3F_7,3F_6$ are also singleton but $3F_5$ has radius $9/5$.  The ball $3F_0$ is centered at $o$ and has radius $30$.}
\label{fig:chain}
\end{center}
\end{figure}  

\begin{lem} \label{lem-WN}
Fix $\eta <1/4$ and $\rho\le 2/\eta$. 
The doubling property implies that any point $z\in U$ is contained in at most  $D^{1+\log_2(4\rho+3)}$  distinct balls of the form $\rho E$ with $E\in \mathcal W$, where $D$ is the volume doubling constant.
\end{lem}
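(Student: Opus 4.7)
The plan is to exploit the two ingredients that define a Whitney covering together with doubling: the balls in $\mathcal W$ are pairwise disjoint, and by Lemma~\ref{lem-W}(2) the radius of any Whitney ball $E=B(x_i,r_i)$ with $z\in \rho E$ is comparable to the ``natural scale'' $r(z)=\eta\delta(z)/4$ at $z$. A packing argument then forces the count.

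First I would apply Lemma~\ref{lem-W}(2) with $\rho\le 2/\eta$: for every $E=B(x_i,r_i)\in \mathcal W$ such that $z\in \rho E$, one has $(1-\rho\eta/4)r_i\le r(z)\le(1+\rho\eta/4)r_i$, and since $\rho\eta/4\le 1/2$ this gives the two-sided bound
$$\tfrac{2}{3}r(z)\le r_i\le 2 r(z).$$
Set $I=\{i : z\in \rho B_i\}$. By the triangle inequality, if $i\in I$ then $B_i\subset B(z,(\rho+1)r_i)\subset B(z,2(\rho+1)r(z))$, and the pairwise disjointness of the Whitney balls yields
$$\sum_{i\in I}\pi(B_i)=\pi\!\left(\bigcup_{i\in I}B_i\right)\le \pi\bigl(B(z,2(\rho+1)r(z))\bigr).$$

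Next I would compare the ``common'' ball $B(z,2(\rho+1)r(z))$ to each individual $B_i$ by centering at $x_i$. For any $y$ with $d(y,z)\le 2(\rho+1)r(z)$ and $i\in I$, the estimates $d(x_i,z)\le \rho r_i$ and $r(z)\le \frac{3}{2}r_i$ give
$$d(y,x_i)\le 2(\rho+1)r(z)+\rho r_i\le 3(\rho+1)r_i+\rho r_i=(4\rho+3)r_i,$$
so $B(z,2(\rho+1)r(z))\subset B(x_i,(4\rho+3)r_i)$. Iterating the doubling inequality $V(x,2r)\le D V(x,r)$ at most $\lceil\log_2(4\rho+3)\rceil\le 1+\log_2(4\rho+3)$ times yields
$$\pi\bigl(B(x_i,(4\rho+3)r_i)\bigr)\le D^{1+\log_2(4\rho+3)}\pi(B_i).$$

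Putting the last two displays together, $\pi\bigl(B(z,2(\rho+1)r(z))\bigr)\le D^{1+\log_2(4\rho+3)}\pi(B_i)$ for every $i\in I$, in particular for the index minimizing $\pi(B_i)$. Combined with the packing inequality $|I|\min_{i\in I}\pi(B_i)\le\sum_{i\in I}\pi(B_i)$ this gives $|I|\le D^{1+\log_2(4\rho+3)}$, which is the claim. There is no real obstacle here; the only point one has to be careful about is tracking the geometric constants (the factors $3/2$, $2(\rho+1)$, $4\rho+3$) so that the final exponent matches the stated $1+\log_2(4\rho+3)$, and noting that the restriction $\rho\le 2/\eta$ is precisely what allows the use of Lemma~\ref{lem-W}(2) with its $1/2$ and $3/2$ bounds.
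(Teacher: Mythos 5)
Your proof is correct and follows essentially the same route as the paper: a packing argument combining pairwise disjointness of the Whitney balls, the radius comparability from Lemma~\ref{lem-W} (you use part (2) relative to $r(z)$, the paper uses part (3) between pairs of balls meeting at $z$), and iterated doubling to reach the same constant $D^{1+\log_2(4\rho+3)}$. The only cosmetic difference is that you pass through the intermediate ball $B(z,2(\rho+1)r(z))$ centered at $z$, whereas the paper directly includes all the balls in $(4\rho+3)E_j$ for the Whitney ball $E_j$ of minimal measure.
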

\begin{rem}Note that this property  does not necessarily hold if $\rho$ is much larger than $2/\eta$. This lemma implies that $$\sum_{E \in \mathcal{W}} \chi_{\rho E} \leq D^{1+\log_2(4\rho + 3)}.$$
\end{rem}
\begin{proof} Suppose $z\in U$ is contained in $N$ balls $\rho E$ with $E\in \mathcal W$, and call them  $E_i=B(x_i,r_i)$, $1\le i\le N$.  By Lemma \ref{lem-W}(3),
the radii  $r_i$  satisfy     $r_i/r_j\le 3$ (this uses the inequality $\rho\le 2/\eta$) and it follows that $$\bigcup_1^N B(x_i,r_i) \subset B(x_j, (4\rho+ 3)r_j) .$$ Because the balls $E_i$ are disjoint, applying this inclusion with $j$ chosen so that $\pi(E_j)=\min\{\pi(E_i):1\le i\le N\}$ yields  $$N \pi(E_j)\le \pi((4\rho+3)E_j)\le D^{1+\log_2(4\rho+3)} \pi(E_j),$$
which, dividing by $\pi(E_j)$ proves the lemma.
\end{proof}

 \begin{lem} \label{lem-WN2}
Fix $\eta <1/4$ and $\rho\le 2/\eta$.  For any ball $E=B(x,r(x))\in \mathcal W$ and any ball $F=B(y,3r(y))\in \mathcal W'(E)$, where $\mathcal{W}'(E)$ is defined in~\eqref{eq:whitney-chain}, we have 
$E\subset  \kappa F$ with $\kappa = 7\alpha^{-1}\eta^{-1}$.
\end{lem}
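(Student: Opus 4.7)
The plan is to use the triangle inequality together with the fact that every ball $F$ in the chain $\mathcal{W}'(E)$ intersects the John path $\gamma^E$. For any $z \in E$, we have $d(z,y) \le d(z,x) + d(x,y) \le r(x) + d(x,y)$, so it suffices to prove the scalar inequality
\[ r(x) + d(x,y) \le 3\kappa\, r(y) = \frac{21\, r(y)}{\alpha\eta}. \]
The two summands $r(x)$ and $d(x,y)$ will be controlled separately using (i) the John condition on $\gamma^E$, and (ii) the Whitney-type estimates from Lemma~\ref{lem-W}(2) applied with $\rho = 3$ (permissible since $\eta < 1/4$ gives $3 \le 4/\eta$).

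By definition of $\mathcal{W}'(E)$, there is a point $w \in F \cap \gamma^E$. Writing $\gamma^E = \gamma_x = (x_0 = x, x_1,\dots, x_{m_x} = o)$, I pick an index $i$ with $w = x_i$. The John condition gives $\delta(w) = \delta(x_i) \ge \alpha(1+i)$, which rearranges to the key bound $i \le \delta(w)/\alpha$. Also, Lemma~\ref{lem-W}(2) with $\rho = 3$ and the fact that $w \in F = B(y, 3r(y))$ yields $\delta(w) \le (1 + 3\eta/4)\delta(y) = (1+3\eta/4)\cdot 4r(y)/\eta$.

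For $d(x,y)$: since $w$ lies on a path of length $i$ starting at $x$, we have $d(x,w) \le i$, and $d(w,y) \le 3r(y)$ since $w \in F$. Thus $d(x,y) \le i + 3r(y)$, and combining with the bound on $i$ gives $d(x,y) \le (1+3\eta/4)\cdot 4r(y)/(\alpha\eta) + 3r(y)$. For $r(x)$: by the triangle inequality $\delta(x) \le \delta(w) + d(x,w) \le \delta(w)(1 + 1/\alpha)$, so $r(x) = \eta\delta(x)/4 \le (1+1/\alpha)(1+3\eta/4)\, r(y)$, and in particular $r(x) \le (2/\alpha)(1+3\eta/4)\, r(y)$ since $\alpha \le 1$.

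Adding these two estimates and using $\eta < 1/4$ (so $1 + 3\eta/4 \le 19/16 < 2$) together with $\alpha \le 1$ and $\alpha\eta \le 1$ to absorb the lower-order terms into $1/(\alpha\eta)$, a direct arithmetic check confirms
\[ r(x) + d(x,y) \le \frac{C_1}{\alpha\eta}\, r(y) \]
with $C_1 \le 21$, yielding the claim with $\kappa = 7/(\alpha\eta)$. The main (minor) obstacle is purely bookkeeping: tracking three separate contributions ($r(x)$, the chain-distance $i$, and the slack $3r(y)$) and verifying that the constant $7\alpha^{-1}\eta^{-1}$ is generous enough to absorb all of them once the Whitney comparison and John inequality are invoked.
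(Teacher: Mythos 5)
Your proof is correct and follows essentially the same route as the paper's: pick a point of $F$ on the John path $\gamma^E$, combine the John condition with the Whitney relation $r(\cdot)=\eta\delta(\cdot)/4$ and triangle inequalities to bound $d(x,y)$ and $r(x)$ by multiples of $r(y)$, then check the arithmetic against $\kappa=7\alpha^{-1}\eta^{-1}$. Your intermediate bound (about $6.1\,r(y)/(\alpha\eta)$) in fact recovers the paper's slightly stronger containment $E\subset B(y,7\alpha^{-1}\eta^{-1}r(y))$, not just $E\subset\kappa F$.
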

\begin{proof} By construction, there is a point $z$ in $F$ on the John-path $\gamma^E$ from $x$ to $o$ and $\delta(z)\ge \alpha (1+d(z,x))$.
This implies 
$$4r(y)/\eta=\delta(y)\ge \delta(z)-3r(y)\ge  \alpha (1+d(z,x)) -3r(y),$$ that is, $((4/\eta)+3)r(y)\ge \alpha (1+d(x,z))$.  It follows that
$$ x\in B(y ,(3+\alpha^{-1} \eta^{-1}(4+3\eta)) r(y)).$$  Observe that
$$\delta(x)\le \delta(y) +d(x,y)\le 4\eta^{-1}r(y) + (3+ \alpha^{-1}\eta^{-1}(4+3\eta))r(y)$$ which gives
$$r(x)=\eta \delta(x)/4\le r(y)(1+ \alpha^{-1}(3\alpha \eta+ 4+3\eta)/4).$$  Then,
$$B(x,r(x))\subset B(y,d(x,y)+r(x)),$$
which gives 
$$B(x,r(x))\subset B(y, (4+\alpha^{-1}\eta^{-1}(4+3\eta+(3\alpha \eta^2
+4\eta+3\eta^2)/4)r(y)).$$ Because $\alpha \le 1$ and we assumed  $\eta<1/4$, we have
$$ 4+\alpha^{-1}\eta^{-1}(4+3\eta+(3\alpha \eta^2
+4\eta+3\eta^2)/4 \le 4+6\alpha^{-1}\eta^{-1}\le 7 \alpha^{-1}\eta^{-1},$$
and hence $B(x,r(x)) \subseteq \kappa B(y,2r(y))$ with $\kappa = 7\alpha^{-1}\eta^{-1}$.
\end{proof}
 \begin{lem}  \label{lem-WN3}
 Fix  $\eta\le 1/4$.  For each $E\in \mathcal W$, the sequence 
 $$\mathcal W'(E)=(F^E_0,\dots, F^E_{q(E)})$$ has the following properties. Recall that for each 
 $i\in \{0,\dots, q(E)\}$,  $F^E_i=B(z^E_i,\rho^E_i)$ with $\rho^E_i=3r^E_i= (3\eta/4) \delta(z^E_i)$  and that $F_0^E=3E$, $F^E_{q(E)}=3E_o$. (We drop the reference to $E$ when $E$ is clearly fixed.)
 \begin{enumerate}
 \item  For each $E$, when $\rho_i <1$ we have $B(z_i,\rho_i)=\{z_i\}$ and
 $$1+d(z_0,z_i) \le  4/(3\alpha \eta).$$
 
\item For each $E$ and $i\in \{1,\dots, q(E)-1\}$ such that  $\max\{\rho_i,\rho_{i+1}\} < 1$, we have
$$ |f_{F_i}-f_{F_{i+1}}|^2 =|f(z_{i})-f(z_{i+1})|^2 \le \frac{P_e}{\pi(z_i)} \sum_{z\sim z_i, z\in U} |f(z)-f(z_i)|^2\mu_{zz_i}.$$

\item  For  each $E$ and $i\in \{1,\dots, q(E)-1\}$ such that  $\max\{\rho_i,\rho_{i+1}\}\ge 1$, we have 
 $$|f_{F_i}-f_{F_{i+1}}|^2 \le 2D^6 P (8\rho_i)^\theta \frac{1}{\pi(F_i)} \sum_{x,y\in 8F_{i}, x\sim y}|f(x)-f(y)|^2\mu_{xy},$$
 for any function $f$ on $U$. \end{enumerate}
 \end{lem}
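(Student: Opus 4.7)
My plan is to handle the three parts separately. For part (1), the sizing identity $\rho_i = (3\eta/4)\delta(z_i)$ together with $\rho_i < 1$ gives $\delta(z_i) < 4/(3\eta)$, while the paper's singleton convention forces $F_i = \{z_i\}$. Because $F_i$ meets $\gamma^E$ by construction, $z_i$ itself lies on $\gamma^E = \gamma_{z_0}$, say $z_i = x_{j_i}$. The John condition then gives $\alpha(1 + j_i) \le \delta(x_{j_i}) = \delta(z_i) < 4/(3\eta)$, and since $d(z_0, z_i) \le j_i$ (graph distance is bounded by path length, with $z_0 = x_0$), the bound follows. For part (2), both $F_i = \{z_i\}$ and $F_{i+1} = \{z_{i+1}\}$ are singletons, so $f_{F_i} = f(z_i)$, and $d(F_i,F_{i+1})\le 1$ forces $z_i = z_{i+1}$ (trivial) or $z_i \sim z_{i+1}$. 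In the latter case, ellipticity $P_e\,\mu_{z_i z_{i+1}} \ge \pi(z_i)$ gives the claimed inequality immediately after dividing and multiplying by $\mu_{z_i z_{i+1}}$ and extending the sum to all neighbours in $U$ (the extra terms are nonnegative; note $z_{i+1} \in U$ because tripled Whitney balls sit inside $U$ when $\eta<1/4$).

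Part (3) is the main obstacle and uses a standard enlargement-plus-Poincar\'e technique. Without loss of generality assume $\rho_i \ge \rho_{i+1}$, so $\rho_i \ge 1$. The first structural step is to show $F_i, F_{i+1} \subset B^\ast := 8F_i$: for any $y \in F_{i+1}$, the chain condition gives
$d(y,z_i) \le \rho_{i+1} + d(z_{i+1}, z_i) \le 2\rho_{i+1} + \rho_i + 1 \le 4\rho_i$.
The second step is to compare $\pi(F_i)$ and $\pi(F_{i+1})$ via doubling. The hypothesis $\max\{\rho_i,\rho_{i+1}\} \ge 1$ is exactly what forces $\rho_i + \rho_{i+1} + 1 \le 2\rho_i + 2\rho_{i+1}$, so the enlarged Whitney balls $2F_i$ and $2F_{i+1}$ intersect. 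Lemma~\ref{lem-W}(3) (applicable since the enlargement factor is $6$ in its notation and $\eta < 1/4$) then yields $\rho_i/\rho_{i+1} \le 3$, and volume doubling gives $\pi(F_i)/\pi(F_{i+1}) \le D^c$ for an explicit small $c$.

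The final step combines these with the standard two-step variance inequality: Jensen gives $|f_A - f_{B^\ast}|^2 \le \pi(A)^{-1}\sum_{B^\ast}|f - f_{B^\ast}|^2\pi$ for $A = F_i, F_{i+1}$, and the elementary bound $|a-b|^2 \le 2a^2 + 2b^2$ then produces
\[
|f_{F_i} - f_{F_{i+1}}|^2 \le 2\left(\frac{1}{\pi(F_i)} + \frac{1}{\pi(F_{i+1})}\right) \sum_{B^\ast} |f - f_{B^\ast}|^2\pi.
\]
Applying the ball Poincar\'e inequality on $B^\ast$ (of radius $8\rho_i$) produces the factor $P(8\rho_i)^\theta \mathcal{E}_{\mu,B^\ast}(f,f)$, and absorbing $\pi(F_{i+1})^{-1}$ into $D^6\pi(F_i)^{-1}$ via the volume comparison completes the argument. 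The crux is that the bare chain condition $d(F_i,F_{i+1})\le 1$ is too weak for Lemma~\ref{lem-W}(3) on its own; the scale assumption $\max\{\rho_i,\rho_{i+1}\} \ge 1$ is precisely the lynchpin that rescues the Whitney comparison and makes the doubling-based volume estimate uniform.
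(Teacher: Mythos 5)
Parts (1) and (2) are fine and essentially identical to the paper's (the paper dismisses (2) as ``clear''; your ellipticity argument is the intended one). Part (3) also follows the paper's general route (contain both balls in $8F_i$, compare radii via Lemma \ref{lem-W}(3) after noting that $2F_i\cap 2F_{i+1}\neq\emptyset$, then Jensen plus the ball Poincar\'e inequality on $8F_i$ and doubling); your detour through $f_{8F_i}$ with $|a-b|^2\le 2a^2+2b^2$ instead of the paper's double-average over $F_i\times F_{i+1}$ is only a cosmetic difference.

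However, there is a genuine gap in part (3): the reduction ``without loss of generality $\rho_i\ge\rho_{i+1}$'' is not legitimate, because the inequality you must prove is \emph{not} symmetric in $i$ and $i+1$ — the right-hand side involves $(8\rho_i)^\theta$, $\pi(F_i)$ and a sum over $8F_i$, i.e.\ the \emph{first} ball of the chain specifically. In the uncovered case $\rho_{i+1}>\rho_i$ (in particular $\rho_i<1\le\rho_{i+1}$ is possible), your containment step breaks: the chain of inequalities $d(y,z_i)\le 2\rho_{i+1}+\rho_i+1\le 4\rho_i$ uses both $\rho_{i+1}\le\rho_i$ and $\rho_i\ge 1$, and if you try to repair it with the radius comparison you quote from Lemma \ref{lem-W}(3), the crude ratio $\rho_{i+1}\le 3\rho_i$ only gives $d(y,z_i)\le 10\rho_i$, which does not place $F_{i+1}$ inside $8F_i$. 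This is exactly the delicate case for which the paper inserts a preliminary step: it first proves $\min\{\delta(z_i),\delta(z_{i+1})\}\ge \frac{5}{6\eta}$ (equivalently $\min\{\rho_i,\rho_{i+1}\}\ge 5/8$), by the singleton-plus-adjacency argument when one of the two radii is below $1$, and then uses the sharper ratio $\max\{\rho_{i+1}/\rho_i,\rho_i/\rho_{i+1}\}\le 11/5$ coming from the explicit constants in Lemma \ref{lem-W}(3) with $\rho=6$; together these yield $\rho_i+2\rho_{i+1}+1\le 7\rho_i\le 8\rho_i$ in \emph{all} cases, hence $F_i\cup F_{i+1}\subset 8F_i$, and also the volume comparability $\pi(F_i)\le D^3\pi(F_{i+1})$ needed to land on the stated constant $2D^6$. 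Your argument can be repaired along the same lines (in the missing case $\rho_{i+1}\ge 1$ and $\rho_{i+1}\le\frac{11}{5}\rho_i$ already give $1\le\frac{11}{5}\rho_i$ and then $\rho_i+2\rho_{i+1}+1\le\frac{38}{5}\rho_i<8\rho_i$), but as written the case split is incomplete, and the missing half is where the hypothesis $\max\{\rho_i,\rho_{i+1}\}\ge 1$ genuinely has to be exploited beyond forcing the doubled balls to intersect.
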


 \begin{proof} In the first statement we have $\rho_i=\rho(z_i)= 3\eta \delta(z_i)/4 <1$. Because  $U\in J(\alpha)$, $E_0=B(z_0,r_0)=E$ and  $z_i$ must be on $\gamma^E=\gamma_{z_0}$, 
 $$\delta(z_i)\ge \alpha (1+ d(z_i,z_0)).$$   It follows that $ 1+d(z_0,z_i)\le  4/(3\alpha \eta)$. 
 
 The second statement is clear. 
 
 For the third statement, we need some preparation. First we obtain the lower bound 
 $$\min\{\delta(z_i),\delta(z_{i+1})\}\ge \frac{5}{6\eta},$$
 based on the assumption that $\max \{\rho_i,\rho_{i+1}\}\ge 1$.
If both $\rho_i,\rho_{i+1}$ are
at least $1$, there nothing to prove. If 
is one of them is less than $1$, say $\rho_i<1$, then $F_i=B(z_i,\rho_i)=\{z_i\}$
and $d(z_i,F_{i+1})\le 1$.  It follows that
$$\frac{4}{3\eta}\le  \frac{4}{3\eta}\rho_{i+1}= \delta(z_{i+1}) \le 1+ \rho_{i+1}+\delta(z_i).$$ But $\rho_{i+1}= (3/4\eta) \delta(z_{i+1})$, so $$\left(1-\frac{3\eta}{4}\right) \delta(z_{i+1})\le 1+\delta(z_i)$$
and (using the fact that $\eta\le 1/4$)
$$ \frac{5}{6\eta}\le \frac{4}{3\eta}- 2\le \delta(z_i).$$ 
This shows that $\min\{\rho_i,\rho_{i+1}\} \geq \frac{5}{8}$ because $$\min\{\rho_i,\rho_{i+1}\} = \frac{3\eta}{4}\min\{\delta(z_i),\delta(z_{i+1})\} \ge \frac{3\eta}{4}\frac{5}{6\eta}=5/8.$$
 
Next, we show that  
$$F_i\cup F_{i+1}\subset 8F_i \cap 8F_{i+1}\subset U .$$  
By assumption, the balls $B(z_{j+1}, 6r_{j+1})$ and $B(z_j,6r_j)$ intersect. Applying Lemma \ref{lem-W}(3) with $\rho=6$ and $\eta \leq 1/4$ gives that
 $5/11\le r_{j+1}/r_j\le 11/5$  and it follows that
 $$   \max \left\{ \frac{\rho_{i+1}}{\rho_i}, \frac{\rho_{i}}{\rho_{i+1}}\right\}\le 11/5.$$
Moreover, because $d(F_i,F_{i+1})\le 1$, we have
 $$ \max\{d(z_i,z):z\in F_{i+1})\le \rho_i+ 2\rho_{i+1} +1\le  8\rho_i$$ and similarly, $$ \max\{d(z_{i+1},z):z\in F_{i} \} \le \rho_{i+1}+2\rho_i+1\le 8\rho_{i+1}.$$ 
 It follows that $F_i\cup F_{i+1}\subset 8F_i\cap 8F_{i+1}\subset U$. 
 Now, we are ready to prove the inequality stated in the lemma. Write
 \begin{eqnarray*} |f_{F_i}-f_{F_{i+1}}|^2 &=& \left|\frac{1}{\pi(F_i)\pi(F_{i+1})} \sum_{\xi\in F_i,\zeta\in F_{i+1}} [f(\xi)-f(\zeta)] \pi(\xi)\pi(\zeta) \right|^2\\
 &\le & \frac{1}{\pi(F_i)\pi(F_{i+1})} \sum_{\xi, \zeta\in 8F_i} |f(\xi)-f(\zeta)| ^2\pi(\xi)\pi(\zeta) \\
 &= & \frac{2\pi(8F_i)}{\pi(F_i)\pi(F_{i+1})} \sum_{\xi \in 8F_i} |f(\xi)-f_{8F_i}|^2 \pi(\xi)\\
 &\le & \frac{2 P \pi(8F_i)(8\rho_i)^\theta}{\pi(F_i)\pi(F_{i+1})} \sum_{x,y\in 8F_i} |f(x)-f(y)|^2\mu_{xy}  \\ 
 &\le & \frac{2 D^6 P (8\rho_i)^\theta}{\pi(F_i)} \sum_{x,y\in 8F_i} |f(x)-f(y)|^2\mu_{xy}    \end{eqnarray*} 
 \end{proof}

\begin{theo}  \label{th-P1}
Fix $\alpha, \theta, D,P,  >0$.  Assume that $(\mathfrak X,\mathfrak E,\pi,\mu)$ 
is adapted, elliptic, and satisfies the doubling property with constant $D$ and the ball Poincar\'e inequality with parameter $\theta$ and constant $P$. Assume that the finite  domain $U$ and the point $o\in U$ are such that $U\in J(o,\alpha,R)$, $R>0$. Then there exist a constant  $C$ depending only on $\alpha, \theta, D,P$ and such that
$$\sum_{U}|f(x)-f_U|^2\pi(x) \le P(U) \mathcal E_{\mu,U}(f,f)$$
with 
$$P(U) \le C R^\theta  \mbox{ with }   C= 4^{-\theta}2PD^5+16 D^{14+ 2\log(2\kappa)}  \max\{ R^{-\theta} P_e, 2^{\theta}2PD^6\} $$
where $\kappa = 84/\alpha$.  In particular, 
$$C\le 17 D^{30+2\log_2(1/\alpha)} \max\{R^{-\theta}P_e,2^\theta P\}.$$
\end{theo}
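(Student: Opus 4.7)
The plan is to combine the Whitney covering tools of Section~\ref{sec-WC} with the ball Poincaré inequality and a chaining argument along the sequences $\mathcal{W}'(E)$ linking each Whitney ball to the central ball $3E_o$ via the John path. I fix $\eta\in(0,1/4)$ once and for all (small enough to validate Lemmas~\ref{lem-W}--\ref{lem-WN3}, e.g.\ $\eta=1/12$ to match $\kappa=84/\alpha$). The first preparatory step is to trade the average $f_U$ for the average $f_{3E_o}$ on the central ball, at the cost of a factor $\pi(U)/\pi(3E_o)$ bounded by doubling applied to the John radius ($\delta(o)\gtrsim\alpha R$). Using $(a+b)^2\le 2a^2+2b^2$ and the covering $U=\bigcup_{E\in\mathcal W}3E$, I then split
\[
\sum_U |f-f_{3E_o}|^2\pi \;\le\; 2\underbrace{\sum_{E\in\mathcal W}\sum_{x\in 3E}|f(x)-f_{3E}|^2\pi(x)}_{(\mathrm{I})} \;+\; 2\underbrace{\sum_{E\in\mathcal W}\pi(3E)\,|f_{3E}-f_{3E_o}|^2}_{(\mathrm{II})}.
\]

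Term $(\mathrm{I})$ is immediate from the ball Poincaré inequality applied to each $3E$ (of radius at most $3\eta(2R+1)/4$ by Lemma~\ref{lem-Rad}); after summing with bounded overlap via Lemma~\ref{lem-WN}, it contributes a multiple of $R^\theta\mathcal{E}_{\mu,U}(f,f)$ and accounts for the $4^{-\theta}2PD^5$ piece of the announced constant. The real work is in $(\mathrm{II})$. Along the Whitney chain $\mathcal{W}'(E)=(F_0=3E,F_1,\dots,F_{q(E)}=3E_o)$, Cauchy--Schwarz gives the telescoping bound
\[
|f_{3E}-f_{3E_o}|^2 \;\le\; q(E)\sum_{i=0}^{q(E)-1}|f_{F_i}-f_{F_{i+1}}|^2,
\]
with $q(E)\le C R$ since the chain shadows the John path. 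Each link is then controlled by Lemma~\ref{lem-WN3}: ``small'' links, where both balls are singletons, produce a local edge contribution weighted by ellipticity $P_e$; ``large'' links, with $\max\{\rho_i,\rho_{i+1}\}\ge 1$, produce $2D^6 P (8\rho_i)^\theta\pi(F_i)^{-1}\mathcal{E}_{\mu,8F_i}(f,f)$.

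The main obstacle, and the heart of the argument, is the combinatorial swap of summations: after substitution, I must control, for each fixed $F\in\mathcal W'$,
\[
\sum_{E\in\mathcal W\,:\,F\in\mathcal W'(E)} \pi(3E).
\]
This is exactly what Lemma~\ref{lem-WN2} is designed for: every such $E$ satisfies $E\subseteq \kappa F$ with $\kappa=7/(\alpha\eta)$, and since the balls of $\mathcal W$ are pairwise disjoint, the sum is bounded by $\pi(\kappa F)\le D^{1+\log_2\kappa}\pi(F)$. After interchanging sums, each $F$ therefore contributes its Dirichlet-form-on-$8F$ piece weighted by $q(E)\cdot (8\rho(F))^\theta\cdot \pi(\kappa F)/\pi(F)$, and the two factors of $R$ (one from $q(E)$, another allowed into $\rho(F)^\theta$ via $\rho(F)\lesssim R$) combine into the $R^\theta$ scaling after one redistributes $R$ across links using the uniform bound $q(E)\le CR$ together with the ball-radius bound in Lemma~\ref{lem-Rad}.

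Finally, the small-link pieces collect into the $R^{-\theta}P_e$ contribution (they appear only where $\rho_i=O(1)$, which forces $R^\theta$ in the denominator to match the common $R^\theta$ scaling), while the large-link pieces collect into the $2^\theta P$ contribution; this is why the constant appears in the form $\max\{R^{-\theta}P_e,2^\theta P\}$. Reconstituting the full Dirichlet form $\mathcal{E}_{\mu,U}(f,f)$ on the right-hand side requires a second application of Lemma~\ref{lem-WN} to bound the overlap of the blown-up balls $8F$, producing the factor $D^{14+2\log_2(2\kappa)}$. Collecting constants gives the quoted bound $P(U)\le CR^\theta$ with $C\le 17D^{30+2\log_2(1/\alpha)}\max\{R^{-\theta}P_e,2^\theta P\}$; I expect no conceptual difficulty beyond the careful bookkeeping of exponents of $D$ and $\alpha$ in the chaining/counting step.
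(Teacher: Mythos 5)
Your setup (Whitney covering, splitting into the local term (I) and the chain term (II), bounding each link via Lemma~\ref{lem-WN3}, and the observation that $E\subset\kappa F$ from Lemma~\ref{lem-WN2} plus disjointness of $\mathcal W$ controls $\sum_{E:\,F\in\mathcal W'(E)}\pi(3E)\lesssim \pi(\kappa F)$) matches the paper's architecture. But the way you pass from the telescoping sum to a square is where the argument breaks. Using plain Cauchy--Schwarz, $|f_{3E}-f_{3E_o}|^2\le q(E)\sum_i|f_{F_i}-f_{F_{i+1}}|^2$, and then inserting the link bound $\lesssim \rho_i^\theta\pi(F_i)^{-1}\mathcal E_{8F_i}$ and your counting step yields, after the swap of sums,
\[
(\mathrm{II})\;\lesssim\;\Bigl(\max_E q(E)\Bigr)\sum_{F\in\mathcal W'}\rho(F)^\theta\,\mathcal E_{8F}(f,f)\;\lesssim\;\Bigl(\max_E q(E)\Bigr)\,R^\theta\,\mathcal E_{\mu,U}(f,f),
\]
so with your bound $q(E)\le CR$ you obtain $R^{1+\theta}$, not $R^\theta$. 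The claim that ``the two factors of $R$ combine into the $R^\theta$ scaling after one redistributes $R$ across links'' is exactly the missing step, and as stated it is false: $q(E)\cdot\rho(F)^\theta$ is of order $R\cdot R^\theta$. (Incidentally $q(E)$ is in fact $O(\log R)$, not $O(R)$ --- this is the counting estimate \eqref{logcount} in Lemma~\ref{lem-RC}, since the chain balls intersect a single John path and their radii grow essentially geometrically --- but even that only improves your bound to $(\log R)\,R^\theta$, still short of the theorem and of its explicit constant.)

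Two repairs are possible. The paper's route avoids Cauchy--Schwarz on the chain altogether: it keeps the bound $|f_{3E}-f_{3E_o}|\mathbf 1_E\le QR^{\theta/2}\sum_i G(F^E_i,f)\,\mathbf 1_E\mathbf 1_{\kappa F^E_i}$ as an inequality between functions on $\mathfrak X$, sums over $E$ using disjointness of $\mathcal W$ (so $\sum_E\mathbf 1_E\le 1$), and then applies the maximal-function estimate of Proposition~\ref{prop-sumballs} with $p=2$ to replace the dilated indicators $\mathbf 1_{\kappa F}$ by $\mathbf 1_{F}$ (in fact by the disjoint cores $\mathbf 1_E$) inside the $L^2$ norm; this is the classical Boman-chain/maximal-function mechanism and is precisely what lets the chain length drop out. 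Alternatively, one can salvage a Cauchy--Schwarz argument by weighting it, e.g. $|\sum_i a_i|^2\le(\sum_i\rho_i^{\epsilon})(\sum_i\rho_i^{-\epsilon}a_i^2)$ for a small $\epsilon>0$, using that along a John chain the radii $\rho_i$ increase geometrically up to bounded multiplicity per dyadic scale, so $\sum_i\rho_i^{\epsilon}\lesssim R^{\epsilon}$ and $\rho_i^{\theta-\epsilon}\le(CR)^{\theta-\epsilon}$, which restores the $R^\theta$ scaling --- but this exploitation of the geometric growth of Whitney radii is nowhere in your write-up and must be supplied. A minor further point: no factor $\pi(U)/\pi(3E_o)$ is needed to replace $f_U$ by $f_{3E_o}$, since $f_U$ minimizes $c\mapsto\sum_U|f-c|^2\pi$, so that substitution is free.
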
 
\begin{proof}  We pick a Whitney covering with $\eta=1/12$.
Recall from Lemma \ref{lem-Rad} that all balls in $\mathcal W$ have radius at most $R/16$.
It suffices to bound $\sum_U|f-f_{3E_o}|^2\pi$ because 
 $$\sum_U|f-f_{U}|^2\pi=\min_{c}\left\{ \sum_U|f-c|^2\pi\right\}.$$ 
 The balls in $\mathcal W'$ cover $U$ hence $$\sum_U|f-f_{3E_o}|^2\pi \le \sum _{E\in W} \sum_{ 3E} |f-f_{3E_o}|^2\pi.$$  
Next, using the fact that $(a+b)^2\le 2(a^2+b^2)$, write
$$\sum _{E\in W} \sum_{3E}|f-f_{3E_o}|^2\pi \le 2\left(\sum _{E\in W} \sum_{3E}|f-f_{2E}|^2\pi\right)+ 2\sum _{E\in W} \pi(3E)|f_{3E}-f_{3E_o}|^2.$$
 
 We can bound and collect the first part of the right-hand side very easily because, using the Poincar\'e inequality in balls of radius at most $3R/16\le R/4$ and then Lemma~\ref{lem-WN}, we have
 \begin{eqnarray}
  \sum _{E\in W} \sum_{ 3E} |f-f_{3E}|^2\pi &\le & P (R/4)^\theta \sum_{E\in W} \sum_{\substack{x,y\in 3E \\ x\sim y}}|f(x)-f(y)|^2\mu_{xy}  \nonumber \\
& \le & PD^5 (R/4)^\theta  \mathcal E_{\mu,U}(f,f).  \label{crux1} \end{eqnarray}
 This reduces  the proof to bounding 
$$\sum_{E\in W} \pi(3E)|f_{3E}-f_{3E_o}|^2.$$
For this, we will use the  chain of balls  $\mathcal W'(E) =(F^E_0,\dots, F^E_{q(E)})$ to write
$$|f_{3E}-f_{3E_o}|\le \sum_{0}^{q(E)-1} |f_{F^E_i}-f_{F^E_{i+1}}|.$$
\begin{nota} For any function $f$ on $U$ and any ball $ F=B(x,\rho)\in \mathcal W'$ set
 $$G(F,f)=  \left(\frac{1}{\pi(F)} \sum_{x\in 8F\cap U}\sum_ {y\sim x, y\in U}|f(x)-f(y)|^2\mu_{xy}\right)^{1/2}.$$
 \end{nota}
With this notation, Lemma \ref{lem-WN3}(2)-(3)  yields 
$$ |f_{F^E_i}-f_{F^E_{i+1}}| \le Q R^{\theta/2}  G(F^E_i,f),$$
where $Q^2=\max\{ R^{-\theta} P_e, 2^{\theta}2PD^6\}$. With $\kappa $ as in Lemma \ref{lem-WN2}, this becomes
$$ |f_{3E}-f_{3E_o}|  \mathbf 1_E \le  QR^{\theta/2} \sum_{0}^{q(E)-1} G(F^E_i,f)  \mathbf 1_E \mathbf 1_{\kappa F^E_i}.$$
Write
\begin{eqnarray*}
\lefteqn{\sum_{E\in W} \pi(3E)|f_{3E}-f_{3E_o}|^2\le  D^2\sum_{E\in W}\sum_U |f_{3E}-f_{3E_o}|^2 \mathbf 1_{E}(x) \pi(x)}&& \\
&\le &Q^2D^2 R^\theta\sum_{E\in W}\sum_U \left|     \sum_{0}^{q(E)-1}    G(F^E_i,f) \mathbf 1_{\kappa F^E_i} (x)   \right|^2 \mathbf 1_{E}(x) \pi(x) \\
&\le &  Q^2D^2 R^\theta\sum_{E\in W}\sum_U \left|     \sum_{F\in \mathcal W'}   G(F,f) \mathbf 1_{\kappa F}  (x)  \right|^2 \mathbf 1_{E}(x) \pi(x) \\
&\le &  Q^2D^2 R^\theta \sum_{\mathfrak X} \left|     \sum_{F\in \mathcal W'}   G(F,f) \mathbf 1_{\kappa F}  (x)  \right|^2  \pi(x) \end{eqnarray*}
 where the last step follows from the observation that $\sum_{E\in \mathcal W}\mathbf 1_E\le 1$ because the balls in $\mathcal W$ are pairwise disjoint.

By Proposition \ref{prop-sumballs}  and the fact that the balls in $\mathcal W$ are disjoint, we have
\begin{eqnarray*} \lefteqn{
\sum_{\mathfrak X} \left|\sum_{E\in  \mathcal W} G(3E,f)  \mathbf 1_{3\kappa E} (x)  \right|^2  \pi(x) } &&\\
&\le &   8 D^{4+2\log_2(2\kappa)} \sum_{\mathfrak X} \left|\sum_{E\in  \mathcal W} G(3E,f)  \mathbf 1_{ E}  (x)  \right|^2  \pi(x) \\
&=& 8 D^{4+2\log_2(2\kappa)}  \sum_{E\in  \mathcal W} G(3E,f)^2 \pi(E) \\
&=&  8 D^{4+2\log_2(2\kappa)} \sum_{E\in  \mathcal W} \sum_{x,y\in 24 E\cap U}|f(x)-f(y)|^2\mu_{xy}
\end{eqnarray*}
 By Lemma \ref{lem-WN} (note that $2/\eta=24$),  for each $x\in \mathfrak X$, there are at most  $D^8$ balls $E$ in $\mathcal W$ such that  $24E$ contains $x$.
This yields
 $$\sum_{\mathfrak X} \left|\sum_{F\in  \mathcal W} G(2F,f)  \mathbf 1_{2\kappa F} (x)  \right|^2  \pi(x)   \le
 8 D^{12+ 2\log(2\kappa)} \mathcal E_{U,\mu} (f,f).$$
  Collecting all terms gives Theorem \ref{th-P1} as desired.
 \end{proof}

 \subsection{$\mathcal Q$-Poincar\'e inequality for John domains}
 
For any $s\ge 1$, fix  a scale-$s$ Whitney covering  $\mathcal W_s$ with Whitney parameter $\eta<1/4$. For our purpose, we can restrict ourselves to integer parameters $s$  no greater than  $2R+1$ which results in making only finitely many choice of coverings. Recall that $\mathcal W_s$ is the disjoint union of $\mathcal W_{=s}$ (balls of radius exactly $s$) and $\mathcal W_{<s}$ (balls of radius strictly less than $s$).  As before, we denote by $\mathcal W_{s}',
\mathcal W_{=s}'$ and $\mathcal W_{<s}'$, the sets of balls obtained by tripling the radius of the balls in $\mathcal W_{s},
\mathcal W_{=s}$ and $\mathcal W_{<s}$.

Fix a ball $E^s_o$ in $\mathcal W_s$ such that $3E^s_o$ contains the point $o$.  For any ${E=B(z,r) \in \mathcal W_s}$, select a finite sequence $$\mathcal W_s'(E)=(F^{s,E}_0,\dots,F^{s,E}_{q_s(E)})=(F_0,\dots,F_{q(E)})$$ of distinct balls 
 $F^{s,E}_i=F_i\in \mathcal W'_s$ (for $0\le i\le q_s(E)$)
such that   $F^{s,E}_0=3E$, $F^E_{q(E)}=3E^s_o$, $F^{s,E}_i$ intersects $\gamma^E$ and  $d(F^{s,E}_{i+1}, F^{s,E}_i)\le 1$ ($0\le i\le q_s(E)-1$).  
This is obviously possible since the balls in $\mathcal W'_s$ cover $U$. When the parameter $s$ and  the ball $E$ are fixed, we drop the supscripts $s,E$ from the notation $F_i^{s,E}$.  We only need a portion of this sequence, namely, 
\begin{equation}\label{chain}
W'_{<s}(E)=(F^{s,E}_0,\dots, F^{s,E}_{q^*_s(E)})\end{equation} where $q^*_s(E)$ is the smallest index $j$ such that  $r_j=s$. If no such $j$ exists, set $q^*_s(E)=q(E)$.  
For  future reference, we call these sequences of balls {\em local s-chains}. Namely, the sequence $W'_{<s}(E)$ is the local s-chain for $E$ at scale $s$.

We set
$$F(s,E)=F^{s,E}_{q^*_s(E)},$$
to be the last ball in the local s-chain of $E$. For each $x$, choose a ball $E(s,x)\in \mathcal W_s$ with maximal radius among those $E\in \mathcal W_s$ such that $3E$ contains $x$ and set 
$$F(s,x)=\left\{\begin{array}{cl} 3E(s,x) & \mbox{ when } x\in \bigcup_{E\in \mathcal W_{=s}}3E,\\
F(s,E(s,x)) & \text{otherwise.} \end{array}\right.$$
The ball $F(s,x)$ is, roughly speaking, chosen among those balls  of radius $3s$ in the Whitney covering that are not too far from $x$ and away from the boundary of $U$ --- for points $x$ near the boundary, where the Whitney balls have radius less than $s$, $F(s,x)$ is the last ball in the local s-chain of $E \in \mathcal{W}_s$, where $3E$ covers $x$.

\begin{defin}  \label{def-Q}  For $s\in [0,1]$, set $Q_s=I$ (i.e., $Q_sf=f$).
For any $s>1$, define the averaging operator 
$$Q_sf(x)=\sum_y Q_s(x,y)f(y)\pi(y)$$ by setting 
$$Q_s(x,y)= 
\frac{1}{\pi(F(s,x))}\mathbf 1_{F(s,x)}(y).$$
  \end{defin}
  
Next we collect the $s$-version of the  statements analogous  to Lemmas  \ref{lem-WN} and~\ref{lem-WN2}. The proofs are the same.
\begin{lem} \label{lem-WPP}Fix $\eta <1/4$ and $\rho\le 2/\eta$.
For any $s>0$, the following properties hold.
\begin{enumerate}
\item  
Any point $z\in U$ is contained in at most  $D^{1+\log_2(4\rho+3)}$  distinct balls  $\rho E$ with $E\in \mathcal W_s$.
\item 
For any ball $E=B(x,r(x))\in \mathcal W_{<s}$ and any ball ${F=B(y,3r(y))\in \mathcal W'_{<s}(E)}$ we have 
$E\subset  \kappa F$ with $\kappa = 7\alpha^{-1}\eta^{-1}$.
\end{enumerate}
\end{lem}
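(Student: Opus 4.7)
The plan is to replay the proofs of Lemmas~\ref{lem-WN} and~\ref{lem-WN2} essentially verbatim, adjusting only to accommodate the cap $r(x) = \min\{s, \eta\delta(x)/4\}$ that distinguishes $\mathcal W_s$ from the standard Whitney covering $\mathcal W$.

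For Part (1), I will first verify that the radius-ratio bound underpinning the proof of Lemma~\ref{lem-WN} survives the cap: namely, that $\rho E_i \cap \rho E_j \neq \emptyset$ with $\rho \le 2/\eta$ still forces $r_i/r_j \le 3$. Lemma~\ref{lem-W}(3), already stated for $\mathcal W_s$, provides $\delta(x_i)/\delta(x_j) \le 3$, and a short case analysis closes the gap: if $r_j = s$ then $r_i \le s = r_j$ automatically, while if $r_j = \eta\delta(x_j)/4$ then $r_i \le \eta\delta(x_i)/4 \le 3 r_j$. Once this ratio bound is in hand, the original argument transfers unchanged: for $N$ balls $\rho E_i$ sharing a common point $z \in U$, pick $j$ minimizing $\pi(E_j)$, observe that $\bigcup_i E_i \subset B(x_j, (4\rho+3)r_j)$, and combine pairwise disjointness of the $E_i$ with the doubling property to conclude $N \le D^{1+\log_2(4\rho+3)}$.

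For Part (2), I will follow the proof of Lemma~\ref{lem-WN2} step by step. The hypothesis $E \in \mathcal W_{<s}$ means $r(x) = \eta\delta(x)/4$, and I will interpret $F = B(y, 3r(y)) \in \mathcal W'_{<s}(E)$ as arising from a Whitney ball still governed by the uncapped relation $r(y) = \eta\delta(y)/4$ (equivalently $\delta(y) = 4r(y)/\eta$). Because $F$ intersects $\gamma^E$, there is a point $z \in F$ on the John-path with $\delta(z) \ge \alpha(1 + d(z,x))$, and the triangle inequality gives $\delta(y) \ge \delta(z) - 3r(y)$. Substituting the uncapped identity yields $(4/\eta + 3)r(y) \ge \alpha(1 + d(z,x))$, which in turn bounds $\delta(x) \le \delta(y) + d(x,y)$ and then $r(x)$. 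Combining these estimates reproduces the computation of Lemma~\ref{lem-WN2} and delivers $E \subset B(y, 3\kappa r(y)) = \kappa F$ with $\kappa = 7\alpha^{-1}\eta^{-1}$.

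The only subtle point is the radius-ratio verification under the cap in Part (1); once that case check is settled, both parts reduce to mechanical transfers of earlier arguments, so I do not anticipate a genuine obstacle. Part (2) is almost purely notational, provided one reads $\mathcal W'_{<s}(E)$ as the portion of the local $s$-chain~\eqref{chain} whose underlying Whitney centers still satisfy the uncapped relation $r = \eta\delta/4$.
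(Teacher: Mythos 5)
Your overall strategy is exactly the paper's (the paper simply asserts that the proofs of Lemmas~\ref{lem-WN} and~\ref{lem-WN2} carry over), and your Part (1) is complete: the case check $r_i\le 3r_j$ for intersecting $\rho$-dilates under the cap is precisely what is needed, and the rest of the covering/doubling argument transfers verbatim.

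There is, however, a genuine (if small) gap in Part (2). You stipulate that every $F\in\mathcal W'_{<s}(E)$ has an uncapped center, i.e.\ $r(y)=\eta\delta(y)/4$. But by the definition \eqref{chain}, the local $s$-chain runs up to the index $q^*_s(E)$, and when that index exists the terminal ball $F(s,E)=F^{s,E}_{q^*_s(E)}$ has $r(y)=s\le \eta\delta(y)/4$, so the identity $\delta(y)=4r(y)/\eta$ that drives the computation of Lemma~\ref{lem-WN2} is only an inequality in the wrong direction, and your argument does not cover this ball even though the statement of the lemma includes it. The fix is short: since $E\in\mathcal W_{<s}$ forces $F_0=3E$ to be uncapped, the first capped ball is adjacent (within distance $1$) to an uncapped ball $F_{q^*_s-1}=B(x',3r')$ with $\delta(x')=4r'/\eta< 4s/\eta$; the triangle inequality then gives $\delta(y)\le \delta(x')+3r'+3s+1\le (4/\eta+7)\,r(y)$, which restores the upper bound on $\delta(y)$ (and hence on $\delta(z)$ for the witness point $z\in F\cap\gamma^E$) that the Lemma~\ref{lem-WN2} computation needs; the resulting constants are slightly larger than in the uncapped case but, since $\kappa F=B(y,3\kappa r(y))$ with $\kappa=7\alpha^{-1}\eta^{-1}$ and $\eta<1/4$, there is ample slack and the same $\kappa$ works. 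You should either add this adjacency argument or note explicitly that you prove the lemma only for the uncapped members of the chain; as a mitigating observation, the only places the lemma is invoked (the proofs of Theorems~\ref{th-PP1} and~\ref{th-PP1controlled}) use the inclusion $E\subset\kappa F^{s,E}_i$ only for $i\le q^*_s(E)-1$, i.e.\ for uncapped balls, so the restricted version suffices for the applications, but not for the lemma as stated.
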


The $s$-version of Lemma \ref{lem-WN3} is as follows. The proof is the same.
 \begin{lem}  \label{lem-WPP3}
 Fix  $\eta\le 1/4$.  For each  $s\ge 1$, and $E\in \mathcal W_{<s}$, the sequence 
 $$\mathcal W'_{<s}(E)=(F^{s,E}_0,\dots, F^{s,E}_{q^*_s(E)})$$ has the following properties. Set
 $i\in \{0,\dots, q^*_s(E)\}$,  $F^{s,E}_i=B(z^{s,E}_i,\rho^{s,E}_i)$ with $\rho^{s,E}_i=3r^{s,E}_i= 3\min\{s,\eta \delta(z^E_i)/4\}$  and that $F_0^{s,E}=3E$, $F^{s,E}_{q^*_s(E)}=F(s,E)$. We drop the reference to $s$ and $E$ when they are clearly fixed.
 \begin{enumerate}
 \item  For each $E\in W_{<s}$, when $\rho_i <1$ we have $B(z_i,\rho_i)=\{z_i\}$ and
 $$1+d(z_0,z_i) \le  4/(3\alpha \eta).$$
 
\item For each $E\in W_{<s}$ and $i\in \{1,\dots, q^*_s(E)-1\}$ such that  $\max\{\rho_i,\rho_{i+1}\} < 1$, we have
$$ |f_{F_i}-f_{F_{i+1}}|^2 =|f(z_{i})-f(z_{i+1})|^2 \le \frac{P_e}{\pi(z_i)} \sum_{z\sim z_i, z\in U} |f(z)-f(z_i)|^2\mu_{zz_i}.$$

\item  For  each $E\in W_{<s}$ and $i\in \{1,\dots, q^*_s(E)-1\}$ such that  $\max\{\rho_i,\rho_{i+1}\}\ge 1$ we have 
  $\min\{\delta(z_i),\delta(z_{i+1})\}\ge 4/(9\eta) $, $\min\{\rho_i,\rho_{i+1}\}\ge 1/3$  and  $$F_i\cup F_{i+1}\subset 8F_i\subset U.$$  
 Furthermore, for  any function $f$ on $U$, 
 $$|f_{F_i}-f_{F_{i+1}}|^2 \le 2D^6 P (8\rho_i)^\theta \frac{1}{\pi(F_i)} \sum_{x,y\in 8F_{i}, x\sim y}|f(x)-f(y)|^2\mu_{xy}.$$  \end{enumerate}
 \end{lem}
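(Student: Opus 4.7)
My plan is to closely mirror the proof of Lemma~\ref{lem-WN3}, making the minimal adjustments needed to accommodate the scale cutoff $r_j = \min\{s,\eta\delta(z_j)/4\}$. The structural point that drives everything is that, by the definition of $q^*_s(E)$ as the \emph{first} index where $r_j = s$, every index $i \in \{1,\dots,q^*_s(E)-1\}$ still satisfies $r_i < s$, so $r_i = \eta\delta(z_i)/4$ and $\rho_i = 3\eta\delta(z_i)/4$ just as in the scale-free case; the cutoff can be in effect only at the terminal index $i+1 = q^*_s(E)$, where possibly $r_{i+1}=s$ (so $\rho_{i+1}=3s$ and $\delta(z_{i+1}) \ge 4s/\eta \ge 4/\eta$). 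Every step of the proof of Lemma~\ref{lem-WN3} will either apply verbatim or require only a split into the regimes $r_{i+1}<s$ and $r_{i+1}=s$.

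For (1), $\rho_i<1$ forces $r_i<s$ (otherwise $r_i=s$ would give $\rho_i=3s\ge 3$), so $\rho_i = 3\eta\delta(z_i)/4<1$ yields $\delta(z_i)<4/(3\eta)$; combined with the John property $\delta(z) \ge \alpha(1+d(z_0,z))$ along $\gamma^E$ (and the fact that $F_i$ meets $\gamma^E$), this gives $1+d(z_0,z_i) \le 4/(3\alpha\eta)$, while $B(z_i,\rho_i)=\{z_i\}$ is immediate from integrality of the graph distance. For (2), both $F_i$ and $F_{i+1}$ are singletons with $d(z_i,z_{i+1})\le 1$, so $z_i$ and $z_{i+1}$ coincide or are neighbors, and ellipticity $\pi(z_i) \le P_e\mu_{z_i z_{i+1}}$ converts $|f(z_i)-f(z_{i+1})|^2$ into the desired edge-energy expression (a single nonzero term on the right).

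The substantive step is (3). Assuming WLOG $\rho_i\ge 1$, the equality $\rho_i = 3\eta\delta(z_i)/4$ (valid since $r_i<s$) already gives $\delta(z_i) \ge 4/(3\eta)\ge 4/(9\eta)$. For $\delta(z_{i+1})$: if $\rho_{i+1}\ge 1$ the same argument applies in either regime $r_{i+1}<s$ or $r_{i+1}=s$ and yields $\delta(z_{i+1}) \ge 4/(3\eta)$; if $\rho_{i+1}<1$, then $F_{i+1}=\{z_{i+1}\}$, $d(z_i,z_{i+1})\le \rho_i+1$, and the triangle inequality combined with $\rho_i=3\eta\delta(z_i)/4$ and $\rho_i\ge 1$ gives $\delta(z_{i+1}) \ge \delta(z_i)-\rho_i-1 \ge 4/(3\eta)-2 \ge 4/(9\eta)$ for $\eta\le 1/4$. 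From $\min\delta\ge 4/(9\eta)$ the bound $\min(\rho_i,\rho_{i+1})\ge 1/3$ is immediate. For the containment $F_i\cup F_{i+1}\subseteq 8F_i\subseteq U$: since now $\min(r_i,r_{i+1})\ge 1/9$ and $d(F_i,F_{i+1})\le 1$, suitably scaled Whitney balls $B(z_i,Cr_i)$ and $B(z_{i+1},Cr_{i+1})$ intersect for some $C$ bounded in terms of $\eta$, so Lemma~\ref{lem-WPP}(3) gives that $\rho_i$ and $\rho_{i+1}$ are comparable, and the computation $\max\{d(z_i,z):z\in F_{i+1}\}\le \rho_i+2\rho_{i+1}+1\le 8\rho_i$ goes through unchanged; $8F_i\subseteq U$ follows from the Whitney covering property (the ball of radius $24r_i$ lies in $U$ provided $\eta$ is chosen small enough, as is done in applications). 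The Poincar\'e-type estimate then proceeds identically to Lemma~\ref{lem-WN3}: express $|f_{F_i}-f_{F_{i+1}}|^2$ as a double average over $F_i\times F_{i+1}\subseteq 8F_i\times 8F_i$, expand via Cauchy--Schwarz, apply the ball Poincar\'e inequality on $8F_i$, and use doubling to absorb $\pi(8F_i)/(\pi(F_i)\pi(F_{i+1}))$ into the factor $D^6/\pi(F_i)$.

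The main obstacle I anticipate is precisely the boundary subcase at the terminal index, where $r_{i+1}$ may equal $s$; carefully tracking this scale-capped possibility in the triangle-inequality step is exactly what weakens the lower bound on $\min\{\delta(z_i),\delta(z_{i+1})\}$ from $5/(6\eta)$ in the scale-free Lemma~\ref{lem-WN3} to the slightly smaller $4/(9\eta)$ stated here. Every other estimate is a mechanical translation of the scale-free proof.
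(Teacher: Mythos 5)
Your proposal is correct and follows essentially the same route as the paper, whose proof of this lemma is simply the observation that the argument of Lemma~\ref{lem-WN3} carries over verbatim to the $s$-Whitney setting; your extra bookkeeping for the terminal index (where $r_{q^*_s(E)}$ may equal $s$) is exactly the routine adjustment that is implicit there. One small slip: the radius-comparability fact you invoke is Lemma~\ref{lem-W}(3) (applied with $\rho=6$, using the ball whose radius satisfies $\max\{\rho_i,\rho_{i+1}\}\ge 1$), not a part (3) of Lemma~\ref{lem-WPP}, which has only two items.
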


 \begin{theo}  \label{th-PP1}
Fix $\alpha, \theta, D, P_e,P  >0$.  Assume that $(\mathfrak X,\mathfrak E,\pi,\mu)$ 
is adapted, elliptic and, satisfies the doubling property with constant $D$ and the ball Poincar\'e inequality with parameter $\theta$ and constant $P$. Assume that the finite  domain $U$ and the point $o\in U$ are such that $U\in J(o,\alpha,R)$, $R>0$. Then there exists a constant  $C$ depending only on $\alpha, \theta, D,P$ and such that
$$\forall\, s>0,\;\;\sum_{U}|f(x)-Q_sf(x)|^2\pi(x) \le C s^\theta \mathcal E_{\mu,U}(f,f)$$
with 
$$ C= 3^\theta 7 PD^5 +16 D^{14+ 2\log(2\kappa)}  \max\{P_e, 8^{\theta}2PD^6\}$$
where $\kappa =84/\alpha $.
\end{theo}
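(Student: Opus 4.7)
The plan is to mirror the proof of Theorem~\ref{th-P1} essentially line by line, replacing the standard Whitney covering $\mathcal{W}$ and its chains $\mathcal{W}'(E)$ by the scale-$s$ covering $\mathcal{W}_s$ and the local $s$-chains $\mathcal{W}'_{<s}(E)$. The case $s\in(0,1]$ is trivial because $Q_s=I$ by definition, so I would work with $s>1$ throughout. The key structural observation is that $Q_sf(x)=f_{F(s,x)}$ is always an average over a ball $F(s,x)$ of radius at most $3s$: if $x$ is covered by a tripled ball of $\mathcal{W}_{=s}$ this is immediate, and otherwise $F(s,x)$ is the terminal ball of a local $s$-chain, whose radius is exactly $3s$ by construction.

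I would first split $U=U_{=s}\cup U_{<s}$ where $U_{=s}=\bigcup_{E\in\mathcal{W}_{=s}}3E$. On $U_{=s}$ each difference $|f(x)-Q_sf(x)|^2=|f(x)-f_{3E(s,x)}|^2$ is handled directly by the ball Poincaré inequality applied to $3E(s,x)$ (radius $\le 3s$), and summing over $E\in\mathcal{W}_{=s}$ using the bounded overlap of Lemma~\ref{lem-WPP}(1) yields a clean $(3s)^\theta$ contribution. On $U_{<s}$, I would apply the triangle inequality
$$|f(x)-f_{F(s,E)}|^2\le 2\,|f(x)-f_{3E}|^2+2\,|f_{3E}-f_{F(s,E)}|^2,\qquad E=E(s,x),$$
whose first piece is again controlled by the ball Poincaré on $3E$ (still of radius $\le 3s$).

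The second piece of the triangle inequality is the main work, and I would run the telescoping-chain argument of Theorem~\ref{th-P1} almost verbatim: expand
$$|f_{3E}-f_{F(s,E)}|\le \sum_{i=0}^{q^{*}_s(E)-1}\bigl|f_{F_i^{s,E}}-f_{F_{i+1}^{s,E}}\bigr|,$$
bound each term via Lemma~\ref{lem-WPP3}(2)--(3) by $Q s^{\theta/2}\,G(F_i^{s,E},f)$ for a suitable $Q$, then use Lemma~\ref{lem-WPP}(2) to replace $\mathbf{1}_E$ by $\mathbf{1}_E\mathbf{1}_{\kappa F_i^{s,E}}$ with $\kappa=84/\alpha$, and enlarge the chain-indexed sum to a sum over all $F\in\mathcal{W}'_s$. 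The disjointness of $\mathcal{W}_s$ removes $\mathbf{1}_E$, Proposition~\ref{prop-sumballs} converts $\mathbf{1}_{\kappa F}$ to $\mathbf{1}_F$ at the cost of the factor $D^{4+2\log_2(2\kappa)}$, and the bounded-overlap statement of Lemma~\ref{lem-WPP}(1) applied to the enlarged balls $8F$ converts the resulting expression into a constant multiple of $\mathcal{E}_{\mu,U}(f,f)$.

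The main conceptual departure from Theorem~\ref{th-P1} — and the reason the bound is $s^\theta$ rather than $R^\theta$ — is that the chain now terminates at a ball of radius $3s$ rather than at the single central ball $3E_o$, so every estimate is localized at scale $s$. The one subtle quantitative change is in the ellipticity contribution: in Theorem~\ref{th-P1} the analog of $Q^2$ involves $\max\{R^{-\theta}P_e,\ldots\}$, whereas here the restriction $s\ge 1$ (costless since $Q_s=I$ for $s\le 1$) is exactly what allows one to absorb the otherwise-dangerous $s^{-\theta}$ in front of $P_e$ into the uniform-in-$s$ constant $\max\{P_e,\,8^\theta\cdot 2PD^6\}$ that appears in the statement.
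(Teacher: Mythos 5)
Your proposal is correct and follows essentially the same route as the paper: the trivial reduction to $s>1$, the split over $\mathcal W_{=s}$ and $\mathcal W_{<s}$ with the ball Poincar\'e inequality at scale $3s$, and the telescoping along local $s$-chains bounded via Lemma~\ref{lem-WPP3}, the inclusion $E\subset\kappa F$ of Lemma~\ref{lem-WPP}(2), Proposition~\ref{prop-sumballs}, and bounded overlap, with the restriction $s\ge 1$ absorbing $s^{-\theta}P_e$ into the uniform constant exactly as in the paper. The only (harmless) imprecision is the claim that the terminal ball $F(s,E)$ always has radius exactly $3s$: when the chain reaches $3E_o^s$ before meeting a radius-$s$ ball the terminal ball may be smaller, but your argument never uses that claim.
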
  

\begin{proof}  The conclusion trivially holds when $s\in [0,1]$ because $Q_sf=f$ in this case. For $s>1$,
as in the proof of Theorem \ref{th-P1}, we pick a Whitney covering with $\eta<1/12$. 
We need to bound
\begin{eqnarray*}\sum_x|f(x)-Q_sf(x)|^2\pi(x)&=&\sum_{E\in \mathcal W_s}\sum_{\substack{x\in 3E \\ E=E(s,x)}}|f(x) -f_{F(s,E)}|^2\pi(x)\\
&=& \sum_{E\in \mathcal W_{=s}}\sum_{\substack{x\in 3E \\ E=E(s,x)}}|f(x) -f_{3E}|^2\pi(x) \\
&& +\sum_{E\in \mathcal W_{<s}}\sum_{\substack{x\in 3E \\ E=E(s,x)}}|f(x) -f_{F(s,E)}|^2\pi(x) \\
&\le& \sum_{E\in \mathcal W_{=s}}\sum_{x\in 3E}|f(x) -f_{3E}|^2\pi(x) \\
&& +\sum_{E\in \mathcal W_{<s}}\sum_{x\in 3E}|f(x) -f_{F(s,E)}|^2\pi(x).\end{eqnarray*} 
 Note that, in the first two lines, we are only summing over the $x$ such that $E = E(s,x)$ i.e., $E \in \mathcal{W}_s$ is the selected ball of radius $s$ which covers $x$. That way, $x \in U$ appears once in the sum. In the third line, we expand the sum and each $x$ may appear multiple times.
 
 We can bound and collect the first part of the right-hand side of the last inequality using the Poincar\'e inequality on balls of radius  $3s$ and Lemma~\ref{lem-WPP}(1),
\begin{eqnarray}
  \sum _{E\in W_{=s}} \sum_{ 3E} |f-f_{3E}|^2\pi &\le & P (3s)^\theta \sum_{E\in W_{=s}} \sum_{\substack{x,y\in 3E \\ x\sim y}}|f(x)-f(y)|^2\mu_{xy}  \nonumber \\
& \le &3^\theta P D^5 s^\theta  \mathcal E_{\mu,U}(f,f).  \label{cruxPP1} \end{eqnarray}
 This reduces  the proof to bounding 
\begin{eqnarray*}\lefteqn{\sum_{E\in W_{<s}} \sum_{x\in 3E}|f_{f(x)-f_{F(s,x)}}|^2\pi(x)} &&\\
& \le& 2\sum_{E\in \mathcal W_{<s}}\left(\sum_{x\in 3E}|f(x)-f_{3E}|^2\pi(x)+ \pi(3E)|f_{3E}-f_{F(s,x)}|^2\right).\end{eqnarray*}
The first part of  the right-hand side is, again, easily bounded by
\begin{eqnarray*}2\sum_{E\in \mathcal W_{<s}}\sum_{x\in 3E}|f(x)-f_{3E}|^2\pi(x) &\le & 3^{1+\theta} P s^\theta \sum_{E\in \mathcal W_{<s}}
\sum_ {x,y\in 3E, x\sim y}|f(x)-f(y)|^2\mu_{xy} \\
&\le & 3^{1+\theta} P D^5 s^\theta  \mathcal E_{\mu,U}(f,f).\end{eqnarray*}
The second part is
$$2\sum_{E\in \mathcal W_{<s}}\pi(3E)|f_{3E}-f_{F(s,x)}|^2$$
for which we use the  chain of balls  $\mathcal W'_{s}(E) =(F^{s,E}_0,\dots, F^{s,E}_{q^*_s(E)})$ to write
$$|f_{3E}-f_{F(s,E)}|\le \sum_{0}^{q^*_s(E)-1} |f_{F^{s,E}_i}-f_{F^{s,E}_{i+1}}|.$$
Lemma \ref{lem-WPP3}(2)-(3) and the notation $G(F,f)$ introduced for the proof of Theorem \ref{th-P1}  yields 
$$ |f_{F^{s,E}_i}-f_{F^{s,E}_{i+1}}| \le Q s^{\theta/2}  G(F^{s,E}_i,f),\;\;Q^2=\max\{ s^{-\theta} P_e, 8^{\theta}2PD^6\} $$
and,  with $\kappa $ as in Lemma \ref{lem-WPP}(2), 
$$ |f_{3E}-f_{F(s,E)}|  \mathbf 1_E \le  Qs^{\theta/2} \sum_{0}^{q^*_s(E)-1} G(F^{s,E}_i,f)  \mathbf 1_E \mathbf 1_{\kappa F^{s,E}_i}.$$
Using this estimate, the same argument used at the end of the proof of Theorem \ref{th-P1} (and based on Proposition \ref{prop-sumballs})  gives
$$2\sum_{E\in \mathcal W_{<s}}\pi(3E)|f_{3E}-f_{F(s,x)}|^2
\le  16 Q^2 D^{13+2\log_2 2\kappa} s^\theta  \mathcal E_{\mu,U}(f,f).$$
 \end{proof}

\section{Adding weights and comparison argument}  \setcounter{equation}{0} 
\label{sec-AW}

Comparison arguments are very useful in the study of ergodic finite Markov chains (see ~\cite{DSCcomprev} and~\cite{DSCcompg}). This section uses these ideas in the present context. The results here are used in Section~\ref{sec-Metro} to study the rates of convergence for Metropolis type chains and in Sections~\ref{sec-Dir} and~\ref{sec-IU} for studying Markov chains which are killed on the boundary.

By their very nature, the (almost identical)  proofs of Theorems \ref{th-P1} and~\ref{th-PP1} allow for a number of important variants. In this subsection, we discuss transforming
the pair $(\pi,\mu)$ into a pair $(\widetilde{\pi},\widetilde{\mu})$ so that the proofs of the preceding section yield Poincar\'e type inequalities (including $Q$-type) for this new pair.  

\begin{defin}  Let $U$ be a finite domain in $(\mathfrak X,\mathfrak E,\pi,\mu)$. Let $(\widetilde{\pi},\widetilde{\mu})$  be given on $(U,\mathfrak E_U)$. 
We say that the pair  $(\widetilde{\pi},\widetilde{\mu})$ 
$(\eta,A)$-dominates the pair $(\pi,\mu)$ in $U$ if, for any ball $E=B(z,r)\subset U$ with
$r\le 6 \eta\delta(z)$, we have
$$\sup\left\{\frac{\widetilde{\pi}(x)}{\pi(x)} \ : \ x \in B\right\}\le A \inf\left\{\frac{\widetilde{\mu}_{xy}}{\mu_{xy}}:x\in B,\{x,y\}\in E\right\} ,$$
\end{defin}
\begin{rem}  If $\eta\ge 1/6$, this property is very strong and not very useful. We will use it with $\eta\le 1/12$ so that each of the  balls considered is
far from the boundary relative to the size of its radius. The size of balls for which this property is required, namely, balls such that $r\le 6\eta\delta(z)$ is dictated by the fact the we will have to use this property for the balls   $24 E$ where $E=B(z,r(z))$ is a ball that belong to an $\eta$-Whitney covering of $U$. See Lemma \ref{lem-WPP3}(3). By construction, such a ball $E$ will satisfy
$r(z)=\eta \delta(z)/4$ and $r=24r(z)$ satisfies $r = 6\eta \delta(x)$.   
\end{rem}

The following obvious lemma justifies the above definition.
\begin{lem} \label{lem-domP}
Assume that  $(\widetilde{\pi},\widetilde{\mu})$ 
$(\eta,A)$-dominates the pair $(\pi,\mu)$ in $U$.
\begin{enumerate}
\item
If $(\pi,\mu)$ is $P_e$-elliptic then $(\widetilde{\pi},\widetilde{\mu})$ is 
$AP_e$-elliptic on $U$.
\item If $B=B(z,r)$ is a ball such that $r\le 6\eta\delta(z)$ and the Poincar\'e inequality
$$\sum_{x \in B}|f(x)-f_B|^2\pi\le P(B)\sum_{x,y\in B}|f(x)-f(y)|^2\mu_{xy}$$
holds on $B$ 
then
$$\sum_B|f-\widetilde{f}_{B}|^2\widetilde{\pi}\le
\sum_{x\in B}|f(x)-f_B|^2\widetilde{\pi}\le AP(B)\sum_{x,y\in B}|f(x)-f(y)|^2\widetilde{\mu}_{xy}$$
where $\widetilde{f}_{B}$ is the mean of $f$ over $B$ with respect to $\widetilde{\pi}$ and $f_B$ is the mean of $f$ over $B$ with respect to $\pi$.
\end{enumerate}
\end{lem}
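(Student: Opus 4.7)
The plan is to prove both parts directly from the definition of $(\eta, A)$-domination, essentially by keeping track of the pointwise ratios $\widetilde{\pi}/\pi$ and $\widetilde{\mu}/\mu$. Since the statement is labeled ``obvious,'' I don't expect any significant obstacle; the only bookkeeping issue is checking that the hypothesis $r \le 6\eta\delta(z)$ lets us apply the domination on the ball of interest.

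For part (1), fix an edge $\{x,y\} \in \mathfrak{E}$ with $x,y \in U$. Applying the $(\eta,A)$-domination property to a sufficiently small ball $B \ni x$ lying inside $U$ and containing the edge $\{x,y\}$ gives
\[
\frac{\widetilde{\pi}(x)}{\pi(x)} \le A \frac{\widetilde{\mu}_{xy}}{\mu_{xy}},
\]
so $\widetilde{\pi}(x)\,\mu_{xy} \le A\,\pi(x)\,\widetilde{\mu}_{xy}$. Since $(\pi,\mu)$ is $P_e$-elliptic, $\pi(x) \le P_e\, \mu_{xy}$, and substituting yields $\widetilde{\pi}(x) \le A P_e\, \widetilde{\mu}_{xy}$, which is exactly the $AP_e$-ellipticity of $(\widetilde{\pi},\widetilde{\mu})$ on $U$.

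For part (2), the first inequality is free: $\widetilde{f}_B$ minimizes $c \mapsto \sum_B |f-c|^2 \widetilde{\pi}$ over real constants $c$, so plugging in $c = f_B$ gives
\[
\sum_B |f - \widetilde{f}_B|^2 \widetilde{\pi} \le \sum_{x \in B} |f(x) - f_B|^2 \widetilde{\pi}(x).
\]
For the second inequality, since $r \le 6\eta\delta(z)$, the domination applies to $B$, so setting
\[
M = \sup_{x \in B} \frac{\widetilde{\pi}(x)}{\pi(x)}, \qquad m = \inf_{\substack{x \in B \\ \{x,y\} \in \mathfrak{E}}} \frac{\widetilde{\mu}_{xy}}{\mu_{xy}},
\]
we have $M \le A m$. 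Estimating $\widetilde{\pi}(x) \le M\pi(x)$, invoking the Poincar\'e inequality for $(\pi,\mu)$ on $B$, and then bounding $\mu_{xy} \le m^{-1}\widetilde{\mu}_{xy}$ gives
\[
\sum_{x \in B} |f(x) - f_B|^2 \widetilde{\pi}(x) \le M P(B) \sum_{x,y \in B} |f(x)-f(y)|^2 \mu_{xy} \le \frac{M}{m}\, P(B) \sum_{x,y \in B} |f(x)-f(y)|^2 \widetilde{\mu}_{xy},
\]
and $M/m \le A$ delivers the desired inequality. The only subtlety worth flagging is that the conclusion of (1) is only meaningful for those edges covered by balls to which the domination applies; for edges very close to the boundary, an auxiliary convention (or a separate hypothesis on $\widetilde{\mu}$ near $\partial U$) is implicit in how the lemma is used downstream.
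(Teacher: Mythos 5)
Your verification is correct and is exactly the direct argument the paper has in mind (the paper states this lemma without proof, calling it obvious): part (2) is the variational characterization of the $\widetilde{\pi}$-mean combined with the pointwise bounds $\widetilde{\pi}\le M\pi$ on $B$ and $\mu_{xy}\le m^{-1}\widetilde{\mu}_{xy}$, plus $M\le Am$ from domination. The only thing to retract is your closing caveat about part (1): no auxiliary convention near the boundary is needed. In the definition of $(\eta,A)$-domination the infimum runs over edges $\{x,y\}$ with the single endpoint $x$ in the ball (the displayed condition ``$\{x,y\}\in E$'' is a slip for ``$\{x,y\}\in\mathfrak E$''); the second endpoint is not required to lie in the ball, so the ball does not have to ``contain the edge.'' Since balls of radius $r\in[0,1)$ are singletons and the constraint $r\le 6\eta\delta(z)$ is satisfied for every $z\in U$ once $r$ is small enough (even $r=0$ works), applying domination to $B=\{x\}$ already gives $\widetilde{\pi}(x)/\pi(x)\le A\,\widetilde{\mu}_{xy}/\mu_{xy}$ for every $x\in U$ and every edge $\{x,y\}$ issued from $x$, including edges whose endpoints are at distance $1$ from $\partial U$. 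Combining with $\pi(x)\le P_e\mu_{xy}$ yields $AP_e$-ellipticity on all of $\mathfrak E_U$, which is precisely what is needed downstream (e.g.\ in Lemma \ref{lem-WPP3}(2) and in the proof of Theorem \ref{th-PP1controlled}, where the ellipticity of $(\widetilde{\pi},\widetilde{\mu})$ is invoked exactly at singleton Whitney balls near the boundary).
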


\begin{defin} \label{rem-tilde}
Assume that $U$ is a connected subset of $(\mathfrak X,\mathfrak E)$
with internal boundary 
$\delta U=\{x\in U:\exists \,y\in \mathfrak X\setminus U,\;\{x,y\}\in \mathfrak E\}.$ For each $x\in \delta U$, introduce an auxiliary symbol, $x^c$ and set  
$$\mathfrak U= U\cup \{x^c:x\in \delta U\},\;\;\mathfrak E_\mathfrak U=\mathfrak E_U\cup \{\{x,x^c\}: x\in \delta U\},$$
so that $\mathfrak U$ has an additional copy of $\delta U$ attached to $\delta U$. By inspection, a domain $U$ is in $ J(\mathfrak X,\mathfrak E,\alpha, o,R)$ if and only if
$U\in J(\mathfrak U,\mathfrak E_\mathfrak U,\alpha,o,R)$.   If $\widetilde{\pi}$ is a measure on $U$  then we can extend this measure to a measure on $\mathfrak U$, which we still call $\widetilde{\pi}$, by setting 
 $\widetilde{\pi}(x^c)=\widetilde{\pi}(x)$, $x\in \delta U$.  If $\widetilde{\pi}$ is  $\widetilde{D}$-doubling on $(U,\mathfrak E_U)$ then its  extension is $2\widetilde{D}$-doubling on $(\mathfrak U,\mathfrak E_U)$.
\end{defin}

\subsection{Adding weight under the doubling assumption for the weighted measure}

\begin{theo}\label{th-PP1doub}
Referring to the setting of Theorems {\em \ref{th-P1}-\ref{th-PP1}},
assume further that we are given $\eta\in (0,1/12)$ and a pair $(\widetilde{\pi},\widetilde{\mu})$ on $U$ which dominates $(\pi,\mu)$ with constants $(\eta,A)$ and such that $\widetilde{\pi}$ is $\widetilde{D}$-doubling on $(U,\mathfrak E_U)$.
Then there exists a constant $C$ depending only on $\eta, A,\alpha, \theta, \widetilde{D},P,P_e$
such that
$$\forall\,s>0,\;\;\sum_{x \in U}|f(x)-\widetilde{Q}_sf(x)|^2\widetilde{\pi}(x)\le C s^\theta 
\mathcal E_{\widetilde{\mu},U}(f,f).$$
We can take 
$$ C= 7(3^\theta PA(2\widetilde{D})^5)+16 A(2\widetilde{D})^{14+ 2\log(2\kappa)}  \max\{P_e, 8^{\theta}2P (2\widetilde{D})^6\}$$
where $\kappa =7/(\alpha\eta) $.

Here $\widetilde{Q}_s$ is as in {\em Definition \ref{def-Q}} with $\widetilde{\pi}$ instead of $\pi$.
In particular,
$$\sum_U|f(x)-\widetilde{f}_U|^2\widetilde{\pi}(x)\le C R^\theta 
\mathcal E_{\widetilde{\mu},U}(f,f).$$\end{theo}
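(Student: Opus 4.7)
The strategy is to replay the proof of Theorem \ref{th-PP1} verbatim with $(\widetilde\pi,\widetilde\mu)$ in place of $(\pi,\mu)$, exploiting the $(\eta,A)$-domination hypothesis each time a quantitative property of the unweighted pair is invoked. The Whitney covering $\mathcal W_s$, the local $s$-chains $\mathcal W'_{<s}(E)$, the balls $F(s,x)$, and the John paths $\gamma^E$ are all purely graph-theoretic objects and remain unchanged. The averaging operator $\widetilde Q_s$ is the natural weighted analogue of Definition \ref{def-Q} with $\widetilde\pi$ used for averaging.

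As a preliminary step, I would extend $\widetilde\pi$ to the augmented graph $(\mathfrak U,\mathfrak E_\mathfrak U)$ via Definition \ref{rem-tilde}, so that $\widetilde\pi$ becomes $2\widetilde D$-doubling on $(\mathfrak U,\mathfrak E_\mathfrak U)$ while $U$ remains an $\alpha$-John domain in this extended structure. I would then walk through the proof of Theorem \ref{th-PP1} making three systematic replacements. First, at each application of the ball Poincar\'e inequality --- on the balls $3E$ for $E\in\mathcal W_{=s}$, on $3E$ for $E\in\mathcal W_{<s}$, and on $8F_i$ in the chain argument of Lemma \ref{lem-WPP3}(3) --- Lemma \ref{lem-domP}(2) yields the weighted ball Poincar\'e inequality with the Poincar\'e constant multiplied by $A$, producing the factor $AP$ in the final bound. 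Second, at the step in Lemma \ref{lem-WPP3}(2) where ellipticity controls $|f(z_i)-f(z_{i+1})|^2$, Lemma \ref{lem-domP}(1) replaces $P_e$ by $AP_e$. Third, every covering/counting argument via volume doubling (Lemma \ref{lem-WPP}(1) and Proposition \ref{prop-sumballs}) is run with doubling constant $2\widetilde D$. Collecting these substitutions reproduces the structure of the constant in Theorem \ref{th-PP1} with $D\mapsto 2\widetilde D$, $P\mapsto AP$ in the first summand, and an overall factor of $A$ on the second summand (absorbing both $AP_e$ from ellipticity and $AP$ from the chain-argument Poincar\'e application), yielding precisely the announced form of $C$.

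The main point requiring care is that the domination hypothesis only gives information on balls with $r\le 6\eta\delta(z)$, so every ball on which Lemma \ref{lem-domP} is invoked must lie in this range. This is exactly the role of the assumption $\eta<1/12$: a Whitney ball $E=B(z,r(z))$ satisfies $r(z)\le\eta\delta(z)/4$, so the largest enlargement appearing in the proof, $8F_i=24E$, has radius at most $6\eta\delta(z)$, which sits precisely at the threshold allowed by the hypothesis; the balls $3E$ and $8F_i$ associated to smaller Whitney balls are automatically within range. Once this verification is in place, the ``in particular'' conclusion follows by taking $s=R$. Since $R\ge\eta(\rho_o(U)+1)/4$ (using $\rho_o(U)\le R$ and $\eta<1/12$), Remark \ref{rem-Whit} implies that $\mathcal W_R$ is a standard Whitney covering with no ball of radius exactly $R$, so by construction $F(R,x)=3E^R_o$ is independent of $x$ and $\widetilde Q_R f$ is a constant $c$. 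Because $\sum_U|f-\widetilde f_U|^2\widetilde\pi=\min_c\sum_U|f-c|^2\widetilde\pi$, one obtains $\sum_U|f-\widetilde f_U|^2\widetilde\pi\le\sum_U|f-\widetilde Q_R f|^2\widetilde\pi\le CR^\theta\mathcal E_{\widetilde\mu,U}(f,f)$, which is the stated Poincar\'e inequality.
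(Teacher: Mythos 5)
Your proposal is correct and follows essentially the same route as the paper: the paper's proof likewise replays Theorems \ref{th-P1}--\ref{th-PP1} with $(\widetilde{\pi},\widetilde{\mu})$ in place of $(\pi,\mu)$, invoking Lemma \ref{lem-domP} for the Poincar\'e inequality and ellipticity on (enlarged) Whitney balls, the extension of Definition \ref{rem-tilde}, and the $\widetilde{D}$-doubling of $\widetilde{\pi}$ precisely where Proposition \ref{prop-sumballs} is applied. Your verification that all enlarged balls $8F_i$ stay within the $r\le 6\eta\delta(z)$ range of the domination hypothesis, and your derivation of the ``in particular'' statement by taking $s=R$, make explicit details the paper leaves implicit.
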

\begin{proof} Follow the proofs of Theorems \ref{th-P1}-\ref{th-PP1}, using a $\eta$-Whitney covering with $\eta$  small enough that the Poincar\'e inequalities on  Whitney balls (in fact, on double Whitney balls) holds for the pair $(\widetilde{\pi},\widetilde{\mu})$ by Lemma \ref{lem-domP}. 
To make the argument go as smoothly as possible, use the construction of $(\mathfrak U,\mathfrak E_U)$ in Definition \ref{rem-tilde}.  The proof proceeds as before with $(\widetilde{\pi},\widetilde{\mu})$ instead of $(\pi,\mu)$. The full strength of the assumption that  $\widetilde{\pi}$ is doubling is key in applying Proposition 
\ref{prop-sumballs} in this context.
\end{proof}

\subsection{Adding weight without the doubling assumption for the weighted measure} 

\begin{defin}Let $\psi:U\ra (0,\infty)$ be a positive function on $U$ (we call it a weight). 
We say that $\psi$ is  $A$-doubling on $U$ if the measure $\psi\pi$ is doubling on $(U,\mathfrak E_U)$ with constant $A$.
\end{defin}

\begin{defin}\label{def-regular} Let $\psi:U\ra (0,\infty)$ be a positive function on $U$.
We say that $\psi$ is $(\eta,A)$-regular on $U$ if
 $$\psi(x)\le A \psi(y) \mbox{ for all }\{x,y\}\in \mathfrak E_U,$$
 and, for any ball $E=B(z,r)\subset U$ with
$r\le 6\eta\delta(z)$, we have
$$\max_{E}\{\psi\}\le A \min_{E}\{\psi\}.$$
\end{defin}

\begin{rem} \label{rem-psimu}
Assume that $\psi$  is $(\eta,A)$-regular and consider any pair
$(\widetilde{\pi},\widetilde{\mu}) $ on $(U,\mathfrak E_U)$ such that
$$ \widetilde{\pi}\le \psi \pi,\;\;\mu_{xy} \psi(x)\le A' \widetilde{\mu}_{xy}.$$
Then the pair $(\widetilde{\pi},\widetilde{\mu})$  $(\eta,AA')$-dominates $(\pi,\mu)$. For instance we can set $\widetilde{\pi}=\psi\pi$ and take $\widetilde{\mu}$ to be given by one of the following choices:
$$\mu_{xy}\sqrt{\psi(x)\psi(y)},\;\;\widetilde{\mu}_{xy}=\mu_{xy}\min\{\psi(x),\psi(y)\}
\mbox{ or } \widetilde{\mu}_{xy}=\mu_{xy}\max\{\psi(x),\psi(y)\}.$$
In these three cases $A'=\sqrt{A}$, $A'= A$ and $A'=1$, respectively.
\end{rem}

\begin{defin} \label{def-controlled}
Fix $\eta \in (0,1/8)$. Let $\psi$ be a weight  on a finite domain $U$ such that  $\psi$ is $(\eta,A)$-regular on $U$. Assume $U$ is a John domain, $U\in  J(\alpha,o,R)$, equipped with John paths $\gamma_x$ joining $x$ to $o$, $x\in U$, and a family of $\eta$-Whitney coverings $\mathcal W_s$, $s\ge 1$. We say that $\psi$ is $(\omega,A_1)$-controlled if, for any local s-chain $\mathcal W'_{<s}(E)
=(F^{s,E}_0,\dots,F^{s,E}_{q^*_s(E)})$ with $F^{s,E}_i=B(x_i,3r(x_i))$, $0\le i\le q^*_s(E)$, we have
$$\forall\,s\ge 1,\;\;\forall \, i\in \{0,\dots, q^*_s(E)\},\;\;\psi(x_0)\le  A_1 s^\omega \psi(x_{i}).$$
When we say that an $(\eta,A)$-regular weight $\psi$ on $U\in J(\alpha, o,R)$ is $(\omega,A_1)$-controlled, we assume implicitly that a family of $\eta$-Whitney coverings $\mathcal W_s$, $s\ge 1$ has been chosen. \end{defin}

\begin{rem} 
When $\omega=0$, the weight $\psi$ is essentially increasing along the John path joining Whitney balls to $o$. 
\end{rem}

\begin{theo} \label{th-PP1controlled} Given the setting of Theorems \ref{th-P1} and~\ref{th-PP1},
assume further that we are given $\eta\in (0,1/12)$ and a weight $\psi$ on $U$ such that
$\psi$ is $(\eta,A)$-regular and $(\omega,A_1)$-controlled. Set $\widetilde{\pi}=\psi\pi$ and let $\widetilde{\mu}$ be a weight defined on $\mathfrak E_U$ such that
\begin{equation} \label{psimu}
\forall x,y\in U,\;\;
\psi(x)\mu_{xy}\le A_2 \widetilde{\mu}_{xy}.
\end{equation}
Then there exist a constant  $C$ depending only on $\eta, \alpha, \theta,  A, A_1,A_2 D,P$ and such that
$$\forall\,s>0,\;\;\sum_U|f(x)-\widetilde{Q}_sf(x)|^2\widetilde{\pi}(x)\le C s^{\theta+\omega} 
\mathcal E_{\widetilde{\mu},U}(f,f).$$
Here $\widetilde{Q}_s$ is as in {\em Definition \ref{def-Q}} with $\widetilde{\pi}$ instead of $\pi$. The constant $C$ can be taken to be
$$C= C= 7AA_2(3^\theta PD^5)+16 D^{14+ 2\log(2\kappa)} A^3A_1A_2 \max\{P_e, 8^{\theta}2PA^2D^6\}$$
where $\kappa =7/(\alpha\eta) $. In particular,
$$\sum_U|f(x)-\widetilde{f}_U|^2\widetilde{\pi}(x)\le C R^{\theta +\omega} 
\mathcal E_{\widetilde{\mu},U}(f,f).$$\end{theo}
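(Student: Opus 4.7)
The plan is to mimic the proof of Theorem~\ref{th-PP1}, working with $(\widetilde\pi,\widetilde\mu)$ in place of $(\pi,\mu)$ and keeping careful bookkeeping of the weight $\psi$. For $s\in[0,1]$ the statement is trivial since $\widetilde Q_s=I$, so assume $s\ge 1$. Fix an $\eta$-Whitney covering $\mathcal W_s$ with $\eta<1/12$, the local $s$-chains $\mathcal W'_{<s}(E)=(F_0^E,\dots,F_{q^*_s(E)}^E)$ for $E\in\mathcal W_{<s}$, and the averager $\widetilde Q_s$ from Definition~\ref{def-Q} (with $\widetilde\pi$ in place of $\pi$). Following the template of Theorem~\ref{th-PP1}, I split $\sum_U|f-\widetilde Q_sf|^2\widetilde\pi$ into a \emph{local} piece (the $3E$ contributions for $E\in\mathcal W_{=s}$ and the diagonal $|f-\widetilde f_{3E}|^2$ contributions for $E\in\mathcal W_{<s}$) and a \emph{chain} piece $\sum_{E\in\mathcal W_{<s}}\widetilde\pi(3E)|\widetilde f_{3E}-\widetilde f_{F(s,E)}|^2$.

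For the local piece, the key ingredient is a local Poincar\'e inequality for $(\widetilde\pi,\widetilde\mu)$ on every ball $B=B(z,r)$ with $r\le 6\eta\delta(z)$. The $(\eta,A)$-regularity of $\psi$ gives $\widetilde\pi|_B\le A(\min_B\psi)\,\pi|_B$, while hypothesis~(\ref{psimu}) gives $(\min_B\psi)\mu_{xy}\le A_2\widetilde\mu_{xy}$ on edges of $B$. Together with the ball Poincar\'e inequality for $(\pi,\mu)$ they yield $\sum_B|f-\widetilde f_B|^2\widetilde\pi\le\sum_B|f-f_B|^2\widetilde\pi\le AA_2 P r^\theta\mathcal E_{\widetilde\mu,B}(f,f)$. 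Applied with $B=3E$ (so $r\le 3s$) and summed using the bounded-overlap Lemma~\ref{lem-WPP}(1), this handles the local piece with a bound of order $s^\theta\,\mathcal E_{\widetilde\mu,U}(f,f)$, matching the first term of the quoted constant $C$.

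For the chain piece I telescope $|\widetilde f_{3E}-\widetilde f_{F(s,E)}|\le\sum_{i=0}^{q^*_s(E)-1}|\widetilde f_{F_i^E}-\widetilde f_{F_{i+1}^E}|$ and bound each consecutive difference exactly as in Lemma~\ref{lem-WPP3}(2)-(3), but with $\widetilde\pi,\widetilde\mu$ in place of $\pi,\mu$. Because each $8F_i^E$ has radius $\le 6\eta\delta(x_i^E)$, the regularity of $\psi$ together with~(\ref{psimu}) makes the necessary local Poincar\'e and ellipticity estimates hold for the weighted pair (Lemma~\ref{lem-domP}(1) applied to the $(\eta,AA_2)$-dominating pair of Remark~\ref{rem-psimu} controls the singleton case). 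This produces $|\widetilde f_{F_i^E}-\widetilde f_{F_{i+1}^E}|^2\le Q^2 s^\theta\widetilde G(F_i^E,f)^2$ with $\widetilde G(F,f)^2=\mathcal E_{\widetilde\mu,8F}(f,f)/\widetilde\pi(F)$ and $Q^2$ depending only on the listed parameters.

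The main obstacle, and the point where the factor $s^\omega$ is created, is that $\widetilde\pi$ need not be doubling on $U$, so Proposition~\ref{prop-sumballs} cannot be applied to $\widetilde\pi$ directly. To route the summation through the doubling measure $\pi$, I use the regularity on $F_i^E$ to replace $\widetilde G(F_i^E,f)^2\le A\,H(F_i^E,f)^2/\psi(x_i^E)$ where $H(F,f)^2:=\mathcal E_{\widetilde\mu,8F}(f,f)/\pi(F)$, and estimate $\widetilde\pi(3E)\le A(\min_{3E}\psi)\pi(3E)$. The $(\omega,A_1)$-controlled hypothesis then gives $(\min_{3E}\psi)/\psi(x_i^E)\le A_1 s^\omega$ uniformly in $i$, which factors a single $A_1 s^\omega$ out of the chain sum and reduces it to $\sum_U\bigl(\sum_{F\in\mathcal W_s'}H(F,f)\mathbf 1_{\kappa F}\bigr)^2\pi$. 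This is exactly the object handled at the end of the proof of Theorem~\ref{th-PP1} via Proposition~\ref{prop-sumballs} applied to $\pi$, disjointness of $\mathcal W_s$, and the finite-overlap Lemma~\ref{lem-WPP}(1), producing a bound of order $s^\theta\mathcal E_{\widetilde\mu,U}(f,f)$. Combining with the local piece gives the asserted bound $C s^{\theta+\omega}\mathcal E_{\widetilde\mu,U}(f,f)$ with $C$ of the form stated. Finally, the Poincar\'e inequality with $R^{\theta+\omega}$ follows by taking $s$ of order $R$: by Remark~\ref{rem-Whit} the $s$-Whitney covering then coincides with a standard Whitney covering and $F(s,x)$ reduces to a common ball near $o$, so $\widetilde Q_sf$ is a $\widetilde\pi$-almost constant function and $\|f-\widetilde f_U\|_{\widetilde\pi}^2\le\|f-\widetilde Q_sf\|_{\widetilde\pi}^2$.
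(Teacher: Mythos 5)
Your proposal is correct and follows essentially the same route as the paper's own proof: establish local Poincar\'e/ellipticity for $(\widetilde\pi,\widetilde\mu)$ via the $(\eta,AA_2)$-domination coming from regularity and (\ref{psimu}), telescope along the local $s$-chains, and then use the $(\omega,A_1)$-control to trade the $\widetilde\pi$-normalization in the chain terms for the doubling measure $\pi$ (your $H(F,f)$ is exactly the paper's $G^*(F,f)$), paying the factor $A_1 s^\omega$ so that Proposition \ref{prop-sumballs} can be applied to $\pi$. The concluding reduction of the $R^{\theta+\omega}$ Poincar\'e inequality to the $\widetilde Q_s$-inequality with $s\asymp R$, using that $\widetilde Q_s f$ is then constant and the variational characterization of $\widetilde f_U$, is also the intended argument.
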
 
\begin{proof} (The case $s\in [0,1]$ is trivial and we can assume $s>1$). This result is a bit more subtle than the previous result because the measure $\widetilde{\pi}$ may not be  doubling. 
However, because $\psi$ is $(\eta,A)$-regular and  $\widetilde{\mu}$ satisfies (\ref{psimu}), it follows from Remark \ref{rem-psimu}  that $(\widetilde{\pi},\widetilde{\mu})$ $(\eta,AA_2)$-dominates $(\pi,\mu)$.  By Lemma \ref{lem-domP} this implies that $(\widetilde{\pi},\widetilde{\mu})$ is $AA_2P_e$-elliptic and 
the $\theta$-Poincar\'e inequality on balls $B(z,r)$ such that ${r\le \eta \delta(z)}$, $z\in U$, with constant  $PAA_2$. 
Using the notation $\widetilde{f}_B$ for the mean of $f$ over $B$ with respect to $\widetilde{\pi}$, we also have, for any ball $E$ in $\mathcal W_{<s}$ and 
its local s-chain $\mathcal W'_{<s}(E)=(F^{s,E}_i)_0^{q^*_s(E)}$ with $F^{s,E}_i=B(x_i,3r(x_i))$, $F^{s,E}_0=3E$,
$$|\widetilde{f}_{F^{s,E}_i}-\widetilde{f}_{F^{s,E}_{i+1}}|  \le  Q s^{\theta/2} \widetilde{G}(F^{s,E}_i,f)$$ 
where $\widetilde{G}$ is defined just as $G$ but with respect to the pair  $(\widetilde{\pi},\widetilde{\mu})$.  Here we can take 
$$Q^2=AA_2\max\{P_e,8^\theta 2 P A^2D^6\}.$$
In this computation  (see the proof of Lemma \ref{lem-WN3}), we  have had to estimate 
$\widetilde{\pi}(8F_j)/\widetilde{\pi}(F_j)$ by $ AD^3$ using the doubling property of $\pi$ and the fact that $\psi$ is $(\eta,A)$-regular (in words, what is used here is the fact that, because $\psi$ is $(\eta,A)$-regular, $\widetilde{\pi}$ is doubling on balls that are far away from the boundary even so it is not necessarily globally doubling on $(U,\mathfrak E_U)$).

Next, set 
$$G^*(F,f) = \left(\frac{1}{\pi(F)}\sum _{x\in 8F \cap U}\sum_{y\sim x, y\in U}|f(x)-f(y)|\widetilde{\mu}_{xy}\right)^{1/2}.$$
This differs from $\widetilde{G}(F,f)$ only  by the use of $\pi$ instead of $\widetilde{\pi}$ in the fraction appearing in front of the summations (but note that this quantity involves the edge weight $\widetilde{\mu}$).  Now,
we have
$$|\widetilde{f}_{F^{s,E}_i}-\widetilde{f}_{F^{s,E}_{i+1}}| \left(\frac{\widetilde{\pi}(3E)}{\pi(3E)} \right)^{1/2} \le  A\sqrt{A_1}Qs^{(\theta+\omega)/2} G^*(F^{s,E}_i,f)$$ 
because 
$$\frac{\widetilde{\pi}(3E)}{\pi(3E)} \le  A\psi(x_0)\le A A_1s^{\omega} \psi(x_i)\le   A ^2A_1 s^{\omega}  \frac{\widetilde{\pi}(F^{s,E}_i)}{\pi(F^{s,E}_i)}.$$
This gives
\begin{eqnarray*}
\lefteqn{|\widetilde{f}_{3E}-\widetilde{f}_{F(s,E)}| \left(\frac{\widetilde{\pi}(3E)}{\pi(3E)} \right)^{1/2} \mathbf 1_E}&&\\
&\le& A^2A_1Q s^{(\theta+\omega)/2} \sum_0^{q^*_{s}(E)-1} 
G^*(F^{s,E}_i,f) \mathbf 1_E \mathbf 1_{ \kappa F^{s,E}_i}.\end{eqnarray*}
To finish the proof, we square both sides, multiply by $\pi(3E)$, and proceed as at the end of the proof of Theorem \ref{th-PP1}, using the doubling property of $\pi$. 
 \end{proof}
 
\subsection{Regular weights are always controlled}

The following  lemma is a version of a well-known fact  concerning chains of Whitney balls in John domains.

\begin{lem}  \label{lem-RC}
Assume that $(\mathfrak X,\mathfrak E,\pi)$ is doubling with constant $D$.
Fix $\eta \in (0,1/8)$. Let $\psi$ be a weight  on a finite domain $U$ such that  $\psi$ is $(\eta,A)$-regular on $U$ and $U$ is a John domain, $U\in  J(\alpha,o,R)$, equipped with John paths $\gamma_x$ joining $x$ to $o$, $x\in U$, and a family of $\eta$-Whitney coverings $\mathcal W_s$, $s>0$.
Then there exist $\omega\ge 0$ and $A_1\ge 1$ such that  $\psi$ is $(\omega,A_1)$-controlled on $U$.    Here $A_1=A^{2+4\kappa}$ and $\omega=2\kappa \log _2A$ with $\kappa= D^{4+\log_2(1+1/(\alpha\eta))} .$\end{lem}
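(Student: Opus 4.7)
The plan is to control the ratio $\psi(x_0)/\psi(x_i)$ along the local $s$-chain $\mathcal W'_{<s}(E) = (F_0,\ldots,F_{q^*_s(E)})$ (writing $F_j = B(x_j, 3r(x_j))$) by combining a per-step estimate coming from $(\eta,A)$-regularity with an upper bound of order $\kappa \log_2 s$ on the length of the chain, obtained from doubling together with the John property.

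For the per-step estimate, observe that each ball $F_j$ has radius $3r(x_j) \leq 3\eta\delta(x_j)/4 \leq 6\eta \delta(x_j)$, so $(\eta,A)$-regularity applied to $F_j$ yields $\max_{F_j}\psi \leq A \min_{F_j}\psi$. Since $d(F_j, F_{j+1}) \leq 1$, either $F_j$ and $F_{j+1}$ share a vertex or there is a pair $u \in F_j$, $v \in F_{j+1}$ with $u \sim v$; combining the two ball bounds with the neighbor bound $\psi(u) \leq A\psi(v)$ from the definition of $(\eta,A)$-regularity gives $\psi(x_j) \leq A^3\psi(x_{j+1})$ in all cases.

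To bound the chain length, I would group indices by the dyadic scale of the Whitney radius: set $N_k = \#\{j : r(x_j) \in (2^{k-1},2^k]\}$. Only scales with $2^{k-1}\leq s$ contribute, and since $r(x_j) \geq \min\{s,\eta/4\}$ there are at most $\log_2 s + \log_2(4/\eta) + 2$ nonempty scales. For each $j$ contributing to $N_k$, the ball $F_j$ meets the John path $\gamma^E = (y_0 = x_0, y_1,\ldots,y_m)$ at some $y_{t_j}$, so $d(x_j, y_{t_j}) \leq 3 \cdot 2^k$. The John condition $\alpha(1+t_j) \leq \delta(y_{t_j})$ together with $\delta(y_{t_j}) \leq \delta(x_j) + 3\cdot 2^k \leq (4/\eta + 3)2^k$ yields $t_j \leq (4/(\alpha\eta) + 3/\alpha)2^k$, placing $x_j$ in the ball $B(x_0, C(1+1/(\alpha\eta))2^k)$ for an absolute constant $C$. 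The Whitney balls $B(x_j, r(x_j))$ counted by $N_k$ are pairwise disjoint, have radii $\geq 2^{k-1}$, and lie (after slight enlargement) in that ambient ball, so doubling yields $N_k \leq D^{4 + \log_2(1 + 1/(\alpha\eta))} = \kappa$.

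Summing over scales gives $q^*_s(E) \leq \sum_k N_k \leq \kappa(\log_2 s + c_\eta)$ for a constant $c_\eta = O(\log(1/\eta))$, and iterating the per-step bound gives $\psi(x_0) \leq A^{3q^*_s(E)}\psi(x_i) \leq A^{3\kappa c_\eta} s^{3\kappa \log_2 A}\psi(x_i)$; after absorbing the additive $c_\eta$-term into the prefactor (and, if needed, sharpening the per-step bound to $A^2$ by distinguishing the case $F_j \cap F_{j+1} \neq \emptyset$), this matches the stated $A_1 = A^{2+4\kappa}$ and $\omega = 2\kappa \log_2 A$. The main obstacle is the per-scale estimate $N_k \leq \kappa$; everything hinges on using the John linear-growth condition $\delta(y_t)\geq \alpha(1+t)$ to confine the scale-$k$ Whitney balls of the chain to a ball of radius $\propto 2^k/(\alpha\eta)$, at which point disjointness of the Whitney family and doubling do the rest.
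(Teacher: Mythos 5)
Your proposal is correct and follows essentially the same route as the paper's proof: a per-step bound $\psi(x_j)\le A^2\psi(x_{j+1})$ from $(\eta,A)$-regularity on consecutive chain balls, combined with a per-dyadic-scale count of at most $\kappa$ chain balls obtained from the John condition $\delta\ge\alpha(1+i)$ along $\gamma_E$, disjointness of the Whitney balls, and doubling, giving a chain length of order $\kappa\log_2 s$. The only slippage is in bookkeeping of constants: grouping all radii below $1$ into a single class (as the paper does), rather than continuing the dyadic scales down to $\eta/4$, is what removes the extra $A^{O(\kappa\log(1/\eta))}$ prefactor and yields exactly $A_1=A^{2+4\kappa}$, while your exponent $\omega=2\kappa\log_2 A$ is already the stated one.
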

\begin{proof}  Using the notation of Definition \ref{def-controlled}, we need to compare the values taken by the weight $\psi$ at any pair of points $x_0,x_i,$
such that $x_0$ is the center of a Whitney ball $E$  and $x_i$ is the center of a ball belonging to the local s-chain $\mathcal W'_{<s}(E)$.  This local s-chain is made of balls in $\mathcal W'_s$, each of which has radius at most $3s$  and  intersects the John path $\gamma_E=\gamma_{x_0}$ joining $x_0$ to $o$. 

Assume that we  can prove that
\begin{equation}\label{logcount}
\#\{ K\in \mathcal W_{<s}:  2K \cap \gamma_E\neq \emptyset\}\le  \kappa \log_2 (4s).\end{equation} Of course, under this assumption,
$$1+q_s^*(E)=\#\mathcal W'_{<s}(E)\le 1+\kappa  \log_2 (4s) .$$
Further,  by definition of $\mathcal W'_{<s}(E)=(F^{s,E}_0,\dots,F^{s,E}_{q^*_s(E)})$, the balls  $2F^{s,E}_i,2F^{s,E}_{i+1}$ have a non-empty intersection or are singletons $\{x_i\}$, $\{x_{i+1}\}$ with  $\{x_i,x_{i+1}\}\in \mathfrak E$.  Since $\psi$ is  $(\eta,A)$-regular and the ball $2F^{s,E}_i$ has radius
$6r(x_i)\le 3\eta \delta(x_i)/2$,  we have 
$$\psi(x_i)\le A^2\psi(x_{i+1}), \;\;i=0,\dots,q^*_s(E).$$  
This implies 
\begin{equation} \label{RtoC1}
\psi(x_0)\le  A^{2(1+\kappa \log_2 (4s))} \psi(x_i)= A^{2+4\kappa} s^{2\kappa \log_2 A} \psi(x_i)
,\;\; \;\;i=0,\dots,q^*_s(E).\end{equation}

To prove (\ref{logcount}),  for each $\rho\ge 1$,  let the John path $\gamma_{x_0}$ be
$$\gamma_{x_0}=(\xi_0=x_0, \dots,\xi_m=o).$$
Consider
$$\#\{ K=B(x,r)\in \mathcal W_{<s}:  3K \cap \gamma_E\neq \emptyset,  r\in [\rho,2\rho) \},\;\;\rho\ge 1.$$
Let $K=B(x,r), K'=B(x',r')$ be any two balls from that set and let  
$\xi_i\in 3K$ and $\xi'_{i}\in 3K'$ be two points on the John path $\gamma_{x_0}$ that are witness to the fact that these balls intersect $\gamma_{x_0}$. 
 Now,  by construction,   
 $$ d(x,\xi_i)\le 3r,\;r=\eta \delta(x)/4\mbox{ and } \delta (\xi_i) \ge \alpha (1+i)$$
It follows that   $\delta(x)\ge \delta(\xi_i)- (3\eta/4)\delta(x)$ and thus, using a similar argument for~{$x',\xi'_{i},r'$,}
$$\delta(x)\ge (\alpha/2) (1+i) \mbox{ and }   \delta(x')\ge (\alpha/2) (1+i') .$$ 
This implies that $1+\max\{i,j\}\le (16/\alpha \eta) \rho$ and 
$$d(x,x')\le  8\left(1+ \frac{2}{\alpha\eta} \right)\rho,\;\; B(x',\rho)\subset B\left(x, \left(9+ \frac{16}{\alpha\eta} \right) \rho\right).$$
By construction, the balls $K'\in \mathcal W_{<s}$ are disjoint and the doubling property of $\pi$ thus implies that 
$$\#\{ K=B(x,r)\in \mathcal W_{<s}:  3K \cap \gamma_E\neq \emptyset,  r\in [\rho,2\rho) \}   \le    D^{ 4+\log_2 (1+1/(\alpha\eta))}.
$$
The same argument shows that
$$\#\{ K=B(x,r)\in \mathcal W_{<s}:  3K \cap \gamma_E\neq \emptyset,  r\in (0,1) \}   \le D^{ 4+\log_2 (1+1/(\alpha\eta))}.
$$
For $s\in (2^k,2^{k+1}]$, this implies
\begin{eqnarray*}
\#\{ K\in \mathcal W_{<s}:  3K \cap \gamma_E\neq \emptyset\}
&\le & 
D^{ 4+\log_2 (1+1/(\alpha\eta))} (k+2)\\
& \le & D^{ 4+\log_2 (1+1/(\alpha\eta))} \log_2(4s).\end{eqnarray*}
This, together with~\eqref{RtoC1},  yields
$$
 \psi(x_0)\le A^{2+4\kappa} s^{2\kappa \log_2 A} \psi(x_i)$$ for $i=0,\dots,q^*_s(E),  \kappa= D^{4+\log_2(1+1/(\alpha\eta))}$.
\end{proof}

\section{Application to Metropolis-type chains} \setcounter{equation}{0}  \label{sec-Metro} \label{sec-Met}

\subsection{Metropolis-type chains}
We are ready to apply the technical results developed so far (primarily within Section~\ref{sec-AW}) to Metropolis-type chains
on John domains. The reader may find motivation in the explicit examples of Section~\ref{sec-metropolis-ex}. First we explain what we mean by Metropolis-type chains. Classically, The Metropolis and Metropolis Hastings algorithms give a way of changing the output of one Markov chain to have a desired stationary distribution. See~\cite{liu} or~\cite{DSCMetro} for background and examples.

Assume we are given the background structure $(\mathfrak X,\mathfrak E,\mu, \pi)$
with $\mathfrak X$ finite or countable. Assume that $\mu$ is adapted and subordinated to $\pi$. Let $U$ be a finite domain in $\mathfrak X$. This data determines  an irreducible Markov kernel $K_{N,U}$ on $U$ with reversible probability measure $\pi_U$, proportional to $\pi |_U$,
given by (this is similar to~\eqref{def-Ksub}) 
\begin{equation} \label{def-KNU}
K_{N,U}(x,y)=\left\{\begin{array}{ccl}  \mu_{xy}/\pi(x)  & \mbox{ for  } x\neq y,\,x,y\in U\\
1-(\sum_{z\in U:z\sim x}\mu_{xz}/\pi(x)) & \mbox{ for } x=y\in U. 
\end{array}\right. \end{equation}
The notation $K_{N,U}$ captures the idea that this kernel corresponds to imposing the Neumann boundary condition in $U$ (i.e., some sort of reflexion of the process at the boundary). 

Suppose now that we are given a vertex weight $\psi$ and  a symmetric edge weight $h_{xy}$ on the domain $U$.  Set
$$\widetilde{\pi}=\psi \pi,\;\; \widetilde{\mu}_{xy}=\mu_{xy}h_{xy},$$
and  assume  that   
$$\sum_{y\in U} \widetilde{\mu}_{xy}\le \widetilde{\pi}$$
so that $\widetilde{\mu}$ is subordinated to $\widetilde{\pi}$ in $U$.
This yields a new Markov kernel  $\widetilde{K}$ defined on $U$ by
\begin{equation} \label{def-Ktilde}
\widetilde{K}(x,y)=\left\{\begin{array}{ccl}  \widetilde{\mu}_{xy}/\widetilde{\pi}(x)  & \mbox{ for  } x\neq y,\,x,y\in U\\
1-(\sum_{z\in U: z\sim x}\widetilde{\mu}_{xz}/\widetilde{\pi}(x)) & \mbox{ for } x=y\in U. 
\end{array}\right. \end{equation}
This kernel is irreducible and reversible with reversible probability measure proportional to $\widetilde{\pi}$. 

\begin{exa}\label{ex-metropolis} The choice $h_{xy}=\min\{\psi(x),\psi(y)\}$ satisfies this property and yields the well-known Metropolis chain with proposal chain $(K_{N,U},\pi_U)$ and target probability measure $\widetilde{\pi}_U$, proportional to
 $\widetilde{\pi}=\psi\pi |_U$.   Other choice of $h$ would lead to similar chains including the variants of the Metropolis algorithm considered by Hastings and Baker.  See the discussion in \cite[Remark 3.1]{BD}.
\end{exa}
 
\subsection{Results for Metropolis type chains} 
In order to simplify notation, we fix the background structure  $(\mathfrak X,\mathfrak E, \pi, \mu)$. We assume that $\pi$ is $D$-doubling, $\mu$ is adapted 
and that  the pair 
$(\pi,\mu)$ is elliptic and satisfies the $\theta$-Poincar\'e inequality on balls with constant $P$. We also assume that $\mu$ is subordinated to $\pi$. We also fix $\alpha\in (0,1)$.  In the statements below, we will use  $c,C$ to denote  quantities whose exact values change from place to place and depend only on $\theta, D,P_e,P$ and $\alpha$. Explicit descriptions of these quantitates in terms of the data can be obtained from the proofs. They are of the form 
$$\max\{c^{-1}, C\}\le  A_1^\theta D^{A_2(1+\log 1/\alpha)}\max\{P_e,P\} $$
where $ A_1,A_2 $ are  universal constants. 

Within this fixed background, we consider the collection of all finite domains  $U\subset \mathfrak X$ which are John domains of type $J(\alpha,o,R)$ for some point $o\in U$
and  $R\le 2 R(U,o,\alpha)$.  The parameter $R$ is allowed to vary freely and all estimates are expressed in terms of $R$.  
Recall that $\rho_o(U)=\max\{d_U(o,x): x\in U\}$ satisfies 
$$2R(U,o,\alpha)\ge 2\rho_o(U)\ge  \delta(o) \ge \alpha R(U,o,\alpha).$$
We always assume implicitly that $U$ is not reduced to a singleton so that $R(U,o,\alpha)\ge 1$.
Since $\alpha$ is fixed,  it follows that  $R\asymp \rho_o(U)$, namely,
$$\frac{\alpha}{2}R \le  \rho_o(U)   \le 4R. $$  

We need the following simple technical lemma. 
\begin{lem}  \label{lem-Q}
Assume that $U\in J(o,\alpha,R)$, with $R\le 2R(U,o,\alpha)$, is not a singleton and $0<\eta<1/4$. Referring to the construction of the ball $F(s,x)$, $s>0$, $x\in U$, used in Definition \ref{def-Q},  
any $\eta$-Whitney covering $\mathcal W_s$ of $U$ satisfies
\begin{itemize}
\item
 $W_{=s}=\emptyset$  whenever  $s\ge 3 R(U,o,\alpha)$. In that case, $F(s,x)=3E^s_o$ for all $x\in U$ and the ball $E^s_o$ has radius 
 $$r(o)=\eta\delta(o)/4  \mbox{ with }     \alpha R(U,o,\alpha)\le  \delta(o) \le 2R(U,o,\alpha). $$
\item  When $ s\le \alpha \eta R(U,o,\alpha)/4$,  all balls $F(s,x)$ have radius $3s$. 
\item   When  $s\in (\alpha \eta R(U,o,\alpha)/4, 3R(U,o,\alpha))$, each ball $F(s,x)$, $x\in U$,  has radius contained in the interval
$$ [\alpha \eta  R(U,o,\alpha)/2, 9R(u,o,\alpha)].$$ 
\end{itemize}
In particular, for all   $s\in (0, \alpha \eta R/8)$  
\begin{equation} \label{MGF}
\frac{\pi(F(s,x))}{\pi(U)}\ge \frac{\pi(B(z(x),s))}{\pi(B(z(x),8R)}\ge \frac{1}{D^2} \left(\frac{1+s}{8R}\right)^{\log_2 D},\end{equation}
and, for all $s\in (0,\alpha\eta R/8)$,  the averaging operator  $Q_s$  (Definition \ref{def-Q}) satisfies
$$\|Q_s f\|_\infty\le  
M(1+s)^{-\log_2 D} \|f\|_1 $$ 
 where  $\|f\|_1=\sum|f|\pi$
and $M= D^2 (8R)^{\log_2 D} /\pi(U) $.\end{lem}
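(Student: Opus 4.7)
My plan is to verify the three bullet points by a case analysis on the scale $s$ relative to $\eta\delta(o)/4$, and then derive the volume estimate \eqref{MGF} and the operator bound as direct consequences. I begin by recording the size bounds from Remark~\ref{rem:johnradius}: the hypothesis $R(U,o,\alpha) \le R \le 2R(U,o,\alpha)$ together with $\rho_o(U) \le R(U,o,\alpha)$ gives $\alpha R(U,o,\alpha) \le \delta(o) \le 2R(U,o,\alpha)$, and hence $\eta\delta(o)/4 \in [\alpha\eta R(U,o,\alpha)/4,\, \eta R(U,o,\alpha)/2]$. I would also exploit the freedom in the maximal antichain construction of $\mathcal{W}_s$ by requiring $o$ to be a Whitney center at every scale, so that $E^s_o$ is actually centered at $o$ with radius $\min\{s,\eta\delta(o)/4\}$.

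For the first bullet, the bound $\delta(x) \le d(x,o)+\delta(o) \le \rho_o(U) + 2R(U,o,\alpha) \le 3R(U,o,\alpha)$ shows $\eta\delta(x)/4 < s$ for every $x \in U$ whenever $s \ge 3R(U,o,\alpha)$, so $\mathcal{W}_{=s} = \emptyset$ and $\mathcal{W}_s$ reduces to a standard Whitney covering (Remark~\ref{rem-Whit}). By Definition~\ref{def-Q}, the set over which $F(s,x) = 3E(s,x)$ is empty, so $F(s,x)$ is the final ball of the local $s$-chain, which, having no ball of radius $s$, must terminate at $3E^s_o$. For the second bullet, $s \le \alpha\eta R(U,o,\alpha)/4 \le \eta\delta(o)/4$ places $E^s_o$ in $\mathcal{W}_{=s}$ with radius $s$; every local $s$-chain then reaches a ball of radius $s$ (at the latest, $E^s_o$ itself) and terminates there with $F(s,E)$ of radius $3s$, while the short-circuit case $E(s,x) \in \mathcal{W}_{=s}$ gives the same conclusion immediately.

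The third bullet splits into two subcases. If $s \ge \eta\delta(o)/4$, the ball $E^s_o$ has radius $\eta\delta(o)/4 \in [\alpha\eta R(U,o,\alpha)/4,\, \eta R(U,o,\alpha)/2]$, and $F(s,x)$ is either $3E(s,x)$ with radius $3s \in (3\alpha\eta R(U,o,\alpha)/4,\, 9R(U,o,\alpha))$, or $3E^s_o$ with radius in $[3\alpha\eta R(U,o,\alpha)/4,\, 3\eta R(U,o,\alpha)/2]$. If $s < \eta\delta(o)/4$, the bullet-two argument still gives $F(s,x)$ of radius $3s$. In every subcase the radius lies in $[\alpha\eta R(U,o,\alpha)/2,\, 9R(U,o,\alpha)]$.

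For the volume estimate, when $s \in (0,\alpha\eta R/8)$ we are in the regime of bullet two, since $R \le 2R(U,o,\alpha)$ gives $\alpha\eta R/8 \le \alpha\eta R(U,o,\alpha)/4$; hence $F(s,x) = B(z(x),3s) \supseteq B(z(x),s)$ with $z(x) \in U$. The intrinsic diameter bound $\operatorname{diam}(U,d_U) \le 2\rho_o(U) \le 2R$ yields $U \subseteq B(z(x),8R)$, and the doubling consequence $V(z,8R)/V(z,s) \le D^2\bigl(8R/\max\{1,s\}\bigr)^{\log_2 D}$, combined with $\max\{1,s\} \ge (1+s)/2$ absorbed into the constant, produces the stated lower bound on $\pi(F(s,x))/\pi(U)$. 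The $L^1 \to L^\infty$ bound on $Q_s$ is then immediate from $|Q_s f(x)| \le \pi(F(s,x))^{-1}\|f\|_1$. The main obstacle is not any single computation but the coherent bookkeeping across the three regimes and the choice of $E^s_o$; fixing the convention that $o$ is always a Whitney center is the organizational key that keeps the case analysis manageable.
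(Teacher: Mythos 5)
The paper never actually proves Lemma \ref{lem-Q} (it is stated as a ``simple technical lemma'' and used directly in Theorem \ref{th-KNU}), so there is no official argument to compare with; judged on its own, your case analysis on $s$ versus $\eta\delta(o)/4$, followed by the doubling iteration and the trivial bound $|Q_sf(x)|\le \pi(F(s,x))^{-1}\|f\|_1$, is the natural route and is sound in substance. The one point you should confront explicitly is that the lemma is asserted for \emph{any} $\eta$-Whitney covering $\mathcal W_s$, while your argument builds in the convention that $o$ is a Whitney center at every scale. For an arbitrary maximal disjoint family, $E^s_o$ is merely some ball whose triple contains $o$; its center $x_i$ satisfies $d(x_i,o)\le 3r(x_i)$, so by Lemma \ref{lem-W}(2) one only gets $\delta(x_i)$ comparable to $\delta(o)$ within factors $1\pm 3\eta/4$, and in borderline situations (e.g.\ $\delta(o)$ barely above $4s/\eta$) a local $s$-chain may contain no ball of Whitney radius exactly $s$, so the exact radius claims in the first two bullets can fail by such bounded factors. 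Either state your convention as part of the construction of $\mathcal W_s$ (harmless, since the coverings are chosen by the user in Theorem \ref{th-KNU} and Definition \ref{def-Q}), or add one line handling arbitrary coverings: in the regime $s<\alpha\eta R/8$ one always has $\eta\delta(o)/4\ge s$, hence every $F(s,x)$ has radius at least $3s/(1+3\eta/4)>s$, which is all that \eqref{MGF} and the bound on $Q_s$ require; the bullets then hold with the radii read up to these bounded factors.

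A second, minor bookkeeping issue: the doubling consequence you quote, $V(z,8R)/V(z,s)\le D^2\bigl(8R/\max\{1,s\}\bigr)^{\log_2 D}$, combined with $\max\{1,s\}\ge (1+s)/2$, yields the constant $D^{-3}$ in \eqref{MGF}, not the stated $D^{-2}$; there is no free constant to absorb the factor $2^{\log_2 D}=D$ into, since $M$ is explicit. To match the statement, iterate doubling directly (as in the moderate-growth remark following Definition of moderate growth), i.e.\ use $V(z,8R)\le D^{1+\log_2(8R/\max\{1,s\})}V(z,\max\{1,s\})$ before trading $\max\{1,s\}$ for $(1+s)/2$, or simply note that the precise power of $D$ is immaterial for the way $M$ enters Theorem \ref{th-KNU}. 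The remaining steps --- $\mathcal W_{=s}=\emptyset$ for $s\ge 3R(U,o,\alpha)$ via $\delta(x)\le d(x,o)+\delta(o)\le 3R(U,o,\alpha)$, the identification $F(s,x)=3E^s_o$ in that case, and $U\subset B(z(x),8R)$ from $\operatorname{diam}(U,d_U)\le 2\rho_o(U)\le 2R$ --- are correct as you wrote them.
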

\begin{rem} \label{rem-MGF}
The bound~\eqref{MGF} is a version of moderate growth for the metric measure space $(U,d_U,\pi)$ with the additional twist that, for
    each $s,x$, we consider the ball $F(s,x)$ instead of the ball $B_U(x,s)$. The reason for this is that it is the balls $F(s,x)$ that appear in the definition of the operator $Q_s$ because of the crucial use we make of the Whitney coverings $\mathcal W_s$, $s>0$.
\end{rem}

Our first result concerns the Markov chain driven by $K_{N,U}$ defined in Example~\ref{ex-metropolis}. This is a reversible chain
 with reversible probability measure $\pi_U$.  We let $\beta=\beta_{N,U}$ be the second largest eigenvalue of $K_{N,U}$ and $\beta_{-}=\beta_{N,U,-}$ be the smallest eigenvalue of $K_{N,U}$.  From the definition, it is possible that  $U = \mathfrak X$ and $\beta_-=-1$.
  \begin{theo}\label{th-KNU} There exist constants $c,C$ such that for  any $R>0$ and any finite domain $U\in J(\alpha,R)$, we have
  $$1-\beta_{N,U}\ge cR^{-\theta}.$$
  Assume further that $1+\beta_{N,U,-}\ge cR^{-\theta}$. Under this assumption, for all $t\ge R^\theta$,
  $$\max_{x,y\in U}\left\{ \left|\frac{K^t_{N,U}(x,y)}{\pi_U(y)} -1\right|\right\}\le C e^{-2ct/R^\theta}.$$
\end{theo}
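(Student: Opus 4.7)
The first inequality is the spectral-gap reformulation of Theorem~\ref{th-P1}. Since $K_{N,U}$ is reversible with respect to $\pi_U=\pi|_U/\pi(U)$ and its Dirichlet form on $L^2(\pi_U)$ equals $\mathcal{E}_{\mu,U}(\cdot,\cdot)/\pi(U)$, the variational formula gives
$$1-\beta_{N,U}=\inf_{f\text{ non-constant}}\frac{\mathcal{E}_{\mu,U}(f,f)}{\sum_{U}|f-f_U|^2\pi}.$$
Theorem~\ref{th-P1} bounds the denominator by $CR^\theta\,\mathcal{E}_{\mu,U}(f,f)$, whence $1-\beta_{N,U}\ge cR^{-\theta}$.

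For the second assertion, I would first derive a Nash inequality for $(K_{N,U},\pi_U)$. Lemma~\ref{lem-Q} provides the moderate-growth-type bound $\|Q_s f\|_\infty\le M(1+s)^{-\nu}\|f\|_1$ for $s\in(0,c_0 R)$ with $\nu=\log_2 D$ and $M=D^2(8R)^\nu/\pi(U)$, while Theorem~\ref{th-PP1} yields $\|f-Q_s f\|_2^2\le C s^\theta\,\mathcal{E}_{\mu,U}(f,f)$. Feeding these into Proposition~\ref{pro-Nash1} with $T\asymp R$ and then rescaling norms from $L^p(\pi)$ to $L^p(\pi_U)$ (the extra factor $\pi(U)^{\theta/\nu}$ produced by rescaling cancels exactly the $\pi(U)^{-\theta/\nu}$ hidden in $M^{\theta/\nu}$) produces a Nash inequality on $L^2(\pi_U)$ whose constants depend only on $\alpha,\theta,D,P,P_e$ and whose cutoff parameter satisfies $N\asymp R^\theta$. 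Proposition~\ref{pro-Nash2} then yields the uniform on-diagonal estimate
$$\max_{x\in U}\frac{K^{2n}_{N,U}(x,x)}{\pi_U(x)}\le C',\qquad n\asymp R^\theta.$$

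The conclusion follows from a standard Nash-plus-spectral-gap splitting. Combining the spectral gap above with the hypothesis $1+\beta_{N,U,-}\ge cR^{-\theta}$ shows that the spectral radius of $K_{N,U}$ on the orthogonal complement of constants satisfies $\beta_*\le 1-cR^{-\theta}$. Writing the reversible spectral decomposition
$$\frac{K^{2t}_{N,U}(x,x)}{\pi_U(x)}-1=\sum_{i\ge 1}\beta_i^{2t}\phi_i(x)^2,$$
with $\{\phi_i\}$ orthonormal in $L^2(\pi_U)$, factoring out $\beta_*^{2t-R^\theta}$, and invoking the on-diagonal bound at time $R^\theta$ yields $K^{2t}_{N,U}(x,x)/\pi_U(x)-1\le C''e^{-2ct/R^\theta}$ for $t\ge R^\theta$. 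Reversibility and Cauchy--Schwarz give
$$\left|\frac{K^t_{N,U}(x,y)}{\pi_U(y)}-1\right|^2\le\left(\frac{K^{2t}_{N,U}(x,x)}{\pi_U(x)}-1\right)\left(\frac{K^{2t}_{N,U}(y,y)}{\pi_U(y)}-1\right),$$
and taking square roots finishes the argument.

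The principal obstacle is the bookkeeping required to convert the $\mathcal{Q}$-Poincar\'e and moderate-growth estimates---both stated relative to $\pi$---into a Nash inequality on $L^2(\pi_U)$ whose constants produce an $R$-independent on-diagonal bound precisely at the critical scale $n\asymp R^\theta$. Once this normalization issue is handled cleanly, the remaining Nash/spectral-decomposition step is routine.
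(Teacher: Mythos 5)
Your proposal is correct and follows essentially the same route as the paper: Theorem~\ref{th-P1} for the spectral gap, then Lemma~\ref{lem-Q} together with Theorem~\ref{th-PP1} and Propositions~\ref{pro-Nash1}--\ref{pro-Nash2} (with the same $\pi$-to-$\pi_U$ normalization bookkeeping) for a uniform on-diagonal bound at the critical scale $n\asymp R^\theta$, combined finally with the assumed bound on $\beta_{N,U,-}$. The only cosmetic difference is your last step, which uses the eigenfunction expansion plus Cauchy--Schwarz, whereas the paper factors $\|(K-\pi_U)^t\|_{L^1(\pi_U)\to L^\infty}$ through $L^2(\pi_U)$ as in \cite{DSCNash}; for reversible chains these are equivalent computations.
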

 \begin{proof}  This result is a consequence of Theorems \ref{th-P1} and~\ref{th-PP1}. 
 We use a Whitney covering family $W_s$, $s>0$, with $\eta=1/4$.
 For later purpose when we will need to use a given $\eta$, we keep $\eta$ as a parameter in the proof. Theorem \ref{th-P1} gives the estimates for $1-\beta_{N,U}$.  (Theorem \ref{th-PP1} also gives that eigenvalue estimate if we pick $s\asymp R$ large enough that the Whitney covering $\mathcal W_s$ is such that $W_{=s}$ is empty.) 
By Lemma \ref{lem-Q}, for $s\in (0,\alpha \eta R/8]$
$$\|Q_sf\|_\infty\le M (1+s)^{-\log_2 D}\|f\|_1$$ where $M=D^2 (8R)^{\log_2 D}/  \pi(U) .$ Now, we appeal to Theorem \ref{th-PP1} and Propositions \ref{pro-Nash1} and~\ref{pro-Nash2} to obtain
$$\sup_{x,y\in U} \{K^t(x,y)/\pi(y)\}\le  C \pi(U)^{-1} (R^\theta/(n+1))^{\log_2 D/\theta} $$
for all $t\le  (\alpha \eta R/4)^\theta $.  This is the same as
\begin{equation} \label{NashK}
\sup_{x,y\in U} \{K^t(x,y)/\pi_U(y)\}\le  C (R^\theta/(n+1))^{\log_2 D/\theta} \end{equation}
for all $t\le  (\alpha \eta R/4)^\theta$, because $\pi_U=\pi(U)^{-1}\pi|_U$.
The constant $C$ is of the type described above and incorporates various factors depending only on $D,\theta, \alpha, P, P_e$
which are made explicit in Theorem \ref{th-PP1}, Lemma \ref{lem-Q} and Propositions \ref{pro-Nash1} and~\ref{pro-Nash2}.  

 The next step is (essentially) \cite{DSCNash}[Lemma 1.1].  Using operator notation for ease, write
 $$\sup_{x,y\in U}\left\{\left|\frac{K^t(x,y)}{\pi_U(y)}-1\right|\right\}= \|(K-\pi_U)^t\|_{L^1(\pi_U)\ra L^\infty}$$
 and observe that, for any $t_1,t_2$ such that $t=t_1+2t_2$, $\|(K-\pi_U)^t\|_{L^1(\pi_U)\ra L^\infty}$ is bounded above by the product of 
$$ \|(K-\pi_U)^{t_2}\|_{L^1(\pi_U)\ra L^2(\pi_U)}, \;\;\|(K-\pi_U)^{t_1}\|_{L^2(\pi_U)\ra L^2(\pi_U)}$$ and 
 $$\|(K-\pi_U)^{t_2}\|_{L^2(\pi_U)\ra L^\infty}.$$ The first and last factors are equal (reversibility and duality) and also equal to
 $$ \sqrt{\sup_{x,y\in U} \{K^{2t_2}(x,y)/\pi_U(y)\} }.$$ 
 The second factor is
 $$\|(K-\pi_U)^{t_2}\|_{L^2(\pi_U)\ra L^2(\pi_U)} = \max \{\beta_{N,U},|\beta_{N,U,-}|\}^{t_1}.$$ 
 We pick $t_2$ to be the largest integer less than or equal to  $(\alpha\eta R)^\theta /8$ and apply (\ref{NashK}) to obtain
 $$\sup_{x,y\in U}\left\{\left|\frac{K^t(x,y)}{\pi_U(y)}-1\right|\right\} \le  2^{\log_2 D/\theta} C (\alpha  \eta )^{\log_2 D/\theta} \max \{\beta_{N,U},|\beta_{N,U,-}|\}^{t_1}.$$
 This gives the desired result.
   \end{proof}
 
 The following very general example illustrates the previous theorem.
 \begin{exa}[Graph metric balls]  Fix  constants $P_e,P,\theta$ and $D$. Assume that $(\mathfrak X,\mathfrak E,\pi,\mu)$ is such that the volume doubling property holds with constant $D$ together with $P_e$-ellipticity and  the $\theta$-Poincar\'e inequality with constant $P$. We also assume (for simplicity)
 that $$ \sum_{y\sim x} \mu_{xy}\le \pi(x)/2.$$
Under this assumption, for any finite domain $U$, the kernel $K_{N,U}$ has the property that $K_{N,U}(x,x)\ge 1/2$ (this is often called ``laziness'')  and it implies that  $\beta_{N,U,-}\ge 0$.  

Let $U=B(o,R)$ be any graph metric ball  
in $(\mathfrak X,\mathfrak E)$. From Example~\ref{exa-mb}, such a ball is a John domain with $\alpha=1$, namely, $U\in J(\mathfrak X,\mathfrak E, 1, o,R)$ and $R=R(U,\alpha,o)$.
Since $\beta_{N,U,-}\ge 0$, Theorem \ref{th-KNU} applies and show that $K^t_{N,U}$ converges to $\pi_U$ in times of order $R^\theta$. This applies for instance to the metric balls of the Vicsek graph of Figure \ref{fig-V2}.
 \end{exa}

 Next we consider a weight $\psi$ which is $(\eta,A)$-regular to $U$ and 
 $A$-doubling. This means that the measure $\widetilde{\pi}=\psi\pi$ is $A$ 
 doubling on $(U,\mathfrak E_U)$ (and also it extension to $(\mathfrak U,\mathfrak E_\mathfrak U)$ is $2A$ doubling).  
 For simplicity we pick $\widetilde{\mu}$ to be given by the Metropolis choice
 $$\widetilde{\mu}_{xy}=\mu_{x,y}\min\{\pi(x),\pi(y)\}.$$
 This implies  that $\widetilde{\mu}$ is subordinated to $\widetilde{\pi}$ and we  let
 $$K_{U,\psi}=\widetilde{K}$$ be defined by (\ref{def-Ktilde}). This reversible Markov kernel
 has reversible probability measure $\widetilde{\pi}_U$ proportional to $\psi\pi$ on $U$.
 Also, the hypothesis that $\psi$ is $(\eta,A)$-regular to $U$ implies that the pair
$(\widetilde{\psi},\widetilde{\mu})$ $(\eta, A^2)$-dominates $(\pi,\mu)$ on $U$. See Remark \ref{rem-psimu}. This shows that we can use Theorem \ref{th-PP1doub}
to prove the following result using the same line of reasoning as for Theorem \ref{th-KNU}.  We will denote by $\beta_{U,\psi}$ the second largest eigenvalue of 
$\widetilde{K}=K_{U,\psi}$ and by $\beta_{U,\psi,-}$ its lowest eigenvalue.

\begin{theo}\label{th-Ktilde1} For fixed $\eta\in (0,1/8),A\ge 1$, there exist constants $c,C$ such that for any $R>0$, any finite domain $U\in J(\alpha,R)$ and any weight $\psi$ which is $(\eta,A)$-regular (see Definition~\ref{def-regular})and $A$-doubling on $U$, we have
  $$1-\beta_{U,\psi}\ge cR^{-\theta}.$$
  Assume further  that $1+\beta_{U,\psi,-}\ge cR^{-\theta}$. Under this assumption, for all $t\ge R^\theta$,
  $$\max_{x,y\in U}\left\{ \left|\frac{\widetilde{K}^t(x,y)}{\widetilde{\pi}_U(y)} -1\right|\right\}\le C e^{-2ct/R^\theta}.$$
 There are universal constants $A_1,A_2$ such that
  $$\max\{c^{-1}, C\}\le  A_1^\theta (AD)^{A_2(1+\log 1/\alpha\eta)}\max\{P_e,P\} .$$
\end{theo}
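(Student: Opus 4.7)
The plan is to mimic the argument used for Theorem \ref{th-KNU}, substituting Theorem \ref{th-PP1doub} for Theorem \ref{th-PP1}. The justification for this substitution is built into the setup: since $\widetilde{\mu}_{xy}=\mu_{xy}\min\{\psi(x),\psi(y)\}$ and $\psi$ is $(\eta,A)$-regular, Remark \ref{rem-psimu} gives that $(\widetilde{\pi},\widetilde{\mu})$ $(\eta,A^2)$-dominates $(\pi,\mu)$, and by hypothesis $\widetilde{\pi}$ is $A$-doubling on $(U,\mathfrak E_U)$. Thus Theorem \ref{th-PP1doub} yields, for every $s>0$,
\[
\sum_U|f-\widetilde{Q}_s f|^2\widetilde{\pi} \;\le\; Cs^\theta\,\mathcal E_{\widetilde{\mu},U}(f,f),
\]
with $C$ depending only on the listed constants. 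Taking $s$ large enough that $\widetilde{Q}_s f\equiv \widetilde f_U$ gives the classical Poincar\'e inequality $\sum_U|f-\widetilde f_U|^2\widetilde\pi\le CR^\theta\mathcal E_{\widetilde\mu,U}(f,f)$. Since $\widetilde K=K_{U,\psi}$ is reversible with $\widetilde\pi_U$ and has Dirichlet form $\mathcal E_{\widetilde\mu,U}$ on $L^2(\widetilde\pi_U)$, this translates directly into the spectral gap bound $1-\beta_{U,\psi}\ge cR^{-\theta}$.

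For the second claim, the plan is to promote the $Q$-Poincar\'e inequality to a Nash inequality and apply Propositions \ref{pro-Nash1}--\ref{pro-Nash2}, as in Theorem \ref{th-KNU}. The extra ingredient required is a moderate-growth estimate for $\widetilde\pi$ along the Whitney averages: for $s\in(0,\alpha\eta R/8]$,
\[
\|\widetilde Q_s f\|_\infty \;\le\; \widetilde M(1+s)^{-\log_2 A}\|f\|_{L^1(\widetilde\pi)}, \qquad \widetilde M = \widetilde C R^{\log_2 A}/\widetilde\pi(U).
\]
I would derive this by the same recipe as Lemma \ref{lem-Q}: that lemma shows $F(s,x)$ contains a genuine ball $B(z(x),s)\subset U$ on which the inner and ambient metrics coincide, so the $A$-doubling of $\widetilde\pi$ on $(U,\mathfrak E_U)$, iterated from scale $s$ up to the $d_U$-diameter ($\asymp R$), yields $\widetilde\pi(F(s,x))\ge c\,\widetilde\pi(U)(1+s)^{\log_2 A}/R^{\log_2 A}$. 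Feeding both ingredients into Proposition \ref{pro-Nash1} with $T$ of order $R^\theta$ produces the Nash inequality with $\nu=\log_2 A$, and Proposition \ref{pro-Nash2} then gives
\[
\sup_{x,y\in U}\widetilde K^{2n}(x,y)/\widetilde\pi_U(y) \;\le\; C\bigl(R^\theta/(n+1)\bigr)^{\log_2 A/\theta}, \qquad n\le R^\theta.
\]

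To conclude, I would use the $L^1\!\to\! L^2\!\to\! L^\infty$ interpolation that closes the proof of Theorem \ref{th-KNU}. Writing $t=t_1+2t_2$ with $t_2=\lfloor R^\theta/8\rfloor$, reversibility and duality give
\[
\|(\widetilde K-\widetilde\pi_U)^t\|_{L^1(\widetilde\pi_U)\to L^\infty} \;\le\; \Bigl(\sup_{x,y}\widetilde K^{2t_2}(x,y)/\widetilde\pi_U(y)\Bigr)\max\{\beta_{U,\psi},|\beta_{U,\psi,-}|\}^{t_1}.
\]
The added assumption $1+\beta_{U,\psi,-}\ge cR^{-\theta}$, combined with the spectral gap already established, gives $\max\{\beta_{U,\psi},|\beta_{U,\psi,-}|\}\le 1-cR^{-\theta}$, and the Nash-based prefactor is bounded by a constant for $t_2\asymp R^\theta$; this yields the advertised $Ce^{-2ct/R^\theta}$ decay for $t\ge R^\theta$.

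The main technical point is the moderate-growth step. The subtlety is that $\widetilde\pi$ is only doubling on $(U,\mathfrak E_U)$ (not on all of $\mathfrak X$), while the balls $F(s,x)$ used by $\widetilde Q_s$ are \emph{outer} balls determined by the Whitney coverings. The fact that for $s\le \alpha\eta R/8$ each $F(s,x)$ is a Whitney ball whose radius is of order $s$ and whose distance to $\mathfrak X\setminus U$ is also of order $s$ is exactly what lets the inner-metric doubling of $\widetilde\pi$ apply. Once that is in hand, the tracking of the explicit constants $\max\{c^{-1},C\}\le A_1^\theta(AD)^{A_2(1+\log(1/(\alpha\eta)))}\max\{P_e,P\}$ comes directly from composing the constants recorded in Theorem \ref{th-PP1doub} and Propositions \ref{pro-Nash1}--\ref{pro-Nash2}, together with the elementary Whitney-covering parameters.
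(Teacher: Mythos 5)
Your proposal is correct and follows essentially the same route as the paper: the paper's own proof consists precisely of noting that $(\widetilde{\pi},\widetilde{\mu})$ $(\eta,A^2)$-dominates $(\pi,\mu)$ (Remark \ref{rem-psimu}), invoking Theorem \ref{th-PP1doub} in place of Theorem \ref{th-PP1}, and then repeating the Nash-inequality and $L^1\!\to\!L^2\!\to\!L^\infty$ argument of Theorem \ref{th-KNU}, with the moderate-growth step of Lemma \ref{lem-Q} adapted to the inner-metric doubling of $\widetilde{\pi}$ exactly as you describe. Your treatment of the subtlety that the Whitney balls $F(s,x)$ lie well inside $U$, so inner and ambient balls coincide there, is the right justification for transferring the doubling of $\widetilde{\pi}$ on $(U,\mathfrak E_U)$ to the volume lower bound needed for the Nash inequality.
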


Replacing the hypothesis that $\psi$ is $(\eta,A)$-regular and $A$-doubling by the hypothesis that $\psi$ is $(\eta,A)$-regular and $(\omega,A)$-controlled leads to the following similar statement. 
\begin{theo}\label{th-Ktilde2} For fixed $\eta\in (0,1/8),A\ge 1$ and $\omega\ge 0$, there exist constants $c,C$ such that for any $R>0$, any finite domain $U\in J(\alpha,R)$ and any weight $\psi$ which is $(\eta,A)$-regular and $(\omega,A)$-controlled (see Definition~\ref{def-controlled}) on $U$, we have
  $$1-\beta_{U,\psi}\ge cR^{-(\theta+\omega)}.$$
  Assume further  that $1+\beta_{U,\psi,-}\ge cR^{-(\theta+\omega)}$. Under this assumption, for all $t\ge R^\theta$,
  $$\max_{x,y\in U}\left\{ \left|\frac{\widetilde{K}^t(x,y)}{\widetilde{\pi}_U(y)} -1\right|\right\}\le C e^{-2ct/R^{(\theta+\omega)}}.$$
 There are universal constants $A_1,A_2$ such that
  $$\max\{c^{-1}, C\}\le  A_1^{\theta +\omega}(AD)^{A_2(1+\log 1/\alpha\eta)}\max\{P_e,P\} .$$
\end{theo}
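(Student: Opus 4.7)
The plan is to follow the template of Theorems \ref{th-KNU} and \ref{th-Ktilde1}, replacing Theorem \ref{th-PP1doub} by Theorem \ref{th-PP1controlled} as the source of the $\mathcal Q$-Poincar\'e inequality. The Metropolis choice $\widetilde\mu_{xy}=\mu_{xy}\min\{\psi(x),\psi(y)\}$ together with the $(\eta,A)$-regularity of $\psi$ ensures, via Remark \ref{rem-psimu}, that condition (\ref{psimu}) is satisfied with $A_2=A$, so Theorem \ref{th-PP1controlled} applies to $(\widetilde\pi,\widetilde\mu)$ and yields
$$\sum_U|f(x)-\widetilde Q_s f(x)|^2\widetilde\pi(x)\le Cs^{\theta+\omega}\mathcal E_{\widetilde\mu,U}(f,f)\quad \text{for every } s>0.$$
Taking $s$ of order $R$, large enough that the Whitney covering $\mathcal W_s$ satisfies $\mathcal W_{=s}=\emptyset$ and $\widetilde Q_s f$ reduces to a single average over $U$, turns this into the classical Poincar\'e inequality with constant $CR^{\theta+\omega}$, whence the spectral-gap bound $1-\beta_{U,\psi}\ge cR^{-(\theta+\omega)}$.

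For the exponential-convergence claim we follow the $L^1\to L^2\to L^\infty$ decomposition used at the end of the proof of Theorem \ref{th-KNU}. Writing $t=t_1+2t_2$, reversibility gives
$$\|(\widetilde K-\widetilde\pi_U)^t\|_{L^1(\widetilde\pi_U)\to L^\infty}\le \Bigl(\sup_{x,y}\widetilde K^{2t_2}(x,y)/\widetilde\pi_U(y)\Bigr)\cdot\max\{\beta_{U,\psi},|\beta_{U,\psi,-}|\}^{t_1}.$$
Under the hypothesis $1+\beta_{U,\psi,-}\ge cR^{-(\theta+\omega)}$ together with the spectral gap proved in the first step, the $L^2\to L^2$ factor is bounded by $e^{-ct_1/R^{\theta+\omega}}$. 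Choosing $t_2\asymp R^{\theta+\omega}$ and $t_1=t-2t_2$, the theorem reduces to showing $\sup_{x,y\in U}\widetilde K^{2n}(x,y)/\widetilde\pi_U(y)=O(1)$ at $n\asymp R^{\theta+\omega}$, which via Propositions \ref{pro-Nash1} and \ref{pro-Nash2} is a consequence of a Nash inequality for $(\widetilde K,\widetilde\pi)$ of exponent $\tilde\theta=\theta+\omega$ and effective dimension $\tilde\nu=\log_2 D$ on that time-scale.

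The principal obstacle, and the only step that goes beyond the template of Theorems \ref{th-KNU} and \ref{th-Ktilde1}, is producing the analogue of Lemma \ref{lem-Q} for the (possibly non-doubling) measure $\widetilde\pi$: a lower bound of the form
$$\widetilde\pi(F(s,x))\ge \tilde a\,(1+s)^{\log_2 D}R^{-\log_2 D}\widetilde\pi(U),$$
uniform in $x\in U$ and in $s$ in the relevant range. The plan is to combine three ingredients: \emph{(i)} $(\eta,A)$-regularity, which makes $\widetilde\pi$ comparable to $\psi(x_F)\pi$ on the interior ball $F(s,x)$ centered at $x_F$; \emph{(ii)} $(\omega,A_1)$-control, which applied along the local $s$-chain ending at $x_F$ gives $\psi(x_F)\ge \psi(x_0)/(A_1 s^\omega)$ for the center $x_0$ of the starting Whitney ball and, applied globally with $s$ of order $R$, controls $\widetilde\pi(U)$ through the values of $\psi$ at Whitney-ball centers; and \emph{(iii)} the doubling of $\pi$, supplying $\pi(F(s,x))/\pi(U)\ge c(1+s/R)^{\log_2 D}$. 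To extract the correct exponent one must decompose $\widetilde\pi(U)=\sum_y\psi(y)\pi(y)$ along a Whitney covering (rather than use the crude bound $\max_U\psi\cdot \pi(U)$) and distinguish the case where $F(s,x)=3E^s_o$ is the distinguished ball near $o$ (where $\psi\asymp\psi_o$ by regularity) from the case of a generic interior ball of radius $3s$.

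With the moderate-growth estimate in hand, Proposition \ref{pro-Nash1} produces the Nash inequality with parameters $(\tilde\theta,\tilde\nu)=(\theta+\omega,\log_2 D)$ on the scale $N\asymp R^{\theta+\omega}$, Proposition \ref{pro-Nash2} gives $\widetilde K^{2n}(x,y)/\widetilde\pi_U(y)=O(1)$ at $n\asymp R^{\theta+\omega}$, and insertion into the $L^1\to L^2\to L^\infty$ decomposition yields $\max_{x,y}|\widetilde K^t(x,y)/\widetilde\pi_U(y)-1|\le Ce^{-ct/R^{\theta+\omega}}$ for $t\ge R^{\theta+\omega}$. Tracking the constants through Theorem \ref{th-PP1controlled}, the modified Lemma \ref{lem-Q}, and Propositions \ref{pro-Nash1}--\ref{pro-Nash2} yields the claimed universal form of $c,C$ in terms of $A_1^{\theta+\omega}(AD)^{A_2(1+\log 1/\alpha\eta)}\max\{P_e,P\}$.
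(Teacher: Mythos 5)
Your first step is sound and is exactly the paper's route: with the Metropolis choice of $\widetilde\mu$, regularity gives (\ref{psimu}) with $A_2=A$, Theorem \ref{th-PP1controlled} yields the $\mathcal Q$-Poincar\'e inequality with scaling $s^{\theta+\omega}$, taking $s\asymp R$ gives $1-\beta_{U,\psi}\ge cR^{-(\theta+\omega)}$, and the $L^1\to L^2\to L^\infty$ splitting is the same one used for Theorem \ref{th-KNU}. The gap is precisely in the step you yourself single out as the new ingredient. The estimate
$$\widetilde\pi(F(s,x))\ \ge\ \tilde a\,\Bigl(\tfrac{1+s}{R}\Bigr)^{\log_2 D}\,\widetilde\pi(U)$$
is false under the hypotheses of the theorem, and in fact no bound of the form $\widetilde\pi(F(s,x))\ge c((1+s)/R)^{\nu'}\widetilde\pi(U)$, for any exponent $\nu'$, can hold uniformly for $s$ up to order $R$. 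The paper's own fourth example in Section \ref{sec-metropolis-ex} is a counterexample family: $U=[-N,\dots,N]\subset\mathbb Z$ with $\psi(x)=\delta(x)^{-\nu}$, $\nu>1$, which is $(\eta,A)$-regular and $(\omega,A)$-controlled with $\omega=\nu$ (constants independent of $N$) but not doubling. There $\widetilde\pi(U)=\sum_U\delta^{-\nu}\asymp 1$, while for $x$ near the centre of the interval $F(s,x)$ is a ball of radius $\asymp s$ on which $\psi\asymp N^{-\nu}$, so $\widetilde\pi(F(s,x))\asymp sN^{-\nu}$; this is smaller than $((1+s)/N)^{\log_2 D}$ by an unbounded factor (take, e.g., $s=\sqrt N$ and $\nu$ large), and at $s\asymp R$ any bound of the proposed shape would force $\widetilde\pi(3E^s_o)\gtrsim\widetilde\pi(U)$, i.e.\ $N^{1-\nu}\gtrsim 1$, which fails. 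Your suggested decomposition of $\widetilde\pi(U)$ along Whitney balls cannot rescue this, because the target inequality itself is false: control only propagates $\psi$ along chains toward $o$ and cannot prevent $\widetilde\pi$ from concentrating its mass near $\partial U$.

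Since $\|\widetilde Q_s\|_{L^1(\widetilde\pi)\to L^\infty}=\sup_x\widetilde\pi(F(s,x))^{-1}$, this is not a constant-chasing issue. In the same example, the best admissible $M$ fed into Propositions \ref{pro-Nash1}--\ref{pro-Nash2} (with $T\asymp R$, $\tilde\theta=\theta+\omega$) yields only $\sup_{x,y}\widetilde K^{2n}(x,y)/\widetilde\pi_U(y)\lesssim N^{\nu-1}$ at $n\asymp R^{\theta+\omega}$, not an absolute constant, so inserting it into the $L^1\to L^2\to L^\infty$ decomposition proves the stated inequality only for $t\gtrsim R^{\theta+\omega}\log N$. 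This is exactly where the non-doubling case differs from Theorem \ref{th-Ktilde1}: there, doubling of $\widetilde\pi$ supplies the analogue of Lemma \ref{lem-Q}, whereas under mere $(\omega,A)$-control some genuinely different ultracontractivity input is needed (for instance a weighted Nash inequality in the spirit of the reference the paper cites for this very example). To be fair, the paper itself offers no argument for this step — Theorem \ref{th-Ktilde2} is stated as following by ``the same line of reasoning'' — so your proposal mirrors the intended template; but the key estimate on which your plan rests does not hold, and as written the proof is incomplete at its decisive point.
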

 
 \subsection{Explicit examples of Metropolis type chains}\label{sec-metropolis-ex}
 
We give four simple and instructive explicit examples regarding Metropolis chains. 
There are based on  a cube $U=[-N,N]^d$ in some fixed dimension $d$. The key parameter which is allowed to vary is $N$. This cube is equipped with it natural edge structure induced by the square grid.
The underlying edge weight is $\mu_{x,y}=(2d)^{-1}$ and $\pi$ is the counting measure.

To obtain each of our examples, we will define a ``boundary" for $U$ and a weight $\psi$
 that is $(1/8,A)$-regular and $A$ doubling.
 
 \begin{exa} Our first example uses the natural boundary of $U=[-N,N]^d$ in the square grid $\mathbb Z^d$. The weight $\psi=\psi_\nu$, $\nu\ge 0$, is given by 
 $$\psi(x)=\delta(x)^\nu.$$
Recall that $\delta(x)$ is the distance to the boundary. Thus, this power weight is largest at the center of the cube. It is $(1/8,A)$-regular and $A$-doubling with $A$ depending of $d$ and $\nu$ which we assume are fixed.
Theorem \ref{th-Ktilde1} applies (with $\theta=2$). The necessary estimates on the 
lowest eigenvalue $\beta_{U,\psi,-}$ holds true because there is sufficient holding probability provided by the Metropolis rule at each vertex (this holding is of order at least $1/N$ and, in addition, there is  also enough holding at the boundary). Here $R\asymp N$ and convergence occurs in order $N^2$ steps.
\end{exa}

\begin{exa} Our second example is obtained by adding two points to the box from the first example, which will serve as the boundary. Let $\mathfrak{X} = [-N,N]^d \cup \{u_{-},u_{+}\}$, where $u_-$ is attached by one edge to $(-N,\dots, -N)$
and $u_+$ attached by one edge to $(N,\dots,N)$. Within $\mathfrak{X}$, let $U=[-N,N]^d$, so the boundary is $\{u_-,u_+\}$. Again, we consider the power weight
$$\psi(x)=\psi_\nu(x)=\delta(x)^\nu,\;\;\nu>0$$
but this time $\delta$ is the distance to the boundary $\{u_-,u_+\}$. This power weight 
is constant  along the hyperplanes $\sum_1^d x_i=k$ and maximum on $\sum_1^d x_i=0$. 

\begin{figure}[h] 
\begin{center}
\newcommand{\Depth}{2}
\newcommand{\Height}{2}
\newcommand{\Width}{2}
\begin{tikzpicture}
\coordinate (O) at (0,0,0);
\coordinate (A) at (0,\Width,0);
\coordinate (B) at (0,\Width,\Height);
\coordinate (C) at (0,0,\Height);
\coordinate (D) at (\Depth,0,0);
\coordinate (E) at (\Depth,\Width,0);
\coordinate (F) at (\Depth,\Width,\Height);
\coordinate (G) at (\Depth,0,\Height);

\draw[fill]  (0,0,0) circle [radius=.04];
\draw (O) -- (C) -- (G) -- (D) -- cycle;
\draw (O) -- (A) -- (E) -- (D) -- cycle;
\draw  (O) -- (A) -- (B) -- (C) -- cycle;
\draw[blue, thick, fill=blue!20, opacity=.5]   (0,2,1) -- (1,2,0) -- (2,1,0) -- (2,0,1) -- (1,0,2) -- (0,1,2) -- (0,2,1) ;\draw (D) -- (E) -- (F) -- (G) -- cycle;
\draw  (C) -- (B) -- (F) -- (G) -- cycle;
\draw (A) -- (B) -- (F) -- (E) -- cycle;

\draw[fill]  (2,2,2) circle [radius=.05];
\draw (1,1,1) circle [radius=.04];
\draw [->]  (1,1,1) -- (1,1,1.8); \draw [->] (1,1,1) -- (1,1.5,1); \draw [->] (1,1,1) -- (1.5,1,1);

\end{tikzpicture}
\caption{The box $U=[-N,N]^3$ with two boundary points  $u_1,u_+$ attached at  corners $(-N,-N,-N)$ and $(N,N,N)$ (these to corners are marked with black dots). The blue plane is the set of points in $U$ at maximal distance from the boundary points $\{u_-,u_+\}$. The center of the box is shown with the axes. The grid is not shown.}
\end{center}
\end{figure}
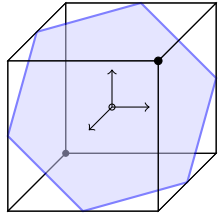  
This weight is $(1/8,A)$-regular and $A$-doubling with $A$ depending of $d$ and $\nu$ which are fixed. Theorem \ref{th-Ktilde1} applies (with $\theta=2$). The necessary estimates on the 
lowest eigenvalue $\beta_{U,\psi,-}$ hold true because there is sufficient holding probability provided by the Metropolis rule (again, at least order $1/N$ at each vertex). We have $R\asymp N$ and convergence occurs in order $N^2$ steps.
\end{exa}

\begin{exa} Our third example is obtained  by adding only one boundary point to the box from the first example. Let $\mathfrak{X} = [-N,N]^d \cup \{u_0\}$ where $u_0$ is attached by one edge to the center $(0,\dots, 0)$. Within $\mathfrak{X}$, let $U = [-N,N]^d$, so the boundary is $\{u_0\}$. Still, we consider the power weight
$$\psi(x)=\psi_\nu(x)=\delta(x)^\nu,\;\nu>0,$$
where $\delta$ is the distance to the boundary $\{u_0\}$. This power weight 
is constant  along the boundary of the graph balls centered at $(0,\dots,0)$. It is largest
at the four corners.  In this case, we obtain a John domain with a fixed $\alpha$
only when $d>1$ (in the case $d=1$, there is no way to avoid passing near the boundary point $u_0$). When $d>1$, we can chose $o$ to be one of the four corners. Again, the weight is $(1/8,A)$-regular and $A$ doubling with $A$ depending on $d$ and $\nu$ which are fixed. Theorem \ref{th-Ktilde1} applies (with $\theta=2$). The necessary estimates on the 
lowest eigenvalue $\beta_{U,\psi,-}$ hold true as n the previous examples.  Again, $R\asymp N$ and convergence occurs in order $N^2$ steps.  We note that there is no problems replacing the single ``pole'' $0$ in this example by an arbitrary finite set $\mathfrak O$ of ``poles'', as long as we fix the number of elements in $\mathfrak O$. 
\end{exa}

\begin{exa} This last example involves  weights which lead to non-doubling measure but are $\omega$-controlled. Take $d=1$ and $U=[-N,\dots,N]$, a symmetric interval around $0$ in $\mathbb Z$. Fix $\nu>1$ and consider the weight
$\psi_{\nu}=\delta(x)^{-\nu}$, where $\delta$ is the distance to the boundary $\{-N-1,N+1\}$. It is easy to check that this weight is not doubling (compare the $\widetilde{\pi}$-volume of $B(0,N/2)$ to that of $B(0,N)$).  Obviously, $\psi_\nu$ is $\omega$-controlled with $\omega=\nu$.
The reference \cite[Theorem 9.6]{LSCSimpleNash} applies to this family and provides the eigenvalue estimate 
$$1-\beta_{U,\psi_\nu}\approx N^{-1-\nu}$$
and the fact that this chain converges to its equilibrium measure in order $N^{1+\nu}$ steps.
This should be compared with the eigenvalue estimate of Theorem \ref{th-Ktilde2} which reads $1-\beta_{U,\psi_\nu}\ge c N^{-2-\nu} $
because $R\approx N$ and $\omega=\nu$.  This estimate is off by a factor of $N$, but it is clear that the parameter $\omega=\nu$ plays a key role in 
estimating $\beta_{U,\psi_\nu}$ in this case.  The following modification of this example shows that the eigenvalue estimate of Theorem \ref{th-Ktilde2} is actually almost optimal.  
Consider $\mathbb [-(N+1),(N+1)]$ equipped with the measure 
$\pi(x)= (N+2-|x|)^{-\alpha}$, $\alpha\in (0,1)$ and the usual graph structure induced by $\mathbb Z$. This space is doubling and satisfies  the Poincar\'e inequality on balls (this is not obvious, but it can be proved). On this space, let $U=[-N,\dots,N]$ and repeat the construction above with $\psi_\nu(x)=\delta(x)^{-\nu}$, $\nu>1-\alpha$. Now, on this new space,
this weight is not doubling but it is $\omega$-controlled with $\omega=\nu$.
The previous argument shows that the eigenvalue 
$\beta_{U,\alpha,\psi_\nu}$ satisfies
$1-\beta_{U,\alpha,\psi_\nu}\approx  N^{-1-\alpha-\nu}$ whereas Theorem \ref{th-Ktilde2} yields $\beta_{U,\alpha,\psi_\nu}\ge c N^{-2-\nu}.$
Since $\alpha$ can be chosen as close to $1$ as desired, Theorem \ref{th-Ktilde2} is indeed almost sharp.
\end{exa}

\section{The Dirichlet-type chain in $U$} \label{sec-Dir}
\setcounter{equation}{0} 

We continue with our general  setup described by the data $(\mathfrak X,\mathfrak E,\pi,\mu)$. We assume that $\mu$ is adapted and that $\mu$ is subordinated to $\pi$. For any finite domain $U$, we consider $K_{D,U}$, the Dirichlet-type kernel in $U$, defined by
\begin{equation} \label{KDU}
K_{D,U}(x,y)= \left\{\begin{array}{ccl}  \mu_{xy}/\pi(x)  & \mbox{ for  } x\neq y \mbox{ with } x,y\in U\\
1-(\sum_{z\in \mathfrak X: z\sim x}\mu_{xz}/\pi(x)) & \mbox{ for } x=y\in U. 
\end{array}\right. \end{equation} 
This is the kernel describing the chain that is killed when it exits $U$. Let us point out  the subtle but essential difference between this definition and that of $K_{N,U}$, the Neumann-type kernel on $U$.  The values of these two kernels  are the same  
when $x\neq y$
or when $x=y$ has no neighbors outside $U$.  But when $x=y$ has a neighbor outside $U$, we have
$$K_{N,U}(x,x)= 1- \left(\sum_{z\in U: z\sim x}\mu_{xz}\right)/\pi(x)  $$
whereas 
$$K_{D,U}(x,x)= 1- \left(\sum_{z\in \mathfrak X: z\sim x}\mu_{xz}\right)/\pi(x) .$$

Because $\mu$ is adapted, at such a point $x$,  
$$\sum_{y\in U}K_{N,U}(x,y)=1 \mbox{ whereas }\sum_{y\in U}K_{D,U}(x,y) <1.$$
In words, the kernel $K_{D,U}$ is not strictly Markovian and the Markov chain corresponding to this kernel includes killing at the boundary.
In terms of the global Markov kernel $K=K_\mu$ defined on $\mathfrak X$ by (\ref{def-Ksub}), we have
$$K_{D,U}= \mathbf 1_U(x)K(x,y)\mathbf 1_U(y).$$
To simplify notation, we  set
$$K_U=K_{D,U}.$$

The goal of this section is to apply the previous results to the study of the iterated kernel $K^t_U(x,y)$. This will be done using the method of Doob's transform explained in more general terms in the next subsection.

\subsection{The general theory of Doob's transform} \label{sec-Doob}

For the purpose of this subsection, we simply assume we are given  a finite or countable state space $\mathfrak X$ equipped with a Markov kernel $K$. We do not assume 
any reversibility.  Fix a finite subset $U$ and consider the restricted kernel
 $$K_U(x,y)=\mathbf 1_U(x)K(x,y)\mathbf 1_U(y). $$
 Throughout this section, we assume that this kernel $K_U$ is irreducible on $U$ in the sense that for any $x,y\in U$ there is an integer $t=t(x,y)$ such that $K_U^t(x,y)>0$. The period $d$ of $K_U$ is the greatest common divisor of $\{t: K_U^t(x,x)>0\}$. Note that $d$ is independent of the choice of $x\in U$.
 When $d=1$ (which is referred to as the aperiodic case), there exists an $N$ such that $K^N_U(x,y)>0$ simultaneously for all $x,y\in U$.  
 We are interested in understanding the behavior of  the chain driven by $K$  on 
 $\mathfrak X$,  started in $U$ and killed at the first exit from $U$.
 If $(X_t)_0^\infty$ denotes the chain driven by $K$ on $\mathfrak X$ 
 and 
 $$\tau=\tau_U=\inf\{t\ge 0: X_t\not \in U\}$$
  is the exit time  from $U$, we would like to have  good approximations for quantities
such as 
$$\mathbf P_x(\tau_U >\ell),\;\;\mathbf P_x( X_t=y \ | \ \tau_U >\ell ),\;\;\mathbf P_x( X_t=y \mbox{ and } \tau>t),$$
for $x,y\in U_N,\;\; 0\le t\le \ell.$ The last of these quantities is, of course,
$$\mathbf P_x( X_t=y \mbox{ and } \tau>t)= K^t_U(x,y).$$
See \cite{CMSM} for a book length discussion of such problems.
The key lemma is the following.

\begin{lem}  \label{lem-Cond}
Assume that $K_U$ is irreducible aperiodic. Let $\beta_0,\phi_0$ denote the Perron-Frobenius eigenvalue  and right eigenfunction of $K_U$.
The limit
 $$\mathbf{P}_x(X_t = y \ | \ \tau_U = \infty) = \lim_{L \rightarrow \infty} \mathbf{P}_x(X_t = y \ | \ \tau_U > L)$$
exists and it is equal to
$$ 
 \mathbf{P}_x (X_t = y \ | \ \tau_U = \infty)= K_{\phi_0}^t(x,y)$$
 where  $K_{\phi_0}$ is the irreducible aperiodic  Markov kernel given by
  \begin{equation} \label{def-Kphi0}
   K_{\phi_0}(x,y)=\beta_0^{-1}\frac{1}{\phi_0(x)}K_U(x,y)\phi_0(y),\;\;x,y\in U.
   \end{equation}
\end{lem}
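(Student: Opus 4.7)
The plan is to express the conditional probability as a ratio of sums of iterated-kernel entries and pass to the limit using the Perron--Frobenius asymptotics of $K_U$.

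First I would write
$$\mathbf{P}_x(X_t = y \mid \tau_U > L) = \frac{\mathbf{P}_x(X_t=y,\,\tau_U > L)}{\mathbf{P}_x(\tau_U > L)},$$
and apply the Markov property at time $t$ to factor the numerator as $K_U^t(x,y) \cdot \sum_{z \in U} K_U^{L-t}(y,z)$; the denominator equals $\sum_{z \in U} K_U^L(x,z)$. The event $\{\tau_U > L\} \cap \{X_t = y\}$ requires the chain to stay in $U$ for $t$ steps and end at $y$, then to stay in $U$ for a further $L-t$ steps, which is exactly what this factorization records.

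Next, because $K_U$ is non-negative, irreducible, and aperiodic, the Perron--Frobenius theorem provides a simple, strictly dominant eigenvalue $\beta_0 > 0$ together with strictly positive right and left eigenfunctions $\phi_0, \phi_0^*$. The standard spectral decomposition in the aperiodic case yields the rank-one asymptotic
$$\beta_0^{-L} K_U^L(u,v) \longrightarrow \frac{\phi_0(u)\,\phi_0^*(v)}{\sum_{w \in U} \phi_0(w)\phi_0^*(w)}$$
as $L \to \infty$, uniformly in $u, v \in U$. Summing over $z$ in the numerator and denominator of the ratio and taking $L \to \infty$, the factor $\beta_0^L$ leaves $\beta_0^{-t}$, the common normalizing constant and the factors $\sum_z \phi_0^*(z)$ cancel, and the limit equals $\beta_0^{-t}\,\phi_0(x)^{-1} \phi_0(y)\, K_U^t(x,y)$.

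A short induction on $t$---at each intermediate summation $\sum_z$ the factors $\phi_0(z)$ and $\phi_0(z)^{-1}$ telescope---shows this quantity equals $K_{\phi_0}^t(x,y)$ with $K_{\phi_0}$ defined by \eqref{def-Kphi0}. To finish, one verifies by direct computation that $K_{\phi_0}$ is a genuine Markov kernel: the eigenequation $K_U \phi_0 = \beta_0 \phi_0$ gives $\sum_y K_{\phi_0}(x,y) = 1$, and irreducibility/aperiodicity are inherited from $K_U$ since $K_{\phi_0}(x,y) > 0$ precisely when $K_U(x,y) > 0$. The main obstacle is the Perron--Frobenius ratio asymptotic above, which relies essentially on aperiodicity to rule out oscillatory behavior of $\beta_0^{-L} K_U^L$; once that rank-one approximation is in hand, the rest is algebraic cancellation.
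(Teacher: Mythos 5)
Your proposal is correct, and the skeleton is the same as the paper's: condition via the Markov property at time $t$, reduce to the ratio $\mathbf P_y(\tau_U>L-t)/\mathbf P_x(\tau_U>L)$ written as sums of iterated kernels, identify the limit with $\beta_0^{-t}\phi_0(x)^{-1}K_U^t(x,y)\phi_0(y)$, and telescope this into $K_{\phi_0}^t(x,y)$. The one place you diverge is in how the ratio limit is justified. You invoke the Perron--Frobenius rank-one asymptotic $\beta_0^{-L}K_U^L(u,v)\to \phi_0(u)\phi_0^*(v)/\sum_w\phi_0(w)\phi_0^*(w)$ for the sub-Markovian kernel $K_U$ directly. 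The paper instead first rewrites the ratio entirely in terms of the Doob chain, obtaining $\sum_z K_{\phi_0}^{L-t}(y,z)\phi_0(z)^{-1}\big/\sum_z K_{\phi_0}^{L}(x,z)\phi_0(z)^{-1}$, and then proves in a separate lemma (Lemma \ref{lem:Kphi0-equals-cond}) that this tends to $1$ because $K_{\phi_0}^L(x,\cdot)\to\pi_{\phi_0}$ by the elementary convergence theorem for irreducible aperiodic finite Markov chains. The two justifications are equivalent under the conjugation $K_{\phi_0}=\beta_0^{-1}\phi_0^{-1}K_U\phi_0$: your route quotes the primitive-matrix asymptotic once and needs no mention of $\pi_{\phi_0}$ at this stage, while the paper's route reduces the analytic input to the most standard ergodic theorem, at the cost of introducing the stationary measure $\phi_0^*\phi_0$ explicitly (which it wants anyway for the surrounding quasi-stationarity results). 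Your closing verification that $K_{\phi_0}$ is a genuine irreducible aperiodic Markov kernel matches the paper's Remark \ref{rem-Doob}.
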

\begin{rem} When $K_U$ is irreducible but periodic, it still has a unique Perron-Frobenius eigenvalue and right eigenfunction, $\beta_0,\phi_0$, and one can still define the Markov kernel $K_{\phi_0}$ (and use it to study $K_U$), but the limit in the lemma does not typically exist. See Example~\ref{ex-five-vertices} below.
\end{rem}
\begin{rem} \label{rem-Doob}
In general terms, {\em Doob's transform method} studies the Markov kernel $K_{\phi_0}$ in order to study  the iterated kernel 
$K^t_U$.  By definition,
$$K^t_U(x,y)= \beta_0^t \phi_0(x) K^t_{\phi_0}(x,y) \frac{1}{\phi_0(y)}.$$
Let $\phi_0^*$ denote the (positive) left eigenfunction of $K_U$ associated with $\beta_0$. By inspection, the positive function $\phi_0^*\phi_0$, understood as a measure on $U$, is invariant under the action of $K_{\phi_0}$, that is, $$\sum_x\phi_0^*(x)\phi_0(x)K_{\phi_0}(x,y)= \phi_0^*(y)\phi_0(y).$$ This measure can be normalized to provide  the invariant probability measure for the irreducible Markov kernel $K_{\phi_0}$.  We call this invariant probability measure $\pi_{\phi_0}$. It is given by
$$\pi_{\phi_0}=\frac{\phi_0^*\phi_0}{\sum_U\phi_0^*\phi_0}.$$ 
The measure $\pi_{\phi_0}$ is one version of the quasi-stationary distribution (a second version is in Definition~\ref{eq-second-qs} below). The measure $\pi_{\phi_0}$ gives the limiting behavior of the chain, conditioned never to be absorbed. As shown below, it is the key to understanding the absorbing chain as well. The Doob transform is a classical tool in Markov chain theory~\cite[Chapter 8]{kemeny}. For many applications and a literature review see~\cite{Pang}.
\end{rem}
 \begin{proof}[Proof of Lemma \ref{lem-Cond}]
 Fix $T \in \mathbb{N}$ and any $t \leq T$. Temporarily fix $L$, but we will let it tend to infinity.
	\begin{eqnarray}\lefteqn{
	\mathbf{P}_x(X_t = y, \tau_U > t \ | \ \tau_U > L) } && \nonumber \\
	&=& \frac{\mathbf{P}_x(\tau_U > L \ | \ X_t = y, \tau_U > t) \ \mathbf{P}_x(X_t = y, \tau_U > t)}{\mathbf{P}_x(\tau_U > L, \tau_U > t)} \label{eq:bayes-1}
	\end{eqnarray}
	
	We can assume $L>T$, because we will later take the limit as $L$ tends to infinity. So~\eqref{eq:bayes-1}, the identity above, becomes,
	\begin{eqnarray*}\lefteqn{
	\mathbf{P}_x(X_t = y \ | \ \tau_U > L)} &&  \\
	& = & \frac{\mathbf{P}_x(\tau_U > L \ | \ X_t = y, \tau_U > t) \ \mathbf{P}_x(X_t = y, \tau_U > t)}{\mathbf{P}_x(\tau_U > L)}
	\end{eqnarray*}
	or equivalently,
	\begin{equation}
	\label{eq:bayes-2}
	\mathbf{P}_x(X_t = y \ | \ \tau_U > L) = \frac{\mathbf{P}_x(\tau_U > L \ | \ X_t = y, \tau_U > t)}{\mathbf{P}_x(\tau_U > L)}K_U^t(x,y)
	\end{equation}	
	
	Because $(X_t)$ is a Markov chain,
	\begin{align*}
	\frac{\mathbf{P}_x(\tau_U > L \ | \ X_t = y, \ \tau_U > t)}{\mathbf{P}_x(\tau_U > L)} &= \frac{\mathbf{P}_y(\tau_U > L-t)}{\mathbf{P}_x(\tau_U > L)} \\
	&= \frac{\sum_{z \in U}K_U^{L-t}(y,z)}{\sum_{z \in U}K_U^{L}(x,z)} \\
	&= \frac{\sum_{z \in U}\beta_0^{L-t}\phi_0(y)K_{\phi_0}^{L-t}(y,z)\phi_0(z)^{-1}}{\sum_{z \in U}\beta_0^{L}\phi_0(x)K_{\phi_0}^L(x,z)\phi_0(z)^{-1}}.
	\end{align*}
	
	Plugging this into~\eqref{eq:bayes-2}, we have
	\begin{eqnarray}\lefteqn{
	\mathbf{P}_x(X_t = y \ | \ \tau_U > L) = } && \nonumber \\
	&=& \left[\frac{\sum_{z \in U}K_{\phi_0}^{L-t}(y,z)\phi_0(z)^{-1}}{\sum_{z \in U}K_{\phi_0}^L(x,z)\phi_0(z)^{-1}}\right]\beta_0^{-t}\phi_0(x)^{-1}K_U^t(x,y)\phi_0(y)
	\end{eqnarray}
	Now we take the limit as $L$ tends to infinity. To finish the proof of Lemma~\ref{lem-Cond} we need to show that
	\begin{equation}
	\label{eq:frac-limit}
	\lim_{L \rightarrow \infty} \frac{\sum_{z \in U}K_{\phi_0}^{L-t}(y,z)\phi_0(z)^{-1}}{\sum_{z \in U}K_{\phi_0}^L(x,z)\phi_0(z)^{-1}}=1,
	\end{equation}
	which is the content of the following, Lemma~\ref{lem:Kphi0-equals-cond}.
\end{proof}

\begin{lem}
	\label{lem:Kphi0-equals-cond} Assume that $K_U(x,y)$ is irreducible and aperiodic on $U$. Then,
	\begin{equation}\label{eq-equals-cond}
	\lim_{L \rightarrow \infty} \frac{\sum_{z \in U}K_{\phi_0}^{L-t}(y,z)\phi_0(z)^{-1}}{\sum_{z \in U}K_{\phi_0}^L(x,z)\phi_0(z)^{-1}} = 1.
	\end{equation}
\end{lem}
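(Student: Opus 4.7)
The plan is to reduce the lemma to the standard convergence theorem for irreducible aperiodic Markov chains on finite state spaces, applied to the kernel $K_{\phi_0}$ itself. First I would verify that $K_{\phi_0}$, as defined in \eqref{def-Kphi0}, is genuinely a Markov kernel on $U$: the identity $\sum_y K_{\phi_0}(x,y) = \beta_0^{-1}\phi_0(x)^{-1}\sum_y K_U(x,y)\phi_0(y) = 1$ is immediate from $K_U\phi_0 = \beta_0\phi_0$. Next, since $K_{\phi_0}(x,y) > 0$ if and only if $K_U(x,y) > 0$ (recall $\phi_0 > 0$ on $U$ by Perron-Frobenius), $K_{\phi_0}$ inherits irreducibility and aperiodicity from $K_U$.

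The second step is to invoke the classical convergence theorem: on the finite set $U$, any irreducible aperiodic Markov kernel admits a unique invariant probability measure $\pi_{\phi_0}$ (which, as noted in Remark~\ref{rem-Doob}, is proportional to $\phi_0^*\phi_0$) and its iterates converge pointwise to $\pi_{\phi_0}$. Thus, for each $z \in U$ and every starting point $w \in U$,
\[
K_{\phi_0}^n(w,z) \xrightarrow[n \to \infty]{} \pi_{\phi_0}(z).
\]

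Finally, $U$ is finite and $\phi_0^{-1}$ is therefore a bounded function on $U$, so dominated convergence (or simply exchanging a finite sum with the limit) gives
\[
\lim_{L \to \infty}\sum_{z \in U} K_{\phi_0}^{L-t}(y,z)\phi_0(z)^{-1} = \sum_{z \in U}\pi_{\phi_0}(z)\phi_0(z)^{-1} = \lim_{L \to \infty}\sum_{z \in U} K_{\phi_0}^{L}(x,z)\phi_0(z)^{-1}.
\]
Both the numerator and denominator of \eqref{eq-equals-cond} tend to the same limit, and this common limit is strictly positive (since $\pi_{\phi_0}$ and $\phi_0^{-1}$ are positive on $U$), so the ratio tends to $1$.

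There is no real obstacle here; the only thing to be careful about is confirming that $K_{\phi_0}$ is aperiodic (inherited from $K_U$ via the equal-support property of its entries) so that pointwise convergence of the iterates, rather than merely Cesàro convergence, holds. Once that is in place, the positivity of the limiting denominator is what ensures the ratio is well-defined and converges to~$1$.
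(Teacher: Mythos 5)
Your proposal is correct and follows essentially the same route as the paper: both reduce the claim to the basic convergence theorem for finite irreducible aperiodic Markov chains applied to $K_{\phi_0}$, so that numerator and denominator converge to the common positive limit $\sum_{z\in U}\pi_{\phi_0}(z)\phi_0(z)^{-1}$. Your added checks (that $K_{\phi_0}$ is Markov and inherits irreducibility and aperiodicity) are exactly what the paper delegates to Remark~\ref{rem-Doob}.
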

\begin{proof}
	By Remark \ref{rem-Doob}, $K_{\phi_0}$ is an irreducible aperiodic Markov kernel with invariant measure $\pi_{\phi_0}$ proportional to $\phi_0^*\phi_0$.
	By the basic convergence theorem for finite Markov chains (e.g.,~\cite[Thm. 1.8.5]{Norris}),
	$$\lim_{L \rightarrow \infty} K_{\phi_0}^L(x,y) = \pi_{\phi_0}(y).$$
	Applying this to
	$$\frac{\sum_{z \in U}K_{\phi_0}^{L-t}(y,z)\phi_0(z)^{-1}}{\sum_{z \in U}K_{\phi_0}^L(x,z)\phi_0(z)^{-1}},$$
	we can see that both the numerator and denominator approach
	$$\sum_{z \in U} \pi_{\phi_0}(z)\phi_0(z)^{-1}.$$
The stated result follows.\end{proof}

\begin{rem}  If $K_U$ is irreducible and periodic of period $d>1$ then so is $K_{\phi_0}$. The chain driven by $K_{\phi_0}$  has $d$ periodic classes, $C_i$ (with $0\le i\le d-1$) each of which has the same measure, $\pi_{\phi_0}(C_i)=\pi_{\phi_0}(C_0)$, and the limit theorem reads
$$\lim_{ L\rightarrow \infty} K_{\phi_0}^{t+Ld}(x,y) =\left\{ \begin{array}{cl}  \pi_{\phi_0}(y)/d &\mbox{ if }  x\in C_i , y\in C_{i+t}\\
0 & \mbox{ otherwise}. \end{array}\right.$$
Here, $0\le i\le d-1$, and  the index $i+t$ in $C_{i+t}$ is taken modulo $d$.  It follows that, typically, the ratio in Lemma \ref{lem:Kphi0-equals-cond}
has no limit. See below for a concrete example.
\end{rem}

 \begin{figure}[h]
	\begin{center}\label{pic:five-vertices} 
		
\begin{picture}(200,40)
					
			\put(0,20){\line(1,0){200}}
			\put(0,15){\line(0,1){10}}
			\put(50,15){\line(0,1){10}}
			\put(100,15){\line(0,1){10}}
			\put(150,15){\line(0,1){10}}
			\put(200,15){\line(0,1){10}}
			
			\put(-3,10){$\scriptstyle x_0$}
			\put(47,10){$\scriptstyle x_1$}
			\put(97,10){$\scriptstyle x_2$}
			\put(147,10){$\scriptstyle x_3$}
			\put(197,10){$\scriptstyle x_4$}
			
\end{picture}\caption{Simple random walk on five vertices}\end{center}\end{figure}
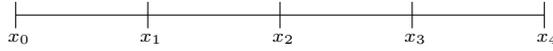

\begin{exa}\label{ex-five-vertices} As a concrete example, consider the simple random walk on five vertices where the boundary vertices have holding probability $\frac{1}{2}$.
$$K(x_i,x_j) = \begin{cases}
\frac{1}{2} & \text{if }|i-j|=1, \ i=j=0, \text{ or } i=j=4 \\
0 & \text{else}
\end{cases}.$$
Let $U = \{x_1,x_2,x_3\}$ be the middle three vertices and define $K_U$ to be sub-Markovian kernel described above. The transition matrix for $K_U$ is given by
$$\begin{bmatrix}
0 & \frac{1}{2} & 0 \\
\frac{1}{2} & 0 & \frac{1}{2} \\
0 & \frac{1}{2} & 0
\end{bmatrix},$$
with largest eigenvalue $\beta_0 = \frac{\sqrt{2}}{2}$ and normalized eigenfunction $$\phi_0 = \begin{bmatrix}
\frac{1}{2} \\ \frac{\sqrt{2}}{2} \\ \frac{1}{2}
\end{bmatrix}.$$
This is a reversible situation (hence, $\phi_0^*=\phi_0$) and the period is $2$ with periodic classes: $C_0 = \{x_2\}$ and $C_1 = \{x_1,x_3\}$.  We have
 $$ \lim_{L\ra \infty}\sum_y K_{\phi_0}^{2L} (x_2,y) \phi_0^{-1}(y)=\sqrt{2}$$
 and
 $$\lim_{L\ra \infty}\sum_y K_{\phi_0}^{2L+1} (x_2,y) \phi_0^{-1}(y)=2,$$
 and hence the ratio in Lemma~\ref{lem:Kphi0-equals-cond} has no limit. 
\end{exa}

Previously, we were considering $\mathbf{P}_x(X_t = y \ | \ \tau_U > L)$, the probability that the process $(X_t)$ equals $y$ at time $t$ and is still inside $U$ at some other time $L$. Now, we consider the case where $t=L$.
\begin{defin}\label{eq-second-qs} Set
	$$\nu_x^t(y) = \mathbf{P}_x(X_t = y \ | \ \tau_U> t), \;\;x,y\in U.$$
This is the second form of quasi-stationary distribution; $\nu_x^t (y)$ describes the chance that the chain is at $y$ at time $t$ (starting from $x$) given that it is still alive.	
\end{defin}

\begin{theo}
	\label{theo:mu-limit}
	Assume that $K_U$ is irreducible and aperiodic. Then
		$$\lim_{t \rightarrow \infty} \nu_x^t(y) = \frac{\phi^*_0(y)}{\sum_{U} \phi^*_0}.$$
\end{theo}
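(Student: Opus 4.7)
The plan is to reduce $\nu_x^t(y)$ to a ratio involving only the iterates of the transformed Markov kernel $K_{\phi_0}$, and then apply the basic convergence theorem for finite irreducible aperiodic Markov chains (exactly as was done at the end of the proof of Lemma~\ref{lem:Kphi0-equals-cond}).

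First I would write out the definition directly:
\begin{equation*}
\nu_x^t(y) = \frac{\mathbf{P}_x(X_t = y \text{ and } \tau_U > t)}{\mathbf{P}_x(\tau_U > t)} = \frac{K_U^t(x,y)}{\sum_{z\in U} K_U^t(x,z)}.
\end{equation*}
Using the identity (already established in Remark~\ref{rem-Doob}) that $K_U^t(x,y) = \beta_0^t \phi_0(x) K_{\phi_0}^t(x,y) \phi_0(y)^{-1}$, the factors $\beta_0^t$ and $\phi_0(x)$ cancel between numerator and denominator, leaving
\begin{equation*}
\nu_x^t(y) = \frac{K_{\phi_0}^t(x,y) \phi_0(y)^{-1}}{\sum_{z\in U} K_{\phi_0}^t(x,z) \phi_0(z)^{-1}}.
\end{equation*}

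Next I would apply the convergence theorem for finite irreducible aperiodic Markov chains to $K_{\phi_0}$: since $K_{\phi_0}$ is irreducible and aperiodic with invariant distribution $\pi_{\phi_0} = \phi_0^* \phi_0 / \sum_U \phi_0^* \phi_0$, we have $K_{\phi_0}^t(x,z) \to \pi_{\phi_0}(z)$ as $t \to \infty$, uniformly in $z$ (since $U$ is finite). Plugging this in, the numerator tends to $\pi_{\phi_0}(y) \phi_0(y)^{-1} = \phi_0^*(y) / \sum_U \phi_0^* \phi_0$, and the denominator tends to $\sum_{z\in U} \phi_0^*(z) / \sum_U \phi_0^* \phi_0$. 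The common normalization $\sum_U \phi_0^* \phi_0$ cancels, giving the desired limit $\phi_0^*(y) / \sum_U \phi_0^*$.

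There is essentially no obstacle here beyond the aperiodicity hypothesis, which is exactly what lets us invoke the convergence theorem for $K_{\phi_0}$. The aperiodicity of $K_U$ transfers to $K_{\phi_0}$ because the two kernels have the same pattern of strictly positive entries (they differ only by the strictly positive prefactor $\beta_0^{-1}\phi_0(x)^{-1}\phi_0(y)$), as noted in Remark~\ref{rem-Doob}. The only small bookkeeping step is to observe that the denominator's limit is strictly positive, so that taking the quotient of limits is justified; this is immediate since $\phi_0^*$ is strictly positive on $U$.
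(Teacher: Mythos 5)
Your proposal is correct and follows essentially the same route as the paper: rewrite $\nu_x^t(y)$ as $K_{\phi_0}^t(x,y)\phi_0(y)^{-1}\big/\sum_{z\in U}K_{\phi_0}^t(x,z)\phi_0(z)^{-1}$ via the Doob-transform identity and then apply the basic convergence theorem for the irreducible aperiodic kernel $K_{\phi_0}$ with invariant measure $\pi_{\phi_0}\propto\phi_0^*\phi_0$. Your extra remarks on aperiodicity transferring to $K_{\phi_0}$ and on the positivity of the limiting denominator are sound but add nothing beyond the paper's argument.
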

\begin{proof} Write
	\begin{align}
	 \nu^t_x(y) &= \mathbf{P}_x(X_t = y \ | \ \tau_U > t) \nonumber\\
	&= \frac{\mathbf{P}_x(X_t = y, \tau_U > t)}{\mathbf{P}_x(\tau_U > t)} \nonumber\\
	&=  \frac{K^t_U(x,y)}{\sum_{z \in U} K^t_U(x,z)} \nonumber\\
	&= \frac{\beta_0^t\phi_0(x)K_{\phi_0}(x,y)\phi_0(y)^{-1}}{\sum_{z \in U} \beta_0^t \phi_0(x) K_{\phi_0}^t(x,z)\phi_0(z)^{-1}} \nonumber\\
	&=  \frac{K^t_{\phi_0}(x,y)\phi_0(y)^{-1}}{\sum_{z \in U} K_{\phi_0}^t(x,z)\phi_0(z)^{-1}} \label{defn-nu-t}.\end{align}
Taking the limit when $t$ tends to infinity yields
$$\lim_{t\ra \infty}\nu_x^t(y)=
 \frac{\phi_0(y)^{-1}\pi_{\phi_0}(y)}{\sum_{z \in U} \phi_0(z)^{-1}\pi_{\phi_0}(z)}=  \frac{\phi_0^*(y)}{\sum_z \phi_0^*(z)}.$$
 This equality follows from the basic Markov chain convergence theorem~\cite[Theorem 4.9]{LevP}.
	The stated result follows since $\pi_{\phi_0}$ is proportional to $\phi_0^*\phi_0$.
\end{proof}
\begin{theo}\label{thm-nu-control}  Assume that $K_U$ is irreducible and aperiodic. Then the rate of convergence
in
$$\lim_{t \rightarrow \infty} \nu_x^t(\cdot) = \frac{\phi^*_0}{\sum_{U} \phi^*_0}$$
is controlled by that of
$$\lim_{t\ra \infty} K^t_{\phi_0}(x,\cdot)= \pi_{\phi_0}.$$

More precisely, fix $\epsilon >0$.  Assume that $N_{\epsilon}$ is such that, for any 
$t \geq N_{\epsilon}$ and~${y \in U}$,
$$\left|\frac{K^t_{\phi_0}(x,y)}{\pi_{\phi_0}(y)} - 1\right| < \epsilon.$$
Then, for any $t \geq N_{\epsilon}$,
$$\left|\frac{(\sum_U\phi_0^*)\nu_x^t(y) }{\phi_0^*(y)}- 1\right| < \frac{2\epsilon}{1 - \epsilon}.$$
\end{theo}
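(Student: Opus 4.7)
The plan is to start from the explicit formula for $\nu^t_x(y)$ derived in the proof of Theorem~\ref{theo:mu-limit}, namely
$$\nu^t_x(y) = \frac{K^t_{\phi_0}(x,y)\phi_0(y)^{-1}}{\sum_{z \in U} K_{\phi_0}^t(x,z)\phi_0(z)^{-1}},$$
and to convert the multiplicative hypothesis on $K^t_{\phi_0}(x,\cdot)/\pi_{\phi_0}(\cdot)$ into multiplicative bounds on this ratio. This is the natural move because the conclusion is stated in relative error form, and any additive comparison would have to be translated back anyway.

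First I would record the identity $\pi_{\phi_0}(y)/\phi_0(y) = \phi_0^*(y)/\sum_U \phi_0^*\phi_0$, which is immediate from the definition $\pi_{\phi_0} = \phi_0^*\phi_0 / \sum_U \phi_0^*\phi_0$. Summing over $y\in U$ gives the corresponding identity for the denominator:
$$\sum_{z\in U} \pi_{\phi_0}(z)/\phi_0(z) = \frac{\sum_U \phi_0^*}{\sum_U \phi_0^*\phi_0}.$$
In particular, dividing numerator by denominator yields the target constant $\phi_0^*(y)/\sum_U \phi_0^*$, which is reassuring and confirms that plugging $K^t_{\phi_0} = \pi_{\phi_0}$ into the formula for $\nu^t_x(y)$ recovers the Theorem~\ref{theo:mu-limit} limit.

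Next I would apply the hypothesis $(1-\epsilon)\pi_{\phi_0}(y) < K^t_{\phi_0}(x,y) < (1+\epsilon)\pi_{\phi_0}(y)$ (valid for all $y$ and $t\ge N_\epsilon$), divide by $\phi_0(y)$, and sum over $y$ to obtain the matching two-sided bound on $\sum_z K^t_{\phi_0}(x,z)/\phi_0(z)$. Substituting both bounds into the formula for $\nu^t_x(y)$ gives
$$\frac{1-\epsilon}{1+\epsilon}\cdot\frac{\phi_0^*(y)}{\sum_U \phi_0^*} \;\le\; \nu^t_x(y) \;\le\; \frac{1+\epsilon}{1-\epsilon}\cdot\frac{\phi_0^*(y)}{\sum_U \phi_0^*}.$$

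Finally I would compute the two relative errors $\tfrac{1+\epsilon}{1-\epsilon}-1 = \tfrac{2\epsilon}{1-\epsilon}$ and $1-\tfrac{1-\epsilon}{1+\epsilon} = \tfrac{2\epsilon}{1+\epsilon}$; the larger is $\tfrac{2\epsilon}{1-\epsilon}$, giving the claimed bound. There is no real obstacle here — the argument is a clean ratio manipulation — the only thing to be careful about is that the bounds on numerator and denominator go in opposite directions when forming the ratio, so one must pair $(1+\epsilon)$ on top with $(1-\epsilon)$ on the bottom to get the tight upper bound, and vice versa.
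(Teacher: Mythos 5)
Your proposal is correct and follows essentially the same route as the paper: both start from the identity $\nu_x^t(y)=\dfrac{K^t_{\phi_0}(x,y)\phi_0(y)^{-1}}{\sum_{z\in U}K^t_{\phi_0}(x,z)\phi_0(z)^{-1}}$, transfer the hypothesis to the denominator by summing over $z$, and bound the relative error of the resulting ratio to obtain $\frac{2\epsilon}{1-\epsilon}$. The only difference is cosmetic: you sandwich numerator and denominator multiplicatively, while the paper splits the error with a triangle inequality; both yield the same constant.
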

\begin{proof}
	For a fixed $\epsilon>0$, let $N_{\epsilon}$ be such that, for $t \geq N_{\epsilon}$ and $z \in U$,
	\begin{equation}
	\label{eq:conv-of-one}
	\left|\frac{K^t_{\phi_0}(x,z)}{\pi_{\phi_0}(z)} - 1\right| < \epsilon,
	\end{equation}
	or equivalently,
	$$\left|K^t_{\phi_0}(x,z)\phi_0^{-1}(z) - c\phi_0^*(z)\right| < \epsilon c \phi_0^*(z),$$
	where $c = (\sum_U \phi_0\phi_0^*)^{-1}$ is the normalization constant $\pi_{\phi_0}=c\phi_0\phi_0^*$.
	Summing over all $z \in U$ and applying the triangle inequality,
	\begin{equation}
	\label{eq:conv-of-sum}
	\left|\frac{\sum_{z \in U} K^t_{\phi_0}(x,z)\phi_0^{-1}(z)}{c\sum_{z \in U} \phi^*_0(z)} - 1 \right| < \epsilon.
	\end{equation}
	
	For ease of notation, we abbreviate
	$$a_t = K^t_{\phi_0}(x,y)\phi_0(y)^{-1}, \hspace{.8cm} a = c\phi_0^*(y),$$
	$$b_t  =\sum_{z \in U} K^t_{\phi_0}(x,z)\phi_0(z)^{-1}, \hspace{.8cm} b = c\sum_{z \in U} \phi_0^*(z),$$
	so that~\eqref{eq:conv-of-one} and~\eqref{eq:conv-of-sum} become,
	$$\left|\frac{a_t}{a} - 1\right|<\epsilon \text{ and } \left|\frac{b_t}{b} - 1\right|<\epsilon.$$
	The formula \eqref{defn-nu-t} for $\nu_x^t(y)$
	gives
	$$\frac{(\sum_U\phi_0^*)\nu_x^t(y) }{\phi_0^*(y)}= \frac{(\sum_U\phi_0^*)K_{\phi_0}^t(x,y)\phi_0(y)^{-1}}{\phi_0^*(y)\sum_{z \in U} K_{\phi_0}^t(x,z)\phi_0(z)^{-1}} = 
	\frac{a_t}{b_t}\cdot\frac{b}{a}$$
	and thus
	\begin{align*}
	\left|\frac{(\sum_U\phi_0^*)\nu_x^t(y) }{\phi_0^*(y)}- 1\right|  & = \left|\frac{a_t}{b_t}\cdot\frac{b}{a} - 1\right| 
	= \left|\frac{a_tb - b_ta}{b_ta}\right| \\
	&\leq \frac{b}{b_t}\left(\left|\frac{a_t}{a} - 1\right| + \left|\frac{b_t}{b} - 1\right|\right) \\
	&< \frac{2\epsilon}{1-\epsilon}.
	\end{align*}	
	
\end{proof}	
\subsection{Dirichlet-type chains in John domains}

We return to our main setting of an underlying space $(\mathfrak X,\mathfrak E, \pi, \mu)$ with $\mu$ subordinated to $\pi$ and $K$ defined by this data as in (\ref{def-Ksub}).  For any finite domain $U\subset \mathfrak X$, we consider the kernel $K_U=K_{D,U}$ defined at (\ref{KDU}) and equal to $K_U(x,y)=\mathbf 1_U(x)K(x,y)\mathbf 1_U(y)$. We also let
$\pi_U$ be the probability measure proportional to $\pi|_U$, i.e., $\pi_U(x) = \frac{\pi|_U(x)}{Z}$ where $Z = \sum_{y \in U} \pi|_U(y)$ is the normalizing constant. Let $\phi_0,\phi_0^*$
be the right and left Perron eigenfunctions of the kernel $K_U$  considered in subsection \ref{sec-Doob} above.  By construction, 
$K_U(x,y)/\pi_U(y)$ is symmetric in $x,y$, that is,
$$\pi_U(y)K_U(y,x) =\pi_U(x)K_U(x,y).$$
Multiplying by $\phi_0(y)$ and summing over $y$, we have
$$\sum_y \phi_0(y) \pi_U(y)K_U(y,x) =\pi_U(x)\sum_y K_U(x,y)\phi_0(y)= \beta_0\pi_U(x)\phi_0(x).$$  
This shows that $\phi_0(y)\pi_U(y)$ is proportional to $\phi_0^*(y)$. If we choose to normalize $\phi_0$
by the natural condition $\sum \phi_0^2\pi_U=1$, then  the invariant probability measure 
of the Doob transform kernel $K_{\phi_0}$ at (\ref{def-Kphi0})---which is proportional to $\phi_0^*\phi_0$---is 
$$\pi_{\phi_0}=\phi_0^2\pi_U.$$

Next, observe that, for any $x,y \in \mathfrak X$,
$$\pi(x)K(x,y)=\mu_{xy}$$
and, for any $x,y \in U$,
$$\phi^2(x)\pi|_U(x)K_{\phi_0}(x,y)= \beta_0^{-1}\phi_0(x)\phi_0(y) \pi|_U(x)K(x,y) = \beta_0^{-1}\phi_0(x)\phi_0(y)\mu_{xy}.$$ 
This means that the kernel $K_{\phi_0}$ is obtained as a Markov kernel on the graph 
$(U,\mathfrak E_U)$ using the pair of weights $(\bar{\mu}, \bar{\pi})$  where
$$\begin{cases}
\bar{\mu}_{xy}= \beta_0^{-1}\phi_0(x)\phi_0(y) \mu_{xy} \\ \bar{\pi}= \phi_0^2\pi |_U,
\end{cases}$$ 
i.e., $K_{\phi_0} = \bar{\mu}_{xy}/\bar{\pi}$. Indeed,  for any $x,y \in U$, we have 
$$\bar{\mu}_{xy}= \left(\sum_U\pi\right) \pi_{\phi_0}(x)K_{\phi_0}(x,y) \mbox{ and }  \bar{\pi}(x)= \left(\sum_U\pi\right) \pi_{\phi_0}(x).$$	
Furthermore, $\bar{\mu}$ is subordinated to $\bar{\pi}$  in $U$  because, for any $x\in U$,
$$\sum_{y\in U} \bar{\mu}_{xy}=  \sum_{y\in U} \beta_0^{-1}\phi_0(x)\phi_0(y) \pi|_U(x)K(x,y)
=\phi_0(x)^2\pi |_U(x)=\bar{\pi}(x).$$

All of this means that we are in precisely the situation of Section~\ref{sec-AW}. We now list four assumptions that  will be used to obtain good results concerning the behavior of the chain $(K_{\phi_0},\pi_{\phi_0})$ by applying the techniques described in Section \ref{sec-AW} and Section \ref{sec-Met}. In what follows, we always fix the parameter $\alpha\in (0,1]$ as well as $\theta\ge 2$.

For the reader's convenience we give brief pointers to notation that will be used crucially in what follows: John domains (Section~\ref{sec-JD}), Whitney coverings (Section~\ref{sec-WC}), $D$-doubling (Definition~\ref{defn-doubling}, the ball Poincar\'e inequality (Definition~\ref{defn-ball-poincare}), elliptic (Defintion~\ref{defn-adapted-elliptic-subor}), subordinated weight (Defintion~\ref{defn-adapted-elliptic-subor}), $(\eta,A)$-regular (Defintion~\ref{def-regular}), and $(\eta,A)$-controlled (Defintion~\ref{def-controlled}).

\begin{description}
\item[Assumption A1 (on $(\mathfrak X,\mathfrak E,\pi,\mu)$)]	 The measure $\pi$ is $D$-doubling, $\mu$ is adapted and  the pair  $(\pi,\mu)$ is elliptic and satisfies the $\theta$-Poincar\'e inequality on balls with constant $P$.  In addition,  $\mu$ is subordinated to $\pi$. 	
\item[Assumption A2 (on the finite domain $U$)]  The finite domain  $U\subset \mathfrak X$  belongs to $J(o,\alpha, R)$ for some $o\in U$ with $R(o,\alpha,U)\le R\le 2R(o,\alpha,U)$. 
\item[Assumption A3 (on $U$ in terms of $\phi_0$)] There are $\eta\in (0,1/12]$ and $A\ge 1$ such that  $\phi_0$ is $(\eta,A)$-regular and $A$-doubling on $U$.
\item[Assumption A4 (on $U$ in terms of $\phi_0$)]
There are $\eta\in (0,1/12]$, $\omega\ge 0$, and $A\ge 1$ such that  $\phi_0$ is $(\eta,A)$-regular and $(\omega,A)$-controlled on $U$.
\end{description}

Assumption A1 will be our basic assumption about the underlying weighted graph structure  $(\mathfrak X,\mathfrak E,\pi,\mu)$.  Assumption  A2 is  a strong and relatively sophisticated assumption regarding the geometric properties of the finite domain $U$.  Assumptions 3 and 4 are technical requirements necessary to apply the methods in Sections\ref{sec-PQPJD} and~\ref{sec-AW}. In the classical case when the parameter $\theta$ in the assumed Poincar\'e inequality satisfies $\theta=2$, Assumptions A1-A2  imply that Assumption 
A4 is satisfied. This follows from Lemma \ref{lem-H} below and Lemma \ref{lem-RC}.

\begin{lem} \label{lem-H}
Assume that  {\em A1-A2} are satisfied  and $\theta=2$.  
Then $\phi_0$  is  $(1/8,A)$-regular with $A$  depending only on the  quantities $D,P_e,P$
appearing in Assumption {\em A1}. 
\end{lem}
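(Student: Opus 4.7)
My plan is to verify both parts of the $(\eta,A)$-regularity condition of Definition~\ref{def-regular} for $\eta = 1/8$, producing a constant $A$ depending only on $D$, $P_e$, $P$. The edge condition will come directly from the eigenvalue equation plus ellipticity; the more substantial ball condition will follow from an elliptic Harnack inequality for $\phi_0$, which I will derive from the parabolic Harnack inequality on graphs (known to hold under A1 with $\theta=2$) by the standard trick of a caloric lift.

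For the edge condition, suppose $\{x,y\} \in \mathfrak{E}_U$. The relation $K_U\phi_0 = \beta_0\phi_0$ yields $\beta_0\phi_0(x) = \sum_{z\in U,\, z\sim x} K(x,z)\phi_0(z) \ge K(x,y)\phi_0(y)$. By ellipticity, $K(x,y) = \mu_{xy}/\pi(x) \ge 1/P_e$, so $\phi_0(y) \le P_e\beta_0\phi_0(x) \le P_e\phi_0(x)$. Swapping the roles of $x$ and $y$ gives the symmetric inequality, and the edge condition holds with constant $P_e$.

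For the ball condition, under A1 with $\theta=2$ one has the parabolic Harnack inequality: any positive caloric function $v$ (meaning $v(\cdot, n+1) = Kv(\cdot, n)$) on a parabolic cylinder $B(z,2r) \times [0,4r^2]$ satisfies $\sup_{Q_-}v \le C_H \inf_{Q_+}v$ on the standard earlier/later subcylinders $Q_{\pm}$, with $C_H$ depending only on $D$, $P$, $P_e$. To apply this to $\phi_0$, I lift it to $v(x,n) := \beta_0^n\phi_0(x)$, which satisfies $Kv(\cdot, n) = \beta_0^n K\phi_0 = \beta_0^{n+1}\phi_0 = v(\cdot, n+1)$ on $U$ and is therefore caloric there. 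Since $\beta_0 \in (0,1]$, the factor $\beta_0^n$ attains its maximum on $Q_-$ at the earliest time and its minimum on $Q_+$ at the latest time; substituting these into the parabolic Harnack gives $\beta_0^{r^2}\sup_{B(z,r)}\phi_0 \le C_H \beta_0^{4r^2}\inf_{B(z,r)}\phi_0$, and after dividing by $\beta_0^{r^2}$ the leftover factor $\beta_0^{3r^2} \le 1$ works in our favor. This yields the elliptic Harnack inequality $\sup_{B(z,r)}\phi_0 \le C_H \inf_{B(z,r)}\phi_0$ whenever $B(z,2r) \subseteq U$, with $C_H$ independent of $\beta_0$ and hence of the geometry of $U$.

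Finally, to pass from the hypothesis $B(z,2r) \subseteq U$ (equivalent to $r \le \delta(z)/2$) to the weaker hypothesis $r \le 3\delta(z)/4$ appearing in $(1/8,A)$-regularity, I use a chaining argument. Any point $x \in B(z,r)$ with $r \le 3\delta(z)/4$ satisfies $\delta(x) \ge \delta(z)/4$, so direct Harnack applies on the smaller ball $B(x, \delta(z)/16)$. By doubling, $B(z,r)$ is covered by a bounded number $N$ of such balls, with $N$ depending only on $D$, and any two points of $B(z,r)$ are connected by a chain of at most $N$ overlapping such balls; iterating Harnack across the chain gives the ball condition with constant $C_H^N$, so one can take $A = \max\{P_e,\, C_H^N\}$. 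The key obstacle---and the reason the caloric lift is needed---is that a naive elliptic argument for the eigenfunction equation $L\phi_0 = (1-\beta_0)\phi_0$ would produce a Harnack constant depending on $(1-\beta_0)r^2$, and hence (via the Rayleigh-quotient upper bound $1-\beta_0 \le C/\delta(o)^2$ and $\delta(o) \ge \alpha R$) on the John parameter $\alpha$; routing through parabolic Harnack eliminates this dependence entirely.
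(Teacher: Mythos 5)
Your proposal follows essentially the same route as the paper: handle the edge condition via the eigenvalue equation plus ellipticity, and obtain the ball condition by lifting $\phi_0$ to a positive caloric function and invoking Delmotte's parabolic Harnack inequality, then chaining. Your additional observation is actually a small improvement in the write-up: because the caloric factor is non-increasing in time, the parabolic Harnack inequality yields $\sup_B\phi_0\le C_H\,\beta_0^{3r^2}\inf_B\phi_0\le C_H\inf_B\phi_0$ directly, so the a priori bound $1-\beta_0\le CR^{-2}$ (which the paper invokes to argue the exponential factors ``behave like the constant $1$,'' and whose constant a priori carries some dependence on the John data) is not needed for this direction, and the constant visibly depends only on $D,P_e,P$.

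One step, however, would fail as literally stated: you apply the discrete-time parabolic Harnack inequality to functions that are caloric for $K$ itself, but Assumption A1 does not give any uniform laziness of $K$, and without it the discrete-time PHI is false (e.g., for non-lazy simple random walk on $\mathbb Z$, $v(x,n)=1+(-1)^{x+n}(1-\delta)$ is a positive caloric function whose oscillation violates any fixed Harnack constant as $\delta\to 0$). This is precisely why the paper passes to the lazy kernel $\tfrac12(I+K)$ and lifts $\phi_0$ with its eigenvalue $1-\tfrac12(1-\beta_0)$, so that \cite{Delm-PH} applies. The fix costs nothing in your argument: $\phi_0$ is also the Perron eigenfunction of the lazy restricted kernel, the lifted eigenvalue still lies in $(0,1]$, and your favorable-sign computation and the chaining step (which is a useful explicit detail the paper leaves implicit) go through unchanged. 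With that patch your proof is complete and matches the paper's.
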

\begin{proof} The short outline of the proof is that doubling and Poincar\'e (with $\theta=2$) imply  the Harnack inequality
$$\sup_{B}\{ \phi_0\}\le C_H \inf_{B}\{\phi_0\} $$
for any ball $B$ such that $2B\subset U$.  The constant $C_H$ is independent of  $B$ and $U$ and depends only of $D,P_e,P$. This would follow straightforwardly from  Delmotte's elliptic Harnack inequality (see \cite{Delm-EH})   if $\phi_0$ were a positive solution of $$(I-K)u=0$$ in the ball $2B$. However,
$\phi_0$ is a positive solution of $$(I-K) u= (1-\beta_0)u.$$  Heuristically, at scale less than $R$, this is almost the same because Assumption A1 implies that 
$1-\beta_0 \le  C R^{-2}$.  This easy estimate follows by using a tent test function in the ball  $B(o,R/4)\subset U$. To prove the stated Harnack inequality for $\phi_0$, one can either extend Delmotte's argument (adapted from Moser's proof of the elliptic Harnack inequality for uniformly elliptic operators in $\mathbb R^n$), see~\cite{Delm-EH}, or use the more difficult parabolic Harnack inequality of \cite{Delm-PH}.   Indeed, to follow this second approach,
$$v(t,x)=e^{-\frac{1}{2}(1-\beta_0)t}\phi_0(x)\;\;  (\mbox{resp. }\;w(t,x)= (1-\frac{1}{2}(1-\beta_0))^t\phi_0(x))$$
  is a positive  solution of the continuous-time (resp. discrete-time)  parabolic equation
  $$ \left[\partial_t  +\frac{1}{2}(I-K)\right] v=0  \;\;  (\mbox{resp. } \; w(t+1,x)-w(t,x)= - \left[\frac{1}{2}(I-K)w_t\right] (x) )$$ 
in $U$  (in the discrete time case, $w_t=w(t,\cdot)$).  These parabolic equations are associated with the (so-called) lazy version of the Markov kernel $K$, that is,  $\frac{1}{2}(I+K)$ to insure that the results of \cite{Delm-PH} are applicable. The parabolic Harnack inequality in \cite{Delm-PH} necessitates that
the time scale be adapted to the size of the ball on which it is applied, namely, the time scale should be $r^2$ if the ball has radius $r$. Our positive  solution $v(t,x)=e^{-\frac{1}{2}(1-\beta_0)t}\phi_0(x)$ of the heat equation is defined on $\mathbb R\times B$ where $B=B(z,r)\subset U$ is a ball of radius $r$. The parabolic Harnack inequality gives that there is a constant $C_H$ such that, for all $x,y\in B$, 
$$v(r^2,x)\le C_H v(2r^2,y).$$
Because  $1-\beta_0\le C R^{-2}$ and $r\le R$, the exponential factors $$e^{-\frac{1}{2}(1-\beta_0)r^2}, \;\;e^{-(1-\beta_0)r^2}$$ behave like the  constant $1$.  This implies that $\phi_0(x)\approx \phi_0(y)$ for all $x,y\in B$. 
\end{proof}

The following statement is an easy corollary of the last part of the proof of Lemma \ref{lem-H}. See the remarks following the statement.
\begin{lem}  \label{lem-HBB} Fix $\theta\ge 2$. 
Assume that $(\mathfrak X,\mathfrak E,\pi,\mu)$ is such that $\mu$ is adapted,  the pair  $(\pi,\mu)$ is elliptic  and $\mu$ is subordinated to $\pi$. 	
  In addition, assume that the operator $\frac{1}{2}(I+K_\mu) $ satisfies the $\theta$-parabolic inequality {\em PHI($\theta$)} of {\em \cite[(1.9)]{BB}}.  
 If $U$ is a finite domain in $\mathfrak X$ satisfying {\em A2}, the function $\phi_0$ is  $(1/8,A)$-regular with $A$  depending only on the $\theta$, $P_e$, and the constant  $C_H$  from the $\theta$-parabolic Harnack inequality.
\end{lem}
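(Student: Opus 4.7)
The plan is to adapt the parabolic route at the end of the proof of Lemma~\ref{lem-H}, replacing the time-scale $r^{2}$ by $r^{\theta}$ throughout. First, I would observe that although Assumption~A1 is no longer postulated, PHI($\theta$) by itself supplies the two ingredients of A1 that are actually used: volume doubling of $\pi$ and the $\theta$-Poincar\'e inequality on balls (this is the Barlow--Bass characterization of PHI($\theta$) in \cite{BB}), with constants controlled by $\theta$ and $C_H$. In particular, testing the Dirichlet Rayleigh quotient with a tent function in a metric ball $B(o,\rho)\subset U$ of radius $\rho\asymp \alpha R$ (which exists by Assumption~A2) yields the preliminary eigenvalue estimate
\[
1-\beta_0 \;\le\; C R^{-\theta},
\]
with $C$ depending only on $\theta$, $P_e$, $\alpha$, and $C_H$.

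Second, I would apply PHI($\theta$) to the positive caloric function
\[
w(t,x) \;=\; \left(1-\tfrac{1}{2}(1-\beta_0)\right)^{t}\phi_0(x),
\]
which satisfies $w(t+1,\cdot)=\tfrac{1}{2}(I+K_\mu)w(t,\cdot)$ on $\mathbb N\times U$ (inside $U$ the identity $K_\mu\phi_0=K_U\phi_0=\beta_0\phi_0$ holds). On any ball $B(z,r)$ whose appropriate enlargement lies in $U$, the parabolic Harnack inequality compares $w(t_1,y)$ and $w(t_2,x)$ for $x,y\in B(z,r)$ with $t_1,t_2$ of order $r^{\theta}$. Dividing out the deterministic time factor gives
\[
\phi_0(y) \;\le\; C_H\left(1-\tfrac{1}{2}(1-\beta_0)\right)^{t_2-t_1}\phi_0(x),
\]
and the uniform bound $(1-\beta_0)(t_2-t_1)\le C(1-\beta_0)R^\theta\le C'$ from the previous step shows that the time factor is bounded by a constant depending only on the allowed parameters. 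Hence
\[
\sup_{B}\phi_0 \;\le\; A_0\inf_{B}\phi_0
\]
for every ball $B$ whose doubled enlargement sits inside $U$, with $A_0$ depending only on $\theta,P_e,\alpha,C_H$.

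Third, I would bootstrap this to full $(1/8,A)$-regularity. The edge condition $\phi_0(x)\le A\phi_0(y)$ for $\{x,y\}\in \mathfrak E_U$ follows directly from the eigenvalue equation and ellipticity: $\beta_0\phi_0(x)\ge K_U(x,y)\phi_0(y)=(\mu_{xy}/\pi(x))\phi_0(y)\ge \phi_0(y)/P_e$, so $\phi_0(y)\le P_e\phi_0(x)$, and symmetrically. For the ball condition, take any $E=B(z,r)\subset U$ with $r\le \tfrac{3}{4}\delta(z)=6\eta\delta(z)$, $\eta=1/8$. Every $w\in E$ has $\delta(w)\ge \delta(z)-r\ge \delta(z)/4$, so any small ball of radius $\delta(z)/16$ centered in $E$ has its double inside $U$ and falls under the Harnack inequality just established. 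Covering $E$ by a universally bounded number (at most roughly $24$) of such small balls and chaining yields $\max_E\phi_0\le A_0^{24}\min_E\phi_0$, which is exactly the required $(\eta,A)$-regularity.

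The main obstacle is the clean extraction of a purely spatial Harnack inequality for $\phi_0$ from the parabolic one: the exponential time factor in $w(t,x)$ must be absorbed into a bounded multiplicative constant, which is what the a~priori eigenvalue estimate $1-\beta_0\le CR^{-\theta}$ guarantees, and this is what forces that preliminary step to come first. Everything else is bookkeeping: the constants at each stage depend only on $\theta$, $P_e$, and $C_H$ (with $\alpha$ from Assumption~A2 folded in universally), as required.
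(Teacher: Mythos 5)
Your overall architecture is the paper's: apply the parabolic Harnack inequality PHI($\theta$) to the caloric function $w(t,x)=(1-\tfrac12(1-\beta_0))^{t}\phi_0(x)$ on balls at time scale $r^{\theta}$, absorb the deterministic time factor using an a priori bound on $1-\beta_0$, and then chain small balls to get $(1/8,A)$-regularity (the edge condition via ellipticity is fine). But there is a genuine gap exactly at the step you call ``preliminary'': the tent test function does \emph{not} yield $1-\beta_0\le CR^{-\theta}$ when $\theta>2$. A function with increments of size $O(1/R)$ across edges has Dirichlet form of order $R^{-2}$ times volume, so the Rayleigh quotient argument gives only $1-\beta_0\le CR^{-2}$. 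Since $R\ge 1$ and $\theta\ge 2$, the bound $R^{-2}$ is \emph{weaker} than the needed $R^{-\theta}$, and it is insufficient for your second step: with only $(1-\beta_0)\le CR^{-2}$, the time factor $(1-\tfrac12(1-\beta_0))^{t_2-t_1}$ with $t_2-t_1\asymp r^{\theta}$, $r\asymp R$, can be as small as $e^{-cR^{\theta-2}}$, which is not bounded below uniformly in $R$ when $\theta>2$, so the spatial Harnack constant you extract would blow up with the domain size.

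This is precisely the subtlety the paper flags: to run the argument one must first establish $1-\beta_0\le CR^{-\theta}$, and for $\theta>2$ this requires the cut-off Sobolev inequality $\mathrm{CS}(\theta)$, which PHI($\theta$) implies (see \cite[Theorem 1.5, Theorem 5]{BB}); $\mathrm{CS}(\theta)$ furnishes cutoff functions $\varphi$ equal to $1$ on $B(o,\rho)$, supported in $B(o,2\rho)\subset U$, with energy $\mathcal E_\mu(\varphi,\varphi)\le C\rho^{-\theta}\,\pi(B(o,2\rho))$ (take $f\equiv 1$ in the $\mathrm{CS}(\theta)$ inequality), and plugging such a $\varphi$ into the Rayleigh quotient gives $1-\beta_0\le C\rho^{-\theta}\asymp CR^{-\theta}$ after one use of doubling. (For $\theta=2$, $\mathrm{CS}(2)$ is trivial and the tent function suffices, which is why the transplanted step from the proof of Lemma~\ref{lem-H} works there but not here.) If you replace your tent-function step by this $\mathrm{CS}(\theta)$-based eigenvalue bound, the rest of your argument goes through and matches the intended proof.
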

\begin{rem} The $\theta$-parabolic inequality  PHI($\theta$) of {\em \cite{BB}} implies the doubling property and the $\theta$-Poincar\'e inequality (\cite[Theorem 1.5]{BB}).  In addition it implies the so-called cut-off Sobolev inequality $\mbox{CS}(\theta)$ (\cite[Definition 1.4; Theorem 1.5]{BB}). Conversely,  doubling, the $\theta$-Poincar\'e inequality and $\mbox{CS}(\theta)$   imply  PHI($\theta$).   In the case $\theta=2$, the cut-off Sobolev inequality is always trivially satisfied.  When $\theta>2$, the cut-off Sobolev inequality is non-trivial and become essential  to the characterization of the parabolic Harnack inequality PHI($\theta$). See \cite[Theorem 5]{BB} (in \cite{BB}, the parameter $\theta$ is called $\beta$).  \end{rem}
\begin{rem}  To prove Lemma \ref{lem-HBB}, it is essential to have an upper bound $1-\beta_0\le C R^{-\theta}$ on the spectral gap $1-\beta_0$.  This upper bound easily follows from the cut-off Sobolev inequality $\mbox{CS}(\theta)$. 
\end{rem}

We can now state two very general results  concerning the reversible Markov chain $(K_{\phi_0}, \pi_{\phi_0})$ in the finite domain $U$. The first theorem has weaker hypotheses and is, in principle, easier to apply. When the parameter $\omega=0$, the two theorems gives essentially identical conclusions. The proofs are immediate application of the results in Section \ref{sec-AW} and follow the exact same line of reasoning used in Section \ref{sec-Met} to obtain 
 Theorems \ref{th-Ktilde1}-\ref{th-Ktilde2}.  In the following statement, $\beta_-$
 is the least eigenvalue of the pair $(K_{\phi_0},\pi_{\phi_0})$
and $\beta$ is second largest eigenvalue of  $(K_{\phi_0},\pi_{\phi_0})$.
 If $\beta_{U,-}$ denotes the smallest eigenvalue of $K_U$ on $L^2(U,\pi_U)$, then $\beta_-=\beta_{U,-}/\beta_0$.  If $\beta_{U,1}$ denotes the second largest eigenvalue of $K_U$ on $L^2(U,\pi_U)$, then $\beta= \beta_{U,1}/\beta_0$. The eigenfunction $\phi_0$ is normalized by $\pi_U(\phi_0^2)=1$.

 \begin{theo}  \label{th-Doob1}
 Fix $\alpha, \theta, \eta, \omega, P_e,P,D,A$ and assume {\em A1-A2-A4}.  Under these assumptions there  are constants $c,C\in (0,\infty)$ (where $c,C$ depend only on the parameters $\alpha, \theta, \eta, \omega, P_e,P, D$ and $ A$) such that
$$  1- \beta_0\le CR^{-\theta}$$
and
$$  1-\beta\ge c R^{-(\theta+\omega)}.$$
Assume further that $1+\beta_-\ge cR^{-(\theta+\omega)}$. Then,  
 for  all $t\ge R^{\theta+\omega}$, we have the following $L^{\infty}$ rate of convergence,
 $$\max_{x,y\in u}\left|\frac{K^t_{\phi_0}(x,y)}{\pi_{\phi_0}(y)} -1\right| \le C \exp\left(-c\frac{t}{R^{\theta+\omega}}\right).$$
Equivalently, in terms of the kernel $K_U$, this reads 
$$\left|K^t_U(x,y) - \beta_0^t\phi_0(x)\phi_0(y)\pi_U(y)\right|\le C\beta_0^t\phi_0(x)\phi_0(y)\pi_U(y)e^{-ct/R^{\theta+\omega}},$$
for all $x,y\in U$ and $t\ge R^{\theta+\omega}$. \end{theo}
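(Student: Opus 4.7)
The core observation for the entire theorem is that the reversible Markov chain $(K_{\phi_0},\pi_{\phi_0})$ on $U$ is precisely the weighted chain associated to $(U,\mathfrak E_U,\bar\pi,\bar\mu)$ with $\bar\pi=\phi_0^2\pi|_U$ and $\bar\mu_{xy}=\beta_0^{-1}\phi_0(x)\phi_0(y)\mu_{xy}$, as verified in the discussion preceding the statement. Assumption A4 on $\phi_0$ then transfers, with constants possibly adjusted by universal factors, to the weight $\psi=\phi_0^2$: it is $(\eta,A^2)$-regular and $(2\omega,A^2)$-controlled (I shall treat the factor $2$ as being absorbed into the notational $\omega$ appearing in the conclusion, or equivalently re-read A4 as a hypothesis on $\phi_0^2$). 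The edge-weight compatibility $\phi_0(x)^2\mu_{xy}\le A\beta_0\,\bar\mu_{xy}$ required to invoke Theorem \ref{th-PP1controlled} is immediate from the neighbor bound $\phi_0(x)\le A\phi_0(y)$ for $x\sim y$ in $U$.

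For the upper bound $1-\beta_0\le CR^{-\theta}$, I will use the variational principle $1-\beta_0\le\mathcal{E}^D_{\mu,U}(f,f)/\sum_U f^2\pi$ with a tent-type test function supported on an interior ball $B(o,r)$ of radius $r\asymp R$. Remark \ref{rem:johnradius} guarantees $\delta(o)\ge\alpha(1+R)$, so $2B\subset U$ and the boundary contributions to $\mathcal{E}^D$ vanish, leaving a purely interior Dirichlet energy bounded by a constant times $r^{-\theta}\sum_B f^2\pi$ via the ball Poincar\'e inequality at scale $r$ and doubling. For the spectral gap $1-\beta\ge cR^{-(\theta+\omega)}$ of $(K_{\phi_0},\pi_{\phi_0})$, apply Theorem \ref{th-PP1controlled} to $(\bar\pi,\bar\mu)$ to obtain
$$\sum_U|f-\widetilde Q_sf|^2\bar\pi\le Cs^{\theta+\omega}\mathcal{E}_{\bar\mu,U}(f,f),\qquad s>0.$$
Choosing $s$ of order $R$ large enough that Lemma \ref{lem-Q} forces $\mathcal{W}_{=s}=\emptyset$ and $F(s,x)$ equals a common ball about $o$ for all $x\in U$, the operator $\widetilde Q_s$ reduces to averaging against $\pi_{\phi_0}$ on all of $U$, producing the global Poincar\'e inequality for $(\bar\pi,\bar\mu)$ and hence the claimed spectral gap by the standard variational characterization of the second-largest eigenvalue.

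For the $L^\infty$ convergence, the plan follows the endgame of the proof of Theorem \ref{th-KNU}. The regular and controlled properties of $\phi_0^2$ combined with the dimensional lower bound \eqref{MGF} of Lemma \ref{lem-Q} applied in the weighted setting deliver a moderate-growth estimate $\bar\pi(F(s,x))/\bar\pi(U)\ge c(s/R)^\nu$ for $s\in(0,\alpha\eta R/8]$, with an effective dimension $\nu$ depending only on the admissible data; equivalently, $\|\widetilde Q_sf\|_\infty\le M(1+s)^{-\nu}\|f\|_1$. Combined with the $\mathcal{Q}$-Poincar\'e inequality from the previous paragraph, Proposition \ref{pro-Nash1} yields a Nash inequality for $(\bar\pi,\bar\mu)$, and Proposition \ref{pro-Nash2} converts it into
$$\sup_{x,y\in U}\frac{K_{\phi_0}^{2n}(x,y)}{\pi_{\phi_0}(y)}\le C\left(\frac{R^{\theta+\omega}}{n+1}\right)^{\nu/\theta}\qquad\text{for }n\lesssim R^{\theta+\omega}.$$
Decomposing $\|(K_{\phi_0}-\pi_{\phi_0})^t\|_{L^1(\pi_{\phi_0})\to L^\infty}$ as a product of two Nash-type $L^1(\pi_{\phi_0})\to L^2(\pi_{\phi_0})$ factors (each evaluated at time $\asymp R^{\theta+\omega}$) and an intermediate $L^2\to L^2$ factor equal to $\max\{\beta,|\beta_-|\}^{t_1}$, and using the spectral gap together with the hypothesis $1+\beta_-\ge cR^{-(\theta+\omega)}$, produces the exponential bound $Ce^{-ct/R^{\theta+\omega}}$. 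The restatement in terms of $K_U$ is immediate from $K_U^t(x,y)=\beta_0^t\phi_0(x)K_{\phi_0}^t(x,y)\phi_0(y)^{-1}$ and $\pi_{\phi_0}(y)=\phi_0(y)^2\pi_U(y)$.

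The main obstacle is the upper bound $1-\beta_0\le CR^{-\theta}$ when $\theta>2$: the standard tent test function only gives the scaling $r^{-2}$ for the Rayleigh quotient, and extracting the correct $r^{-\theta}$ rate requires a test function carefully tailored to the $\theta$-Poincar\'e hypothesis on balls or, equivalently, some cut-off Sobolev input along the lines of the remarks following Lemma \ref{lem-HBB}. Everything else is a systematic application of the Section \ref{sec-AW}--Section \ref{sec-DMPN} machinery to the weighted data $(\bar\pi,\bar\mu)$, with Assumption A4 providing exactly the regularity and control needed to make Theorem \ref{th-PP1controlled} and Lemma \ref{lem-Q} available.
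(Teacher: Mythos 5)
Your proposal follows essentially the same route as the paper: the paper's entire proof of Theorem \ref{th-Doob1} is a pointer to Section \ref{sec-AW} (Theorem \ref{th-PP1controlled} applied to the weighted pair $\bar\pi=\phi_0^2\pi|_U$, $\bar\mu_{xy}=\beta_0^{-1}\phi_0(x)\phi_0(y)\mu_{xy}$) combined with the Nash/moderate-growth endgame of Theorems \ref{th-KNU}--\ref{th-Ktilde2} via Lemma \ref{lem-Q} and Propositions \ref{pro-Nash1}--\ref{pro-Nash2}, which is exactly what you spell out, including the translation back to $K_U^t$. The two wrinkles you flag honestly --- the control exponent being that of $\phi_0^2$ rather than $\phi_0$ ($2\omega$ versus $\omega$), and the upper bound $1-\beta_0\le CR^{-\theta}$ for $\theta>2$ (a tent function only gives $R^{-2}$; the paper addresses this only in the remark following Lemma \ref{lem-HBB} via the cut-off Sobolev input) --- are glossed over in the paper's one-line proof as well, so they are shared bookkeeping/hypothesis issues rather than defects of your argument; the only small correction is that for $s\asymp R$ the operator $\widetilde{Q}_s$ averages over the fixed central ball $3E_o^s$ rather than against $\pi_{\phi_0}$ on all of $U$, which still yields the global Poincar\'e inequality (and hence the spectral gap) because the $\widetilde\pi$-mean minimizes the left-hand side, as the ``in particular'' clause of Theorem \ref{th-PP1controlled} already records.
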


\begin{rem} Part of the proof of this result is to show that there are constants $C,\nu$ such that, for all $t\le R^{\theta+\omega}$ and $x,y\in U$,
$$\frac{K^t_{\phi_0}(x,y)}{\pi_{\phi_0}(y)}\le C (R^{\theta+\omega}/t)^\nu,$$
where $C,\nu$ depends only on the parameters $\alpha, \theta, \eta, \omega, P_e,P, D$ and $ A$. In terms of $K_U^t$, this becomes
for all $t\le R^{\theta+\omega}$ and $x,y\in U$, 
$$\frac{K^t_{U}(x,y)}{\pi_U(y)}\le C (R^{\theta+\omega}/t)^\nu \phi_0(x)\phi_0(y).$$  This type of estimate for $K^t_U$ is called intrinsic ultracontractivity. It first appeared in the context of Euclidean domains in \cite{DavSim,DavUl} (see also \cite{DavisB}) and has been discussed since by many authors. In its classical form, ultracontractivity of the Dirichlet heat semigroup in a bounded Euclidean domain $U$ is the statement that, for each $t>0$, there is a constant $C_t$ such that for all $x,y\in U$,
$$h^D_U(t,x,y)\le C_t\phi_0(x)\phi_0(y)$$
Here $h^D_U(t,x,y)$ is the fundamental solution (e.g., heat kernel) of the heat equation with Dirichlet boundary condition in $U$. Ultracontractivity may or may not hold in a particular bounded domain. It is known that it holds in bounded Euclidean John domains, see \cite{CipUJ}. We note here that running the line of reasoning used here in the case of bounded Euclidean John domains would produce more effective ultracontractivity bounds than the ones reported in \cite{CipUJ}.  
\end{rem}
\begin{rem} As mentioned above, Theorem  \ref{th-Doob1} is relatively easy to apply. Hypothesis A1 is our basic working hypothesis regarding $(\mathfrak X,\mathfrak E,\pi,\mu)$. Hypothesis A2 requires the finite domain $U$ to be a John domain. When $\theta=2$, Hypothesis A4 is automatically satisfied for some $\omega\ge 0$ depending only on the other fixed parameters (Lemma \ref{lem-H}). When $\theta>2$, we would typically appeal to Lemma \ref{lem-HBB} in order to verify A4. This requires an additional assumption on $(\mathfrak X,\mathfrak E,\pi,\mu)$, namely, that $\frac{1}{2}(I+K_\mu)$ satisfies the parabolic Harnack inequality PHI($\theta$) of \cite{BB}.
For instance, Theorem \ref{th-Doob1} applies uniformly to the graph metric balls in $(\mathfrak X,\mathfrak E,\pi,\mu)$ under Hypothesis A1 when $\theta=2$, and under A1 and PHI($\theta$) when $\theta>2$.  Consider the infinite Vicsek fractal graph $(\mathfrak X^V,\mathfrak E^V)$ (a piece of which is pictured in Figure \ref{fig-V2}) equipped the vertex weight $\pi^V(x)=4$, $x\in \mathfrak X^V$ and the edge weight $\mu^V_{xy}=1$, $\{x,y\}\in \mathfrak E^V$. This structure is a good example for the case $\theta>2$.
It has $\theta=d+1$ where $d=\log 5/\log 3$ and also volume growth $\pi(B(x,r))\asymp r^d$.
It satisfies the parabolic Harnack inequality PHI$(\theta)$. See, e.g., \cite[Example 2 and Example 3, Section 5]{BCK} which provides larger classes of examples of this type.
\end{rem}

\begin{theo} \label{th-Doob2} Fix $\alpha, \theta, \eta, P_e,P,D,A$ and assume {\em A1-A2-A3}.  Under these assumptions there  are constants $c,C\in (0,\infty)$ (where $c,C$ depend only on  the parameters $\alpha, \theta, \eta, P_e,P, D$ and $ A$) such that
$$  1- \beta_0\le CR^{-\theta}$$
and
$$  1-\beta\ge c R^{-\theta}.$$
 Assume further that $1+\beta_-\ge cR^{-\theta}$.
 Then, for all $t\ge R^{\theta}$, we have
 $$\max_{x,y\in u}\left|\frac{K^t_{\phi_0}(x,y)}{\pi_{\phi_0}(y)} -1\right| \le C \exp\left(-c\frac{t}{R^{\theta}}\right).$$
Equivalently, in terms of the kernel $K_U$, this reads
$$\left|K^t_U(x,y) - \beta_0^t\phi_0(x)\phi_0(y)\pi_U(y)\right|\le C\beta_0^t\phi_0(x)\phi_0(y)\pi_U(y)e^{-ct/R^{\theta}},$$
for all $x,y\in U$ and $t\ge R^{\theta}$.
 \end{theo}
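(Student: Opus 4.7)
The argument parallels the proof of Theorem \ref{th-Doob1} line by line, with the weight-controlled Theorem \ref{th-PP1controlled} replaced by the doubling-based Theorem \ref{th-PP1doub}. The setup is already in place: the discussion preceding Theorem \ref{th-Doob1} identifies $K_{\phi_0}$ as the Markov kernel on $(U,\mathfrak{E}_U)$ determined by the weights $\bar\pi = \phi_0^2\pi|_U$ and $\bar\mu_{xy} = \beta_0^{-1}\phi_0(x)\phi_0(y)\mu_{xy}$, with $\bar\mu$ subordinated to $\bar\pi$, and the invariant probability of $K_{\phi_0}$ equals $\pi_{\phi_0} = \bar\pi/\bar\pi(U)$.

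First I would verify the two structural hypotheses of Theorem \ref{th-PP1doub} for $(\bar\pi,\bar\mu)$. For the domination, fix $B=B(z,r)\subset U$ with $r\le 6\eta\delta(z)$; the $(\eta,A)$-regularity of $\phi_0$ gives $\sup_B\phi_0\le A\inf_B\phi_0$ and $\phi_0(y)\ge \phi_0(x)/A$ for any $\{x,y\}\in\mathfrak{E}$, so $\sup_B\phi_0^2 \le A^3\phi_0(x)\phi_0(y) = A^3\beta_0\cdot\bar\mu_{xy}/\mu_{xy}$ for any such $x,y$. Since $\beta_0\le 1$, $(\bar\pi,\bar\mu)$ $(\eta,A^3)$-dominates $(\pi,\mu)$. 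For the doubling of $\bar\pi$ on $(U,\mathfrak{E}_U)$, note that on a ball $B(z,r)$ with $r\le 6\eta\delta(z)$ regularity forces $\phi_0\asymp\phi_0(z)$, so $\bar\pi\asymp\phi_0(z)\cdot(\phi_0\pi)$ and doubling there follows from the $A$-doubling of $\phi_0\pi$ given by A3; for larger balls one chains such interior balls along John paths using the Whitney-covering machinery of Section \ref{sec-WC} to deduce doubling of $\bar\pi$ globally on $(U,\mathfrak{E}_U)$, with constant $\widetilde D = \widetilde D(A,D,\eta,\alpha)$.

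With these verified, Theorem \ref{th-PP1doub} delivers the $\mathcal{Q}$-Poincar\'e inequality $\sum_U|f-\widetilde Q_sf|^2\bar\pi \le Cs^\theta\mathcal{E}_{\bar\mu,U}(f,f)$ for all $s>0$; its $s\asymp R$ specialization is the ordinary Poincar\'e inequality and yields $1-\beta \ge cR^{-\theta}$. The upper bound $1-\beta_0 \le CR^{-\theta}$ is obtained independently by plugging a tent function supported in a ball $B(o,R/8)\subset U$ (permissible since $\delta(o)\ge \alpha R/2$ by Remark \ref{rem:johnradius}) into the variational formula for $1-\beta_0$ and invoking the ball Poincar\'e inequality of A1. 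The $L^\infty$ decay then follows from the three-factor argument at the end of the proof of Theorem \ref{th-KNU}: write $t = t_1 + 2t_2$ with $t_2\asymp R^\theta$, decompose $\|(K_{\phi_0}-\pi_{\phi_0})^t\|_{L^1(\pi_{\phi_0})\to L^\infty}$ into $L^1\to L^2$, $L^2\to L^2$, $L^2\to L^\infty$ factors, bound the outer factors by the ultracontractivity estimate $K_{\phi_0}^{2t_2}(x,y)/\pi_{\phi_0}(y)\le C(R^\theta/t_2)^\nu$ (obtained by feeding the $\mathcal{Q}$-Poincar\'e inequality and the moderate-growth analog of Lemma \ref{lem-Q} for $\bar\pi$ into Propositions \ref{pro-Nash1}--\ref{pro-Nash2}), and bound the middle factor by $\max\{\beta,|\beta_-|\}^{t_1}\le(1-cR^{-\theta})^{t_1}$ using the spectral gap and the standing assumption $1+\beta_-\ge cR^{-\theta}$. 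The reformulation in terms of $K_U^t$ is purely algebraic, following from $K_U^t(x,y) = \beta_0^t\phi_0(x)\phi_0(y)^{-1}K_{\phi_0}^t(x,y)$ and $\pi_{\phi_0}(y) = \phi_0(y)^2\pi_U(y)$.

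The main technical obstacle is the doubling of $\bar\pi = \phi_0^2\pi$ on $(U,\mathfrak{E}_U)$, since A3 directly supplies only $A$-doubling of $\phi_0\pi$. The upgrade to $\phi_0^2\pi$ must exploit the Whitney-scale near-constancy of $\phi_0$ afforded by $(\eta,A)$-regularity together with the John property of $U$; once this doubling is in hand, all remaining steps are routine assemblies of Theorem \ref{th-PP1doub}, the Nash machinery of Propositions \ref{pro-Nash1}--\ref{pro-Nash2}, and the three-factor $L^\infty$ scheme already used for Theorem \ref{th-KNU}.
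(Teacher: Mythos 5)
Your overall skeleton is exactly the paper's (very terse) argument: identify $K_{\phi_0}$ with the chain built from $(\bar\pi,\bar\mu)=(\phi_0^2\pi|_U,\;\beta_0^{-1}\phi_0(x)\phi_0(y)\mu_{xy})$, check that $(\bar\pi,\bar\mu)$ dominates $(\pi,\mu)$ via the $(\eta,A)$-regularity of $\phi_0$ (Remark \ref{rem-psimu}), invoke Theorem \ref{th-PP1doub} to get the Poincar\'e and $\mathcal Q$-Poincar\'e inequalities for $(\bar\pi,\bar\mu)$, bound $1-\beta_0$ from above with a tent function, and then run the Nash/ultracontractivity plus three-factor interpolation scheme of Theorem \ref{th-KNU} (using the standing assumption on $1+\beta_-$ for the middle factor), finishing with the algebraic translation to $K_U^t$. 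All of that is what the paper does.

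The gap is in your treatment of the doubling hypothesis. Theorem \ref{th-PP1doub} needs the measure $\bar\pi=\phi_0^2\pi$ to be doubling on $(U,\mathfrak E_U)$, and in the paper this is precisely what Assumption A3 is meant to supply: see the remark following the theorem (``Theorem \ref{th-Doob2} requires to verify Hypothesis A3, that is, to show that $\pi_{\phi_0}$ is doubling'') and the proof of Corollary \ref{cor:KU-rate-of-conv}, which verifies A3 by proving that $\phi_0^2\pi_U$ is doubling via the Carleson estimate. You instead read A3 literally as doubling of $\phi_0\pi$ and then assert that doubling of $\phi_0^2\pi$ follows ``by chaining interior balls along John paths.'' That step is not proved and does not follow at this level of generality: doubling of a measure is not preserved under squaring its density, and $(\eta,A)$-regularity only forces near-constancy of $\phi_0$ at scale $\eta\,\delta(x)$, which says nothing about balls centered near $\partial U$ of radius much larger than $\delta$ --- exactly where tangential oscillation at scale $\delta$ (Riesz-product-type weights) defeats such an implication. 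Indeed the paper stresses that establishing the doubling of $\pi_{\phi_0}$ is the hard part of applying this theorem (it is only achieved for inner-uniform domains, through Theorem \ref{theo-Carleson}), so it cannot be a routine consequence of regularity plus the John property plus doubling of $\phi_0\pi$. With the intended reading of A3 (doubling of $\phi_0^2\pi$, i.e.\ of $\pi_{\phi_0}$), your bridging paragraph should simply be deleted, and the rest of your argument is the paper's proof.
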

 \begin{rem} As for Theorem \ref{th-Doob1},
 part of the proof of Theorem \ref{th-Doob2} is to show that there are constants $C,\nu$ such that, for all $t\le R^{\theta}$ and $x,y\in U$,
$$\frac{K^t_{\phi_0}(x,y)}{\pi_{\phi_0}(y)}\le C (R^{\theta}/t)^\nu,$$
where $C,\nu$ depends only on the parameters $\alpha, \theta, \eta, P_e,P, D$ and $ A$. In terms of $K_U^t$, this gives the intrinsic ultracontractivity estimate
for all $t\le R^{\theta}$ and $x,y\in U$, 
$$\frac{K^t_{U}(x,y)}{\pi_U(y)}\le C (R^{\theta}/t)^\nu \phi_0(x)\phi_0(y).$$  
 \end{rem}
\begin{rem}Theorem~\ref{th-Doob2} gives a more satisfying result than Theorem \ref{th-Doob1} in that it does not involves the extra parameter $\omega$ (the two theorems have the same conclusion when $\omega=0$). However, Theorem \ref{th-Doob2} requires to verify Hypothesis A3, that is, to show that  $\pi_{\phi_0}$ is doubling. This is an hypothesis that is hard to verify, even for simple finite domains in $\mathbb Z^d$. At this point in this article, the only finite domains
in $\mathbb Z^2$ for which we could verify this hypothesis are those where we can compute $\phi_0$ explicitly such as cubes with sides parallel to the axes or the $45$ degree finite cone of Figure \ref{D0}. This shortcoming will be remedied in the next section when we show that finite  inner-uniform domains satisfy Hypothesis A3 (see Theorem~\ref{theo-Carleson}).
\end{rem}

\begin{exa} We can apply either of these two theorems to the one dimensional example of simple lazy random walk on $\{0,1,\dots,N\}$ with absorption at $0$ and reflection at $N$. This is the leading example of \cite{DM} where quantitative estimates for absorbing chains are discussed.  In this simple example, we know exactly the function $\phi_0$ and we can easily verify A1-A2-A3 and A4 with $\omega=0$. In terms of the Doob-transform chain $K_{\phi_0}$ and its invariant measure $\pi_{\phi_0}$, the result above proves convergence after order $N^2$ steps. This improves upon the results of \cite{DM} by a factor of $\log N$.
\end{exa}

\begin{exa} In the same manner, we can apply the two theorems above to the example discussed in the introduction (Figure \ref{D0}). The key is again the fact that we can find an explicit expression for the eigenfunction $\phi_0$ and that it follows that Assumptions A1-A1-A3 and A4 with $\omega=0$  are satisfied.  The conclusion is the same. In terms of the Doob-transform chain  $K_{\phi_0}$ and its invariant measure $\pi_{\phi_0}$, the result above proves convergence after order $N^2$ steps. 
\end{exa}

\begin{exa} Let us focus on the square grid $\mathbb Z^m$ in a fixed dimension $m$ and on the family of its finite $\alpha$-John domains for some fixed $\alpha\in (0,1]$.
In addition, for simplicity, we assume that the weight $\mu$ is constant equal to $1/4m$ one the grid edges and $\pi\equiv 1$ (this insure aperiodicity of $K$ and $K_U$).
Obviously, A1 is satisfied with $\theta=2$ and  A2  is assumed since $U$ is an $\alpha$-John domain. Theorem \ref{th-Doob2} does not apply here because we are not able to prove doubling of the measure $\pi_{\phi_0}$ (and in fact, doubling should probably not be expected in this generality). However, there is an $\omega$ (which depends only on the two fixed parameters $m$ and $\alpha$) such that A4 is satisfied (this follows from Lemma \ref{lem-RC} and Lemma \ref{lem-H}), and hence, we can apply Theorem~\ref{th-Doob1}.
\begin{theo} \label{th-JZd}
Fix $m$ and $\alpha\in (0,1]$. Let the square grid $\mathbb Z^m$ be equipped with the weights $\mu,\pi$ described above. There are constants $c=c(m,\alpha), C=C(m,\alpha)$ and $\omega=\omega(m,\alpha)$ such that, for any finite $\alpha$-John domain $U$ in $\mathbb Z^m$ with John radius $R_U=R(o,\alpha,U)$, the Doob-transform chain $K_{\phi_0}$ satisfies $$ cR_U^{-2}\le1-\beta_0\le CR_U^{-2},$$ 
$$1-\beta\ge c R_U^{-2-\omega},$$
and, for $t\ge R^{2+\omega}$ 
$$\max_{x,y\in u}\left|\frac{K^t_{\phi_0}(x,y)}{\pi_{\phi_0}(y)} -1\right| \le C \exp\left(-c\frac{t}{R_U^{2+\omega}}\right).$$
Equivalently, in terms of the kernel $K_U$, this reads 
$$\left|K^t_U(x,y) - \beta_0^t\phi_0(x)\phi_0(y)\pi_U(y)\right|\le C\beta_0^t\phi_0(x)\phi_0(y)\pi_U(y)e^{-ct/R^{2+\omega}},$$
for all $x,y\in U$ and $t\ge R^{2+\omega}$.Moreover, for $1\le t\le R^{\theta+\omega}$, we have
$$\frac{K_U^t(x,y)}{\pi_U(y)}\le C\left(R^{2+\omega}/t\right)\phi_0(x)\phi_0(y)\pi_U(y).
$$\end{theo}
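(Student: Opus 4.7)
The plan is to recognize Theorem \ref{th-JZd} as an application of Theorem \ref{th-Doob1} in the concrete setting of the grid, and to reduce it to verifying the four ingredients that Theorem \ref{th-Doob1} requires (A1, A2, A4, and the lower bound $1+\beta_-\ge cR^{-(\theta+\omega)}$) for $\theta=2$. Hypothesis A1 is classical for $\mathbb Z^m$: counting measure is $D$-doubling with $D$ depending only on $m$; the pair $(\pi,\mu)$ with $\mu_{xy}=1/(4m)$ is adapted and $P_e$-elliptic with $P_e=4m$; the $2$-Poincar\'e inequality on balls is the standard Euclidean Poincar\'e inequality transferred to the grid; and subordination $\sum_y\mu_{xy}=1/2\le 1=\pi(x)$ is immediate. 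Hypothesis A2 is the defining assumption on $U$.

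For A4 I invoke the two-step scheme from Section~\ref{sec-AW}. Because $\theta=2$, Lemma~\ref{lem-H} delivers $(1/8,A)$-regularity of $\phi_0$ on $U$ with $A$ depending only on $m$ and $\alpha$; the proof of that lemma reduces to the elliptic/parabolic Harnack inequality for the almost-harmonic function $\phi_0$, after absorbing the $O(R^{-2})$ spectral shift using the trivial upper bound $1-\beta_0\le CR^{-2}$ obtained by plugging a tent test function supported in a ball interior to $U$. Lemma~\ref{lem-RC} then promotes $(\eta,A)$-regularity to $(\omega,A_1)$-controllability along Whitney chains, with $\omega$ and $A_1$ depending only on $m$ and $\alpha$. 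This is the one structural step that requires the most care: the Whitney parameter must be chosen compatibly with the Harnack radius of Lemma~\ref{lem-H}, and the control exponent $\omega$ must be tracked through the logarithmic count of Whitney balls along John paths.

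The spectral hypothesis $1+\beta_-\ge cR^{-(2+\omega)}$ is handled by laziness. For every $x\in\mathbb Z^m$ one has $K(x,x)=1-2m\cdot 1/(4m)=1/2$, and by definition \eqref{KDU} the same equality $K_U(x,x)=1/2$ persists for every $x\in U$. Writing $K_U=\tfrac12 I+\tfrac12 Q_U$ with $Q_U$ a symmetric, entrywise non-negative, sub-stochastic kernel on $(U,\pi_U)$, a short Cauchy-Schwarz argument shows $\|Q_U\|_{L^2(\pi_U)}\le 1$; hence $\beta_{U,-}\ge 0$, so $\beta_-\ge 0$ and $1+\beta_-\ge 1$, comfortably beating any polynomial lower bound in $R$. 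The matching lower bound $1-\beta_0\ge cR^{-2}$ in the first display of the theorem is the only conclusion not produced directly by Theorem~\ref{th-Doob1}; it follows from the Nash/Faber-Krahn inequality on $\mathbb Z^m$ combined with the doubling bound $|U|\le |B(o,R)|\le CR^m$, giving $1-\beta_0\ge c|U|^{-2/m}\ge c'R^{-2}$.

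With all hypotheses in hand, Theorem~\ref{th-Doob1} supplies the spectral gap bound $1-\beta\ge cR^{-(2+\omega)}$, the $L^\infty$ convergence rate $\exp(-ct/R^{2+\omega})$ for $t\ge R^{2+\omega}$, and the equivalent two-sided control of $K_U^t(x,y)$ in terms of $\beta_0^t\phi_0(x)\phi_0(y)\pi_U(y)$. The intrinsic ultracontractivity estimate for $1\le t\le R^{2+\omega}$ is exactly the intermediate Nash-type bound recorded in the remark following Theorem~\ref{th-Doob1}, obtained by applying Propositions~\ref{pro-Nash1}-\ref{pro-Nash2} to the $\mathcal Q$-Poincar\'e inequality of Theorem~\ref{th-PP1controlled}. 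The main obstacle throughout is really bookkeeping: tracking the $(m,\alpha)$-dependence of the Harnack constant, the regularity constant $A$, and the control exponent $\omega$ through Lemmas~\ref{lem-H} and~\ref{lem-RC}. The genuinely delicate point is the Harnack-type control of $\phi_0$ that absorbs the eigenvalue shift $(1-\beta_0)$; once that is in place, everything else is essentially an application of the machinery already developed.
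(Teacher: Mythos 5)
Your proposal is correct and follows essentially the same route as the paper: verify A1 (with $\theta=2$) for $\mathbb Z^m$ with these weights, take A2 as the standing assumption, obtain A4 from Lemma~\ref{lem-H} combined with Lemma~\ref{lem-RC}, and then invoke Theorem~\ref{th-Doob1} (plus the remark following it for the ultracontractive bound at small times). Your explicit handling of the two points the paper leaves implicit --- the laziness $K_U(x,x)=1/2$ giving $\beta_{U,-}\ge 0$ so that the hypothesis $1+\beta_-\ge cR^{-(2+\omega)}$ holds trivially, and the Faber--Krahn/Nash argument with $|U|\le CR^m$ for the lower bound $1-\beta_0\ge cR^{-2}$ --- is sound and consistent with the paper's intent.
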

It is an open question whether or not it is possible to prove the above theorem with $\omega(m,\alpha)=0$ for all finite $\alpha$-John domains in $\mathbb Z^m$ or, even more generally, for a general underlying structure $(\mathfrak X,\mathfrak E,\pi,\mu)$ under assumption $A1$ with $\theta=2$.
\end{exa} 

\begin{rem}
    Recall from Definition~\ref{eq-second-qs} that $\nu_x^t(y) = \mathbf{P}_x(X_t=y \ | \ \tau_u > t)$. Theorem~\ref{thm-nu-control} gives control on the rate of convergence of $\nu_x^t(y)$ in terms of the rate of convergence of $K_{\phi_0}^t(x,y)$. We can now apply Theorem~\ref{thm-nu-control} in each of the settings described above in Theorems~\ref{th-Doob1},~\ref{th-Doob2}, and~\ref{th-JZd}. For example, in the case of the square grid $\mathbb Z^m$ and for a fixed $\alpha\in (0,1)$, there exists $\omega=\omega(m,\alpha)\ge 0$ and $C=(m,\alpha), c=c(m,\alpha)>0$ such that, for any finite $\alpha$-John domain $U$ with John radius $R$,
    $$\forall\, t>CR^{2+\omega},\;\;  \left|\frac{\phi_0(y)\nu_x^t(y)}{\sum_U\phi_0}-1\right|\le e^{-ct/R^{2+\omega}}$$

\end{rem}

\section{Inner-uniform domains} \setcounter{equation}{0}
\label{sec-IU}

We now turn to the definition of inner $(\alpha,A)$-uniform domains. These domains form a subclass of the class of  $\alpha$-John domains.   They allow for a much more precise analysis of Metropolis-type chains and their Doob-transforms.

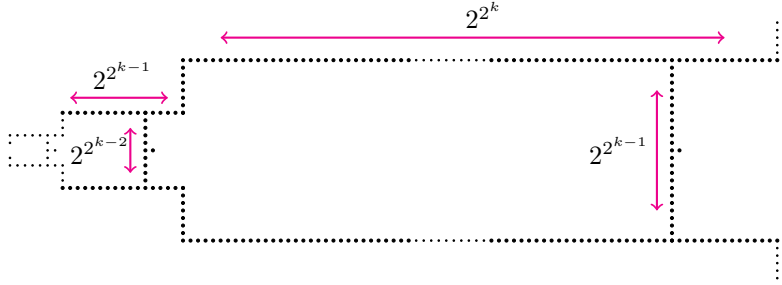
\begin{figure}[h] 
\begin{center}
\begin{tikzpicture}[scale=.1]

{\foreach \x in {4,5,6,7,8,9,10,11,12,13,14,15,16,17,18,19,20}
{\draw  [fill] (\x+5,25)  circle [radius=.2];
\draw  [fill] (\x+5,15)  circle [radius=.2];}}

{\foreach \x in {4,5,6,7,8,9,10,11}
{\draw  [fill] (25,\x+21)  circle [radius=.2];
\draw  [fill] (25,19-\x)  circle [radius=.2];}}

{\foreach \x in {21,22,23,24,25,26,27,28,29,30,31,32,33,34,35,36,37,38,39,40,41,42,43,44,45,46,47,48,49,50,61,62,63,64,65,66,67,68,69,70,71,72,73,74,75,76,77,78,79,80,81,82,83,84,85,86,87,88,89,90,91,92,93,94,95,96,97,98,99}
{\draw  [fill] (\x+5,32)  circle [radius=.2];
\draw  [fill] (\x+5,8)  circle [radius=.2];}}

{\foreach \x in {51,52,53,54,55,56,57,58,59,60}
{\draw  [fill] (\x+5,32)  circle [radius=.1];
\draw  [fill] (\x+5,8)  circle [radius=.1];}}

{\foreach \x in {2,3,4,5,6,7,8,9} {\draw [fill] (\x,22) circle[radius=.1];
 \draw [fill] (\x,18) circle [radius=.1];}}
\draw [fill] (9,17)  circle [radius=.1]; 
 \draw [fill] (9,16)  circle [radius=.1]; 
 \draw [fill] (9,23)  circle [radius=.1]; 
 \draw [fill] (9,24)  circle [radius=.1]; 
 \draw [fill] (8,20)  circle [radius=.1]; 
 \draw [fill] (7,19)  circle [radius=.1]; 
 \draw [fill] (7,20)  circle [radius=.1]; 
 \draw [fill] (7,21)  circle [radius=.1]; 
  \draw [fill] (2,21)  circle [radius=.1]; 
 \draw [fill] (2,19)  circle [radius=.1];  
  
 {\foreach \x in {0,1,2,3,4,5,6,7,8,9}
{\draw [fill] (20,16+\x) circle [radius=.2];
}}
{\foreach \x in {0,1,2,3,4,5,6,7,8,9,10,11}{
\draw [fill] (90,20+\x) circle [radius=.2];
\draw [fill] (90,8+\x) circle [radius=.2];
}}

{\foreach \x in {0,1,2,3,4}{
\draw [fill] (104,33+\x) circle [radius=.1];
\draw [fill] (104,7-\x) circle [radius=.1];
}}

\draw [fill] (21,20) circle [radius=.2];
\draw [fill] (91,20) circle [radius=.2];

\draw [thick,magenta,<->] (30,35) -- (97,35);
\node at (65,38) {$2^{2^k}$};

\draw [thick,magenta,<->] (88,12) -- (88,28);
\node at (83,20) {$2^{2^{k-1}}$};

\draw [thick,magenta,<->] (18,17) -- (18,23);
\node at (14,20) {$2^{2^{k-2}}$};

\draw [thick,magenta,<->] (10,27) -- (23,27);
\node at (17,30) {$2^{2^{k-1}}$};

\end{tikzpicture}
\caption{A graph in which, at large scales, some balls are not inner-uniform.
To the left, the graph ends after finitely many step with an origin $o$ which serves as the center of the balls to be considered. To the right, the indicated pattern is repeated infinitely many times at larger and larger scales. This graph is roughly linear. It satisfies doubling and Poincar\'e.} \label{fig-ballnotIU1}

\end{center}
\end{figure}

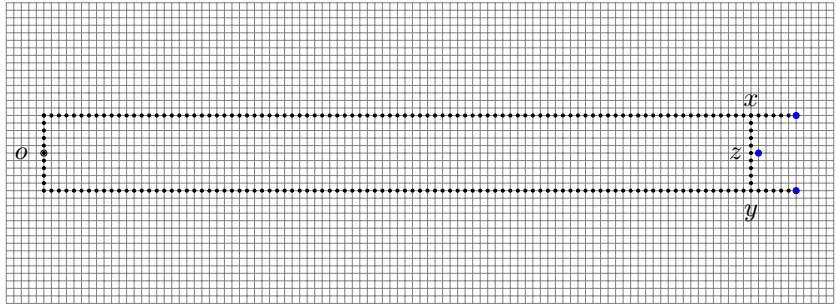
\begin{figure}[t] 
\begin{center}
\begin{tikzpicture}[scale=.1]
\draw [help lines] (0,0) grid (110,40);
{\foreach \x in {0,1,2,3,4,5,6,7,8,9,10,11,12,13,14,15,16,17,18,19,20,21,22,23,24,25,26,27,28,29,30,31,32,33,34,35,36,37,38,39,40,41,42,43,44,45,46,47,48,49,50,51,52,53,54,55,56,57,58,59,60,61,62,63,64,65,66,67,68,69,70,71,72,73,74,75,76,77,78,79,80,81,82,83,84,85,86,87,88,89,90,91,92,93,94,95,96,97,98,99,100}
{\draw  [fill] (\x+5,25)  circle [radius=.2];
\draw  [fill] (\x+5,15)  circle [radius=.2];}}

{\foreach \x in {0,1,2,3,4,5,6,7,8,9}
{\draw [fill] (5,16+\x) circle [radius=.2];
\draw [fill] (99,16+\x) circle [radius=.2];
}}
\draw (5,20) circle [radius=.4];
\draw [fill, blue] (105,15) circle [radius=.4];
\draw [fill, blue] (105,25) circle [radius=.4];
\draw [fill, blue] (100,20) circle [radius=.4];
\node at (2,20) {$o$};
\node at (99,27) {$x$};
\node at (99,12) {$y$};
\node at (97,20) {$z$};
\end{tikzpicture}
\caption{The basic model for the balls in the graph of Figure \ref{fig-ballnotIU1}. The shortest path from $x$ to $y$ is much shorter than any other path but its middle point is at distance $1$ from the boundary of the ball marked by blue dots.} \label{fig-ballNotIU2}

\end{center}
\end{figure}

Although the definition of inner-uniform domains given below appears to be quite similar to that of
John domains, it is in fact much harder, in general circumstances, to find inner-uniform domains than it is to find John domains. In the square lattice $\mathbb Z^d$, both classes of domains are very large and contain many interesting natural examples. Things are very different if one consider an abstract graph structure $(\mathfrak X,\mathfrak E)$ of the type used in this paper.  We noted earlier than any graph distance ball $B(o,r)$ in such a structure $(\mathfrak X,\mathfrak E)$ is a $1$-John domain. In particular, $\mathfrak X$ admits an exhaustion  $\mathfrak X=\lim_{r\rightarrow \infty}B(o,r)$ by finite $1$-John domains. We know of no constructions of an increasing  family of $\alpha$-inner-uniform domains in $(\mathfrak X,\mathfrak E)$, in general. Even if we assume additional properties such as doubling and Poincar\'e inequality on balls, we are not aware of a general method to construct inner-uniform subsets.  Of course, it may happen that, as in the case of $\mathbb Z^d$, graph balls turn out to be inner-uniform
(all for some fixed $\alpha>0$). But that is not the case in general.
Figures \ref{fig-ballnotIU1} and~\ref{fig-ballNotIU2} describe a simple planar graph in which, there are balls $B(o,r_i)$ with $r_i$ tending to infinity which each contains points 
$x_i,y_i$ such that $d_{B(o,r_i)}(x_i,y_i)=\rho_i=o(r_i)$ but the only path from $x_i$ to $y_i$ of length $O(\rho_i)$ has a middle point $z_i$ which is at distance $1$ from the boundary. all other paths from $x_i$ to $y_i$ have length at least $r_i/8$.  This implies that the inner-uniformity constant $\alpha_i$ of the ball $B(o,r_i)$ is $O(\rho_i/r_i)\le o(1)$.  The graph in question has a very simple structure and it satisfies doubling and the Poincar\'e inequality on balls at all scales.

\subsection{Definition and main convergence results} 
\begin{defin} A domain $U$  in $\mathfrak X$ is an inner $(\alpha,A)$-uniform domain (with respect to the graph structure $(\mathfrak X,\mathfrak E)$)
	if  for any two points $x,y\in U$ there exists a path $\gamma_{xy}=(x_0=x,x_1,\dots,x_k=y)$ joining $x$ to $y$ in  $(U, \mathfrak E_U)$ with the properties that:
	\begin{enumerate}
		\item  $k  \le A d_U(x,y)$;
		\item  For any $j \in \{0,\ldots, k\}$, $d (x_j,\mathfrak X \setminus U) \ge  \alpha (1+\min \{ j,k-j\})$. 
	\end{enumerate}
\end{defin}

\begin{rem}  Because the second condition must hold for all $x$, including those that are distance $1$ from the boundary, we see that $\alpha\in (0,1]$.
\end{rem}

We can think of an inner-uniform domain $U$ as being one where any two points are connected by a banana-shaped region. The entire banana must be contained within $U$. See Figure~\ref{Banana} for an illustration.

    There is an alternative and equivalent (modulo a change in $\alpha$)
	definition of inner-uniformity which uses distance instead of path-length in the second condition. More precisely, in this alternative definition, the condition
	``for any $j=0,\dots, k$, $d (x_j,\mathfrak X \setminus U) \ge  \alpha (1+\min \{ j,k-j\})$'' is replaced by  ``for any $j=0,\dots, k$, $d(x_j,\mathfrak X \setminus U)\ge \alpha' \min\{d_U(x_j,x),d_U(x_j,y)\}$''.  It is obvious that the definition we choose here easily implies the condition of the alternative definition (with $\alpha=\alpha'$). The reverse implication is much less obvious. It amounts to showing that it is possible to choose the path $\gamma_{xy}$  so that any of its segments $(x_i,x_{i+1},\dots,x_j)$ provide approximate geodesics between its end-points. This requires a modification (i.e., straightening) of the path $\gamma_{xy}$ provided by the definition because there is no reasons these paths have this property. See \cite{MS}.

The following lemma shows that all inner-uniform domains are John domains. However, the converse is not true. See Figure~\ref{IU-J}.

\begin{figure}[h] 
\begin{center}
\begin{tikzpicture}[scale=.17]
\draw[thick]  (6,10) -- (1,11) --(3,19) --(29,19)--( 29,2) -- (3,2) -- (1,9) -- (6,10);

\draw[fill] (2,8) circle [radius=.1];
\draw[fill] (3,5) circle [radius=.1];
\path[fill=yellow=20!, opacity=.8]  (2,8) to [out=-10, in= 90]  (5.5,6)  to [out=-90, in=0]  (3,5)  to [out =10, in=-90] (4.5,6)  to [out=90, in=-30] (2,8) ;

\draw[fill] (4,9) circle [radius=.1];
\draw[fill] (3,11) circle [radius=.1];
\path[fill=yellow=20!, opacity=.8]  (4,9) to [out=-40, in= -90]  (10.5,10)  to [out=90, in=40]  (3,11)  to [out =20, in=90] (8.5,10)  to [out=-90, in=-20] (4,9) ;

\draw[fill] (28,15) circle [radius=.1];
\draw[fill] (27,8) circle [radius=.1];
\path[fill=yellow=20!, opacity=.8]  (28,15) to [out=180, in= 90]  (22,11)  to [out=-90, in=-180]  (27,8)  to [out =160, in=-90] (24,11)  to [out=90, in=-160] (28,15) ;

\draw[thick]  (46,10) -- (41,11) --(43,19) --(69,19)--( 69,2) -- (43,2) -- (41,9) -- (46,10);

\draw[fill] (42,8) circle [radius=.1];
\draw[fill] (43,5) circle [radius=.1];
\path[fill=orange=10!, opacity=.2]  (42,8) to [out=-40, in= 180]  (56,9)  to [out=0, in =0] (56,13)  to [out =180, in=-20] (42,8) ;
\path[fill=orange=10!, opacity=.2]  (43,5) to [out=-10, in= 180]  (56,9)  to [out=0, in =0] (56,13)  to [out =180, in=10] (43,5) ;

\draw[fill] (44,9) circle [radius=.1];
\draw[fill] (43,11) circle [radius=.1];
\path[fill=orange=10!, opacity=.2]  (44,9) to [out=-55, in= 180]  (56,9)  to [out=0, in =0] (56,13)  to [out =180, in=-40] (44,9) ;
\path[fill=orange=10!, opacity=.2]  (43,11) to [out=30, in= 180]  (56,9)  to [out=0, in =0] (56,13)  to [out =180, in=45] (43,11) ;

\draw[fill] (68,15) circle [radius=.1];
\draw[fill] (67,8) circle [radius=.1];
\path[fill=orange=10!, opacity=.2]  (68,15) to [out=170, in= 0]  (56,13)  to [out=180, in =180] (56,9)  to [out =0, in=-160] (68,15) ;
\path[fill=orange=10!, opacity=.2]  (67,8) to [out=-170, in= -10]  (56,9)  to [out=180, in =1800] (56,13)  to [out =0, in=160] (67,8) ;

\draw[fill=red] (56,11) circle [radius=.2];

\end{tikzpicture}
\caption{On the left: The banana regions for arbitrary pairs of points which are the witnesses for the inner-uniform property. On the right: The carrot regions joining arbitrary vertices to the central point $o$ marked in red. They are witnesses for the John domain property.} \label{Banana}

\end{center}
\end{figure}
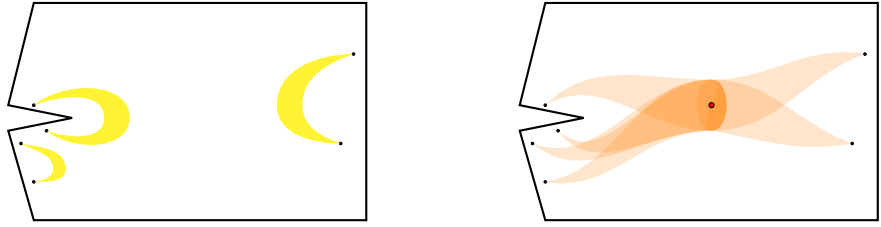

\begin{lem} \label{lem-IUJ}
	Suppose that $U$ is a finite  inner $(\alpha,A)$-uniform domain. Let $o$ be a point such that $d(o,\mathfrak X \setminus U)=\max\{d(x,\mathfrak X \setminus U):x\in U\}$, and let $R = d(o,\mathfrak X \setminus U)$. Then  
	$U\in J(\mathfrak X,\mathfrak E,o, \alpha^2/8,  (2/\alpha)R)$, that is, $U$ is an ($\alpha^2/8$)-John domain.
\end{lem}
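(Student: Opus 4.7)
The plan is to show that the inner-uniform path from $x$ to $o$ directly serves as the required John path, without any modification. Given $x \in U$, apply the inner-uniform property to produce a path $\gamma = (x_0, \ldots, x_k)$ with $x_0 = x$, $x_k = o$, satisfying
$$d(x_j, \mathfrak X \setminus U) \ge \alpha(1+\min\{j,k-j\}), \qquad 0 \le j \le k.$$
I would first bound the length $k$ using the choice of $o$. Specifically, applying the inner-uniform inequality at the midpoint $j = \lfloor k/2 \rfloor$ gives $\alpha(1+\lfloor k/2 \rfloor) \le d(x_{\lfloor k/2\rfloor}, \mathfrak X \setminus U) \le R$, whence $k \le 2R/\alpha - 1 < 2R/\alpha$. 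This matches the required John radius $R' = (2/\alpha)R$.

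The main work is verifying the John condition $d(x_i, \mathfrak X \setminus U) \ge (\alpha^2/8)(1+i)$ for every $i$. For $i \le k/2$, the inner-uniform bound gives $d(x_i, \mathfrak X\setminus U) \ge \alpha(1+i)$, and since $\alpha \le 1$ implies $\alpha \ge \alpha^2/8$, this case is immediate. The subtle case is $i > k/2$, where the inner-uniform bound $\alpha(1+k-i)$ degrades to just $\alpha$ near $i=k$ while the John condition demands a lower bound proportional to $1+i$ which can be as large as $1+k$.

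The key idea, and the only interesting step, is to supplement the inner-uniform bound with the triangle-inequality bound coming from $o$: since $x_i$ lies on a path of length $k-i$ ending at $o$, we have $d(x_i,o) \le k-i$, hence
$$d(x_i, \mathfrak X\setminus U) \ge d(o,\mathfrak X\setminus U) - d(x_i,o) \ge R - (k-i).$$
Setting $s = k-i \in [0,k/2)$ and splitting into two subcases based on the size of $s$ handles everything cleanly. If $s \le R/2$, then $d(x_i, \mathfrak X\setminus U) \ge R - s \ge R/2$, while $(\alpha^2/8)(1+i) \le (\alpha^2/8)(1+k) < \alpha R/4 \le R/2$, so the John condition holds. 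If $s > R/2$, then the inner-uniform bound gives $d(x_i, \mathfrak X\setminus U) \ge \alpha(1+s) > \alpha R/2$, while $(\alpha^2/8)(1+i) \le \alpha R/4 < \alpha R/2$, again settling the condition.

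Combining these pieces, the inner-uniform path itself qualifies as an $(\alpha^2/8)$-John path of length at most $2R/\alpha$, proving $U \in J(\mathfrak X, \mathfrak E, o, \alpha^2/8, (2/\alpha)R)$. The whole argument is essentially a calibration exercise; the only genuine obstacle is recognizing that the apparent mismatch between the inner-uniform profile $\alpha(1+\min\{j,k-j\})$ and the monotone John profile $(\alpha^2/8)(1+i)$ is resolved by exploiting that $o$ itself sits at distance $R$ from the boundary, and the factor $\alpha^2/8$ is precisely what allows both estimates to cover the full range of $i$.
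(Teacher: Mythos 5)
Your proof is correct and follows essentially the same route as the paper: bound the path length $k\le 2R/\alpha$ via the inner-uniform estimate at the midpoint, use the inner-uniform bound directly for $i\le k/2$, and for the second half split according to whether $x_i$ is close to $o$ (triangle inequality from $d(o,\mathfrak X\setminus U)=R$) or far from $o$ (inner-uniform bound with $k-i>R/2$ combined with $k\le 2R/\alpha$). The only difference is cosmetic bookkeeping of the cases and constants, which lands on the same profile $(\alpha^2/8)(1+i)$.
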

\begin{proof}  Look at the mid-point  $z=x_{\lfloor k/2\rfloor}$ along $\gamma_{xo}$. We have ${R\ge d(z,\mathfrak X \setminus U)\ge \alpha k/2}$ so the   $k\le (2/\alpha)R$. 
	We consider three cases to find a lower bound on $d(x_j,\mathfrak X \setminus U)$ along $\gamma_{xo}$.
	\begin{enumerate}
		\item  When $j\le k/2$, then we have $d(x_j,\mathfrak X \setminus U)\ge \alpha (1+j)$.
		\item When $x_j\in B(o,R/2)$, then we have that 
		$$d(x_j,\mathfrak X \setminus U)\ge R/2 \ge (\alpha/4) k\ge  (\alpha/8)(1+j).$$
		\item When  $x_j\not \in B(o,R/2)$ and $j\ge k/2$, then $k-j\ge R/2$ and $$d(x_j,\mathfrak X \setminus U)\ge \alpha(1+ k-j)\ge \alpha (1+R/2)\ge  \alpha(1+ \alpha k/4)\ge (\alpha^2/4)(1+j).$$
	\end{enumerate}
	
\end{proof}

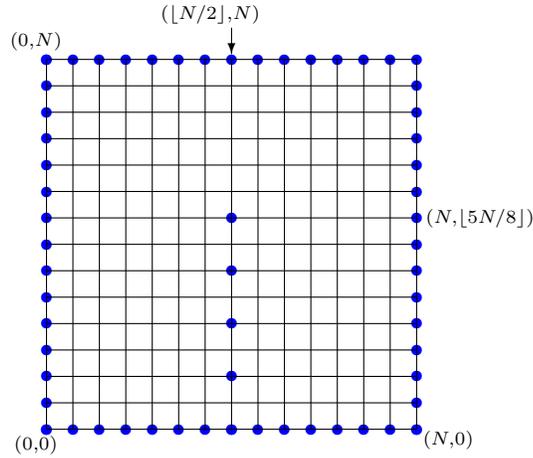
\begin{figure}[h]
	\begin{center}
		\begin{picture}(200,200)(-20,-20)

		{\color{blue}   \multiput(0,0)(10,0){15}{\circle*{4}}
			\multiput(0,0)(0,10){15}{\circle*{4}}
			\multiput(0,140)(10,0){15}{\circle*{4}}
			\multiput(140,0)(0,10){14}{\circle*{4}}
			
			\multiput (70,80)(0,-20){5}{\circle*{4}}  
			
		}

		\multiput(0,0)(10,0){15}{\line(0,1){140}}
		\multiput(0,0)(0,10){15}{\line(1,0){140}}

		\put(-6,-8){\makebox(4,4){$\scriptstyle (0,0)$}}
		\put(-6,145){\makebox(4,4){$ {\scriptstyle (0,N)}$}}
		\put(150,-6){\makebox(4,4){$\scriptstyle (N,0)$}}
		\put(162,78){\makebox(4,4){$\scriptstyle (N,\lfloor 5N/8\rfloor)$}}\put(60,155){\makebox(4,4){$\scriptstyle (\lfloor N/2\rfloor,N)$}}
		\put(70,152){\vector(0,-1){10}}
		
		\end{picture}
		\caption{A domain that is John but not inner-uniform. The blue dots are the boundary. Note that, on the middle vertical line, the blue dots are placed on every other vertex, up to the indicated height.}\label{IU-J}
		\end{center}\end{figure}

\begin{rem} The word ``inner'' in inner-uniform refers to the fact that the first condition 
	compares the length $k$ of the curve $\gamma_{xy}$ to the inner-distance $d_U(x,y)$ between $x$ and $y$. If, instead, the original distance $d(x,y)$ is used (i.e., the first condition in the definition becomes $k \leq Ad(x,y)$), then we obtain a much more restrictive class of domains called ``uniform domains.'' See Figure~\ref{IU-U}.
\end{rem}

\begin{figure}[h] \label{fig-ConvZ2IU}
\begin{center}
\begin{tikzpicture}[scale=.1]

\path [fill=lightgray]  (8,30) arc [radius=22, start angle=180, end angle=90] (30,52)  --
 (38,49) -- (53,33) -- (50,22) --   (38,10) -- (18,10) -- (8,30);
 \draw [fill] (30,30)  circle  [radius=.2];
 \draw (30,30)  circle  [radius=24];      \draw [help lines] (0,0) grid (60,60);   
  \draw (30,30)  circle  [radius=18];   
 \draw [fill] (48,37) circle [radius=.2];
 \draw [fill] (51, 30) circle [radius=.2];
 \foreach \x in {1,2,3,4,5,6,7,8,9}
 \draw [fill] (51-\x,30) circle [radius=.1];
 \draw [fill] (47,37) circle [radius=.1];
 \draw [fill] (46,37) circle [radius=.1];
 \draw [fill] (46,36) circle [radius=.1];
 \draw [fill] (45,36) circle [radius=.1];
 \draw [fill] (44,36) circle [radius=.1];    
 \draw [fill] (44,35) circle [radius=.1];  
 \draw [fill] (43,35) circle [radius=.1];  
 \draw [fill] (42,35) circle [radius=.1];     
 \draw [fill] (41,35) circle [radius=.1];    
 
   \foreach \x in {1,2,3,4,5}
 \draw [fill] (41,35-\x) circle [radius=.1]; 
 \draw ((30,30) -- (48,37);
  
   \end{tikzpicture}

\caption{Finite discrete ``convex subsets'' of $\mathbb Z^2$ are (inner-)uniform}
\end{center}
\end{figure}
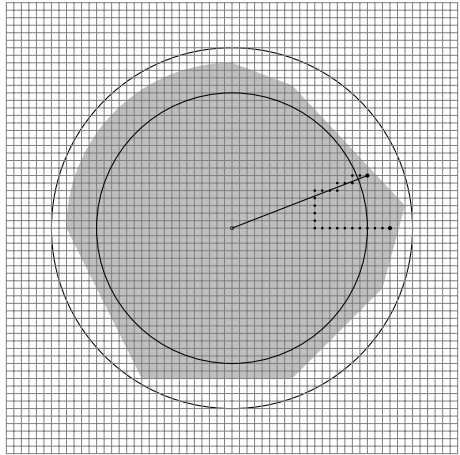

\begin{figure}[t]
	\begin{center} 
		\begin{picture}(200,200)(-20,-20)

		{\color{blue}   \multiput(0,0)(10,0){15}{\circle*{4}}
			\multiput(0,0)(0,10){15}{\circle*{4}}
			\multiput(0,140)(10,0){15}{\circle*{4}}
			\multiput(140,0)(0,10){14}{\circle*{4}}
			
			\multiput (40,130)(0,-10){4}{\circle*{4}}  
			\multiput (90,100)(0,-10){11}{\circle*{4}}  
			
		}

		\multiput(0,0)(10,0){15}{\line(0,1){140}}
		\multiput(0,0)(0,10){15}{\line(1,0){140}}

		\put(-6,-8){\makebox(4,4){$\scriptstyle (0,0)$}}
		\put(-6,145){\makebox(4,4){$ {\scriptstyle (0,N)}$}}
		\put(150,-6){\makebox(4,4){$\scriptstyle (N,0)$}}
		\put(33,155){\makebox(4,4){$\scriptstyle (\lfloor N/3\rfloor,N)$}}
		\put(40,152){\vector(0,-1){10}}
		\put(162,98){\makebox(4,4){$\scriptstyle (N,\lfloor 2N/3\rfloor)$}}
		
		\put(80,155){\makebox(4,4){$\scriptstyle (\lfloor 2N/3\rfloor,N)$}}
		\put(90,152){\vector(0,-1){10}}
		
		\end{picture}
		\caption{A domain that is inner-uniform but not uniform.}\label{IU-U}
		\end{center}\end{figure}
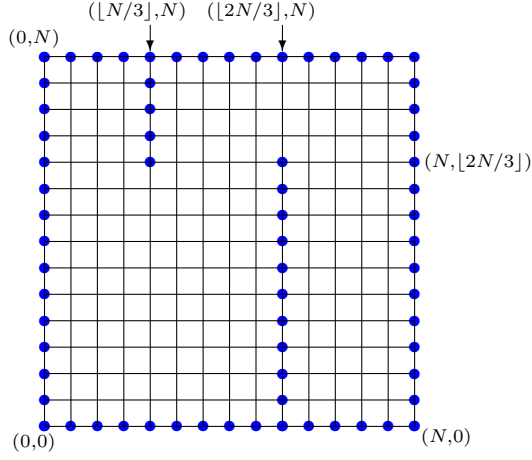

\begin{exa} The set $\mathfrak X_N$ in Figure~\ref{D0} (a forty-five degree finite cone in $\mathbb{Z}^2$)
	is a uniform domain, and hence, also an inner-uniform domain.  Finite convex sets in $\mathbb Z^d$ in the sense of Example~\ref{ex:convex} are uniform domains, all with the same fixed $(\alpha,A)$ depending only on the dimension $d$. The domain pictured in Figure \ref{D3} is a uniform domain, with the same fixed $(\alpha,A)$ for all $N$. Note that in this example, viewed as a subset of $\mathbb Z^2$, some of the boundary points are not killing points, but points where the process is reflected. This illustrates how variations of this type (i.e., with reflecting points) can be treated with our methods.
\end{exa}

\begin{exa} In Example \ref{exa-mb}, we observed that metric balls are always $1$-John domains. They are not always inner-uniform domains. See Figures~\ref{fig-ballnotIU1} and~\ref{fig-ballNotIU2}.
\end{exa}

\begin{exa} The discrete ``finite convex subsets" $U$ of $\mathbb Z^d$
satisfying \eqref{eq-alphaConv} and considered in Proposition \ref{pro-Conv} are inner-uniform with parameter $\bar{\alpha}>0$ 
depending only on the dimension $d$ and the parameter $\alpha$ in \eqref{eq-alphaConv}.  Note that the inner distance in such a finite connected set is comparable to the graph distance of $\mathbb Z^d$ with comparison constant depending only on the dimension $d$ and the parameter $\alpha$ in \eqref{eq-alphaConv} (i.e., these finite domains are {\em uniform}). 

Here is a rough description of the paths $\gamma_{xy}$ that demonstrate that such domains $U$ are inner-uniform. (See Figure \ref{fig-ConvZ2IU}). Let $r$
be the distance between $x$ and $y$ in $\mathbb Z^d$. Recall that $U$ has ``center'' $o$ and that we can go from $x$ (and $y$) to $o$ while getting away linearly from the boundary, roughly along a straight-line (see Proposition \ref{pro-Conv}). Let $\tilde{x}$ and $\tilde{y}$ be respective points along the paths joining $x$ and $y$ to $o$, respectively at distance $r$ from $x$ and from $y$. Convexity insures that there is a discrete path in $U$ joining $\tilde{x}$ to $\tilde{y}$ while staying close to the straight-line segment between these two points. This discrete path from $\tilde{x}$ to $\tilde{y}$ has length at most $Ar$ and stays at distance at least $ar$ from the boundary. This completely the discussion of the example.
\end{exa}

Now we return to the general setting. We define a special point $x_r$ for each point $x \in U$ and radius $r >0$. The meaning of this definition and the key geometric property of  $x_r$ is that
$x_r$ is a point which is essentially as far away from the boundary as possible while still being within a ball of radius $r$ of $x$, i.e., $d(x,x_r) \leq r$. Namely,  ${d(x_r,\mathfrak X \setminus U)\ge \alpha( 1+r)}$  
if $r\le R$ and $x_r=o$ otherwise.
\begin{defin} \label{def-x(r)}
	Let $U$ be a finite inner $(\alpha,A)$-uniform domain. Let $o$ be a point such that $d(o,\mathfrak X \setminus U)=\max\{d(x,\mathfrak X \setminus U):x\in U\}=R$.  Let $\gamma_{xy}$ be a collection of inner $(\alpha,A)$-uniform paths indexed by $x,y\in U$. For any $x\in U$ and $r>0$,
	let $x_r$ be defined by
	$$x_r= \left\{\begin{array}{cl}x_{\lfloor r\rfloor}  & \mbox{ if } \gamma_{xo}=(x=x_0,x_1,\dots, x_k=o) \mbox{ with } k\ge r ,\\
	o& \mbox{ if } \gamma_{xo}=(x=x_0,x_1,\dots, x_k=o) \mbox{ with } k< r.\end{array}\right.
	$$
\end{defin}

The following Carleson-type theorem, regarding the eigenfunction $\phi_0$, is the key to obtaining refined results for the convergence of the intrinsic Doob-transform chain on a finite inner-uniform domain. The context is as follows.
In addition to the geometric structure $(\mathfrak X,\mathfrak E)$, we assume we are given a measure $\pi$ and an edge weight $\mu$ such  that  
$(\mathfrak X,\mathfrak E, \pi,\mu)$ satisfies Assumption A1 with $\theta=2$, i.e., we assume that the measure $\pi$ is $D$-doubling, $\mu$ is adapted, $\pi$ domaintes $\mu$, and the pair  $(\pi,\mu)$ is elliptic and satisfies the $2$-Poincar\'e inequality on balls with constant $P$. 
\begin{theo}\label{theo-Carleson}
	Assume $(\mathfrak X, \mathfrak E, \pi, \mu)$ satisfies Assumption {\em A1} with $\theta=2$ and fix $\alpha,A$. There exists a constant $C_0$ depending only on $\alpha,A,D,P_e,P$ such that, for any finite inner $(\alpha,A)$-uniform domain $U$, the positive eigenfunction $\phi_0$ for the kernel $K_U$ in $U$  is $(1/8,C_0)$-regular and satisfies
	$$ \forall\,r>0,\;x\in U,\; z\in B_U(x, r/2),\;\;\phi_0(z)\le C_0\phi_0(x_r).$$
\end{theo}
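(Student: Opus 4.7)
The plan is to handle the two conclusions separately. The $(1/8, C_0)$-regularity is an immediate consequence of Lemma \ref{lem-H}: an inner $(\alpha,A)$-uniform domain is an $(\alpha^2/8)$-John domain by Lemma \ref{lem-IUJ}, so hypotheses A1 and A2 of the previous section hold with $\theta=2$, and Lemma \ref{lem-H} gives exactly $(1/8, A)$-regularity with $A$ depending only on $D, P_e, P$. We set $C_0$ to absorb this constant. The substantive content is therefore the Carleson-type inequality $\phi_0(z) \le C_0 \phi_0(x_r)$ for $z\in B_U(x,r/2)$.

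The strategy I would follow is to transfer the problem to the continuous cable space $\widetilde{\mathfrak{X}}$ obtained by replacing each edge of $\mathfrak{X}$ by a unit interval, and then invoke a Carleson estimate already known in the continuous Dirichlet-space setting. On $\widetilde{\mathfrak{X}}$ one builds a strictly local Dirichlet form whose restriction to vertex-functions is comparable to $\mathcal{E}_\mu$; the associated Laplacian $\widetilde{\Delta}$ is the usual one-dimensional Laplacian along each cable with Kirchhoff-type matching at vertices. The key technical step---the interesting argument alluded to in Proposition \ref{pro-*}---is to produce a continuous extension $\widetilde{\phi}_0$ of $\phi_0$ on the cable domain $\widetilde{U}$ which is positive, vanishes continuously on $\partial \widetilde{U}$, is pointwise comparable to $\phi_0$ at every vertex, and solves in the weak sense a cable eigenvalue equation $-\widetilde{\Delta}\widetilde{\phi}_0 = \widetilde{\lambda}_0 \widetilde{\phi}_0$ with $\widetilde{\lambda}_0 \asymp 1-\beta_0$. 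Naive piecewise-linear interpolation fails to be a true eigenfunction, so the construction should solve a one-dimensional eigenvalue problem on each cable with Dirichlet boundary data provided by the discrete values of $\phi_0$, and then verify that the Kirchhoff condition at each vertex reproduces the discrete eigenvalue equation $K_U\phi_0=\beta_0\phi_0$.

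Once $\widetilde{\phi}_0$ is in hand, the rest is geometric transfer. The doubling and ball Poincar\'e hypotheses on $(\mathfrak{X},\mathfrak{E},\pi,\mu)$ pass to analogous doubling and Poincar\'e properties on $\widetilde{\mathfrak{X}}$ in a standard way. The inner $(\alpha, A)$-uniformity of $U$ in $\mathfrak{X}$ implies an inner $(\alpha', A')$-uniformity of $\widetilde{U}$ in $\widetilde{\mathfrak{X}}$ with $\alpha', A'$ depending only on $\alpha, A$, since the witnessing discrete paths lift to cable-space rectifiable paths of comparable length and boundary distance. The Carleson estimate for non-negative local eigenfunctions in inner-uniform subsets of strictly local Dirichlet spaces from \cite{LierlLSCOsaka, LierlLSCJFA} (following \cite{Aik1, Aik4, Gyrya}) then applies to $\widetilde{\phi}_0$ on $\widetilde{U}$ and yields $\widetilde{\phi}_0(\widetilde{z}) \le C \widetilde{\phi}_0(\widetilde{x}_r)$ for every $\widetilde{z}$ in the cable ball of radius $r/2$ about $x$, where $\widetilde{x}_r$ is a corkscrew point at depth of order $r$. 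Restricting to vertices and observing that $\widetilde{x}_r$ and the discrete point $x_r$ of Definition \ref{def-x(r)} both lie at scale-$r$ depth on an inner-uniform path starting at $x$, a short chain of Harnack inequalities at the interior scale $r$ identifies $\widetilde{\phi}_0(\widetilde{x}_r)$ with $\phi_0(x_r)$ up to a constant, and the vertex-wise comparability of $\widetilde{\phi}_0$ and $\phi_0$ finishes the argument.

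The main obstacle is the construction of $\widetilde{\phi}_0$: finding a cable-space function that is genuinely an eigenfunction of $\widetilde{\Delta}$, vanishes on $\partial \widetilde{U}$, and has the correct vertex values. Everything else---the transfer of doubling, Poincar\'e, and inner-uniformity to the cable space, and the final Harnack-chain comparison between $\widetilde{x}_r$ and $x_r$---is conceptually routine once this discrete-to-continuous dictionary is correctly set up.
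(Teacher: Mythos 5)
Your overall route---pass to the cable space, apply the continuous Carleson estimate of \cite{LierlLSCJFA,LierlLSCOsaka} for inner-uniform domains, and transfer back through a vertex-wise identification of eigenfunctions---is exactly the paper's strategy (Section \ref{sec-cable}), and your handling of the regularity claim via Lemmas \ref{lem-IUJ} and \ref{lem-H} and of the $\widetilde{x}_r$ versus $x_r$ discrepancy is fine. The gap is in the step you yourself single out as the crux. If you build the cable space with unit edges (no loops, or loops of a fixed length), prescribe the discrete values $\phi_0(x)$ at the vertices, and solve $u''=-\lambda u$ on each edge with those boundary data, then Kirchhoff's law at a vertex $x\in U$ forces
$$K_U\phi_0(x)-K_U(x,x)\phi_0(x)=\cos\bigl(\sqrt{\lambda}\bigr)\bigl(1-K_U(x,x)\bigr)\phi_0(x),$$
so, using $K_U\phi_0=\beta_0\phi_0$, you need $\cos(\sqrt{\lambda})=(\beta_0-K_U(x,x))/(1-K_U(x,x))$ at \emph{every} $x\in U$. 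Unless the holding probability $K_U(x,x)$ is constant on $U$---and in the setting of the theorem it generally is not, since $\mu$ is only subordinated to $\pi$ and the deficiency $\pi(x)-\sum_y\mu_{xy}$ varies from vertex to vertex (e.g.\ the cone of Figure \ref{D0}, where only the right-side vertices carry holding)---no single eigenvalue $\lambda$ satisfies all these conditions, and your candidate $\widetilde{\phi}_0$ is not an eigenfunction of the cable Laplacian. So ``verify that the Kirchhoff condition reproduces the discrete eigenvalue equation'' is precisely the step that fails.

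The paper's resolution (Proposition \ref{pro-*}) is to modify the cable space itself: attach a self-loop of length $\ell\in[0,1]$, carrying the deficiency weight $\mu_{xx}=\pi(x)-\sum_y\mu_{xy}$, at each deficient vertex, and treat $\ell$ as a free parameter. Computing Kirchhoff's law for the genuine Dirichlet eigenfunction $\boldsymbol{\phi}_{\ell,0}$ of the cable domain, the loop contributes through a factor $F(\ell)$ multiplying $K_U(x,x)$, with $F(0)=1$ and $F(1)=-1$; continuity of $\ell\mapsto\boldsymbol{\lambda}_{\ell,0}$ (proved by a quadratic-form comparison) yields an $\ell_0$ with $F(\ell_0)=0$, at which the vertex restriction of $\boldsymbol{\phi}_{\ell_0,0}$ satisfies $K_Uv=\cos(\sqrt{\boldsymbol{\lambda}_{\ell_0,0}})\,v$, and Perron--Frobenius uniqueness identifies it with $\phi_0$ (with $\beta_0=\cos(\sqrt{\boldsymbol{\lambda}_{\ell_0,0}})$). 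Note also that the direction is reversed relative to your plan: one does not extend $\phi_0$ to the cables, one restricts the cable eigenfunction to the vertices, which sidesteps the construction problem entirely. Your argument would be complete if you added this loop-parameter device (or an equivalent mechanism for non-constant holding), or restricted to the special case where $K_U(x,x)$ is constant.
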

\begin{cor} \label{cor-Carleson}
	Under the assumptions of {\em Theorem \ref{theo-Carleson}}, there are constants $D_0,D_1$ depending only on $\alpha,A,D,P_e,P$ such that 
	$$\forall\,x\in U,\;r>0,\;\;\pi_{\phi_0}(B_U(x,2r))\le D_0 \pi_{\phi_0}(B_U(x,r)).$$
	Moreover, for all $r\in [0,R]$,
	$$D_1^{-1}\phi_0(x_r)^2\pi_U(B(x_r,\alpha r) )\le \pi_{\phi_0}(B_U(x,r)) \le
	D_1\phi_0(x_r)^2\pi_U(B(x_r,\alpha r)).$$ \end{cor}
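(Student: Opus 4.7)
The plan is to establish the two-sided comparison in the second displayed inequality first, and then deduce the doubling property for $\pi_{\phi_0}$ on inner balls as an easy consequence. Recall that $\pi_{\phi_0}$ is proportional to $\phi_0^2 \pi|_U$, so the comparison amounts to sandwiching $\sum_{B_U(x,r)} \phi_0^2 \pi$ between multiples of $\phi_0(x_r)^2 \pi(B(x_r,\alpha r))$. The three tools available are the Carleson estimate and the $(1/8,C_0)$-regularity of $\phi_0$ supplied by Theorem \ref{theo-Carleson}, and the global doubling of $\pi$ on $(\mathfrak X,\mathfrak E)$.

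A preliminary step is to compare the values $\phi_0(x_r),\phi_0(x_{r/2}),\phi_0(x_{2r})$. One direction is immediate from Carleson: since $x_{r/2}\in B_U(x,r/2)$, we have $\phi_0(x_{r/2})\le C_0\phi_0(x_r)$, and similarly $\phi_0(x_r)\le C_0\phi_0(x_{2r})$. For the reverse direction I would chain $(1/8,C_0)$-regularity along the segment of $\gamma_{xo}$ joining $x_{r/2}$ to $x_r$ (or $x_r$ to $x_{2r}$): this segment has length $r/2$ (resp.\ $r$), all its vertices satisfy $\delta\ge \alpha r/2$, and it can therefore be covered by $O(1/\alpha)$ balls of radius $\asymp \alpha r$ on each of which regularity gives constant-factor pointwise control of $\phi_0$. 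This yields $\phi_0(x_r)\asymp \phi_0(x_{r/2})\asymp \phi_0(x_{2r})$ with constants depending only on $\alpha,A,D,P_e,P$.

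The upper bound then proceeds as follows: for any $z\in B_U(x,r)$, Carleson applied with radius $2r$ gives $\phi_0(z)\le C_0\phi_0(x_{2r})\asymp \phi_0(x_r)$, so
$\pi_{\phi_0}(B_U(x,r))\le C\phi_0(x_r)^2\pi_U(B_U(x,r))$; since $B_U(x,r)\subset B(x,r)\subset B(x_r,2r)$ (because $d(x,x_r)\le r$) and $\pi$ is doubling, $\pi(B(x_r,2r))\lesssim \pi(B(x_r,\alpha r))$, giving the required upper bound. For the lower bound, I use that $B(x_{r/2},\alpha r/8)$ sits inside both $U$ (because $\delta(x_{r/2})\ge \alpha(1+r/2)$) and $B_U(x,r)$ (because the triangle inequality in $d_U$ yields $d_U(z,x)\le \alpha r/8+r/2\le r$); on this ball $\phi_0\ge c\phi_0(x_{r/2})\asymp \phi_0(x_r)$ by regularity, and doubling of $\pi$ produces $\pi(B(x_{r/2},\alpha r/8))\asymp \pi(B(x_r,\alpha r))$ using $d(x_{r/2},x_r)\le r/2$.

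With the two-sided comparison in hand, the doubling of $\pi_{\phi_0}$ on inner balls follows by writing
$$\frac{\pi_{\phi_0}(B_U(x,2r))}{\pi_{\phi_0}(B_U(x,r))}\lesssim \frac{\phi_0(x_{2r})^2}{\phi_0(x_r)^2}\cdot \frac{\pi(B(x_{2r},2\alpha r))}{\pi(B(x_r,\alpha r))},$$
where the first ratio is controlled by the chaining argument above and the second by standard doubling of $\pi$ applied in the ball $B(x_r,3r)\supset B(x_{2r},2\alpha r)$. I expect the main technical obstacle to be the Harnack-chain step comparing $\phi_0(x_r)$ with $\phi_0(x_{r/2})$: one must verify that the relevant balls along $\gamma_{xo}$ are simultaneously inside $U$, have radii within the range where $(1/8,C_0)$-regularity applies, and that the number of balls in the chain is bounded purely in terms of $\alpha$ and $A$. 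Once that combinatorial/geometric bookkeeping is done, the rest is a straightforward application of doubling and the regularity/Carleson bounds on $\phi_0$.
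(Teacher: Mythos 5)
Your argument is correct, and it is essentially the argument the paper relies on implicitly: Corollary \ref{cor-Carleson} is stated without proof as a direct consequence of Theorem \ref{theo-Carleson}, and your route (Carleson applied at radius $2r$ for the upper bound, a regularity ball $B(x_{r/2},\alpha r/8)\subset B_U(x,r)$ for the lower bound, a Harnack-type chain of $O(1/\alpha)$ regularity balls of radius $\asymp \alpha r$ along $\gamma_{xo}$ to get $\phi_0(x_{r/2})\asymp\phi_0(x_r)\asymp\phi_0(x_{2r})$, and ordinary doubling of $\pi$ to exchange the various ambient balls) is exactly the intended one, with all constants depending only on $\alpha,A,D,P_e,P$ through $C_0$ and $D$.

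Two bookkeeping items should be added when you write this up. First, in the chaining step the needed bound $\delta(x_j)\gtrsim\alpha r$ for all vertices of the relevant segment of $\gamma_{xo}$ follows from $\delta(x_j)\ge\alpha(1+j)$ on the first half of the path and $\delta(x_j)\ge\max\{\alpha(1+k-j),\,R-(k-j)\}$ on the second half (using $r\le R$, resp.\ $2r\le R$); for very small $r$, where balls of radius $\asymp\alpha r$ degenerate to singletons, chain instead along edges of the segment, which has length $O(1/\alpha)$ in that regime, using the edge part of $(1/8,C_0)$-regularity. Second, the doubling statement is claimed for all $r>0$ while your comparison (and the statement of the corollary) only covers $r\le R$, so you need the one-line remark that for $r\ge R$ every inner ball $B_U(x,r)$ has $\pi_{\phi_0}$-measure comparable to $\pi_{\phi_0}(U)$: indeed $\mathrm{diam}(U,d_U)\le 4R/\alpha$, $\phi_0\asymp\phi_0(o)\asymp 1$ on $B(o,R/2)$, and a chain of the same type handles the intermediate range $r\le R<2r$. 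These are routine and do not affect the substance of your proof.
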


The following corollary gives a rate of convergence of the Doob transform chain to its stationary distribution in $L^{\infty}$.
\begin{cor}
\label{cor:KU-rate-of-conv}
Under the assumptions of {\em Theorem \ref{theo-Carleson}}, there are constants $C,c$ depending only on $\alpha,A,D,P_e,P$ such that, assuming that the lowest eigenvalue $\beta_-$ of the reversible Markov chain $(K_{\phi_0},\pi_{\phi_0}) $
	satisfies $1+\beta_-\ge cR^{-2}$, we have
	$$\max_{x,y\in u}\left|\frac{K^t_{\phi_0}(x,y)}{\pi_{\phi_0}(y)} -1\right| \le C \exp\left(-c\frac{t}{R^{2}}\right),$$
	for all $t\ge R^{2}$. In terms of the kernel $K_U$, this reads 
	$$\left|K^t_U(x,y) - \beta_0^t\phi_0(x)\phi_0(y)\pi_U(y)\right|\le C\beta_0^t\phi_0(x)\phi_0(y)\pi_U(y)e^{-ct/R^{2}},$$
	for all $x,y\in U$ and $t\ge R^{2}$. 
\end{cor}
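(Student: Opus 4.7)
The plan is to verify the hypotheses of Theorem \ref{th-Doob2} and then invoke it directly. The corollary is then a packaging of the heavier results (the Carleson-type estimate of Theorem \ref{theo-Carleson} and its doubling consequence in Corollary \ref{cor-Carleson}) into a ready-to-use convergence statement. First I will note that, by Lemma \ref{lem-IUJ}, any finite inner $(\alpha,A)$-uniform domain $U$ is an $(\alpha^2/8)$-John domain with respect to the point $o$ at maximal distance from the boundary, and with John radius comparable to $R = d(o,\mathfrak X\setminus U)$. Thus Assumption A2 is in force, with John parameters depending only on $\alpha$, $A$, and with John radius $\asymp R$. Assumption A1 holds by hypothesis, with parameter $\theta = 2$.

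The substantive step is to verify Assumption A3: $\phi_0$ must be $(\eta,A')$-regular and $A'$-doubling on $U$ for some $\eta \in (0,1/12]$ and $A' \ge 1$ depending only on $\alpha,A,D,P_e,P$. The regularity of $\phi_0$ is precisely the content of Theorem \ref{theo-Carleson}, which gives $(1/8, C_0)$-regularity, and this immediately implies $(\eta, C_0)$-regularity for any smaller $\eta$, in particular $\eta = 1/12$. The doubling of $\phi_0$ on $(U,\mathfrak E_U)$ is exactly what Corollary \ref{cor-Carleson} delivers: the measure $\pi_{\phi_0} = \phi_0^2 \pi|_U$, which is what controls the arguments in Section \ref{sec-AW}, satisfies $\pi_{\phi_0}(B_U(x,2r)) \le D_0 \pi_{\phi_0}(B_U(x,r))$ for all $x \in U$ and $r>0$.

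Once Assumptions A1, A2, and A3 are verified, and given the additional spectral hypothesis $1 + \beta_- \ge cR^{-2}$, Theorem \ref{th-Doob2} (applied with $\theta = 2$) yields
\[
\max_{x,y \in U}\left|\frac{K_{\phi_0}^t(x,y)}{\pi_{\phi_0}(y)} - 1\right| \le C\exp\!\left(-\frac{ct}{R^{2}}\right)
\quad\text{for } t \ge R^{2}.
\]
To obtain the stated bound on $K_U$, I will use the defining identity $K_U^t(x,y) = \beta_0^t \phi_0(x) K_{\phi_0}^t(x,y) \phi_0(y)^{-1}$ together with $\pi_{\phi_0}(y) = \phi_0(y)^2 \pi_U(y)$. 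These combine to give
\[
\frac{K_{\phi_0}^t(x,y)}{\pi_{\phi_0}(y)} = \frac{K_U^t(x,y)}{\beta_0^t \phi_0(x) \phi_0(y) \pi_U(y)},
\]
so the previous display is exactly equivalent to the stated estimate on $K_U^t$.

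The ``hard part'' is not really in this proof itself but is absorbed into the two preceding results it cites. The key conceptual point is that inner-uniformity, unlike the bare John condition, is strong enough to promote the ``controlled weight'' Assumption A4 (available for general John domains via Lemma \ref{lem-H} and Lemma \ref{lem-RC}, at the cost of an extra parameter $\omega$) to the cleaner doubling Assumption A3. This is precisely what the Carleson estimate provides, and it is this upgrade that removes the $\omega$ from the exponent $t/R^{2+\omega}$ in Theorem \ref{th-Doob1} and yields the sharp rate $t/R^2$.
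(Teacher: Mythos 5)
Your proposal is correct and follows essentially the same route as the paper: verify A1--A2--A3 (John via Lemma \ref{lem-IUJ}, regularity and doubling of $\phi_0^2\pi_U$ via Theorem \ref{theo-Carleson} and Corollary \ref{cor-Carleson}) and then apply Theorem \ref{th-Doob2} with $\theta=2$, the two displayed estimates being equivalent through the Doob-transform identity. The paper's own proof is just a one-line version of this argument.
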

\begin{proof} This follows from Theorem \ref{th-Doob2} because  the measure $\phi_0^2\pi_U$ is doubling by Theorem \ref{theo-Carleson} (and $U$ is a John domain by Lemma~\ref{lem-IUJ}).
\end{proof}

\begin{cor} \label{cor-Carleson2}
Under the assumptions of {\em Theorem \ref{theo-Carleson}}, there are constants $c,C$ depending only on $\alpha,A,D,P_e,P$ such that the second largest eigenvalue $\beta $ of the reversible Markov chain $(K_{\phi_0},\phi_{\phi_0})$ satisfies
	$$ c R^{-2}\le 1-\beta \le CR^{-2}.$$
	If $\beta_{U,1}< \beta_{U,0}=\beta_0$ denotes the second largest eigenvalue of 
	the kernel $K_U$ acting on on $L^2(U,\pi_U)$ then $\beta=\beta_{U,1}/\beta_0$
	and 
	$$ cR^{-2}\beta_0\le  \beta_0 -\beta_{U,1}\le C \beta_0  R^{-2}$$
	or equivalently
	$$   \beta_0(1-CR^{-2})\le \beta_{U,1}\le \beta_0(1-cR^{-2}).$$
	In particular, for all $t$,
	$$\max_{x\in U}\sum_{y\in U}| K_{\phi_0}^t(x,y)-\pi_{\pi_0}(y)|\ge  ce^{-Ct/R^2}.$$
\end{cor}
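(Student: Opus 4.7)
The plan is to establish the statement in four pieces: the identity $\beta=\beta_{U,1}/\beta_0$, the lower bound $1-\beta\ge cR^{-2}$, the matching upper bound $1-\beta\le CR^{-2}$, and the $L^1$-distance lower bound. The identity is immediate: if $K_U\psi=\lambda\psi$, then using $K_{\phi_0}(x,y)=\beta_0^{-1}\phi_0(x)^{-1}K_U(x,y)\phi_0(y)$ one checks at once that $K_{\phi_0}(\psi/\phi_0)=(\lambda/\beta_0)(\psi/\phi_0)$, so $\psi\mapsto\psi/\phi_0$ is a bijection between the spectra that divides by $\beta_0$. The top eigenvalue $\beta_0$ of $K_U$ maps to $1$ (constant eigenfunction for $K_{\phi_0}$), and therefore the second eigenvalue $\beta_{U,1}$ maps to $\beta=\beta_{U,1}/\beta_0$. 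Given this, the bound pairs $cR^{-2}\le 1-\beta\le CR^{-2}$ and $\beta_0(1-CR^{-2})\le \beta_{U,1}\le\beta_0(1-cR^{-2})$ are equivalent, so I focus on the first.

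For the lower bound I would simply invoke Theorem~\ref{th-Doob2} with $\theta=2$. Lemma~\ref{lem-IUJ} shows that the finite inner $(\alpha,A)$-uniform domain $U$ is an $(\alpha^2/8)$-John domain with John radius at most $(2/\alpha)R$, which gives Assumption~A2 with constants controlled by $\alpha$. Theorem~\ref{theo-Carleson} provides the $(1/8,C_0)$-regularity of $\phi_0$, and Corollary~\ref{cor-Carleson} provides the doubling of $\pi_{\phi_0}=\phi_0^2\pi_U$, so Assumption~A3 holds with constants depending only on $\alpha,A,D,P_e,P$. Theorem~\ref{th-Doob2} then gives $1-\beta\ge cR^{-2}$.

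For the matching upper bound I plan to exhibit a test function in the variational formula
\[
1-\beta=\inf_{g\not\equiv\text{const}}\frac{\mathcal E_{K_{\phi_0}}(g,g)}{\mathrm{Var}_{\pi_{\phi_0}}(g)}.
\]
Since $R=d(o,\mathfrak X\setminus U)$ and $U$ is connected, pick $z^*\in U$ with $d_U(z^*,o)\ge R-1\ge R/2$ (assuming $R\ge 2$; smaller $R$ is absorbed into constants), and set $g(z)=R^{-1}\min\{d_U(z,o),R\}$. Neighbours in $\mathfrak E_U$ satisfy $|g(x)-g(y)|\le 1/R$, so, using that $K_{\phi_0}$ is Markov and $\pi_{\phi_0}$ is a probability,
\[
\mathcal E_{K_{\phi_0}}(g,g)=\tfrac12\sum_{\{x,y\}\in\mathfrak E_U}(g(x)-g(y))^2\pi_{\phi_0}(x)K_{\phi_0}(x,y)\le\frac{1}{2R^2}.
\]
By the $d_U$-Lipschitz property, $g\le 1/8$ on $B_U(o,R/8)$ and $g\ge 3/8$ on $B_U(z^*,R/8)$. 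Because $U\subset B_U(o,(4/\alpha)R)$ by Remark~\ref{rem:johnradius} and $\pi_{\phi_0}(U)=1$, iterating the doubling estimate of Corollary~\ref{cor-Carleson} centred at $o$ and at $z^*$ yields $\pi_{\phi_0}(B_U(o,R/8)),\pi_{\phi_0}(B_U(z^*,R/8))\ge c_1$ for a constant $c_1=c_1(\alpha,A,D,P_e,P)>0$. A case split on whether $\bar g:=\pi_{\phi_0}(g)$ is $\le 1/4$ or $>1/4$ then forces $\mathrm{Var}_{\pi_{\phi_0}}(g)\ge c_1/64$, so $1-\beta\le C/R^2$. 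I expect this variance estimate, where the Carleson-type doubling of $\pi_{\phi_0}$ is essential, to be the main (if minor) technical obstacle.

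Finally, for the $L^1$-distance bound I would use the second eigenfunction. Let $\psi$ be an $L^2(\pi_{\phi_0})$-eigenfunction of $K_{\phi_0}$ for $\beta$, normalised so that $\|\psi\|_\infty=1$, and choose $x_0\in U$ with $\psi(x_0)=1$ (replacing $\psi$ by $-\psi$ if necessary). Orthogonality to the constant eigenfunction gives $\pi_{\phi_0}(\psi)=0$, so
\[
\beta^t=K_{\phi_0}^t\psi(x_0)=\sum_{y\in U}\bigl(K_{\phi_0}^t(x_0,y)-\pi_{\phi_0}(y)\bigr)\psi(y),
\]
whence $|\beta|^t\le\sum_y|K_{\phi_0}^t(x_0,y)-\pi_{\phi_0}(y)|$. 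From the already-proven $1-\beta\le CR^{-2}$, once $R$ is large enough that $CR^{-2}\le 1/2$ we have $\beta>0$ and $\log\beta\ge-2(1-\beta)\ge-2CR^{-2}$, so $\beta^t\ge e^{-2Ct/R^2}$. Bounded $R$ is absorbed into the constants $c,C$, producing the stated bound.
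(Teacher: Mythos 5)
Your proposal is correct and is essentially the argument the paper intends: the paper leaves this corollary's proof implicit, but it proves the companion Corollary \ref{cor:KU-rate-of-conv} by exactly your route (Lemma \ref{lem-IUJ} for A2, Theorem \ref{theo-Carleson} and Corollary \ref{cor-Carleson} for A3, then Theorem \ref{th-Doob2}), and your conjugation identity $\beta=\beta_{U,1}/\beta_0$, the tent-type test function with the variance bounded below via the doubling of $\pi_{\phi_0}$, and the second-eigenfunction argument for the $L^1$ lower bound are the standard completions the paper has in mind (compare the tent-function remark in the proof of Lemma \ref{lem-H}). The only soft spot is your claim that bounded $R$ is ``absorbed into the constants'' for the final $L^1$ bound, which additionally needs some nontrivial eigenvalue of $K_{\phi_0}$ bounded away from $0$ in modulus; this caveat is inherited from the literal statement itself (it can degenerate for very small domains) rather than being a defect specific to your argument.
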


The following theorem is closely related to~\ref{theo-Carleson} and is to used to obtain explicit control on the function $\phi_0$. In Section~\ref{sec:examples} we demonstrate the power of this theorem in several examples.

\begin{theo}  \label{theo-comph}
	Assume {\em A1} with $\theta=2$ and fix $\alpha,A$. There exists a constant $C_1$ depending only on $\alpha,A,D,P_e,P$ such that, for any finite inner $(\alpha,A)$-uniform domain $U$, any point $x\in U$ and $r>0$ such that $$B_U(x,r)=\{y\in U: d_U(x,y)\le r\} \neq U$$ and any function $h$ defined in $U$
	and satisfying $K_Uh =h$ in $B_U(x,r)$, we have 
	$$\forall\,y,z\in B_U(x,r/2),\;\;\;  \frac{\phi_0(y)}{\phi_0(z)}\le C_1 \frac{h(y)}{h(z)}.$$
\end{theo}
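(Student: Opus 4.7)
The claim is a (discrete) boundary Harnack principle for the pair $(\phi_0,h)$: both are non-negative on $U$, (exactly or approximately) harmonic for $K_U$ inside $B_U(x,r)$, and vanish on $\partial U$. Since swapping $y$ and $z$ in the stated inequality produces the reverse bound with the same constant, the conclusion is equivalent to a two-sided oscillation bound
$$\frac{\phi_0(y)}{\phi_0(z)} \asymp \frac{h(y)}{h(z)}, \qquad y,z\in B_U(x,r/2),$$
which is exactly the content of the classical boundary Harnack principle. The plan is to establish this by extending the cable-process transfer used to prove Theorem~\ref{theo-Carleson}, exploiting the fact that the boundary Harnack principle is known in the continuous setting of local Dirichlet spaces over inner-uniform domains (see \cite{Aik1,Aik4,Gyrya,LierlLSCOsaka,LierlLSCJFA}).

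Concretely, I would proceed as follows. First, lift both $\phi_0$ and $h$ to functions $\widetilde{\phi_0}, \widetilde{h}$ on the cable space associated to $(U,\mathfrak E_U)$, using the linear-interpolation construction of Section~\ref{sec-cable} (the analogue of Proposition~\ref{pro-*}). Verify that the inner $(\alpha,A)$-uniformity of $U$ passes to an inner-uniformity of the corresponding continuous cable domain with parameters depending only on $\alpha, A$, and that hypothesis A1 (with $\theta=2$) equips the cable space with a local Dirichlet form satisfying the doubling and Poincar\'e hypotheses required by the Lierl--Saloff-Coste framework. Second, apply the continuous boundary Harnack principle to the pair $(\widetilde{\phi_0},\widetilde{h})$ in the cable ball over $B_U(x,r)$, obtaining the oscillation bound in the cable space. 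Restricting back to vertices yields the claim.

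The main technical obstacle is that $\widetilde{\phi_0}$ is not strictly harmonic in the cable Dirichlet form: it satisfies a perturbed equation of the form $(\Delta-V)\widetilde{\phi_0}=0$ with a potential $V$ of order $1-\beta_0\le CR^{-2}$, where the upper bound follows from a tent test function exactly as in the proof of Lemma~\ref{lem-H}. Since we only need the inequality on $B_U(x,r/2)$ with $r\le R$, the potential is of order $r^{-2}$ (up to constants depending only on $\alpha,A,D,P$), which is precisely the scale at which the Aikawa-type iteration behind the boundary Harnack principle tolerates Schr\"odinger-type perturbations. The crux is to verify the stability of that iteration under such perturbations, or equivalently to factor $\widetilde{\phi_0}$ so as to absorb the potential $V$ into a known multiplicative profile and apply the principle to the harmonic residual; the remaining cable-to-graph bookkeeping follows the blueprint already used to establish Theorem~\ref{theo-Carleson}.
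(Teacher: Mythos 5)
Your overall strategy (transfer to the cable space and invoke the continuous boundary Harnack principle for inner-uniform domains from \cite{Gyrya,LierlLSCOsaka,LierlLSCJFA}) is the same as the paper's, and your treatment of $h$ is sound: the piecewise-affine extension of a function satisfying $K_Uh=h$ in $B_U(x,r)$ (with value $0$ at boundary vertices of $\mathbf U$) is exactly cable-harmonic there, since the Kirchhoff condition at a vertex is precisely the discrete equation; this is the ``one-to-one straightforward correspondence, independent of $\ell$'' that the paper uses. The gap is in how you handle $\phi_0$. You describe Proposition \ref{pro-*} as a ``linear-interpolation construction,'' but it is not: its whole point is that for a suitable loop parameter $\ell_0$ the \emph{continuous} Dirichlet eigenfunction $\boldsymbol{\phi}_{\ell_0,0}$ of the cable Laplacian (extended along edges by sines and cosines, not affinely) restricts at the vertices to a multiple of the discrete Perron eigenfunction $\phi_0$. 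With that device, $\phi_0$ is literally (a constant times) an exact object of the continuous theory, and the cited result \cite[Theorem 5.5]{LierlLSCJFA} applies directly; no perturbation argument is needed. Your route instead interpolates $\phi_0$ affinely and treats the error as a Schr\"odinger potential, and the step you yourself identify as ``the crux''---stability of the Aikawa-type iteration under such a perturbation---is asserted, not proved. That is exactly the difficulty the paper's loop-parameter trick is designed to avoid, so as written the proof is incomplete at its central point.

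Two further points make the proposed patch harder than you suggest. First, the affine interpolation of $\phi_0$ does not solve $(\Delta-V)\widetilde{\phi_0}=0$ with a bounded potential: it is harmonic on open edges and has a Kirchhoff deficit $-(1-\beta_0)\pi(x)\phi_0(x)$ concentrated at each vertex, i.e.\ the perturbation is a vertex-supported measure, so even formulating the perturbed equation and the relevant smallness (Kato-type) condition requires care. Second, the smallness you invoke is only genuine for $r\ll R$; at the top scale $r\asymp R$ one has $(1-\beta_0)r^2$ of order one, which is borderline for perturbation arguments and would need an actual quantitative verification (or a separate treatment of large scales, e.g.\ via the Carleson estimate of Theorem \ref{theo-Carleson}). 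If you want to keep a perturbative flavor, a cleaner variant is to work parabolically, observing that $(t,\xi)\mapsto e^{-\boldsymbol{\lambda}_0 t}\boldsymbol{\phi}_0(\xi)$ is caloric and applying the parabolic boundary Harnack machinery; but the most economical fix is simply to use Proposition \ref{pro-*} as the paper does.
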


\subsection{Proofs of Theorems \ref{theo-Carleson} and \ref{theo-comph}: the cable space with loops}
\label{sec-cable}
The statement in Theorem \ref{theo-Carleson} is a version of a fundamental inequality
known as a Carleson estimate \cite{Carleson} and was first derived in the study of analysis in Lipschitz domains \cite{Kemp} and \cite{Ancona,Dahl,Wu}.  For a modern perspective, sharp results, and references to the vast literature on the subject in the context of analysis on bounded domains, see \cite{BBB,Aik1,Aik2,Aik3,Aik4}. The generality and flexibility of the arguments developed by H. Aikawa in these papers and other works, based on the notion of 
``capacity width,'' is used in a fundamental way in \cite{Gyrya} and in \cite{LierlLSCOsaka,LierlLSCJFA,Lierl1,Lierl2} to extend  the result in the setting of (nice) Dirichlet spaces.

Given $(\mathfrak X,\mathfrak E, \mu, \pi) $ one can build an associated continuous space $\mathbf X$, known as the cable space for $(\mathfrak X,\mathfrak E,\pi,\mu)$. In many cases, it is more difficult to prove theorems in discrete domains than in continuous domains --- the cable space provides an important bridge, but allowing us to transfer known theorems from the continuous space $\mathbf X$ to its associated discrete space $\mathfrak X$. 

Topologically, the space $\mathbf X$ is a connected one-dimensional complex, that is, a union of copies of the interval $[0,1]$ with some identifications of end points.  The process of building the cable space from the discrete space $(\mathfrak X, \mathfrak E, \mu, \pi)$ is straightforward: the zero-dimensional points in the complex are given by the vertices $\mathfrak X$; two points $x$ and $y$ are then connected by a unit length edge $[0,1]$ if $\{x,y\} \in \mathfrak E$, with $0$ identified with $x$ and $1$ identified with $y$. For early references to the cable space, see the introduction to~\cite{Cattaneo}. This resource is particularly relevant because it discusses the spectrum of the discrete Laplacian.

But we need to allow for the addition of self-loops, copies of $[0,1]$ with $0$ and $1$ identified to each other and to some vertex $x \in \mathfrak X$. (Recall that $\mathfrak E$ has no self-loops.)  We will use the notation $(0,1)_{xx}$ for the self-loop at $x$ minus the point $x$ itself.  Let $\mathfrak L$ be the subset of those $x\in \mathfrak X$ where $\sum_y \mu_{xy} <\pi (x)$. Form a loop at each $x \in \mathfrak L$ and set the weight $\mu_{xx}$ on the loop to be equal to its ``deficiency,'' 
\begin{equation}
    \label{eq:loop-deficiency-weight}
\mu_{xx}=\pi(x)-\sum_y\mu_{xy}, \;\;x\in \mathfrak L.
\end{equation}
In what follows we will use the notation $xy$ as an index running over $\{x,y\}\in \mathfrak E$ when $x\neq y$ and $x\in \mathfrak L$ when $x=y$.

We need to use a simple (but rather interesting) variation on this construction. We introduce a {\em loop-parameter}, call it $\ell$. For any fixed $\ell\in [0,1]$, we construct the cable space $\mathbf X_\ell$ as described above but the self-loops have length $\ell$ instead of $1$ above. The other edges (non-self-loops) still have length 1. 

More precisely, the space $\mathbf X_\ell$ is obtained by joining any two  points $x,y$ in $\mathfrak X$ with $\{x,y\}\in \mathfrak E$ by a continuous edge $e_{xy}=(0,1)_{xy}$ isometric to the interval $(0,1)$ and adding a self-loop $e_{xx}=(0,\ell)_{xx}$ at each $x\in \mathfrak L$. Strictly speaking,  
$$\mathbf  X=\mathfrak X\cup \left(\bigcup_{\{x,y\}\in \mathfrak E}(0,1)_{xy}\right)\cup \left(\bigcup_{x\in \mathfrak L} (0,1)_{xx} \right) $$ 
with $e_{xy}$ being a copy of $(0,1)$ when $x\neq y$ and a copy of $(0,\ell)$ when $x=y$. See Figure \ref{XX}. The topology of this space is generated by the open subintervals of these many copies of $(0,1)$ and $(0,\ell)$, together with the star-shaped open neighborhoods of the vertices in
$\mathfrak X$.

\begin{figure}[h]
	\begin{center}
		\begin{picture}(250,170)(-20,0)
		
		\put(0,20){
			\multiput( 50,125)(40,0){4}{\circle*{3}}
			\multiput( 53,125)(40,0){3}{\multiput(0,0)(5,0){7}{\line(1,0){3}}}
			{\color{red}\put(45,127){\makebox{$2$}} \put(88,127){\makebox{$2$}} \put(128,127){\makebox{$3$}} \put(170,127){\makebox{$2$}} }
			{\color{black}\multiput(65,127)(40,0){3}{\makebox{$1$}}  } }
		
		\put(-1,15){
			\multiput( 50,75)(40,0){4}{\circle*{2}}
			\thicklines{\multiput( 50,75)(40,0){3}{\line(1,0){40}} }
			\put(50,66){\circle{16}}
			\put(130,66){\circle{16}} \put(170,66){\circle{16}}
			
			{\color{red}\put(45,77){\makebox{$2$}} \put(88,77){\makebox{$2$}} \put(128,77){\makebox{$3$}} \put(170,77){\makebox{$2$}} }
			{\color{black}\multiput(65,77)(40,0){3}{\makebox{$1$}}  }
			{\color{black} \put(40,59){\makebox{$1$}}
				\put(121.5,59){\makebox{$1$}} \put(163,59){\makebox{$1$}}}
		}

		\put(-1,-30){
			\multiput( 50,75)(40,0){4}{\circle*{2}}
			\thicklines{\multiput( 50,75)(40,0){3}{\line(1,0){40}} }
			\put(50,71){\circle{8}}
			\put(130,71){\circle{8}} \put(170,70){\circle{8}}
			
			{\color{red}\put(45,77){\makebox{$2$}} \put(88,77){\makebox{$2$}} \put(128,77){\makebox{$3$}} \put(170,77){\makebox{$2$}} }
			{\color{black}\multiput(65,77)(40,0){3}{\makebox{$1$}}  }
			{\color{black} \put(40,57){\makebox{$1$}}
				\put(121.5,57){\makebox{$1$}} \put(163,57){\makebox{$1$}}}
		}

		\end{picture}
		\caption{A simple example of $(\mathfrak X,\mathfrak E,\pi,\mu)$ and the associated cable spaces $\mathbf X_\ell$, where the edge weights $\mu$ are indicated in black and the vertex weights $\pi$ are indicated in red. The black weights on the loops indicate the ``deficiencies'' in the edge weights, as described in~\eqref{eq:loop-deficiency-weight}.}\label{XX}
		\end{center}\end{figure}
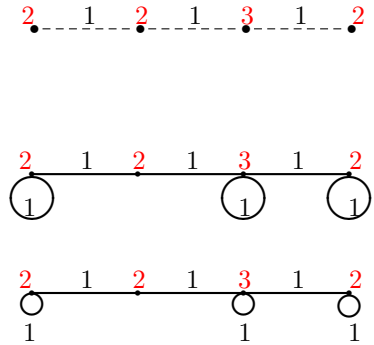

The cable Dirichlet space associated with the data   $(\mathfrak X,\mathfrak E, \mu, \pi,\ell) $  is obtained by equipping $\mathbf X_\ell$ with 
its natural distance function  $\mathbf d_\ell :\mathbf X_{\ell}\times\mathbf X_{\ell}\rightarrow [0,\infty)$, the length of the shortest path between two points. The space $\mathbf X_\ell $ is also equipped
with a measure   $\boldsymbol{\pi}$ equal to $\mu_{xy} dt$ on each  interval $e_{xy}$ (including the intervals  $e_{xx}$), and  with the Dirichlet form obtained by closing  the form 
$$\mathcal E_{\mathbf X_\ell}(f,f)= \sum_{{xy}} \mu_{xy}\int_{e_{xy}} |f_{e_{xy}}'(t)|^2dt, \;\; f\in \mathcal D_0(\mathbf X_\ell),$$
where $\mathcal D_0(\mathbf X_\ell)$ is the space of all compactly  supported continuous functions on $\mathbf X_\ell$ which have a bounded continuous derivative  $f'_{e_{xy}}$ on each open edge $e_{xy}$ and $e_{xx}$. (Note that  the values of these various edge-derivatives at a vertex do not have to match in any sense.)  The domain of 
$\mathcal E_{\mathbf X_\ell}$, $\mathcal D(\mathcal E_{\mathbf X_{\ell}})$, is the closure of $\mathcal D_0(\mathbf X_\ell)$ under the norm $$\|f\|_{\mathcal E_{\mathbf X_{\ell}}} = \left(\int_{\mathbf X_\ell}|f|^2d\boldsymbol{\pi}+\mathcal E_{\mathbf X_\ell}(f,f)\right)^{1/2}.$$ 

The cable Dirichlet space $(\mathbf X_\ell, \boldsymbol{\pi}, {\mathcal E}_{\mathbf X_\ell})$ is  a regular strictly local Dirichlet space (see, e.g., \cite{FOT,Gyrya}) and its  intrinsic distance
is the shortest-path distance $\mathbf d_\ell$ described briefly above. This Dirichlet space is actually quite elementary in the sense that it is possible to describe concretely the domain of the associated Laplacian, the generator of the associated Markov semigroup of operators acting on $L^2(\mathbf X_\ell,\boldsymbol{\pi})$.  First, we recall that this Laplacian is the self-adjoint operator $\Delta_\ell$ with domain $\mathcal D(\Delta_\ell)$ in $L^2(\mathbf X_\ell,\boldsymbol{\pi})$ defined by
$$\mathcal D(\Delta_\ell)=\{u\in \mathcal D(\mathcal E_{\mathbf X_\ell}): \exists C \mbox{ such that}, \forall f\in \mathcal D_0({\mathbf X_\ell}), \mathcal E_{\mathbf X_\ell}(u,f)\le C\|u\|_2\}.$$
For  any function $u\in \mathcal D(\Delta_\ell)$ there exists a unique function $v\in L^2({\mathbf X_\ell},\boldsymbol{\pi})$ such that $\mathcal E_{\mathbf X_\ell}(u,f)=\int_{\mathbf X_\ell} vf d\boldsymbol{\pi}$ (from the Riesz representation theorem) and we set
$$\Delta u = -v.$$
This implies that  $$\mathcal E_{\mathbf X_\ell}(u,f)= -\int  f \Delta _\ell u d\boldsymbol{\pi}$$
for all $u\in \mathcal D(\Delta_\ell)$ and all $f\in \mathcal D_0(\mathbf X_\ell)$ (equivalently, all $f\in \mathcal D(\mathcal E_{\mathbf X_\ell})$).

From the above abstract definition, we can now derive a concrete description of $\mathcal D(\Delta_\ell)$.  We start with a concrete description of  $\mathcal D(\mathcal E_{\mathbf X_\ell})$. A function $f$ is in $\mathcal D(\mathcal E_{\mathbf X_\ell})$ if it is continuous on $\mathbf X_\ell$, belongs to $L^2(\mathbf X_\ell,\boldsymbol{\pi})$ and the restriction $f_{e_{xy}}$ of $f$ to  any open edge $(0,1)_{xy}$, has a distributional derivative which can be represented by a square-integrable function $f'_{e_{xy}}$
satisfying   $$\sum_{xy} \mu_{xy}\int_{e_{xy}} |f'_{e_{xy}}|^2 dt<\infty.$$
The key observation is that, because of the one-dimensional nature of $\mathbf X$,  on any edge $e_{xy}$ (or subinterval of $e_{xy}$)
on which $f'$ is defined in the sense of distributions and represented by a square integrable function, we have
$$ |f(s_2)-f(s_1)|\le \sqrt{|s_2-s_1|}\left(\int_{s_1}^{s_2}|f'(s)|^2 ds\right)^{1/2}.$$

We now give a (well-known) concrete description of $\mathcal D(\Delta_\ell)$. A  function $u\in L^2(\mathbf X_\ell,\boldsymbol{\pi})$
is in $\mathcal D(\Delta_\ell)$ if and only if  
\begin{enumerate} 
	\item The function $u\in L^2(\mathbf X_\ell,\boldsymbol{\pi})$ admits a continuous version, which, abusing notation, we still call  $u$.
	\item  On each open edge  $e_{xy}$, the restriction $u_{e_{xy}}$ of  $u$ to $e_{xy}$ has a continuous first derivative $u'_{e_{xy}}$ with limits at the two end-points  and such that
	$$\sum_{xy}\mu_{xy}\int_{e_{xy}}|u'_{e_{xy}}|^2dt<\infty.$$ Furthermore
	$u_{e_{xy}}$ has  a 
	second derivative in the sense of distributions which can be represented by  a square-integrable function $u''_{e_{xy}}$ and $$\sum_{xy}\mu_{xy}\int_{e_{xy}}|u''_{e_{xy}}|^2dt<\infty.$$
	\item  At any vertex $x\in \mathfrak X$, Kirchhoff's law
	$$\sum_{y:\{x,y\}\in \mathcal E}\mu_{xy}\vec{u}_{e_{xy}}(x) +\sum_{x\in \mathfrak L}\mu_{xx}(\vec{u}_{e_{xx}}(0)-\vec{u}_{e_{xx}}(\ell))=0$$
	holds.  Here,  for $\{x,y\}\in \mathfrak E$,
	$\vec{u}_{e_{xy}}(x)$ is the  (one-sided) derivative of $u$ at $x$ computed along $e_{xy}$ oriented from $x$ to $y$ and, for $x\in \mathfrak L$, $\vec{u}_{e_{xx}}(0)$ and $\vec{u}_{e_{xx}}(\ell)$ are the (one-sided) derivatives of $u_{e_{xx}}$ on $(0,\ell)_{xx}$ at $0$ and at $\ell$. \end{enumerate}

We say that a function $u$ defined on a subset $\Omega$ is locally in $\mathcal D(\Delta_\ell)$ if it satisfies the above properties over $\Omega$ except for the global square integrable conditions on $u,u'$ and $u''$.  For such a function, 
$\Delta_\ell  u$ is defined as the locally square integrable function 
$\Delta_\ell  u =u''$ where $u''=u''_{e_{xy}}$ on  $e_{xy}\cap \Omega$.

\begin{rem}
    The stochastic process associated with the Dirichlet form $\mathcal{E}_{\mathbf{X}_{\ell}}$ can be explicitly constructed using Brownian motion. More specifically, starting at a vertex in the cable space, one performs Brownian excursions along adjacent edges until reaching another vertex. For a detailed description see~\cite{RevuzYor, Folz}. See~\cite{quantum-graphcs} for a description of the related quantum graphs.
\end{rem}

\begin{defin}\label{def-UL}
	To any finite domain $U$
	in $(\mathfrak X,\mathfrak E)$ we  associate the domain $\mathbf U=\mathbf U_\ell$ in ${\mathbf X_\ell}$
	formed by all the vertices $x$ in $U$  and all  the open edge $e_{xy}$ with at least one end point in $U$, including the loops $e_{xx}$ with $x\in U$.
\end{defin}

See Figure \ref{XXU} for an example of Defintion~\ref{def-UL}. As another example, consider the trivial finite domain $U=\{x\}$. To it, we associate the domain $\mathbf U$ formed by the vertex $x$ and all the open edges containing $x$, i.e., an open star around $x$, perhaps with a self-loop of length $\ell$,
whose branches are in one to one correspondence with the $y\in \mathfrak X$ such that $\{x,y\}\in \mathfrak E$. 

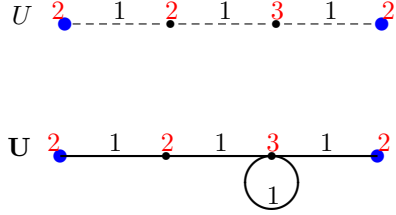
\begin{figure}[t]
	\begin{center}
		\begin{picture}(200,100)(-20,0)
		
		\put(30,75){\makebox{$U$}}
		{\color{blue} \multiput( 50,75)(120,0){2}{\circle*{5}}}
		\multiput( 90,75)(40,0){2}{\circle*{3}}
		\multiput( 53,75)(40,0){3}{\multiput(0,0)(5,0){7}{\line(1,0){3}}}
		
		{\color{red}\put(45,77){\makebox{$2$}} \put(88,77){\makebox{$2$}} \put(128,77){\makebox{$3$}} \put(170,77){\makebox{$2$}} }
		{\color{black}\multiput(65,77)(40,0){3}{\makebox{$1$}}  }
		
		\put(-5,0){  \put(30,25){\makebox{$\mathbf U$}}
			{\color{blue} \multiput( 50,25)(120,0){2}{\circle*{5}}}
			\multiput( 90,25)(40,0){2}{\circle*{3}}\thicklines{\multiput( 50,25)(40,0){3}{\line(1,0){40}} }
			\put(130,15){\circle{20}} 
			
			{\color{red}\put(45,27){\makebox{$2$}} \put(88,27){\makebox{$2$}} \put(128,27){\makebox{$3$}} \put(170,27){\makebox{$2$}} }
			{\color{black}\multiput(65,27)(40,0){3}{\makebox{$1$}}  }
			{\color{black}
				\put(121.5,7){\makebox{$1$}}}
		}\end{picture}
		\caption{ $U$ and $\mathbf U$  in black with their boundaries in blue.}\label{XXU} 
		\end{center}\end{figure}

With this definition, the discrete finite domain $U$ is inner-uniform if and only if the
domain $\mathbf U$ is inner-uniform in the metric space $(\mathbf X_\ell,\mathbf{d}_\ell)$. Following \cite[Definition 3.2]{Gyrya} we say that a continuous domain $\mathbf U$ is inner-uniform in the metric space $(\mathbf X_{\ell},\mathbf d_{\ell})$ if there exists constants $A^c$ and $\alpha^c$ such that, for each $\xi,\zeta \in \mathbf U$, there exists a continuous curve $\gamma_{\xi\zeta}:[0,\tau] \rightarrow \mathbf U$ (called an \emph{inner-uniform path}) contained in $\mathbf U$ with $|\gamma_{\xi\zeta}| = \tau$ such that (1) $\gamma_{\xi\zeta}(0) = \xi$ and $\gamma_{\xi\zeta}(\tau) = \zeta$, (2) $|\gamma_{\xi\zeta}| \leq A^c\mathbf{d}_{\mathbf U}(\xi,\zeta)$ and (3) for any $t\in[0,\tau]$,
$$\mathbf{d}_{\ell}(\gamma_{\xi\zeta}(t),\mathbf X_{\ell} \setminus \mathbf{U}) \geq \alpha^c \min\{t,\tau_U - t\}$$ where $\mathbf{d}_{\mathbf{U}}$ is the distance in $\mathbf{U}$.

The important constants $A^d,\alpha^d$
and $A^c,\alpha^c$  ($d$ for discrete, $c$ for continuous)
capturing the key properties of an inner-uniform domain in both cases are within factors
of  $8$ from each others.  (Very large self-loops would be problematic, but we restrict to $\ell\in [0,1]$.)  
In fact, for any pair of points $\xi,\zeta$ in $\mathbf U$ we can define
an inner-uniform path $\gamma_{\xi\zeta}$ from $\xi$ to $\zeta$ as follows. If the two points satisfy $\mathbf d_{\mathbf U}(\xi,\zeta)=\tau\le 1$, i.e., they are either on the same edge or on two adjacent edges, then we set $\gamma_{\xi\zeta}$ to be the obvious path  from $\xi$ to $\zeta$, parametrized by arc-length (one can easily check that this path satisfies  $\mathbf{d}_{\ell}(\gamma_{\xi\zeta}(t),\mathbf{X}_{\ell} \setminus \mathbf{U})\ge  \min\{t,\tau-t\}$).  When $\mathbf d_{\mathbf U}(\xi,\zeta)> 1$, one can join them in $\mathbf U$ by first finding the closest points $x(\xi)$ and $x(\zeta)$ in $U$ (if there are multiple choices, pick one) and then use the obvious continuous extension of the discrete inner-uniform path from $x(\xi)$ to $x(\zeta)$, which is, again, parametrized by arc-length.  

Finally we extend Definition \ref{def-x(r)} from $U$ to $\mathbf U$ as follows.

\begin{defin} Let $U$ be a finite inner-uniform domain equipped with a central point $o\in U$ such that $d(o,\mathfrak X \setminus U)=\max\{d(x,\mathfrak X \setminus U):x\in U\}$.
	For any point $\xi\in \mathbf U$, let  $\gamma_{\xi o}$ be the inner-uniform continuous path defined above joining $\xi$ to $o$ in $\mathbf U$.
	For any $\xi\in \mathbf U$ and $r>0$, let $\xi_r$ be defined by
	$$\xi_r= x(\xi)_r   \mbox{ if } r\ge 1$$
	where $x(\xi)$ is the (chosen) closest point to $\xi$ in $U$ and $x(\xi)_r$ is given by Definition \ref{def-x(r)},   and
	$$\xi_r =\gamma_{\xi o}(\min\{r,\tau\}) \mbox{ if }   r\in (0,1) \mbox{ and }  \gamma_{\xi o}(\tau)=o,.$$ 
\end{defin}

\begin{rem} \label{rem-xr}
The two key properties of the point $\xi_r\in \mathbf U$ are  as follows. There are two constants $C,\epsilon$ which depends only on the
	inner-uniform constants $A,\alpha$ of $U$ such that
	\begin{enumerate}
		\item The inner-distance  $\mathbf d_{\mathbf U}(\xi,\xi_r)$ is no larger than $C r$; 
		\item  The  distance $\mathbf d_\ell (\xi_r,\mathbf \mathfrak X \setminus U)$  is at least  $\epsilon r$.
	\end{enumerate}
	In the present case, we chose the points $\xi_r$ so that, for $r\ge 1$, they actually belong to $U$ and coincide with $x(\xi)_r$ from Definition \ref{def-x(r)}. 
\end{rem}

The heat diffusion with Dirichlet boundary condition on the bounded inner-uniform domain $\mathbf U=\mathbf U_\ell$
is studied in \cite{LierlLSCJFA,LierlLSCOsaka}. The heat diffusion semigroup with Dirichlet boundary condition on the domain $\mathbf U$ is the semigroup associated with the Dirichlet form obtained by closing the (closable) form
$$ \mathcal E_{\mathbf U,D}(f,f)= \int _{\mathbf U} |f'|^2 d\boldsymbol{\pi}$$
defined on continuous functions $f$ in $\mathbf U$ that are locally in $\mathcal D(\mathcal E_{\mathbf X_\ell})$  and have compact support in $\mathbf U$ (for such function, $f'=f'_{e_{xy}}$ on $e_{xy}\cap \Omega$).  The subscript $D$ in this notation stands for {\em Dirichlet condition}.
Let
$H_t^{\mathbf U,D}= e^{t \Delta_{\mathbf U,D}}$ be the associated self-adjoint semigroup on $L^2(\mathbf U,\boldsymbol{\pi}_{\mathbf U})$
with infinitesimal generator $\Delta_{\mathbf U,D}$.  Here, $\boldsymbol{\pi}_{\mathbf U}$ is the normalized restriction of $\boldsymbol{\pi}$ to $\mathbf U$
$$ \boldsymbol{\pi}_{\mathbf U}= \boldsymbol{\pi}(\mathbf U)^{-1} \boldsymbol{\pi}|_{\mathbf U}.$$
The domain of $\Delta_{\mathbf U, D}$  is  exactly the set of 
functions $f$ that are locally in  $\mathcal D(\Delta_\ell)$ in $\mathbf U$, have limit $0$ 
at the boundary points of $\mathbf U$ and satisfy $\int _{\mathbf U}|u''|^2 d\boldsymbol{\pi}<\infty$.   Also the parameter $\ell$ does not appear explicitly in the notation we just described, but all these objects depend on the choice of $\ell$. 

Just as in the discrete setting, the key to the study of $H^{\mathbf U,D}_t$ is the Doob-transform technique which involves the positive eigenfunction $\boldsymbol{\phi}_{\ell,0}$ associated to the smallest eigenvalue $\boldsymbol{\lambda}_{\ell,0}$ of $-\Delta_{\mathbf U,D}$ in $\mathbf U$. 
This function is defined by the following equations:
\begin{enumerate}
	\item $\boldsymbol{\lambda}_{\ell,0}= \inf \left\{\int_{\mathbf U} |f'|^2 d\boldsymbol{\pi}_{\mathbf{U}}: f\in \mathcal D(\mathcal E_{\mathbf U,D}),\int_{\mathbf U}|f|^2d\boldsymbol{\pi}_{\mathbf{U}}=1\right\}$;
	\item $\boldsymbol{\phi}_{\ell,0}\in \mathcal D(\Delta_{\mathbf U,D})$ and $\Delta_{\mathbf U,D} \boldsymbol{\phi}_{\ell,0}=-\boldsymbol{\lambda}_0 \boldsymbol{\phi}_{\ell,0}$;
	\item $\int_{\mathbf U}|\boldsymbol{\phi}_{\ell,0}|^2d\boldsymbol{\pi}=1.$
\end{enumerate}

\begin{pro} \label{pro-*}
	Assume  that $(\mathfrak X,\mathfrak E,\pi,\mu)$ is such that $\mu$ is adapted  and  $\mu$ is subordinated to $\pi$.  
	Let $U$ be a finite domain in $(\mathfrak X,\mathfrak E)$.   There exists a value 
	$$\ell_0=\ell_0(\mathfrak X,\mathfrak E,\pi,\mu,U)\in [0,1]$$ 
	of the loop-parameter $\ell$ such that the following properties hold true.
	
	Let $\mathbf U$ be the bounded domain in $\mathbf X_{\ell_0}$ associated to $U$. Let $\phi_0$, $\beta_0$ be the Perron-Frobenius eigenfunction and eigenvalue of $K_U$. Let $\boldsymbol{\phi}_0$, $\boldsymbol{\lambda}_0$ be the eigenfunction and bottom eigenvalue of $\Delta_{\mathbf U,D}$  for the parameter $\ell_0$ as defined above.  There exists  a constant $\kappa>0$ such that 
	\begin{enumerate}
		\item $\beta_0= \cos(\sqrt{\boldsymbol{\lambda}_0})$,
		\item $\phi_0(x)= \kappa \boldsymbol{\phi}_0(x)$ for all vertex $x\in U$.
	\end{enumerate}
\end{pro}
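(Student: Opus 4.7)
The strategy is to lift the discrete Perron eigenfunction $\phi_0$ of $K_U$ to a continuous function $\boldsymbol\phi$ on the cable space $\mathbf X_\ell$, choose $\ell=\ell_0$ precisely so that $\boldsymbol\phi$ lies in $\mathcal D(\Delta_{\mathbf U,D})$, and then identify $\boldsymbol\phi$ with the principal Dirichlet eigenfunction $\boldsymbol\phi_0$ by uniqueness. Since $K_U$ is irreducible and strictly sub-Markovian, Perron--Frobenius yields $\phi_0>0$ on $U$ with eigenvalue $\beta_0\in(0,1)$; set $\omega:=\arccos\beta_0\in(0,\pi/2)$. On each non-loop edge $e_{xy}$ parametrized by $[0,1]$ with $x$ at $0$, extend $\phi_0$ by the unique solution of $-u''=\omega^2 u$ with the prescribed boundary values,
\[
\boldsymbol\phi(t)=\phi_0(x)\frac{\sin(\omega(1-t))}{\sin\omega}+\phi_0(y)\frac{\sin(\omega t)}{\sin\omega},
\]
using the Dirichlet convention $\phi_0(y)=0$ for $y\in\partial U$. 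On each self-loop $e_{xx}$ of length $\ell_0$ at $x\in\mathfrak L\cap U$, extend by the unique solution matching $\boldsymbol\phi(0)=\boldsymbol\phi(\ell_0)=\phi_0(x)$, namely $\boldsymbol\phi(t)=\phi_0(x)\cos(\omega(t-\ell_0/2))/\cos(\omega\ell_0/2)$. By construction $\boldsymbol\phi$ is continuous on $\mathbf U_{\ell_0}$, satisfies $-\boldsymbol\phi''=\omega^2\boldsymbol\phi$ on each open edge, vanishes on the Dirichlet boundary, and is strictly positive in the interior of $\mathbf U_{\ell_0}$ (the positivity on loops using $\omega\ell_0/2<\pi/2$ and on non-loops using $\omega<\pi/2$).

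The heart of the argument is to verify Kirchhoff's condition at each interior vertex $x\in U$, and this is what pins down $\ell_0$. Computing outward derivatives gives, at $x$, the contribution $\mu_{xy}\omega(\phi_0(y)-\phi_0(x)\cos\omega)/\sin\omega$ from each non-loop edge $e_{xy}$ and the net flux $2\mu_{xx}\omega\phi_0(x)\tan(\omega\ell_0/2)$ from a self-loop. Writing $\delta(x):=\pi(x)-\sum_{y\sim x}\mu_{xy}=\mathbf 1_{\mathfrak L}(x)\mu_{xx}$ and clearing the common factor $\omega/\sin\omega$, Kirchhoff at $x$ reduces to
\[
\sum_{y\sim x,\,y\in U}\mu_{xy}\phi_0(y)=\phi_0(x)\bigl[\cos\omega\,(\pi(x)-\delta(x))-2\delta(x)\sin\omega\tan(\omega\ell_0/2)\bigr],
\]
whereas multiplying $K_U\phi_0=\beta_0\phi_0$ by $\pi(x)$ and using $K_U(x,x)=\delta(x)/\pi(x)$ gives
\[
\sum_{y\sim x,\,y\in U}\mu_{xy}\phi_0(y)=\phi_0(x)\bigl[\beta_0\pi(x)-\delta(x)\bigr].
\]
Matching the coefficient of $\pi(x)$ yields $\beta_0=\cos\omega$ automatically; matching the coefficient of $\delta(x)$ (required whenever some $x\in U$ has $\delta(x)>0$) is equivalent to $\tan(\omega\ell_0/2)=\tfrac12\tan(\omega/2)$. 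Set
\[
\ell_0:=\frac{2}{\omega}\arctan\bigl(\tfrac12\tan(\omega/2)\bigr);
\]
since $\omega/2\in(0,\pi/4)$ and $\tan$ is strictly increasing there, $\ell_0\in(0,1)$. When $\mathfrak L\cap U=\emptyset$ the $\delta$-equation is vacuous and any $\ell_0\in[0,1]$ works.

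With this choice of $\ell_0$, Kirchhoff holds at every interior vertex, so $\boldsymbol\phi\in\mathcal D(\Delta_{\mathbf U_{\ell_0},D})$ and $-\Delta_{\mathbf U_{\ell_0},D}\boldsymbol\phi=\omega^2\boldsymbol\phi$, with $\boldsymbol\phi>0$ on $\mathbf U_{\ell_0}$. The Dirichlet Laplacian on the bounded one-dimensional cable complex has compact resolvent, so its smallest eigenvalue $\boldsymbol\lambda_0$ is simple with a strictly positive eigenfunction $\boldsymbol\phi_0$; the positive eigenfunction $\boldsymbol\phi$ is therefore $\kappa\boldsymbol\phi_0$ for some $\kappa>0$, forcing $\omega^2=\boldsymbol\lambda_0$. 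Evaluating at a vertex $x\in U$ yields $\phi_0(x)=\boldsymbol\phi(x)=\kappa\boldsymbol\phi_0(x)$ and $\beta_0=\cos\omega=\cos(\sqrt{\boldsymbol\lambda_0})$, proving both assertions. The delicate point is the self-loop bookkeeping: each loop contributes two one-sided outward derivatives at $x$ summing to $2\omega\phi_0(x)\tan(\omega\ell_0/2)$, and this doubling is what produces the factor $\tfrac12$ in the transcendental relation for $\ell_0$, preventing the naive guess $\ell_0=1$ from working.
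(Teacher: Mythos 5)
Your proof is correct, and it runs in the opposite direction from the paper's. The paper fixes the loop length $\ell$, takes the continuous Dirichlet eigenpair $(\boldsymbol{\lambda}_{\ell,0},\boldsymbol{\phi}_{\ell,0})$ on $\mathbf U_\ell$, computes $K_U\boldsymbol{\phi}_{\ell,0}/\boldsymbol{\phi}_{\ell,0}$ at the vertices via Kirchhoff's law, and observes that the vertex restriction is a positive eigenfunction of $K_U$ exactly when a certain function $F(\ell)$ vanishes; since $F(0)=1$ and $F(1)=-1$, the existence of $\ell_0$ follows from the intermediate value theorem once the continuity of $\ell\mapsto\boldsymbol{\lambda}_{\ell,0}$ is established (a separate form-comparison argument in the paper), and the identification with $(\beta_0,\phi_0)$ then uses only finite-dimensional Perron--Frobenius uniqueness. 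You instead start from $(\beta_0,\phi_0)$, extend edge-wise by solutions of $-u''=\omega^2u$ with $\omega=\arccos\beta_0$, and let Kirchhoff at the vertices carrying a deficiency loop dictate $\ell_0$; your relation $\tan(\omega\ell_0/2)=\tfrac12\tan(\omega/2)$ is exactly the condition $F(\ell_0)=0$ in the paper (with $\omega$ there equal to $\sqrt{\boldsymbol{\lambda}_{\ell_0,0}}$), so the two Kirchhoff computations agree, but you obtain $\ell_0$ in closed form and avoid the continuity-plus-IVT step entirely. The price is that your identification step needs the infinite-dimensional fact that on the connected bounded complex $\mathbf U_{\ell_0}$ the Dirichlet ground state is simple with a positive eigenfunction, so that your positive eigenfunction is forced to be it; this is standard for this compact one-dimensional Dirichlet space (compact resolvent plus a positivity-improving semigroup), but it deserves an explicit sentence, whereas the paper only invokes uniqueness of the discrete Perron eigenvector. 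Two small housekeeping points: $\beta_0<1$ uses $\partial U\neq\emptyset$ (the intended setting, as in the paper's own formulas), and $\beta_0>0$, hence $\omega<\pi/2$, is automatic precisely when the loop condition is actually needed, since a loop at some $x\in U$ forces $\beta_0\ge K_U(x,x)>0$; you use both facts implicitly and should say so.
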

\begin{proof}   First, we study the function $\boldsymbol{\phi}_{\ell,0}$ for an arbitrary $\ell \in [0,1]$. 
	On each edge $e_{xy}$ in $\mathbf U$, the function $\boldsymbol{\phi}_{\ell,0}$ 
	satisfies $$\left(\frac{\partial }{\partial s}\right)^2[\boldsymbol{\phi}_{\ell,0}]_{e_{xy}}=-\boldsymbol{\lambda}_{\ell,0}[\boldsymbol{\phi}_{\ell,0}]_{e_{xy}},$$ and this implies  
	$$[\boldsymbol{\phi}_{\ell,0}]_{e_{xy}}(s)= \frac{\boldsymbol{\phi}_{\ell,0}(y)-\cos(\sqrt{\boldsymbol{\lambda}_{\ell,0}} \ell_{xy})\boldsymbol{\phi}_{\ell,0}(x)}{\sin (\sqrt{\boldsymbol{\lambda}_{\ell,0}}\ell_{xy})} \sin (\sqrt{\boldsymbol{\lambda}_{\ell,0}} s) +\boldsymbol{\phi}_{\ell,0}(x)\cos (\sqrt{\boldsymbol{\lambda}_{\ell,0}} s)$$
	where $s\in (0,\ell_{xy})$ parametrizes $e_{xy}$ from $x$ to $y$  with 
	$$\ell_{xy} = \begin{cases} 1 & \text{ when } x\neq y \\ \ell & \text{ when } x = y. \end{cases}$$ When $x=y$,
	$$[\boldsymbol{\phi}_{\ell,0}]_{e_{xx}}(0)=[\boldsymbol{\phi}_{\ell,0}]_{e_{xx}}(\ell)=\boldsymbol{\phi}_{\ell,0}(x),$$
	and the function $[\boldsymbol{\phi}_{\ell,0}]_{e_{xx}} $  on the edge $(0,\ell)_{xx}$ satisfies 
	$$[\boldsymbol{\phi}_{\ell,0}]_{e_{xx}}(s)=[\boldsymbol{\phi}_{\ell,0}]_{e_{xx}}(\ell-s).$$
	To express Kirchhoff's law  at $x\in U$, we compute, for $x \neq y$,
	$$[\vec{\boldsymbol{\phi}}_{\ell,0}]_{e_{xy}}(0)=\frac{\sqrt{\boldsymbol{\lambda}_{\ell,0}}}{\sin (\sqrt{\boldsymbol{\lambda}_{\ell,0}})}(\boldsymbol{\phi}_{\ell,0}(y )-\cos(\sqrt{\boldsymbol{\lambda}_{\ell,0}}) \boldsymbol{\phi}_{\ell,0}(x)),$$
	and, for $x=y$,
	\begin{eqnarray*}
		[\vec{\boldsymbol{\phi}}_{\ell,0}]_{e_{xx}}(0)-[\vec{\boldsymbol{\phi}}_{\ell,0}]_{e_{xx}}(1)&=&
		2[\vec{\boldsymbol{\phi}}_{\ell,0}]_{e_{xx}}(0)\\ &=&2\frac{\sqrt{\boldsymbol{\lambda}_{\ell,0}}}{\sin (\sqrt{\boldsymbol{\lambda}_{\ell,0}}\ell)}(1-\cos(\sqrt{\boldsymbol{\lambda}_{\ell,0}} \ell))
		\boldsymbol{\phi}_{\ell,0}(x).\end{eqnarray*}
	It follows that Kirchhoff's law gives
	\begin{eqnarray*} \lefteqn{
			\sum_{y:\{x,y\}\in \mathfrak E}\mu_{xy}(\boldsymbol{\phi}_{\ell,0}(y )-\cos(\sqrt{\boldsymbol{\lambda}_{\ell,0}}) \boldsymbol{\phi}_{\ell,0}(x)) } \hspace{1in}&&\\
		&&+2\mu_{xx}\frac{\sin( \sqrt{\boldsymbol{\lambda}_{\ell,0}})}{\sin( \sqrt{\boldsymbol{\lambda}_{\ell,0}}\ell)}(1-\cos(\sqrt{\boldsymbol{\lambda}_0}\ell))\boldsymbol{\phi}_{l,0}(x)=0.\end{eqnarray*}
	
	Recall that $K_U(x,y)= \mu_{xy}/\pi(x)$ for $x,y\in U$ with $\{x,y\}\in \mathfrak E$ and $K_U(x,x)=\mu_{xx}/\pi(x)$. It follows that, for $x\in U$,
	$$K_U\boldsymbol{\phi}_{\ell,0} (x)= \frac{1}{\pi(x)}\sum_{y} \mu_{xy} \boldsymbol{\phi}_{\ell,0}(y)=  \frac{1}{\pi(x)}\left(
	\sum_{y:\{x,y\}\in \mathfrak E} \mu_{xy} \boldsymbol{\phi}_{\ell,0}(y) +\mu_{xx} \boldsymbol{\phi}_{\ell,0}(x) \right),$$
	and Kirchhoff laws for $\boldsymbol{\phi}_{\ell,0}$  yields
	\begin{eqnarray*}\lefteqn{
			\frac{K_U\boldsymbol{\phi}_{\ell,0}(x) }{ \boldsymbol{\phi}_{\ell,0}(x)}=
			K_U(x,x) +\left(1-K_U(x,x)\right)\cos (\sqrt{\boldsymbol{\lambda}_{\ell,0}})} &&\\&&\hspace{1.3in} -2K_U(x,x) \frac{\sin( \sqrt{\boldsymbol{\lambda}_{\ell,0}})}{\sin( \sqrt{\boldsymbol{\lambda}_{\ell,0}}\ell)}(1-\cos(\sqrt{\boldsymbol{\lambda}_0}\ell))\\
		&=&  \cos (\sqrt{\boldsymbol{\lambda}_{\ell,0}})\\
		&&+K_U(x,x)(1-\cos(\sqrt{\boldsymbol{\lambda}_{\ell,0}}))\left(  1-2   \frac{\sin( \sqrt{\boldsymbol{\lambda}_{\ell,0}})}{\sin( \sqrt{\boldsymbol{\lambda}_{\ell,0}}\ell)}\frac{(1-\cos(\sqrt{\boldsymbol{\lambda}_{\ell,0}}\ell))}{  (1-\cos(\sqrt{\boldsymbol{\lambda}_{\ell,0}}))} \right)
	\end{eqnarray*}
	
	Given the uniqueness of the Perron-Frobenius eigenvalue and the fact that the associated positive eigenfunction is unique up to a multiplicative constant,
	the proposition follows from the previous computation if there exists 
	$\ell_0\in [0,1] $ at which the function
	$$ F(\ell)= 1-2   \frac{\sin( \sqrt{\boldsymbol{\lambda}_{\ell,0}})}{\sin( \sqrt{\boldsymbol{\lambda}_{\ell,0}}\ell)}\frac{(1-\cos(\sqrt{\boldsymbol{\lambda}_{\ell,0}}\ell))}{  (1-\cos(\sqrt{\boldsymbol{\lambda}_{\ell,0}}))} $$
	vanishes.    But, by an easy inspection,  $F(0)= 1$  and $F(1)=-1$.  If we can prove that the function
	$$\ell\mapsto \boldsymbol{\lambda}_{\ell,0}$$
	is continuous, then $F$ must vanish somewhere between $l=0$ and $l=1$, so we are done. 
	
	Fix $\ell_1,\ell_2$. Any function $f$ on $\mathbf X_{\ell_1}$ is turned into a function
	$\tilde{f}$ on $\mathbf X_{\ell_2}$ by setting 
	$$\tilde{f}_{e_{xy}}(s)=\left\{\begin{array}{cl} f_{e_{xy}}(s) &\mbox{ if } x\neq y\\
	f_{e_{xx}}(\ell_2s/\ell_1) &\mbox{ if } y=x.\end{array}\right.$$
	Further,
	$$\int_{\mathbf X_{\ell_2}}|\tilde{f}|^2d\boldsymbol{\pi}=\int_{\mathbf X_{\ell_1}}|f|^2d\boldsymbol{\pi} +((\ell_1/\ell_2)-1)\sum_{x\in \mathfrak L}\mu_{xx}\int_{e_{xx}}|f_{e_{xx}}|^2dt$$
	and
	$$\mathcal E_{\mathbf X_{\ell_2}}(\tilde{f},\tilde{f})=\mathcal E_{\mathbf X_{\ell_1}}(f,f)+((\ell_2/\ell_1)-1)\sum_{x\in \mathfrak L}\mu_{xx}\int_{e_{xx}}|f'_{e_{xx}}|^2dt.$$

	Applying this to the function $\boldsymbol{\phi}_{\ell_1,0}$, normalized so that
	$\int_{\mathbf X_{\ell_1}}|\boldsymbol{\phi}_{\ell_1,0}|^2d\boldsymbol{\pi} =1$,
	we find that 
	$$\boldsymbol{\lambda}_{\ell_2,0}\le  \frac{\max\{1,\ell_1/\ell_2\}}{\min\{1,\ell_2/\ell_1\}} 
	\boldsymbol{\lambda}_{\ell_1,0}.$$
	Exchanging the role of $\ell_1,\ell_2$ yields thet complementary inequality
	$$\boldsymbol{\lambda}_{\ell_2,0}\ge  \frac{\min\{1,\ell_1/\ell_2\}}{\max\{1,\ell_2/\ell_1\}} 
	\boldsymbol{\lambda}_{\ell_1,0}.$$
	This proves the continuity of $\ell\mapsto \boldsymbol{\lambda}_{\ell,0}$ as desired.
\end{proof}

\begin{rem} When the quantity $K_U(x,x)$ is constant, say, $K_U(x,x)=\theta$ for all $x\in U$, then every function $\boldsymbol{\phi}_{\ell,0}$ for $\ell\in [0,1]$ satisfies
	$\phi_0(x)=\kappa_\ell \boldsymbol{\phi}_{\ell,0}(x)$ at vertices $x \in U$, and 
	we have  
	$$\beta_0=1- (1-\cos(\sqrt{\boldsymbol{\lambda}_{\ell,0}}))\left(1-\theta\left(1-2   \frac{\sin( \sqrt{\boldsymbol{\lambda}_{\ell,0}})}{\sin( \sqrt{\boldsymbol{\lambda}_{\ell,0}}\ell)}\frac{(1-\cos(\sqrt{\boldsymbol{\lambda}_{\ell,0}}\ell))}{  (1-\cos(\sqrt{\boldsymbol{\lambda}_{\ell,0}}))}  \right)\right).$$
	The function of $l$ on the right-hand side is equal to the constant $\beta$.
\end{rem}

\begin{theo}[{Special case of \cite[Proposition 5.10]{LierlLSCJFA}}]  \label{th-JL}
	Assume {\em A1} with $\theta=2$ and fix $\alpha,A$. There exists a constant $C_0$ depending only on $\alpha,A,D,P_e,P$ such that, for any finite inner $(\alpha,A)$-uniform domain $U$ and loop parameter $\ell\in [0,1]$, the positive eigenfunction $\boldsymbol{\phi}_{\ell,0}$ for the $\Delta_{\mathbf U_\ell,D}$ in $\mathbf U_\ell$  is $(1/8,C_0)$-regular and satisfies
	$$ \forall\,r>0,\;\xi\in \mathbf U_\ell,\; z\in B_{\mathbf U_\ell}(\xi, r/2),\;\;\boldsymbol{\phi}_{\ell,0}(z)\le C_0\boldsymbol{\phi}_{\ell,0}(\xi_r).$$
\end{theo}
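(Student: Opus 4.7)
The plan is to present this as a direct application of the general Carleson-type estimate of \cite[Proposition 5.10]{LierlLSCJFA} (which builds on the framework of \cite{Gyrya} and on the capacity-width arguments of Aikawa \cite{Aik1,Aik4}) to the strictly local regular Dirichlet space $(\mathbf X_\ell,\boldsymbol{\pi},\mathcal E_{\mathbf X_\ell})$ restricted to the bounded inner-uniform sub-domain $\mathbf U_\ell$. Since the Carleson estimate quoted from \cite{LierlLSCJFA} requires three ingredients--(i) a volume doubling property for $(\mathbf X_\ell,\mathbf d_\ell,\boldsymbol{\pi})$, (ii) a scale-invariant $L^2$ Poincar\'e inequality on metric balls, and (iii) that $\mathbf U_\ell$ is an inner $(\alpha^c,A^c)$-uniform domain of the continuous metric space $(\mathbf X_\ell,\mathbf d_\ell)$--the bulk of the work is to check that each of these is inherited from the discrete hypotheses with constants depending only on $\alpha,A,D,P_e,P$ (and not on $\ell\in[0,1]$ or on $U$).

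First I would verify doubling and Poincar\'e on $(\mathbf X_\ell,\mathbf d_\ell,\boldsymbol{\pi})$. By the one-dimensional edge structure, every continuous ball $\mathbf B(\xi,r)$ in $\mathbf X_\ell$ can be sandwiched between two discrete balls: there exist vertices $x_1,x_2\in\mathfrak X$ and comparable integer radii $r_1\le r_2$, with $r_2-r_1$ bounded absolutely, such that the continuous ball lies between unions of edges spanned by $B(x_1,r_1)$ and $B(x_2,r_2)$; the discrete doubling of $\pi$ (Assumption A1) and the fact that edge measures are comparable to vertex measures by ellipticity transfer to $\boldsymbol{\pi}$, giving a doubling constant depending only on $D,P_e$ and not on $\ell\in[0,1]$ (the loop length $\ell\le 1$ contributes at most a multiplicative factor of $2$). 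For the ball Poincar\'e inequality on $\mathbf X_\ell$, one combines the ball Poincar\'e inequality for $(\mathfrak X,\mathfrak E,\pi,\mu)$ with the elementary one-dimensional Poincar\'e inequality on each cable, using the standard decomposition argument (approximate a mean value on the cable-ball by a vertex-average, control the discrepancy by cable energy, and use the discrete Poincar\'e inequality to control vertex oscillations); ellipticity ensures that $\boldsymbol{\pi}$ on cables is comparable to $\pi$ on incident vertices so the resulting constant depends only on $D,P,P_e$, uniformly in $\ell\in[0,1]$.

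Second I would verify that $\mathbf U_\ell$ is inner $(\alpha^c,A^c)$-uniform in $(\mathbf X_\ell,\mathbf d_\ell)$ with constants depending only on $\alpha,A$. Given $\xi,\zeta\in\mathbf U_\ell$, pick closest vertices $x(\xi),x(\zeta)\in U$, concatenate the short arcs $\gamma_{\xi,x(\xi)}$ and $\gamma_{x(\zeta),\zeta}$ (both of inner length at most $1$) with the piecewise-linearization of the discrete inner-uniform path joining $x(\xi)$ to $x(\zeta)$ in $U$ provided by the definition of inner $(\alpha,A)$-uniformity. The checks that this concatenated curve has length at most $A^c\mathbf d_{\mathbf U_\ell}(\xi,\zeta)$ and satisfies the twisted-cone inequality $\mathbf d_\ell(\gamma(t),\mathbf X_\ell\setminus\mathbf U_\ell)\ge\alpha^c\min\{t,\tau-t\}$ are routine, using Remark \ref{rem-xr} and the fact that distance-to-boundary in the cable space agrees with that in the graph up to $\pm 1$. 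The parameter $\ell\in[0,1]$ enters only through harmless universal factors because $\mathbf U_\ell$ contains each loop of length $\ell\le 1$ at each interior vertex.

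Finally I would invoke \cite[Proposition 5.10]{LierlLSCJFA} (or the analogous result in \cite{Gyrya}) applied to $\boldsymbol{\phi}_{\ell,0}$: the latter is a positive solution of $\Delta_{\mathbf U_\ell,D}u=-\boldsymbol{\lambda}_{\ell,0}u$ vanishing on the boundary of $\mathbf U_\ell$, and the bottom eigenvalue satisfies $\boldsymbol{\lambda}_{\ell,0}\le C R^{-2}$ (by testing against a tent function supported in the inner ball $B(o,R/4)$), so at scales $r\le R$ the eigenvalue contributes only a harmless $O(1)$ factor to the Harnack/Carleson constants. The main obstacle I anticipate is isolating the $\ell$-uniformity of all constants; the continuity argument used in Proposition \ref{pro-*} shows $\boldsymbol{\lambda}_{\ell,0}$ varies continuously with $\ell$ but the Carleson constant itself must be shown to be uniformly controlled. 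This is handled by noting that every quantitative input--doubling constant of $\boldsymbol{\pi}$, ball Poincar\'e constant, inner-uniformity parameters $(\alpha^c,A^c)$ of $\mathbf U_\ell$, and the a priori bound on $\boldsymbol{\lambda}_{\ell,0}$--admits explicit bounds independent of $\ell$ once $\ell$ is confined to $[0,1]$, so the conclusion of \cite{LierlLSCJFA} yields the $(1/8,C_0)$-regularity and the Carleson estimate with a single $C_0=C_0(\alpha,A,D,P_e,P)$.
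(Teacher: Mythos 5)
Your proposal is correct and follows essentially the same route as the paper: the paper's proof simply observes that $\mathbf U_\ell$ is inner-uniform in $(\mathbf X_\ell,\mathbf d_\ell)$, that the cable Dirichlet space is a Harnack space in the sense of \cite{Gyrya,LierlLSCJFA}, and then invokes \cite[Proposition 5.10]{LierlLSCJFA} (noting only that the slight difference in the definition of $\xi_r$ is inconsequential). The transfer of doubling, the ball Poincar\'e inequality, and inner-uniformity to the cable space, uniformly in $\ell\in[0,1]$, which you verify explicitly, is exactly what the paper leaves implicit in the phrase ``Harnack space,'' so your additional checks are consistent with, rather than a departure from, the paper's argument.
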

\begin{proof} The domain $\mathbf U=\mathbf U_\ell$ in $(\mathbf X_\ell,\mathbf d_\ell)$ is inner-uniform and the Dirichlet space $(\mathbf X_\ell,\boldsymbol{\pi},\mathcal E_{\mathbf X_\ell})$ is a Harnack space in the sense of \cite{Gyrya} and \cite{LierlLSCJFA}. The most basic case of \cite[Proposition 5.10]{LierlLSCJFA} provides the desired result. Technically speaking, the definition of the map $(x,r)\mapsto \xi_r$ here and in \cite{LierlLSCJFA} are slightly different but these differences are inconsequential. \end{proof}

\begin{proof}[Proof of Theorem \ref{theo-Carleson}]   Together, Theorem \ref{th-JL} and Proposition \ref{pro-*} obviously yield Theorem \ref{theo-Carleson}.
\end{proof}

\begin{proof}[Proof of Theorem~\ref{theo-comph}] We use the  same method as in the proof of Theorem \ref{theo-Carleson} and extract this result from the similar result for the cable process
	with the proper choice $\ell_0$ of loop length.  Local harmonic functions for the cable process (with Dirichlet boundary condition at the boundary of $U$) 
	are always in a one-to-one  straightforward  correspondance with  local  harmonic functions for $K_U$, independently of the choice of the loop parameter $\ell$. Therefore, the stated result follows from \cite[Theorem 5.5]{LierlLSCJFA}.
	
\end{proof}

\subsection{Point-wise kernel bounds}
 \label{sec-HPWb}
In this section, we describe how to obtain the following detailed point-wise estimates on the iterated kernels $K^t_U$ and $K^t_{\phi_0}$ when $U$ is inner-uniform. Recall that $V(x,r)=\pi(B(x,r))$ and $x_{\sqrt{t}}$ is a point such that ${d(x_{\sqrt{t}},\mathfrak X \setminus U)\ge \alpha( 1+\sqrt{t})}$  
if $\sqrt{t} \le R$ and $x_{\sqrt{t}}=o$ otherwise.
\begin{theo}\label{theo-DH}
	Assume {\em A1} with $\theta=2$ and fix $\alpha,A$.  In addition, assume that the pair $(\pi,\mu)$ is such that $\sum_y\mu_{xy}\le (1-\epsilon)\pi$ with $\epsilon>0$ (this means that $\min_{x\in \mathfrak X}\{K_{\mu}(x,x)\}\ge \epsilon $). 
	There exist constants $c_1,c_2,C_1,C_2\in (0,\infty)$which depend only on $\alpha,A,D,P_e,P$ and are such that, for any finite inner $(\alpha,A)$-uniform domain $U$,  integer $t$ and $x,y\in U$ such that $d_U(x,y)\le t$,
	\begin{eqnarray*} \lefteqn{
			\frac{C_1\exp(-c_1 d_U(x,y)^2/t) } {\sqrt{V(x,\sqrt{t})V(y,\sqrt{t})} \phi_0(x_{\sqrt{t}})\phi_0(y_{\sqrt{t}})} } &&  \\
		&\leq& \frac{K_{\phi_0}^t(x,y)}{\phi_0(y)^2\pi(y)}\\
		&\leq& \frac{C_2 \exp(-c_2 d_U(x,y)^2/t)} {\sqrt{V(x,\sqrt{t})V(y,\sqrt{t})} \phi_0(x_{\sqrt{t}})\phi_0(y_{\sqrt{t}})}  .\end{eqnarray*}
\end{theo}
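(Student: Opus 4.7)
The strategy parallels that of Theorem \ref{theo-Carleson}: transfer the question to the cable space $(\mathbf{X}_{\ell_0},\boldsymbol{\pi},\mathcal{E}_{\mathbf{X}_{\ell_0}})$ of Section \ref{sec-cable}, apply the already-established two-sided Gaussian heat kernel bounds for the Doob-transformed Dirichlet semigroup on bounded inner-uniform domains in strictly local Dirichlet spaces, and finally transfer back to the discrete chain. The role of the laziness assumption $\sum_y\mu_{xy}\le (1-\epsilon)\pi$ is precisely to make this last transfer routine.

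\emph{Step 1 (cable space heat kernel).} Fix the loop parameter $\ell_0$ provided by Proposition \ref{pro-*}, form the inner-uniform domain $\mathbf U=\mathbf U_{\ell_0}$ in $\mathbf X_{\ell_0}$, and consider the Dirichlet semigroup $e^{t\Delta_{\mathbf U,D}}$ with heat kernel $\mathbf p^{\mathbf U,D}_t(\xi,\zeta)$ with respect to $\boldsymbol\pi$. Because A1 (with $\theta=2$) transfers from $(\mathfrak X,\mathfrak E,\pi,\mu)$ to the cable Dirichlet space, the hypotheses of \cite{Gyrya,LierlLSCOsaka,LierlLSCJFA} are met: $(\mathbf X_{\ell_0},\boldsymbol\pi,\mathcal E_{\mathbf X_{\ell_0}})$ is a Harnack-type Dirichlet space, and $\mathbf U$ is inner-uniform with constants depending only on $\alpha,A$. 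The Doob-transformed kernel $\mathbf p^{\boldsymbol\phi_0}_t(\xi,\zeta)=\mathbf p^{\mathbf U,D}_t(\xi,\zeta)e^{\boldsymbol\lambda_0 t}/(\boldsymbol\phi_0(\xi)\boldsymbol\phi_0(\zeta))$ is the heat kernel of a conservative symmetric Markov semigroup on $L^2(\mathbf U,\boldsymbol\pi_{\boldsymbol\phi_0})$ with $\boldsymbol\pi_{\boldsymbol\phi_0}=\boldsymbol\phi_0^2\boldsymbol\pi|_{\mathbf U}$, and the results of \cite{LierlLSCJFA} yield the two-sided Gaussian bound
\begin{equation}\label{eq:cable-gaussian}
\frac{\mathbf p^{\boldsymbol\phi_0}_t(\xi,\zeta)}{\boldsymbol\phi_0(\zeta)^2}\asymp \frac{\exp\bigl(-c\,\mathbf d_{\mathbf U}(\xi,\zeta)^2/t\bigr)}{\sqrt{\boldsymbol\pi_{\boldsymbol\phi_0}(B_{\mathbf U}(\xi,\sqrt t))\,\boldsymbol\pi_{\boldsymbol\phi_0}(B_{\mathbf U}(\zeta,\sqrt t))}},
\end{equation}
valid for $t\ge 1$ and $\mathbf d_{\mathbf U}(\xi,\zeta)\le t$ (the relevant regime for matching upper and lower bounds).

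\emph{Step 2 (volume conversion at vertices).} Restrict $\xi=x,\zeta=y$ to vertices in $U$. By Proposition \ref{pro-*}, $\phi_0=\kappa\boldsymbol\phi_0$ at vertices, and $\mathbf d_{\mathbf U}(x,y)=d_U(x,y)$ for $x,y\in U$. By Corollary \ref{cor-Carleson} (valid both in the discrete and cable settings, since the cable volumes agree with the discrete ones up to multiplicative constants depending on $D,P_e$), we have
\[
\boldsymbol\pi_{\boldsymbol\phi_0}(B_{\mathbf U}(x,\sqrt t))\asymp \phi_0(x_{\sqrt t})^2\,V(x_{\sqrt t},\alpha\sqrt t)\asymp \phi_0(x_{\sqrt t})^2\,V(x,\sqrt t),
\]
where the last equivalence uses the doubling of $\pi$ together with the fact that $d(x,x_{\sqrt t})\le\sqrt t$. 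Inserting this into \eqref{eq:cable-gaussian} produces precisely the right-hand side of the statement of Theorem \ref{theo-DH}, up to the distinction between the cable kernel at vertices (with respect to $\boldsymbol\phi_0^2\boldsymbol\pi$) and the discrete-time kernel $K^t_{\phi_0}(x,y)$ (with respect to $\phi_0^2\pi$).

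\emph{Step 3 (discrete-to-continuous transfer).} The continuous-time Markov chain on $\mathfrak X$ associated with the Dirichlet form $\mathcal E_\mu$ on $L^2(\pi)$ is $e^{-s(I-K)}$, and its restriction to $U$ killed at the boundary has generator $I-K_U$ and semigroup $e^{-s(I-K_U)}$. A routine computation (as in the proof of Proposition \ref{pro-*}, where one evaluates the cable-process kernel at vertices) shows that the cable semigroup $e^{t\Delta_{\mathbf U,D}}$ restricted to vertex pairs coincides, up to the time-normalisation dictated by the choice of $\ell_0$, with $e^{-s(I-K_U)}$ on $\pi$-weighted vertices; the Doob transform by $\phi_0=\kappa\boldsymbol\phi_0$ intertwines them. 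Thus the bound \eqref{eq:cable-gaussian}, read at vertices, gives the analogue of the theorem with $K^t_{\phi_0}$ replaced by the continuous-time kernel $e^{-s(I-K_{\phi_0})}(x,y)/\pi_{\phi_0}(y)$, $s\asymp t$. The passage from the continuous-time semigroup to the discrete-time iterated kernel $K^t_{\phi_0}$ is then standard under the laziness hypothesis $K(x,x)\ge\epsilon$: writing $K=\epsilon I+(1-\epsilon)K^\flat$ for a Markov kernel $K^\flat$, the kernel $K^t$ is a Poisson-weighted average of $e^{-s(I-K)}$ at times $s\asymp t$ with Gaussian concentration of width $\sqrt t$ around $t$, which preserves Gaussian bounds with the inner distance $d_U$ (see for example the Delmotte-style arguments in \cite{Delm-PH}).

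\emph{Main obstacle.} The genuinely delicate point is Step 3: the identification of the vertex-values of the cable heat kernel with the (Doob transform of the) continuous-time chain on $U$, and the subsequent comparison of the continuous- and discrete-time kernels with Gaussian precision. Each individual ingredient is classical, but combining them while keeping track of the correct diagonal normalisation $\phi_0(y)^2\pi(y)$ and of the inner-distance Gaussian profile requires careful bookkeeping. Once this is in place, the assembly of Steps 1–3 delivers the two-sided estimate with constants depending only on $\alpha,A,D,P_e,P,\epsilon$.
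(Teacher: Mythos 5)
Your route is genuinely different from the paper's, and as written it has a gap at its load-bearing step. The paper never goes back to the cable space for this theorem: it works entirely in the discrete setting, observing that the Doob chain $(\widetilde K,\widetilde\pi)=(K_{\phi_0},\phi_0^2\pi|_U)$ is a reversible Markov chain on $(U,\mathfrak E_U)$ with holding $\widetilde K(x,x)\ge\epsilon$ (this is where the laziness hypothesis enters) and with ellipticity constant controlled via $\beta_0$ and the $(1/8,C_0)$-regularity of $\phi_0$. It then invokes Delmotte's characterization (\cite{Delm-PH}, \cite{BalrlowLMS}): for such a chain the two-sided Gaussian estimate of Theorem~\ref{theo-DH} is \emph{equivalent} to volume doubling plus the Poincar\'e inequality on the inner balls $B_U(x,r)$ for the weighted measure $\widetilde\pi$. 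Doubling of $\widetilde V(x,r)\simeq\phi_0(x_r)^2V(x,r)$ comes from Theorem~\ref{theo-Carleson} and Corollary~\ref{cor-Carleson}, and the Poincar\'e inequality on inner balls is proved by rerunning the Whitney-covering argument of Section~\ref{sec-PQPJD}, now exploiting inner-uniformity (with \cite{Gyrya} and \cite{Kelsey} as references). So the only continuous input in the paper's proof is the Carleson estimate already established; everything else is discrete.

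The gap in your proposal is Step 3. Proposition~\ref{pro-*} identifies only the Perron--Frobenius \emph{eigenpair}: for the special loop length $\ell_0$ one has $\beta_0=\cos(\sqrt{\boldsymbol\lambda_0})$ and $\phi_0=\kappa\boldsymbol\phi_0$ at vertices. It does not intertwine the semigroups, and your assertion that $e^{t\Delta_{\mathbf U,D}}$ restricted to vertex pairs ``coincides, up to time-normalisation'' with $e^{-s(I-K_U)}$ is false: the cable diffusion observed at vertices is not the continuous-time jump chain (the cable Laplacian has, for instance, Dirichlet-type eigenvalues supported on single edges that have no discrete counterpart), and the higher spectra are related only through the non-linear map $\lambda\mapsto\cos\sqrt{\lambda}$. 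Passing from two-sided Gaussian bounds for the cable Doob-transformed kernel to two-sided bounds for the discrete-time kernel $K^t_{\phi_0}$, with the correct diagonal normalisation $\phi_0(y)^2\pi(y)$ and the inner-distance Gaussian profile, requires a genuine heat-kernel comparison theorem (of Barlow--Bass/Folz type, here additionally for a killed and Doob-transformed kernel), together with a continuous-time/discrete-time comparison that is only Gaussian in the regime $d_U(x,y)\le t$; none of this is ``routine,'' and it is precisely the content you would have to supply. Since the discrete statement is anyway \emph{equivalent}, by Delmotte, to doubling and Poincar\'e for $\widetilde\pi$ on inner balls, the natural fix is the paper's: prove those two discrete properties directly (your Step 2 already gives doubling; what is missing is the inner-ball Poincar\'e inequality for $\phi_0^2\pi$, which follows from a Whitney-chain argument in inner-uniform domains) and dispense with the cable-space detour for this theorem.
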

\begin{rem}   When $t$ is larger than $R^2$ then $x_{\sqrt{t}}=o$ and the two-sided estimate above states that 
	$K_{\phi_0}(x,y)$ is roughly of order $\pi_{\phi_0}(y)^2\pi_U(y)$ because $\phi_0(o)^2\simeq 
	\sum_U \phi_0^2\pi_U=1$. The convergence result stated earlier give better estimates in this case.  When $t\le R^2$, the statement provides a useful estimate of the iterated kernel before the equilibrium is reached.
\end{rem}
The following corollary simply translates Theorem \ref{theo-DH} in terms of the iterated kernel $K_U^t$.
\begin{cor}
\label{cor:DH}
	Assume {\em A1} with $\theta=2$ and fix $\alpha,A$.  In addition, assume that the pair $(\pi,\mu)$ is such that $\sum_y\mu_{xy}\le (1-\epsilon)\pi$ with $\epsilon>0$ (which implies that $\min_{x\in \mathfrak X}\{K_{\mu}(x,x)\}\ge \epsilon $). 
	There exist constants $c_1,c_2,C_1,C_2\in (0,\infty)$which depend only on $\alpha,A,D,P_e,P$ and are such that, for any finite inner $(\alpha,A)$-uniform domain $U$,  for any integer $t$ and any $x,y\in U$ such that $d_U(x,y)\le t$,
	\begin{eqnarray*} \lefteqn{
			\frac{C_1\beta_0^t\phi_0(x)\phi_0(y)\exp(-c_1 d_U(x,y)^2/t) } {\sqrt{V(x,\sqrt{t})V(y,\sqrt{t})} \phi_0(x_{\sqrt{t}})\phi_0(y_{\sqrt{t}})}}  \\
		&\leq& \frac{K_{U}^t(x,y)}{\pi(y)} \\
		&\le &  \frac{C_2\beta_0^t \phi_0(x)\phi_0(y)\exp(-c_2 d_U(x,y)^2/t)} {\sqrt{V(x,\sqrt{t})V(y,\sqrt{t})} \phi_0(x_{\sqrt{t}})\phi_0(y_{\sqrt{t}})}  .\end{eqnarray*}
\end{cor}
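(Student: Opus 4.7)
The plan is to obtain Corollary \ref{cor:DH} as a direct algebraic consequence of Theorem \ref{theo-DH} via the Doob-transform identity established in Section \ref{sec-Dir}, namely
\begin{equation*}
K_U^t(x,y) = \beta_0^t\, \phi_0(x)\, K_{\phi_0}^t(x,y)\, \phi_0(y)^{-1}, \qquad x,y \in U.
\end{equation*}
This identity is a consequence of the definition of $K_{\phi_0}$ at (\ref{def-Kphi0}) and the eigenfunction relation $K_U\phi_0 = \beta_0 \phi_0$; iterating (\ref{def-Kphi0}) gives $K_{\phi_0}^t(x,y) = \beta_0^{-t}\phi_0(x)^{-1}K_U^t(x,y)\phi_0(y)$, and the formula above follows by rearrangement.

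The bulk of the argument is then purely algebraic. Divide both sides of the identity by $\pi(y)$ and multiply and divide the right-hand side by $\phi_0(y)^2$ to write
\begin{equation*}
\frac{K_U^t(x,y)}{\pi(y)} = \beta_0^t\, \phi_0(x)\, \phi_0(y)\, \cdot\, \frac{K_{\phi_0}^t(x,y)}{\phi_0(y)^2\,\pi(y)}.
\end{equation*}
At this point, apply the two-sided bound on $K_{\phi_0}^t(x,y)/(\phi_0(y)^2\pi(y))$ provided by Theorem \ref{theo-DH}: the hypotheses of Theorem \ref{theo-DH} coincide with those of Corollary \ref{cor:DH}, so this is legitimate, and the hypothesis $d_U(x,y) \le t$ is exactly the range in which Theorem \ref{theo-DH} applies. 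Multiplying the resulting upper and lower bounds by the prefactor $\beta_0^t\phi_0(x)\phi_0(y)$ yields the two inequalities stated in the corollary, with the same constants $c_1, c_2, C_1, C_2$ (which depend only on $\alpha, A, D, P_e, P$, as these are the parameters controlling Theorem \ref{theo-DH}).

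Since every ingredient is already in place, I do not expect any real obstacle here; the entire proof is a one-line manipulation given Theorem \ref{theo-DH}. The only thing worth emphasizing for the reader's benefit is that the factor $\phi_0(y)^2$ in the denominator of the statement of Theorem \ref{theo-DH} has been absorbed, together with $\phi_0(y)^{-1}$ from the Doob-transform formula, into the single factor $\phi_0(y)$ appearing in the numerator of the bound in Corollary \ref{cor:DH}, while the factor $\phi_0(x)$ comes directly from the Doob-transform identity. In other words, the apparent asymmetry between the $\phi_0(x)\phi_0(y)$ prefactor in the corollary and the $\phi_0(x_{\sqrt{t}})\phi_0(y_{\sqrt{t}})$ factor in the denominator of the Gaussian-type bound is inherited unchanged from Theorem \ref{theo-DH}, and reflects the competition between the boundary decay of $\phi_0$ at the endpoints $x,y$ and the ``interior'' reference values $\phi_0(x_{\sqrt{t}}), \phi_0(y_{\sqrt{t}})$.
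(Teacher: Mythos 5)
Your proposal is correct and is exactly the paper's (implicit) argument: the paper states that Corollary \ref{cor:DH} ``simply translates'' Theorem \ref{theo-DH} via the Doob-transform identity $K_U^t(x,y)=\beta_0^t\phi_0(x)K_{\phi_0}^t(x,y)\phi_0(y)^{-1}$, which is precisely the one-line algebraic manipulation you carry out. The bookkeeping of the $\phi_0(y)^2$ factor and the constants depending only on $\alpha,A,D,P_e,P$ is handled correctly, so there is nothing to add.
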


\begin{proof}[Outline of the proof of Theorem \ref{theo-DH}]   To simplify notation, set 
	$$\widetilde{K}= K_{\phi_0}, \;\;\widetilde{\pi}= \phi_0^2\pi |_U.$$ 
	The estimates stated above and which we are going to obtain  for $\widetilde{K}^t=K^t_{\phi_0}$ do  not depend on the exact scaling 
	of $\phi_0$ and $\pi|_U$ as long as 
	the given choice made is used consistently.  The first key point of the proof is the fact that $\widetilde{K}=K_{\phi_0}$ is Markov (i.e., satisfies 
	$\sum_{y\in U} \widetilde{K}(x,y)=1$ for each $x\in U$) and reversible with respect to $\widetilde{\pi}=\phi_0^2 \pi|_U$. (Normalizing is optional.)
	Also, the reversible Markov chain $(\widetilde{K},\widetilde{\pi})$ satisfies  $\widetilde{K}(x,x)\ge \epsilon $ and  the ellipticity condition
	$\widetilde{K}(x,y)\ge 1/\widetilde{P}_e$  where $\widetilde{P}_e= \beta^{-1}_0 P_e\max\{\phi_0(x)/\phi_0(y): \{x,y\}\in \mathfrak E_U\}$. The constant  $\widetilde{P}_e$ is bounded above in terms of the constants $\alpha,A,D,P, P_e,\epsilon$ only.
	
	It is well-known (see~\cite[Theorem 6.34]{BalrlowLMS} or~\cite{Delm-PH}) that the two-sided Gaussian-type estimate stated in Theorem \ref{theo-DH}  for the reversible Markov chain $(\widetilde{K},\widetilde{\pi})$
	is equivalent to the conjunction of two more geometric properties which are (a) the doubling property 
	$$\forall\,x\in U,\;r>0,\;\;\; \widetilde{V}(x,2r)\le \widetilde{D}\widetilde{V}(x,r)$$
	of the volume function 
	$$\widetilde{V}(x,r)= \widetilde{\pi}(B_U(x,r))= \sum_{y\in B_U(x,r)} \phi_0^2(y)\pi|_U(y),$$
	and (b)  the Poincar\'e inequality  
	$$\min_{\xi}\sum_{B_U(x,r)} |f(y)-\xi|^2 \widetilde{\pi}(y)\le \widetilde{P} r^2 \sum_{y,z\in B_U(x,r)} |f(y)-f(z)|^2 \widetilde{K}(z,y)\widetilde{\pi}(z),$$
	for all $x\in U$, $r>0$ and all $f$ defined over $B_U(x,r)$.   See \cite{Delm-PH}.

	Theorem \ref{theo-Carleson} shows that 
	\begin{equation}
	\label{eq:phi0-volume}
	\widetilde{V}(x,r)\simeq  \phi_0(x_r)^2V(x,r)
	\end{equation}
	and the doubling property of $\widetilde{V}$ follows from Corollary \ref{cor-Carleson}.  The proof of the Poincar\'e inequality on the balls $B_U(x,r)$ follows from a variation on the argument developed in Section \ref{sec-PQPJD}
	which uses the additional  property of inner-uniform domains. See \cite{Gyrya} for the proof in the context of strictly local  Dirichlet spaces and \cite{Kelsey} for the case of discrete graphs. 
\end{proof}

The following useful corollary to Theorem~\ref{theo-DH} is illustrated in several different examples in Section~\ref{sec:examples}. 

\begin{cor}
\label{cor:exit-time-estimate}
Given the setup of Theorem~\ref{theo-DH},
$$ c\beta_0^t\frac{\phi_0(x)}{\phi_0(x_{\sqrt{t}})}\le \mathbf P_x(\tau_U>t)  \le   C \beta_0^t \frac{\phi_0(x)}{\phi_0(x_{\sqrt{t}})} ,$$
where $\tau_U$ is the random time that the process $(X_t)$ exists $U$, and $c,C>0$ are constants which depend only on $\alpha,A,D,P_e,P$.
\end{cor}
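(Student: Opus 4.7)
The plan is to start from the Doob-transform identity
$$\mathbf{P}_x(\tau_U>t) = \sum_{y\in U} K_U^t(x,y) = \beta_0^t\phi_0(x)\sum_{y\in U}\phi_0(y)^{-1} K_{\phi_0}^t(x,y),$$
which reduces the corollary to showing that the sum $S_t(x) := \sum_{y\in U}\phi_0(y)^{-1} K_{\phi_0}^t(x,y)$ is comparable to $\phi_0(x_{\sqrt t})^{-1}$, uniformly in $x$ and $t$, with constants depending only on $\alpha, A, D, P_e, P$. Both directions will be proved by inserting the two-sided Gaussian bounds of Theorem~\ref{theo-DH} into this sum, which is legitimate for all terms actually contributing to $S_t(x)$ because $K_U^t(x,y)=0$ whenever $d_U(x,y)>t$.

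For the upper bound, I would apply Carleson (Theorem~\ref{theo-Carleson}) with $z=y$ and $r=\sqrt{t}$ to get $\phi_0(y)\le C_0\phi_0(y_{\sqrt t})$, which cancels the troublesome factor $\phi_0(y_{\sqrt t})^{-1}$ appearing in the upper estimate of Theorem~\ref{theo-DH}. What remains is the purely metric Gaussian sum
$$\sum_y\frac{\pi(y)\,\exp(-c_2\,d_U(x,y)^2/t)}{\sqrt{V(x,\sqrt t)\,V(y,\sqrt t)}}.$$
This is handled by the standard argument on a doubling space: use $d\le d_U$ in the exponent, bound $\sqrt{V(x,\sqrt t)/V(y,\sqrt t)}$ by a polynomial in $1+d(x,y)/\sqrt t$ (via doubling), and split the sum along the dyadic annuli $A_k=\{y:2^k\sqrt t\le d(x,y)<2^{k+1}\sqrt t\}$, on which $\pi(A_k)\le D^{k+1}V(x,\sqrt t)$ and $\exp(-c_2 d(x,y)^2/t)\le\exp(-c_2 4^k)$. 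The resulting series converges to a constant depending only on $D$ and $c_2$, yielding $S_t(x)\le C\phi_0(x_{\sqrt t})^{-1}$.

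For the lower bound, I would discard all but the terms with $y$ in the small ball $B(x_{\sqrt t},c\sqrt t)$, where $c\le\alpha/4$ is small enough that this ambient ball lies in $U$ and coincides with $B_U(x_{\sqrt t},c\sqrt t)$. Three comparabilities then hold on this ball with constants depending only on the fixed data: the Gaussian factor $\exp(-c_1 d_U(x,y)^2/t)$ is bounded below (since $d_U(x,x_{\sqrt t})\le\sqrt t$ by the definition of $x_{\sqrt t}$ and $d(x_{\sqrt t},y)\le c\sqrt t$); the volumes obey $V(y,\sqrt t)\asymp V(x_{\sqrt t},\sqrt t)\asymp V(x,\sqrt t)$ by doubling; and $\phi_0(y)\asymp\phi_0(x_{\sqrt t})$ by the $(1/8,C_0)$-regularity of $\phi_0$ from Theorem~\ref{theo-Carleson}, which is applicable because $c\sqrt t\le(3/4)\delta(x_{\sqrt t})$. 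Finally, $\phi_0(y_{\sqrt t})\asymp\phi_0(x_{\sqrt t})$ follows by combining the Carleson lower inequality $\phi_0(y_{\sqrt t})\ge\phi_0(y)/C_0$ with a chain of Harnack balls (in the spirit of Lemma~\ref{lem-H}) connecting $y_{\sqrt t}$ to $x_{\sqrt t}$, both of which sit at distance of order $\sqrt t$ from the boundary. Substituting these into the lower bound of Theorem~\ref{theo-DH} makes each term at least of order $\pi(y)/(V(x,\sqrt t)\phi_0(x_{\sqrt t}))$, so summing and invoking doubling once more gives $S_t(x)\ge c\,\pi(B(x_{\sqrt t},c\sqrt t))/(V(x,\sqrt t)\phi_0(x_{\sqrt t}))\asymp\phi_0(x_{\sqrt t})^{-1}$.

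The main technical difficulty is the coherent coordination of Carleson regularity, Harnack chains, and volume doubling in order to pin every occurrence of $\phi_0(y)$, $\phi_0(y_{\sqrt t})$, and $V(y,\sqrt t)$ in Theorem~\ref{theo-DH} to its value at the reference points $x$ or $x_{\sqrt t}$; once these comparisons are in place, the Gaussian summation in the upper bound and the single-ball restriction in the lower bound are routine. The minor edge cases are handled automatically: when $\sqrt t>R$ one has $x_{\sqrt t}=o$ and the argument applies verbatim, while for $t=0$ the inequality reduces to the trivial identity $\mathbf{P}_x(\tau_U>0)=1=\beta_0^0\phi_0(x)/\phi_0(x_0)$ since $x_0=x$ by Definition~\ref{def-x(r)}.
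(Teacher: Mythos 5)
Your proposal is correct and follows essentially the same route as the paper's own proof: both insert the two-sided bound of Theorem~\ref{theo-DH} (equivalently Corollary~\ref{cor:DH}) into $\mathbf P_x(\tau_U>t)=\sum_{y\in U}K_U^t(x,y)$, use the Carleson bound $\phi_0(y)\le C_0\phi_0(y_{\sqrt t})$ together with the standard Gaussian-sum estimate on a doubling space for the upper bound, and restrict to a ball of radius comparable to $\sqrt t$ around $x_{\sqrt t}$, where $\phi_0(y)$, $\phi_0(y_{\sqrt t})$, $\phi_0(x_{\sqrt t})$ and the relevant volumes are all comparable, for the lower bound. The only difference is presentational: you phrase everything through the sum $S_t(x)=\sum_y\phi_0(y)^{-1}K_{\phi_0}^t(x,y)$ and spell out the dyadic-annulus and Harnack-chain steps that the paper treats as standard.
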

\begin{proof} Remark \ref{rem-xr} 
gives us a constant $c$ such  $d(x_r,\mathfrak X\setminus U)\ge cr.$  Note that for any $y\in B(x_{\sqrt{t}},c\sqrt{t}/2)$, we have
$\phi_0(y)\le C\phi_0(x_{\sqrt{t}})$ and $\phi_0(y_{\sqrt{t}})\ge C^{-1}\phi_0(x_{\sqrt{t}})$.  Furthermore, Theorem \ref{theo-Carleson} gives that
$$\widetilde{V}(x,\sqrt{t})\approx \widetilde{V}(y,\sqrt{t})\approx V(x_{\sqrt{t}},c\sqrt{t}/2)\approx \phi_0(x_{\sqrt{t}})^2 V(x,\sqrt{t}).$$

Now, we use the lower bound concerning $K^t_{U}$ from Corollary~\ref{cor:DH} and the previous observations to obtain
    \begin{align}
        \mathbf{P}_x(\tau_U > t) &= \sum_{y \in U} K_U^t(x,y) 
        \geq \sum_{y \in B(x_{\sqrt{t}},c\sqrt{t}/2)} K^t_U(x,y)\nonumber \\
        &\geq c'_1\beta_0^t\frac{\phi_0(x)}{\phi_0(x_{\sqrt{t}})}. \label{eq:exit-time-1}
    \end{align}
  
  For the upper bound, also using Corollary~\ref{cor:DH},  
    \begin{align}
        \mathbf{P}_x(\tau_U > t) &= \sum_{y \in U} K_U^t(x,y) \nonumber \\
        &\le C_2 \beta_0^t\frac{\phi_0(x)}{\phi_0(x_{\sqrt{t}})} \sum_{y \in U}  \frac{\phi_0(y)}{\phi_0(y_{\sqrt{t}})}  \frac{e^{-c_2d^2_U(x,y)}}{\sqrt{V(x,\sqrt{t})V(y,\sqrt{t})}} \pi(y) \nonumber \\
        & \le C'_2 \beta_0^t\frac{\phi_0(x)}{\phi_0(x_{\sqrt{t}})} .
    \end{align}
The last inequality holds because $\phi_0(y)\le C\phi_0(y_{\sqrt{t}})$ by Theorem \ref{theo-Carleson}, and 
$$\sum_{y \in U} \frac{e^{-c_2d^2_U(x,y)}}{\sqrt{V(x,\sqrt{t})V(y,\sqrt{t})}} \pi(y) \le C $$
on any doubling space.
\end{proof}

\section{Some explicit examples}
\label{sec:examples}
In this section, we consider explicit families of finite domains  indexed by a size parameter $N$ which is
comparable to the diameter of the relevant domain. Each finite domain $U$ is an $\alpha$-inner-uniform domain with a chosen ``center'' $o$ which is just a point in $U$ at maximal distance $R=R_U$ from the boundary (see Lemma \ref{lem-IUJ}).  Within each family, the inner-uniformity parameter, $\alpha\in (0,1)$, is fixed.  

The underlying 
weighted graph $(\mathfrak X,\mathfrak E,\pi,\mu)$ for these examples satisfies  A1 with $\theta=2$. In fact, in this section, the underlying space is the square grid $\mathbb Z^d$ of some fixed dimension $d$ (or some simple modification of it).  

We normalize the Perron-Frobenius eigenfunction $\phi_0$ by $\pi_U(\phi_0^2)=1$.  Because of Theorem \ref{theo-Carleson},  we have  $$\max\{\phi_0\}\le C_0\phi_0(o)$$ and (see the $(1/8,C_0)$-regularity of $\phi_0$), 
$$C_0\min_{B(o,R/2)}\{\phi_0\}\ge \phi_0(o).$$
Furthermore, $\pi_U(B(o,R/2))\ge c_0\pi(U)$.  It follows that
$$\forall y\in B(o,R/2),\;\;\phi_0(y)\approx  \phi_0(o)\simeq 1$$
uniformly within each family of examples considered.   In fact, in many examples, the choice of the point $o$ is somewhat arbitrary because one could as well pick any point $\tilde{o}$ with the property that 
$$d(\tilde{o},\mathfrak X\setminus U)\ge  \frac{1}{2}\max_{x\in U}\{d(x,\mathfrak X\setminus U)\}=\frac{R}{2}.$$
Any such point $\tilde{o}$ has the property that
$$\forall y\in B(\tilde{o},R/4),\;\;\phi_0(y)\approx  \phi_0(\tilde{o})\approx \phi_0(o)\simeq 1$$
uniformly over $\tilde{o}$ and within each family of examples considered. See Figure~\ref{B0}.

\begin{figure}[h]
	\begin{center}
		\begin{tikzpicture}[scale=.05]
		
		\draw [help lines] (0,0) grid (50,50);
		\draw [thick, blue]  (2,25) -- (12, 30)  --  (25,48)  -- (30,35)  --(48,25) -- (25,2) --  (17, 20) -- (2,25); 
		\draw [orange, fill=orange!40, opacity=.6]  (27,24)  circle  [radius= 5];

		\draw [help lines] (60,0) grid (110,50);
		\draw [thick, blue]  (62,25) -- (85,48)  -- (108,25) -- (85,2) -- (62,25); 
		\draw [thick, blue, fill]  (82,23) -- (82,26)  -- (88,26) -- (88,23) -- (82,23); 		
		\draw [name path=A, orange] (71,25)  -- (85,39) -- (99,25) -- (85, 11) -- (71,25) ;
		\draw [name path=B, orange]	 (77,23)  --  (77,27)-- (83,33) --  (87,33)-- (93,27) -- (93, 23) -- (87,17)	-- (83,17) -- (77,23);
		\tikzfillbetween[of=A and B]{orange!40, opacity=.6};	
		\end{tikzpicture}
		\caption{In light orange, regions where $\phi_0$ is approximately equal to $1$. On the left, an example in which there is essentially one central point $o$. On the right, an example in which the ``center'' $o$ can be placed in a variety of different location.}\label{B0}
	\end{center}
\end{figure}
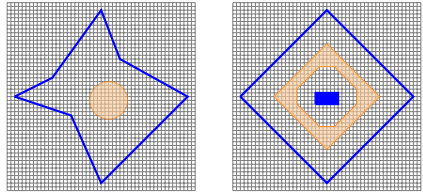

\subsection{Graph distance balls in $\mathbb Z^2$} 

\begin{figure}[h]
	\begin{center}
		\begin{tikzpicture}[scale=.05]
		
		\draw [help lines] (0,0) grid (50,50);
		\draw [thick, blue]  (2,25) -- (25,48)  -- (48,25) -- (25,2) -- (2,25); 
		\draw [fill] (25,25)  circle  [radius=.1];
		\end{tikzpicture}
		\caption{$B(N)$ in $\mathbb Z^2$}\label{B1}
	\end{center}
\end{figure}
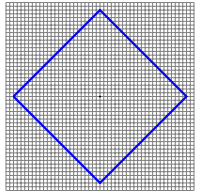

In $\mathbb Z^2$, let  $U=B(N)=\{x=(p,q)\in \mathbb Z^2: |p|+|q|\leq N\}$. This is the graph ball around $0$ in $\mathbb Z^2$.  
Equip $\mathbb Z^2$ with  the counting measure $\pi$ and with edge weights 
$$\mu_{xy}\begin{cases} 1/8 & \text{ if } |p_x-p_y|+|q_x-q_y|=1 \\ 0 & \text{ otherwise.} \end{cases}$$
The Markov kernel $K_\mu$ drives a lazy random walk on the square lattice, with holding probability $1/2$ at each vertex.  We are interested in the kernel 
$$K_U(x,y)= K_\mu(x,y)\mathbf 1_U(x)\mathbf 1_U(y)$$
which we view as defining an operator  on $L^2(U,\pi_U)$ where $\pi_U$ is the uniform probability measure on $U$.    This set is clearly inner-uniform (in fact, it is uniform because the inner distance between any two points in $U$ is the same as the distance between these point in $\mathbb Z^2$).

Let us introduce the Perron-Frobenius eigenfunction $\phi_0$ and its eigenvalue~$\beta_0$. Obviously, they depend on $N$.  This is one of the rare cases when
$\phi_0$ and $\beta_0$ can be determined explicitly:
$$\phi_0(x)= \kappa_N\cos \left(\frac{\pi}{2(N+1)} (p+q) \right) \cos \left(\frac{\pi}{2(N+1)}(p-q) \right)$$
with
$$\beta_0 = \frac{1}{2}\left(1 + \cos^2\left(\frac{\pi}{2(N+1)}\right)\right).$$
The normalizing constant $\kappa_N$ is of order $1$. Here we need to recall that $\phi_0$ vanishes on points at graph distance $N+1$ from
the origin in $\mathbb Z^2$.

To illustrate our result for estimating $\mathbf{P}_x(\tau_U > t)$ without writing long formulas, let us consider the probabilities 
$\mathbf P_{(p,0)}(\tau_U>t)$ and $\mathbf P_{(p,p)}(\tau_U>t)$  that
a random walk started at $x=(p,0)$ (for $0\le p\le N)$ and $x=(p,p)$ (for $0\le p\le N/2$), respectively, has not yet been killed by time  $t$. For all $t\le N^2$, we have
\begin{equation}
    \label{eq:exit-time-ball-1}
\mathbf P_{(p,0)}(\tau_U>t) \approx   \left(\frac{N-p}{N-p+\sqrt{t}}\right)^2, \;0\le p\le N.
\end{equation}
This comes from applying Corollary~\ref{cor:exit-time-estimate} to the eigenfunction above,
\begin{align*}
    \mathbf{P}_{(p,0)}(\tau_U > t) &\approx \frac{\phi_0((p,0))}{\phi_0((p,0)_{\sqrt{t}})} \\
    &\approx \frac{\phi_0((p,0))}{\phi_0((p-\sqrt{t},0))} \\
    &\approx  \frac{(\cos (\frac{\pi}{2N}p))^2}{\cos(\frac{\pi}{2N}(p-\sqrt{t}))^2}.
\end{align*}
Now, use that $\cos\left(\frac{\pi}{2N}x\right)= \sin \left(\frac{\pi}{2N}(N-x)\right)\sim \frac{\pi}{2N}(N-x)$.   In particular, for any fixed $0<t\le N^2$, $P_{(p,0)}(\tau_U>t)$ vanishes asymptotically like $\frac{(N-p)^2}{t}$ as $p$ tends to $N$. 

Similarly, for  $0<t\le N^2$,
$$\mathbf P_{(p,p)}(\tau_U>t) \approx   \left(\frac{N-2p}{N-2p+\sqrt{t}}\right),\;0\le 2p\le N.$$
In this case,  for any fixed $0<t\le N^2$,  $\mathbf P_{(p,p)}(\tau_U>t)$ vanishes like $\frac{N-2p}{\sqrt{t}}$  when $p$ tends to $N/2$.

\begin{rem} While our results apply equally well to the graph distance balls of $\mathbb Z^d$ for $d>2$, they are much more complicated in that case and there is no explicit formula for $\phi_0$ or the eigenvalue $\beta_0$. The ball is  a polytope with faces of dimension $0,1,\dots,d$.  The vanishing of $\phi_0$ near each of these faces is described by a power function of the distance to the particular face that is considered
	and the exponent depends on the dimension of the face and on the  angles made by the higher dimensional faces meeting at the given face (the exponent is always $1$ when approaching the highest dimensional faces).
\end{rem}

\subsection{$B(N)\setminus\{(0,0)\}$ in $\mathbb Z^2$} 
\begin{figure}[h]
	\begin{center}
		\begin{tikzpicture}[scale=.1]
		\draw [thick, blue] (25,25)  circle  [radius=.4];
		\draw [red]  (25,25) circle [radius=7];
		\node at (21,27) {$2$};
		\draw [red] (19,19) circle [radius=5];
		\node at (18,18) {$1$};
		\draw [red] (34,34) circle [radius=7];
		\node at (33,33) {$3$};
		\draw [red] (42,25) circle [radius=7];
		\node at (42,25) {$4$};
		\draw [help lines] (0,0) grid (50,50);
		\draw [thick, blue]  (2,25) -- (25,48)  -- (48,25) -- (25,2) -- (2,25); 
		\draw [fill] (25,25)  circle  [radius=.1];
		\end{tikzpicture}
		\caption{$B(N)\setminus \{0\}$ in $\mathbb Z^2$ (the blue central point is part of the boundary)} \label{B2}
	\end{center}
\end{figure}
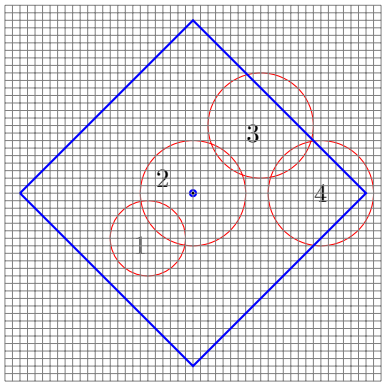
The case when $U=B(N)\setminus\{(0,0)\}$ is interesting because we are able to describe precisely the behavior of $\phi_0$ even though there is no
explicit formula available.  First, we note again that this is an inner-uniform domain (there is no preferred point $o$ in this case, since any point at distance of order $N/2$
from $(0,0)$ will do).   Theorem \ref{theo-comph} will play a key part in allowing us to describe the behavior of $\phi_0$.  First, we can use path arguments and an appropriate test function to show that
$$1-\beta_0 \approx N^{-2}.$$
For the upper bound, use the test function $$f((p,q))=\min\{d((0,0),(p,q)),N+1-d(0,0),(p,q))\}$$
which vanishes at all boundary points for $U$

Second, we show that
$$\phi_0((p,q))\approx  \frac{(N-|p+q|)(N-|p-q|))\log (1+|p|+|q|)}{N^2\log N }.$$
To obtain this result, cover $U$ by a finite number (independent of $N$) of  $\mathbb Z^2$ balls
$\{B_j\}$ of radius of order $N$ so that the trace of $U$ in each of the balls $2B_j$ is of one of the following four types: (1) no intersection with the boundary of $U$; (2)  the intersection with the boundary  of $U$ is $\{(0,0)\}$; (3) the intersection with the boundary of $U$ is  a subset of $\{(p,q): p+q=N\}$ or $\{(p,q): p-q=N\}$ of $\{(p,q): p+q=-N\}$ or $\{(p,q): p-q=-N\}$; and (4) the intersection with the boundary is a corner formed by two of the previously mentioned lines.  See Figure \ref{B2} for an illustration of these four types.  In case (1), we know that $\phi$ is approximately constant in $B_j$. Moreover, this approximately constant value must be (approximately) the maximum value of $\phi_0$ because of Theorem \ref{theo-Carleson}, and this constant must be approximatively equal to $1$ because $\phi_0$ is normalized by $\pi_U(\phi_0^2)=1$. This is compatible with the proposed formula describing $\phi_0$. In case (2), Theorem \ref{theo-comph} allows us to compare
$\phi_0((p,q))$ to the harmonic function $h((p,q))$ equal to the discrete modified Green's function $$A((0,0),(p,q))=\sum_0^\infty(M^t((0,0),(p,q))-M^t((0,0),(0,0))$$ on $\mathbb Z^2\setminus \{(0,0)\}$
Here $M$ is the Markov kernel of aperiodic simple random walk on $\mathbb Z^2$.
 It is well-known that this function is comparable to $\log(1+|p|+|q|)$ (See \cite[Chapter 3]{Spitzer} from which we borrowed the notation $A(x,y)$. More precise estimates are available using sharp version of the local limit theorem, but this is enough for our purpose). Because the ball $B_j$ in question must contain a point at distance of order $N$ from the boundary of $U$ at which $\phi_0$ is of order $1$, we find that, in such a ball, 
$$\phi_0((p,q))\approx \frac{\log (1+|p|+|q|)}{\log N}.$$
Again, this estimate is compatible with the proposed formula. 
In case (3), we easily have a linear function $h$ vanishing on the (flat) portion of the boundary contains in that ball and positive discrete harmonic in $U$. Thanks to Theorem~\ref{theo-comph}, this provides  the estimate
$$\phi_0((p,q))\approx \frac{ d_U((p,q)),\mathfrak X \setminus U)}{N}$$
in balls of this type, which has the form suggested by the proposed formula. Finally, in case (4), and, for definiteness, in the case the ball $B_j$ is centered at the corner of intersection of the line ${\{(p,q): p+q=N\}}$ and ${\{(p,q): p-q=N\}}$, the function $h((p,q))= (N-p-q)(N-p+q) $ vanishes on these two lines and is discrete harmonic.  This gives (again,using Theorem \ref{theo-comph})
$$\phi_0((p,q))\approx \frac{ (N-p-q)(N-p+q)}{N^2}$$
as desired.

\subsection{$B(N)\setminus \{\mathbf 0\}$ in $B(N)$, in dimension $d>1$} \label{pointedball}

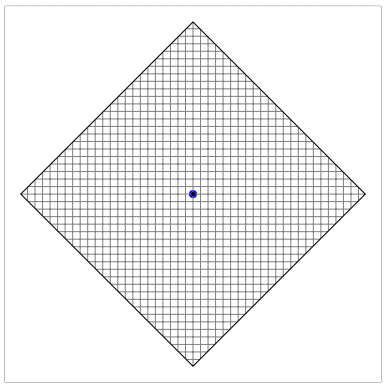
\begin{figure}[h]
	\begin{center}
		\begin{tikzpicture}[scale=.1]
		\draw [blue, thick] (25,25)  circle  [radius=.4];
		\draw [help lines] (0,0) grid (50,50);
		\draw [thick]  (2,25) -- (25,48)  -- (48,25) -- (25,2) -- (2,25); 
		\draw [fill] (25,25)  circle  [radius=.2];
		\fill [white, even odd rule] (0,0) -- (0,50)  -- (50,50) -- (50,0) -- (0,0)
		(2,25) -- (25,48)  -- (48,25) -- (25,2) -- (2,25); 
		\end{tikzpicture}
		\caption{$B(N)\setminus \{0\}$ in $B(N)$ (The blue central point is the entire boundary.)} \label{B3}
	\end{center}
\end{figure}

First we explain the title of this subsection. Consider the simple random walk in the ball $B(N)\subset \mathbb Z^d$, with any reasonable reflection
type hypothesis on the boundary of $B(N)$. Our aim is to study absorption at $0$ for this random walk on the finite set  $B(N)$.  To put this example in our general framework,
we set $\mathfrak X_N= B(N)$ equipped with the edge set $\mathfrak E_N$ induced by the underlying square lattice, that is the collection of all lattice edges with both end points in $B(N)$. The measure $\pi$ on $\mathfrak X_N=B(N) $ is the counting measure and each lattice edge $e$ in $\mathfrak E$ is given the weight $\mu(e)=1/(2d)$.  This means that the Markov kernel $K_\mu$ for our underlying walk has no holding at point $x\in B(N-1)\subset B(N)$ and holding probability $\nu(x)/(2d)$ where $\nu(x)= 2d- \#\{y\in B(N): \{x,y\}\in \mathfrak E_N\}$ when $x\in B(N)\setminus B(N-1)$ (this holding probability at the boundary is always at least $1/2$).   The domain  $U_N$ of interest to us here is $U_N=B(N)\setminus \{\mathbf 0\}$ (inside $B(N)$) whose sole outside boundary point is 
the center $\mathbf 0$.  When the dimension $d$ is at least $2$, this is an inner-uniform domain in $(\mathfrak X_N,\mathfrak E_N)$ (there is no canonical center but any point at distance at least $N/2$ from $\mathbf 0$ can be chosen to be the center $o$).   

Because the domain  $U_N$ is inner-uniform  (uniformly in $N$), Theorem~\ref{theo-comph} yields 
$$\mathbf P_x(\tau_U >t)\approx  \frac{\beta_0^{t} \phi_0(x)}{\phi_0(x_{\sqrt{t}})} $$
and, for $t \geq N^2$, Corollary~\ref{cor:KU-rate-of-conv} gives,
$$|K^t_U(x,y) - \phi_0(x)\phi_0(y)\beta_0^{t} |U|^{-1}|  \le C \beta_0^t\phi_0(x)\phi_0(y) e^{-t/N^2}.$$

As in the previous examples, the key is to obtain further information on $\beta_0$ and $\phi_0$.
For that we need to treat the cases $d=2$ and $d>2$ separately. In both cases, we use Theorem \ref{theo-comph} to estimate $\phi_0$.

\subsubsection{Case $d=2$}    

The first task is to estimate $1-\beta_0$  from above and below. This is done by using the same argument explained in \cite[Example 3.2.5: The dog]{LSCStF}. See Subsection \ref{par-eig} below where we spell out
the main part of the argument in question.  The upshot is that    $1-\beta_0\approx 1/N^2\log N$.   We know that $\phi_0(x)\approx 1$  when $x$ is at graph distance at least $N/2$ from 
$\mathbf 0$ (see the outline describe in Example \ref{pointedball} for type 1 balls).  To estimate $\phi_0$ at other points, we compare it with the global positive harmonic function from $\mathbb Z^2\setminus \{\mathbf 0\}$
given the so-called modified Green's function $h(x)= A(\mathbf 0,x)= \sum_{t=0}^\infty [M^t(\mathbf 0,x)-M^t(\mathbf 0,\mathbf 0)]$ where $M$ stands here for the Markov kernel of aperiodic simple random walk in $\mathbb Z^2$ as in Example \ref{pointedball}.  Note that $h$ vanishes at $0$. Classical estimates (e.g., \cite{Spitzer}) yield $h(x) \approx \log |x|$.    This, together with Theorem \ref{theo-comph} and the estimate when $x$ is at distance at least $N/2$ from $\mathbf 0$,  gives
$$\phi_0(x)  \approx  \frac{\log |x|}{\log N}.$$

\subsubsection{Case $d>2$}   The case $d>2$ is perhaps easier although the arguments are essentially the same.  The eigenvalue $\beta_0$ is estimated by $1-\beta_0\approx 1/N^d$ and the harmonic function  $h(x)=\sum_0^\infty M^t(\mathbf 0,x)-\sum_0^\infty M^t(\mathbf 0,\mathbf 0)$ (these sums converge separately because $d>2$) is estimated by  $h(x)\approx  \left(1-  1/(1+|x|)^{d-2}\right)$. This gives
$$\phi_0(x) \approx    \frac{\left(1-  1/(1+|x|)^{d-2}\right) }{ \left(1-  1/(1+N)^{d-2}\right)} \approx 1.$$

\subsubsection{Discussion}  The first thing to observe in these examples is the fact that $1-\beta_0=o(1/N^2)$.  For $t\ge N^2$  we have
$$|K^t_U(x,y) - \phi_0(x)\phi_0(y)\beta_0^{t} |U|^{-1}|  \le C \beta_0^t\phi_0(x)\phi_0(y) e^{-t/N^2}.$$  
In the case $d=2$, if $\epsilon>0$ is fixed and  $x,y$ are at distance greater than $N^\epsilon$ from the origin, we can without loss of information, 
simplify the above statement and write
$$|K^t_U(x,y) - \phi_0(x)\phi_0(y)\beta_0^{t} |U|^{-1}|  \le C e^{-t/N^2}.$$  
Because $\beta_0^t$ decays significantly slower than  $e^{-t/N^2}$, this provides a good example of a quasi-stationary distribution during the time interval 
$t\in (N^2, N^2\log N)$. 

In the case $d>2$, the same phenomenon occurs, only in an even more tangible way.  For any $x,y\in U_N$, $\phi_0(x),\phi_0(y)$ are uniformly bounded away from $0$  (even for the neighbors of the origin, $\mathbf 0$).  Moreover, $1-\beta_0\approx 1/N^{d}=o(1/N^2)$.  For $t\ge N^2$  and $x,y\in U_N$,
$$|K^t_U(x,y) - \phi_0(x)\phi_0(y)\beta_0^{t} |U|^{-1}|  \le C e^{-t/N^2}.$$ 
On intervals of the type $t \in (TN^2,N^d/T)$ with $T$ large enough,  $  K^t_U(x,y) $ is well approximated  by $ \phi_0(x)\phi_0(y) |U|^{-1} $ because, on such intervals, $\beta_0^t$ remains close to $1$.

\subsection{ $B(N)\setminus B_2(L)$  in $B(N)$, in dimension $d>1$}

We work again in $\mathfrak X_N=B(N)$ with the weighted graph structure explained above.   We use $B_2(r)$ to denote
the trace on the lattice $\mathbb Z^d$ of the Euclidean (round) ball centered at the origin, $\mathbf 0$. The domain we wish to investigate is $U_{N,L}=B(N)\setminus B_2(L)$ with  $L=o(N)$ so that the number of points in $U_{N,L}$ is of order $N^d$ and $U_{N,L}$ is inner-uniform (uniformly in all choices of $N,L$). Again, the chosen center $o$ in $U_{N,L}$ can be any point  at graph distance $N$ from $\mathbf 0$.  All the estimates described below are uniform in $N,L$ as long as $L=o(N)$.

\subsubsection{Estimating $\beta_0$} \label{par-eig}
First we explain how to estimate $\beta_0$ for
$U=B(N)\setminus B_2(L)$  in $B(N)$ using and argument very similar to those used in
\cite[Example 3.2.5: The dog]{LSCStF}. For each point $x\in U$ 
fix a graph geodesic discrete path $\gamma_x$ that joins $x$ to the origin in $\mathbb Z^d$ while staying as close as possible to the straight line
from $x$ to the origin. We stop $\gamma_x$ whenever it reaches a point in $B_2(L)$. 
\begin{figure}[h]
	\begin{center}
		\begin{tikzpicture}[scale=.1]
		\draw [fill=blue] (25,25)  circle  [radius=3];
		\draw [help lines] (0,0) grid (50,50);
		\draw [thick]  (2,25) -- (25,48)  -- (48,25) -- (25,2) -- (2,25); 
		\draw [fill] (25,25)  circle  [radius=.2];
		\fill [white, even odd rule] (0,0) -- (0,50)  -- (50,50) -- (50,0) -- (0,0)
		(2,25) -- (25,48)  -- (48,25) -- (25,2) -- (2,25); 
		\node at (44,31) {$x$};
		\draw  (25,25) -- (42,30);
		\draw[thick] (42,30) -- (41,30) --(40,30) --(40,29)-- (39,29) -- (38,29) --(37,29) -- (37,28) -- (36,28) --(35,28) -- (34,28)
		-- (34,27) --(33,27) -- (32,27) --(31,27)--(30,27)-- (30,26)
		--(29,26) --(28,26) --(27,26)
		;
		\draw[fill] (42,30) circle [radius=.2];
		\draw[yellow, thick, fill=yellow!20, opacity=.6] (25,25) -- (44,34) -- (46, 28) --(25,25); 
		\draw[thick, red] (30,27)-- (30,26);
		
		\end{tikzpicture}
		\caption{Paths to the origin in $B(N)\setminus B_2(L)$} \label{B4path}
	\end{center}
\end{figure}
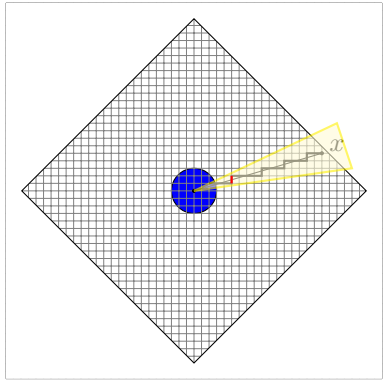

 Given a function $f$ on $B(N)$ which is equal to zero on $B_2(L)$ and a directed edge $e=(x,y)$, set $df(e)=f(y)-f(x)$. The edges along a path $\gamma_x$ are all directed toward the origin. Using this notation, we have
$$|f(x)|^2\le |\sum_{e\in \gamma_x} df(e)|^2
\le |\gamma_x|_w \sum_{e\in \gamma_x} |df(e)|^2 w(e)$$
where $w$ is a weight function on the edge $e$ which will be chosen later
and $|\gamma|_w= \sum_{e\in \gamma} w(e)^{-1}$. 
Summing over all $x\in U$, we obtain
$$\sum_U|f|^2 \le  2d\sum_{e\in \mathfrak E}\left(\sum_{x: \gamma_x\ni e}|\gamma_x|_w w(e)\right) \frac{|df(e)|^2  }{2d}
\le C_w(d,N,L) \mathcal E_\mu(f,f) $$
where $$C_w(d,N,L)= 2d \max_{e\in \mathfrak E}\left\{w(e)\sum_{x:\gamma_x\ni e}|\gamma_x|_w\right\}. $$
Using the Raleigh quotient formula for $1-\beta_0$, we obtain the eigenvalue estimate
$$\beta_0\le 1 - 1/C_w(d,N,L)$$
for any choice of the weight $w$.  Here we choose $w(e)$ to be the Euclidean distance of the edge $e$ to the origin raised to the power $d-1$. This implies that  
$$|\gamma_x|_w\le C_d\times \begin{cases} \log (N/L) &\mbox{ when } d=2,\\
L^{-d+2} & \mbox{ when } d>2 \end{cases}$$
for some constant $C_d$ which depends on the dimension $d$. It remains to count how many $x$ use a given edge $e$. Because we use paths that remain close to the straight line from $x$ to the origin,
the vertices $x$ that use and given edge $y$ at Euclidean distance $T$
from the origin must be in a cone of aperture bounded by $C_d/T$. The number of these vertices is at most $C_d N\times (N/T)^{d-1} $
where the constant $C_d$ changes from line to line. See Figure \ref{B4path}. Recall that $w(e)\approx T^{d-1}$. Putting things together yields
$$C_w(d,N,L)\le C_d \times  \begin{cases}{cl} N^2 \log (N/L) &\mbox{ when } d=2,\\
N^d L^{-d+2} & \mbox{ when } d>2. \end{cases}$$
In terms $\beta_0$ this gives
$$1-\beta_0\ge C^{-1}_d \times  \begin{cases}{cl} 1/N^2\log (N/L) &\mbox{ when } d=2,\\
L^{d-2}/N^d & \mbox{ when } d>2. \end{cases} $$ 

The upper-bound is a simple computation using a test function which take the value $0$ on $B_2(L)$ and increase linearly at rate $1$ until taking the value $L$. After that the test function remains constant equal to $L$.  Note that this bound interpolates between the case $L=1$ (more or less, the previous case) when $1-\beta_0\approx 1/N^d$
and the case when $L$ is a fixed small fraction of $N$, in which case  $1-\beta_0\approx 1/N^2$.

\subsubsection{Estimating $\phi_0$ in the case $d=2$}

\begin{figure}[h]
	\begin{center}
		\begin{tikzpicture}[scale=.1]
		\draw [fill=blue] (25,25)  circle  [radius=3];
		\fill [yellow, even odd rule] (25,25) circle[radius=6] circle[radius=3];
		\draw [help lines] (0,0) grid (50,50);
		\draw [thick]  (2,25) -- (25,48)  -- (48,25) -- (25,2) -- (2,25); 
		\draw [fill] (25,25)  circle  [radius=.2];
		\fill [white, even odd rule] (0,0) -- (0,50)  -- (50,50) -- (50,0) -- (0,0)
		(2,25) -- (25,48)  -- (48,25) -- (25,2) -- (2,25); 
		\end{tikzpicture}
		\caption{$B(N)\setminus B_2(L)$: In the yellow region of width $L$ around $B_2(L)$, $\phi_0(x)\approx (\frac{\log L}{\log N})d(x,B_2(L))$.} \label{B4}
	\end{center}
\end{figure}
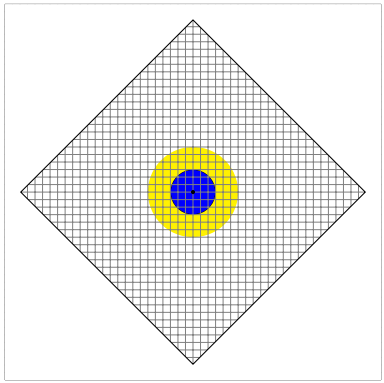

The technique is the same as the one described below for the case $d>2$. Here we omit the details and  only describe the findings. The behavior of the function $\phi_0$ is best described by considering two zones.
See Figure \ref{B4}.
The  first zone is  $B_2(2L)\setminus B_2(L)$ in which the function $\phi_0$
is roughly  linearly increasing as the distance from $B_2(L)$ increases and satisfies 
$$\phi_0(x)\approx \frac{ \log L}{\log N}d(x,B_2(L)).$$
The second zone is $B(N)\setminus B_2(2L)$ in which  $\phi_0$ satisfies
$$\phi_0(x)\approx \frac{\log |x|}{\log N}.$$

\subsubsection{Estimating $\phi_0$ in the  case $d>2$}
Because of the basic known property of $\phi_0$ discussed earlier, it satisfies $\phi_0\approx 1$ on the portion of  $U_{N,L}$  which is at distance of order $N$  from $B_2(L)$   (the outer-part of $U_{N,L}$).  The function $\phi_0$ is also bounded on $U_N$, uniformly in $N,L$.  One key step is to find out the region in $U_{N,L}$ over which $\phi_0$ is bounded below by a  fixed small $\epsilon$.   For this purpose we use, a simple comparison with the Green's function 
$G(\mathbf 0,y)=\sum K^t(\mathbf 0,y)$, of the simple random walk on $\mathbb Z^d$.   First, find the smallest positive $T=T(L)$  such that
$$B_2(L)\subset \{x\in \mathbb Z^d: G(\mathbf 0,x)\ge T\}.$$
Recall that 
\begin{equation}
G(\mathbf 0,x)\approx 1/(1+|x|)^{d-2}  \label{Green=}
\end{equation}
This shows that $T\approx  1/L^{d-2}$  (the implied constants in this estimate depend on $d$ because we are using both the Euclidean norm and the graph distance). 

We are going to compare $\phi_0$ to a multiple of the harmonic function $$v(x)=1-G(\mathbf 0,x)/T.$$   It is clear that $v\approx 1$  when $|x|=N$ (uniformly over $N,L$).  It follows that there is a constant $a>0$,
independent of $N,L$, such that $ \phi_0-av$ is  greater or equal to $4$ on the boundary of $V_{N,L}=B(N)\setminus \{z: G(\mathbf 0,z)\ge T\}$  (the constant $a$ is chosen so that this is true on the outer-boundary whereas, on the inner-boundary, $v=0$, $\phi_0>0$).   Suppose that $\phi_0-av$ attains a minimum at an interior point $x_0$ in $V_{N,L}$.  This would imply that    $\phi_0(x_0)-av(x_0)\le \beta_0 \phi_0(x_0)-av(x_0)$, that is, $1\le \beta_0$, a contradiction.
It follows that $\phi_0\ge a v$  on $V_{N,L}$.  Because of the known estimate for $G$ recalled above and of the general properties of $\phi_0$, this shows that 
$$\phi_0\approx 1 \mbox{ over }B(N)\setminus B_2(2L).$$  

All the statements and arguments given so far would work just as well if we where considering $B(N)\setminus B(L)$ instead of $B(N)\setminus B_2(L)$.
These two cases differ  only  in the behavior of their respective $\phi_0$ near the interior boundary.   For $U_{N,L}=B(N)\setminus B_2(L)$,
it is possible to show that 
$$\phi_0(x) \approx  \frac{d( x, B_2(L)}{L}.$$
The fundamental reason for this is the (uniform) smoothness of the boundary of the Euclidean ball $B_2(L)$ (viewed at scale $L$).  
The result is a consequence of one of  the main result in \cite{VarMilan1} (see also \cite{varMilan2,VarMilan3}).

\subsection{$B(N)\setminus B(L)$, $d=2$}

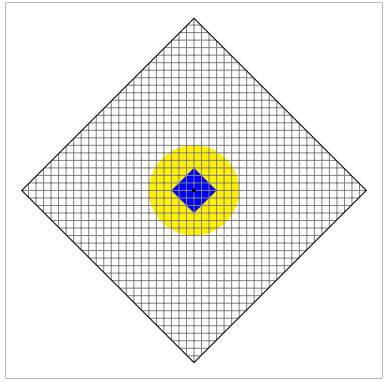
\begin{figure}[h]
	\begin{center}
		\begin{tikzpicture}[scale=.1]
		\draw [fill=blue] (25,25)  circle  [radius=3];
		\fill [yellow, even odd rule]  (25,25) circle [radius=6]
		(22,25) -- (25,28)  -- (28,25) -- (25,22) -- (22,25); 
		\draw [help lines] (0,0) grid (50,50);
		\draw [thick]  (2,25) -- (25,48)  -- (48,25) -- (25,2) -- (2,25); 
		\draw [fill] (25,25)  circle  [radius=.2];
		\fill [white, even odd rule] (0,0) -- (0,50)  -- (50,50) -- (50,0) -- (0,0)
		(2,25) -- (25,48)  -- (48,25) -- (25,2) -- (2,25); 
		\end{tikzpicture}
		\caption{$B(N)\setminus B(L)$} \label{BNL}
	\end{center}
\end{figure}  

Next we consider $B(N)\setminus B(L)$, $L<N/2$, in dimension $d=2$.
We have again
$$\beta_0\approx 1/ N^2\log (N/L)$$ 
In the zone $B(N)\setminus B_2(2L)$ (outside the yellow area in Figure \ref{BNL}), the function $\phi_0$ is estimated by
$$\phi_0(x)\approx \frac{\log |x|}{\log N }.$$
We note here that the exact outer shape of the yellow region is unimportant (we could have drawn a diamond instead of a round ball). In order to describe the function $\phi_0$ is the yellow zone ($B_2(2L)\setminus B(L)$),  it is convenient to split the region into eight areas, each of which is of one of two types.  See Figure \ref{BNL2}
where the two red circles describes the two types of region that we will consider.
The estimates described below are compatible when two regions intersect. In the type $1$ regions, because the relevant piece of the boundary at scale $L$ is flat, 
$$\phi_0(x)\approx \frac{\log L}{L\log N} d(x,B(L)).$$

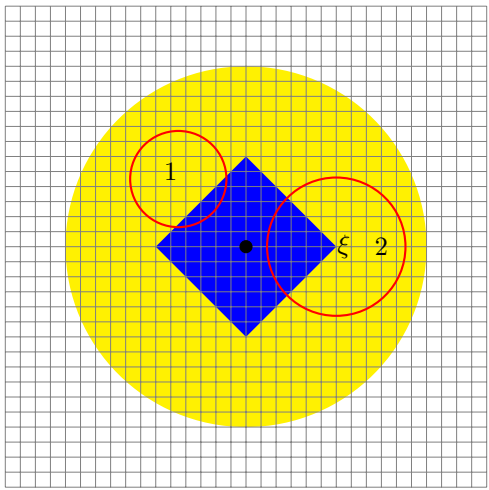
\begin{figure}[h]
	\begin{center}
		\begin{tikzpicture}[scale=.2]
		\draw [fill=blue] (16,16)  circle  [radius=6];
		\fill [yellow, even odd rule]  (16,16) circle [radius=12]
		(10,16) -- (16,22)  -- (22,16) -- (16,10) -- (10,16); 
		\draw [help lines] (0,0) grid (32,32);
		\draw [fill] (16,16)  circle  [radius=.4];
		\draw [thick, red] (11.5,20.5) circle [radius=3.2];
		\draw [thick, red] (22,16) circle [radius=4.6];
		\node at (11,21) {$1$};
		\node at (22.5,16) {$\xi$};
		\node at (25,16) {$2$};
		\end{tikzpicture}
		\caption{The yellow zone in  $B(N)\setminus B(L)$} \label{BNL2}
	\end{center}
\end{figure}   

In the type $2$ regions, centered around one of the corner of $B(L)$,
$$\phi_0(x)\approx   \frac{\log L}{\log N}  (\rho/L)^{2/3} \cos \left( 4\theta/3\right),\;\;
x=(x_1,x_2), x-\xi=\rho e^{i \theta}$$
Here $\xi$ is the tip of the diamond $B(L)$ around which the region of type 2 is centered, $\theta$ is the angle in $[-\pi,\pi)$ measured from the median semi-axis through the tip.  This last estimate  is obtained by using the results of \cite{VarMilan1}  to derive the behavior of discrete harmonic function in a type $2$ region from the behavior of the
analogous classical harmonic function in the analogous domain in $\mathbb R^2$ (a cone with aperture $3\pi/2$).

\section{Summary and concluding remarks}
This article gives detailed quantitative estimates describing the behavior of Markov chains on certain finite sub-domains of a large class of underlying graphs before the chain exits the given sub-domain. There are two types of key assumptions. 

The first set of assumptions concern the underlying graph (before we consider a particular sub-domain). This underlying graph belongs to a large class of graphs whose properties mimic those of the square grid $\mathbb Z^m$. This class of graphs can be defined in a variety of known equivalent different ways: it satisfies, uniformly at all scales and locations, the doubling volume condition and Poincar\'e inequality on balls; equivalently, the iterated kernel of simple random walk satisfies detailed two-sided ``Gaussian or sub-Gaussian bounds''; or, equivalently, it satisfies a certain type of parabolic Harnack inequality for (local) positive solutions of the discrete heat equation. See the books \cite{BalrlowLMS,GrigBook} for details and pointers to the literature.  It is perfectly fine for the reader to concentrate attention on the case of the square grid $\mathbb Z^m$. However, even if the reader concentrates on this special case, the techniques that are then used to study the behavior of the chain in sub-domains are the same techniques as the ones needed to understand the more general class of graphs we just alluded to. 

The second set of assumptions concerns the finite sub-domains of the underlying graph that can be treated. These sub-domains are called John domains and inner-uniform domains, and both are defined using metric properties. For John domains (the larger class), there is a central point $o$ and any other point of the domain can be joined to the central point $o$ by a carrot-shaped region that remains entirely contained in the domain. The inner-uniform  condition (a strictly more restrictive condition) requires that any pair of point in the domain can be joined by a banana-shaped region that is entirely contained in the domain.   It is not easy to get a good precise understanding of the type of regions afforded by these conditions because they allow for very rugged domains (e.g., in the Euclidean plane version, the classical snowflake). They do cover many interesting examples. 

It is worth emphasizing here that the strength of the results obtained in this article comes from the conjunction of the two types of assumptions described above.  Under these assumptions, one can describe the results of this paper by saying that any question about the behavior of the chain until it exits the given sub-domain boils down (in a technically precise and informative way) to estimating the so-called Perron-Frobenius eigenvalue and eigenfunction of the domain.  Let us stress here that it is quite clear that it is necessary to understand the Perron-Frobenius pair in order to get a handle on the behavior of the chain until it exits the domain.  What is remarkable is the fact
that it is essentially sufficient to understand this pair in order to answer a host of seemingly more sophisticated and intricate questions. This idea is not new as it is the underlying principle of the method known as the Doob-transform technique which has been used by many authors before. Under two basic types of assumptions described above, this idea works remarkably well. In  different contexts (diffusion, continuous metric measure spaces, Dirichlet forms and unbounded domains) this same idea is the basis for many of the developments in \cite{Pinsky,Gyrya}. 

For inner-uniform domains, the more restrictive class of domains, the results obtained are rather detailed and complete. For John domains, the  results obtained, which depend on the notion of moderate growth (see Lemma~\ref{lem-Q}), are less detailed and leave interesting questions open.

We conclude with pointing out to further potential developments. This article focuses on the behavior before the exit time of the given finite domain.  In the follow-up paper \cite{DHSZ2}, we discuss, in the case of inner-uniform domains, the implications of these results on the problem of understanding the exit position. This can be framed as an extension of the classical Gambler's ruin problem.  In a spirit similar to what was said above,
\cite{DHSZ2} shows how Gambler's ruin estimates on inner-uniform domains reduce to an understanding of the Perron-Frobenius eigen pair of the domain. Much less is known for John domains in this direction.  

Having reduced a certain number of interesting questions to the problem of estimating the Perron-Frobenius eigenfunction $\phi_0$ of a given finite domain, we owe the reader to observe that this task, estimating $\phi_0$, remains extremely difficult. There are plenty of interesting results in this direction and many more natural open problems. An illustrative example is the following: consider the cube of side length $2N$ in $\mathbb Z^3$ with the three main coordinate axes going through the center removed; this is an inner-uniform domain and we would like to estimate the eigenfunction $\phi_0$. Another example, less mysterious, is to find precise estimates for $\phi_0$ for the graph balls in $\mathbb Z^m$ with $m\ge 3$.

For finite domains in $\mathbb Z^m$ with diameter $R$, we have proved that the key convergence parameter for the quasi-stationarity problems considered here is order $R^2$ for $\alpha$-inner-uniform domains and no more than $R^{2+\omega}$ for $\alpha$-John domains where $\omega\ge 0$ depends only 
on the dimension $m$ and John parameter $\alpha$. It is an interesting open question to decide
whether or not $\omega$ can be taken to be always equal to $0$. Even if there are John domains where $\omega$ must be positive, it is clear that there is a class of John domains that is strictly larger than the class of all inner-uniform domains and for which one can take $\omega=0$. Elucidating this question is an interesting open problem in the present context and in the context of analysis in Euclidean domains.  

\section*{Acknowledgements}
The authors thank Tianyi Zheng for her contributions to the early stage of this work and, in particular, for pointing out that the idea of moderate growth is useful in treating John domains. Laurent Saloff-Coste was partially supported by NSF grant DMS-1707589. Kelsey Houston-Edwards was partially supported by NSF grants DMS-0739164 and DMS-1645643.

\bibliographystyle{plain}

\bibliography{}

\end{document}